\newcommand\reallywidehat[1]{%
\savestack{\tmpbox}{\stretchto{%
  \scaleto{%
    \scalerel*[\widthof{\ensuremath{#1}}]{\kern-.6pt\bigwedge\kern-.6pt}%
    {\rule[-\textheight/2]{1ex}{\textheight}}
  }{\textheight}%
}{0.5ex}}%
\stackon[1pt]{#1}{\tmpbox}%
}
\numberwithin{equation}{section} 
\let\b=\beta
\let\d=\delta
\let\ep=\varepsilon
\let\s=\sigma
\let\f=\frac
\let\om=\omega
\let\Om=\Omega
\let\na=\nabla
\let\pa=\partial
\def\no{\noindent}
\def\eqdef{\buildrel\hbox{\footnotesize def}\over =}
\def\fc{{\mathfrak c}}
\newcommand{\beq}{\begin{equation}}
\newcommand{\eeq}{\end{equation}}
\newcommand{\ben}{\begin{eqnarray}}
\newcommand{\een}{\end{eqnarray}}
\newcommand{\beno}{\begin{eqnarray*}}
\newcommand{\eeno}{\end{eqnarray*}}
\newtheorem{theorem}{Theorem}[section]
\newtheorem{lemma}[theorem]{Lemma}
\newtheorem{proposition}[theorem]{Proposition}
\newtheorem{assumption}[theorem]{Assumption}
\newtheorem{remark}[theorem]{Remark}
\begin{document}
\title[Asymptotic stability in the critical space]{Asymptotic stability in the critical space of 2D monotone shear flow in the viscous fluid}
\author{Hui Li}
\address[H. Li]{Department of Mathematics, New York University Abu Dhabi, Saadiyat Island, P.O. Box 129188, Abu Dhabi, United Arab Emirates.}
\email{lihuiahu@126.com, lihui@nyu.edu}

\author{Weiren Zhao}
\address[W. Zhao]{Department of Mathematics, New York University Abu Dhabi, Saadiyat Island, P.O. Box 129188, Abu Dhabi, United Arab Emirates.}
\email{zjzjzwr@126.com, wz19@nyu.edu}

\begin{abstract}
In this paper, we study the long-time behavior of the solutions to  the two-dimensional incompressible free Navier Stokes equation (without forcing) with small viscosity $\nu$, when the initial data is close to stable monotone shear flows. We prove the asymptotic stability and obtain the sharp stability threshold $\nu^{\frac{1}{2}}$ for perturbations in the critical space $H^{log}_xL^2_y$. Specifically, if the initial velocity $V_{in}$ and the corresponding vorticity $W_{in}$ are $\nu^{\frac{1}{2}}$-close to the  shear flow $(b_{in}(y),0)$ in the critical space, i.e., $\|V_{in}-(b_{in}(y),0)\|_{L_{x,y}^2}+\|W_{in}-(-\pa_yb_{in})\|_{H^{log}_xL^2_y}\leq \ep \nu^{\frac{1}{2}}$, then the velocity $V(t)$ stay $\nu^{\frac{1}{2}}$-close to a shear flow $(b(t,y),0)$ that solves the free heat equation $(\pa_t-\nu\pa_{yy})b(t,y)=0$. We also prove the enhanced dissipation and inviscid damping, namely, the nonzero modes of vorticity and velocity decay in the following sense 
$\|W_{\neq}\|_{L^2}\lesssim \ep\nu^{\frac{1}{2}}e^{-c\nu^{\f13}t}$ and $\|V_{\neq}\|_{L^2_tL^2_{x,y}}\lesssim \ep\nu^{\frac{1}{2}}$. 
In the proof, we construct a time-dependent wave operator corresponding to the Rayleigh operator $b(t,y)\pa_x-\pa_{yy}b(t,y)\pa_x\Delta^{-1}$, which could be useful in future studies. 
\end{abstract}
\maketitle
\section{introduction}
We consider the two-dimensional incompressible Navier-Stokes equation on $\mathbb{T}\times\mathbb{R}$
\begin{equation}\label{eq:NS}
  \left\{
    \begin{array}{l}
    \pa_tV+V\cdot\na V+\na P-\nu\Delta V=0,\\
    \na\cdot V=0,\\
    V(0,x,y)=V_{in}(x,y),
    \end{array}
  \right.
\end{equation}
where $\nu>0$ denotes the viscosity, $V=(V^{(1)},V^{(2)})$ is the velocity field and $P$ is the pressure. Let $W=\pa_xV_2-\pa_y V_1$ be the vorticity, which satisfies
\begin{align}\label{eq:NS1}
  \pa_tW+V\cdot\na W-\nu \Delta W=0.
\end{align}
Let $b(t,y)$ solves the linear heat equation:
\begin{align}\label{eq:linearHeat}
 (\pa_t-\nu\pa_{yy})b(t,y)=0,\quad b(0,y)=b_{in}(y) 
\end{align}
then $V_{sh}=(b(t,y),0)$ is a special solution to \eqref{eq:NS} with  vorticity $W_{sh}=-\pa_yb(t,y)$. 

The purpose of this paper is to study the long-time behavior of solutions to \eqref{eq:NS} when the initial data $V_{in}(x,y)$ is close to the shear flow $(b_{in}(y),0)$. It is natural to introduce the perturbation equations. Let $V=(b(t,y),0)+u$, $W=-\pa_yb(t,y)+\om$, we get
\begin{equation}\label{eq:NS2}
  \left\{
    \begin{array}{l}
      \pa_t\om+b(t,y)\pa_x\om-\pa_{yy}b(t,y)u^{(2)}-\nu\Delta\om+u\cdot\na \om=0,\\
      u=\na^{\bot}\psi=(-\pa_y\psi,\pa_x\psi),\quad \Delta\psi=\om.
    \end{array}
  \right.
\end{equation}

\subsection{Historical comments}
The field of hydrodynamic stability started in the nineteenth century with the pioneering works of Stokes, Helmholtz, Reynolds, Rayleigh, Kelvin, Orr, Sommerfeld, and many others. The study of (in)stability of shear flows dates back to Rayleigh \cite{Rayleigh1880}, Kelvin \cite{Kelvin1887}, and Sommerfeld \cite{Sommerfeld1908}, Orr \cite{Orr1907}. Kelvin considered the linearized Navier-Stokes equation around the Couette flow $(y,0)$:
\begin{equation}\label{eq:LNS2}
  \left\{
    \begin{array}{ll}
      \partial_{t} \omega+y \partial_{x} \omega-\nu \Delta \omega=0,\\
      \psi=\Delta^{-1}\omega.
    \end{array}
  \right.
\end{equation}
Let $\hat{\om}(t,k,\eta)$ denotes the Fourier transform of $\om(t,x,y)$, then the solution of \eqref{eq:LNS2} can be written as
\begin{equation}\label{eq: Lin-sol}
  \begin{aligned}    
    &\hat{\om}(t,k,\eta)=\hat{\om}_{in}(k,\eta+kt)\exp\left(-\nu\int_0^t|k|^2+|\eta-ks+kt|^2ds\right),\\
    &\hat{\psi}(t,k,\eta)=\f{-\hat{\om}_{in}(k,\eta+kt)}{k^2+\eta^2}\exp\left(-\nu\int_0^t|k|^2+|\eta-ks+kt|^2ds\right),
  \end{aligned}
\end{equation}
which gives that
\begin{align}
  &\|\pa_yP_{\neq}\psi\|_{L^2}+\langle t\rangle \|\pa_xP_{\neq}\psi\|_{L^2}\leq C\langle t\rangle^{-1}\|P_{\neq}\om_{in}\|_{H^{2}},\label{eq: ID}\\
&\|P_{\neq}\om\|_{L^2}\leq C\|P_{\neq}\om_{in}\|_{L^2}e^{-c\nu t^3},\label{eq: ED}
\end{align}
where here we denote by $P_{\neq}f=f(x,y)-\frac{1}{2\pi}\int_{\mathbb{T}}f(x,y)dx$ the projection to the nonzero mode of $f$. The inequality \eqref{eq: ID} is the inviscid damping and \eqref{eq: ED} one is the enhanced dissipation. 

For the inviscid flow, Orr observed that the velocity will tend to a shear flow if the initial velocity is close to the Couette flow. This phenomenon is called inviscid damping. The mechanism leading to the inviscid damping is the vorticity mixing driven by the shear flow. The linear inviscid damping is rigorously proved by Case \cite{Case1960}. In the breakthrough paper \cite{BM2015},  Bedrossian-Masmoudi proved the nonlinear inviscid damping of the Couette flow when the perturbations are in the Gevrey-$m$ class ($1\leq m<2$). See also \cite{IonescuJia2020cmp} for the finite channel setting. For the general monotonic shear flow, Case \cite{Case1960} predicted that the inviscid damping still holds with some spectrum assumption on the corresponding Rayleigh operator. The presence of a nonlocal term makes this problem more challenging for general monotonic flow than that for Couette flow, even at the linear level \cite{LinZeng2011,Zillinger2017jde}.  It was later proved by Wei-Zhang-Zhao \cite{WeiZhangZhao2018} at the linear level. We also refer to \cite{Jia2020siam,Jia2020arma} for a simplified proof and the linear inviscid damping in Gevrey class.  Recent works by Ionescu-Jia \cite{IJ2020} and Masmoudi-Zhao \cite{MasmoudiZhao2020} have established the nonlinear inviscid damping for general linear stable monotone shear flows. 

For the viscous flow, the stability phenomenon described by \eqref{eq: ED}, where the decay rate of the vorticity is significantly higher than the diffusive decay rate of $e^{-\nu t}$, is known as enhanced dissipation. The mechanism leading to enhanced dissipation is also due to vorticity mixing. Generally speaking, the sheared velocity sends information to a higher frequency, enhancing the effect of diffusion. However, in experiments, the Couette flow is unstable at high Reynolds numbers, contradicting the linear analysis \cite{romanov1973}. This discrepancy is known as the Sommerfeld paradox \cite{Sommerfeld1908}.  It was suggested by Lord Kelvin \cite{Kelvin1887} that indeed the flow may be stable, but the stability threshold is decreasing as $\nu\to 0$, resulting in transition at a finite Reynolds number in any real system. In \cite{BGM2017}, Bedrossian-Germain-Masmoudi formulated the following  stability threshold problem:

{\it Given a norm $\|\cdot\|_X$, find a $\beta=\beta(X)$ so that
\begin{align*}
  &\|\omega_{in}\|_{X}\leq \nu^{\beta} \Rightarrow \text{stability},\\
&\|\omega_{in}\|_{X}\gg \nu^{\beta} \Rightarrow \text{instability}.
\end{align*}
}
Many works have been devoted to the transition threshold problem of Couette flow. We summarize the most recent stability threshold results for the 2D Couette flow as follows:
\begin{itemize}
    \item \cite{MasmoudiZhao2020cpde}: $\|\omega_{in}\|_{H_x^{log}L_y^2}\leq \ep \nu^{\f12}\Rightarrow \text{asymptotic stability}$;
    \item \cite{LiMasmoudiZhao2022critical}: $\|\omega_{in}\|_{H_x^{1}L_y^2}\geq M\nu^{\f12-\d}\Rightarrow \text{instability}$, for $M$ independent of $\nu$ and any $\d>0$;
    \item \cite{MasmoudiZhao2019}: $\|\omega_{in}\|_{H^{\s}}\leq \ep \nu^{\f13}\Rightarrow \text{asymptotic stability}$, with $\s\geq 40$;
    \item \cite{LMZ2022G}: 
    $\|\omega_{in}\|_{\mathcal{G}^s}\leq \ep \nu^{\b}\Rightarrow \text{asymptotic stability}$, with $1\leq s\leq \f{2-3\b}{1-3\b}$ and $\b\in[0,\f13]$. 
\end{itemize}
We also refer to \cite{BH2020, BMV2016, BVW2018, CLWZ2020} and references therein for many other interesting results and refer to \cite{BGM2020,BGM2015,ChenWeiZhang2020,WeiZhang2020} for the results in the 3D case. It is worth pointing out that when the perturbations are in the critical space $H_x^{log}L_y^2$, $\b=\f12$ is sharp.

For Couette flow, we can observe a strong connection between inviscid damping and enhanced dissipation, both mechanisms of stability arising from the vorticity mixing effect (see \eqref{eq:LNS2}). Furthermore, inviscid damping still holds in the viscous flow, and it remains uniform in $\nu$ (e.g.\cite{CLWZ2020, BH2020, LMZ2022G}). It is natural to ask whether there is still a strong connection between inviscid damping and enhanced dissipation for general monotonic shear flows. If the background monotonic shear flow is stable in inviscid flow, whether it remains stable in the presence of viscosity, and whether the inviscid damping holds uniformly in $\nu$ \cite{BV2022}. Compared to the Couette flow, the problem for general flows becomes more challenging for the following two reasons: 1, the background shear flow varies in time; 2, the nonlocal term destroys the transport-diffusion structure. By adding external forces $F=(-\nu \frac{d^2}{dy^2}b_{in},0)$ in \eqref{eq:NS} to keep the background flow unchanged, at the linear level, Chen-Wei-Zhang \cite{CWZ2022} and Jia \cite{Jiahao2022} provided positive answers for the stability of the background shear flow in the finite channel $\mathbb{T}\times[0,1]$ and unbounded channel $\mathbb{T}\times\mathbb{R}$, respectively, where the corresponding Rayleigh operator has purely continuous spectrum, see also \cite{GNRS2020} for the background flows in the $\mathcal{K}_+$ class. In this paper, we establish the nonlinear enhanced dissipation and uniform inviscid damping for a class of shear flows without the external forcing term, and obtain the sharp stability threshold $\nu^{\frac{1}{2}}$ for perturbations in the critical space $H^{log}_xL^2_y$. To tackle the problem generated from the non-local term, we construct a time-dependent wave operator corresponding to the Rayleigh operator $b(t,y)\pa_x-\pa_{yy}b(t,y)\pa_x\Delta^{-1}$, which could be useful in future studies.

\subsection{Main results}

Let us now list some assumptions on the shear flow $(b_{in}(y),0)$.
\begin{assumption}\label{assum}
We normalize that $b_{in}(0)=0$, and assume that the shear flow $(b_{in}(y),0)$ satisfies the following conditions:
  \begin{itemize}
\item (Monotone) There is $1>c_m>0$ such that for all $y\in\mathbb{R}$, it holds that $$0<c_m\leq \frac{d}{dy}b_{in}(y)\leq c_m^{-1}.$$
\item (Linear growth) There is $c_m\leq C_-\leq C_+\leq c_m^{-1}$ such that 
$$
\lim_{y\to \pm\infty}\frac{d}{dy}b_{in}(y)=C_{\pm}.
$$
\item (Regularity and decay)  $\frac{d^2}{dy^2}b_{in}(y)\in W^{1,1}\cap H^3$ and $(1+y^2)^{\f14}\frac{d^2}{dy^2}b_{in}(y)\in H^1$.  
\item (Stability) For any $t\geq 0$, the Rayleigh operator $\mathcal{R}_{b_t}=b(t,y)\mathrm{Id}-\pa_{yy}b(t,y)(\Delta)^{-1}$ has no embedded eigenvalues or eigenvalues.
\end{itemize}
\end{assumption}

\begin{theorem}\label{thm-main}
Let $\om$ be a solution of \eqref{eq:NS2} with $b(t,y)$ satisfies Assumption \ref{assum}. There exists $\ep_0,\nu_0>0$, such that $\left\|\ln \left(e+\left|D_x\right|\right) \omega_{i n}\right\|_{L_{x, y}^2}+\left\|u_{i n}\right\|_{L_{x, y}^2} \leq \varepsilon_0 \nu^\beta$ for $\beta\ge \frac{1}{2}$, then
\begin{align*}
  \left\|\ln \left(e+\left|D_x\right|\right) \omega_{\neq}(t)\right\|_{L_{x, y}^2}\le C  e^{-c\nu^{\frac{1}{3}}t}\left\|\ln \left(e+\left|D_x\right|\right) \omega_{i n}\right\|_{L_{x, y}^2}.
\end{align*}
Moreover, we have the inviscid damping type estimate,
\begin{align*}
    \int_{0}^{+\infty}\| u^{(2)}_{\neq}(t,x,y)\|_{L^\infty_{x,y}}^2dt &+\int_{0}^{+\infty}\||D_x|^{\frac{1}{2}}u^{(2)}_{\neq}(t,x,y)\|_{L^2_xL^\infty_{y}}^2dt\\
    &+\int_{0}^{+\infty}\|\pa_xu^{(1)}_{\neq}(t,x,y)\|_{L^2_{x,y}}^2dt
    \le C \left\|\ln \left(e+\left|D_x\right|\right) \omega_{i n}\right\|_{L_{x, y}^2}.
\end{align*}
The constants $c,C$ are independent of $\nu$.
\end{theorem}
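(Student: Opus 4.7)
The plan is to treat \eqref{eq:NS2} as a nonlinear perturbation of the time-dependent linearized operator $\pa_t + b(t,y)\pa_x - \pa_{yy}b(t,y)\pa_x\Delta^{-1} - \nu \Delta$ acting on $\om$. The heart of the argument is a wave-operator reduction: following the spectral viewpoint of Wei--Zhang--Zhao and Jia, I would construct, for each $t\ge 0$, an isomorphism $\mathcal{W}(t)$ on the nonzero modes such that $\mathcal{W}(t)^{-1}\mathcal{R}_{b_t}\mathcal{W}(t)=b(t,y)\pa_x$. Formally, $\mathcal{W}(t)$ is built from the generalized eigenfunctions of $\mathcal{R}_{b_t}$, whose existence (and absence of spurious point spectrum) is guaranteed by the stability and monotonicity parts of Assumption \ref{assum}; the time-dependence of $\mathcal{W}(t)$ is governed by the heat equation satisfied by $b$, so $\pa_t \mathcal{W}$ is controlled by $\nu\pa_{yy}b$, which is small and smooth under the regularity/decay hypothesis on $\pa_{yy}b_{in}$.

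Setting $g=\mathcal{W}(t)^{-1}\om_{\neq}$, the equation for $g$ should take the schematic form
\begin{equation*}
\pa_t g + b(t,y)\pa_x g - \nu \Delta g = \mathcal{W}^{-1}\bigl(u\cdot\na \om\bigr) + \mathcal{E}(t) g,
\end{equation*}
where $\mathcal{E}(t)$ collects commutators between $\mathcal{W}$ and $\pa_t$, $\nu\Delta$. Once the nonlocal Biot--Savart obstruction is conjugated away, I would switch to good coordinates $(x,z)=(x,b(t,y))$ (monotonicity of $b$ is preserved by the heat flow, so this change of variable is well-defined and has uniform Jacobian). In those variables the drift becomes $z\pa_x$, i.e.\ a genuinely Couette-like transport, and one can use the full toolkit developed for the Couette problem in the critical space $H^{log}_x L^2_y$ by Masmoudi--Zhao \cite{MasmoudiZhao2020cpde}: a Fourier multiplier $M(t,k,\eta)$ of the form $\ln(e+|k|)\,m_{ED}(t,k,\eta)\,m_{ID}(t,k,\eta)$, where $m_{ED}$ encodes the $e^{c\nu^{1/3}t}$ enhanced-dissipation gain on the sheared variable $\eta-kt$, and $m_{ID}$ produces the time-integrable inviscid-damping norms for $u^{(2)}_{\neq}$, $|D_x|^{1/2}u^{(2)}_{\neq}$ and $\pa_x u^{(1)}_{\neq}$.

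The core of the proof would then be a bootstrap on the energy $\|M g\|_{L^2}^2+$ its dissipation integrals, showing that under the smallness $\ep_0\nu^{1/2}$ all nonlinear contributions close. For the transport nonlinearity $u\cdot\na\om$ I would split $\om=\om_0+\om_{\neq}$, $u=u_0+u_{\neq}$, and run paraproduct estimates adapted to $M$: the key reactions (low--high and high--low with the zero mode) are absorbed by the inviscid damping on $u^{(2)}_{\neq}$ together with the logarithmic weight, while the genuinely nonlinear nonzero--nonzero interactions lose at most one logarithm, which is exactly what $\ln(e+|D_x|)$ affords. The zero-mode $\om_0$ is treated separately as a standard heat equation with a small quadratic forcing, using energy methods in $L^2_y$.

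The main obstacle, I expect, is making the wave operator $\mathcal{W}(t)$ quantitatively effective in the critical norm: one needs boundedness of $\mathcal{W}(t)^{\pm 1}$ on $H^{log}_x L^2_y$ with constants independent of $\nu$ and $t$, together with a sharp control of $\mathcal{E}(t)$ so that the commutator terms $\pa_t\mathcal{W}\cdot g$ and $[\nu\Delta,\mathcal{W}]g$ do not destroy the $e^{-c\nu^{1/3}t}$ rate. A subtler point is the interaction between $\mathcal{W}(t)$ and the change of coordinates $z=b(t,y)$: both depend on time through the heat flow, and one must verify that their combined time derivative produces only $O(\nu)$ errors that are dominated by the dissipative multiplier. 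If these two technical ingredients are established, the rest of the proof reduces to a controlled transcription of the Couette critical-threshold analysis, yielding the stated enhanced dissipation and uniform inviscid damping bounds.
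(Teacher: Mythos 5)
Your proposal follows essentially the same route as the paper: a time-dependent wave operator (the paper's $\mathbb{D}_{t,k}$ playing the role of your $\mathcal{W}(t)^{-1}$, with $\mathbb{D}_{t,k}\mathcal{R}_{t,k}=b_t\mathbb{D}_{t,k}$), the sheared change of coordinates $v=b(t,y)$, $z=x-tv$, a ghost-type Fourier multiplier $\mathrm{A}_k(t,\eta)=e^{c_0\nu^{1/3}t}\mathring{\mathrm{A}}_k(t,\eta)\ln(e+|k|)$ simultaneously encoding enhanced dissipation, inviscid damping and the critical $H^{log}_x$ weight, Bony paraproducts for the nonlinearity, and a bootstrap. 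The technical points you flag as obstacles are precisely what the paper establishes: uniform-in-$(s,\nu)$ boundedness of $\mathbb{D}_{s,k}^{\pm1}$ and its $\pa_s$, $\pa_y$, $\pa_{yy}$ commutators (Propositions \ref{prop-est-s}, \ref{pro-wave}, Lemma \ref{lem-wave-com}), the construction via inverse Fourier--Laplace/limiting absorption rather than a Dunford integral (needed because the spectrum is unbounded on $\mathbb{R}$), and a physical-side kernel representation of the ghost multiplier (Lemmas \ref{lem-ker-arct}--\ref{lem-A-nonlocal}) to absorb the non-constant coefficient $(\partial_y b\circ b^{-1})^2$ in the dissipation.
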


 \begin{remark}
     In the proof, we construct a wave operator to absorb the nonlocal term and recover the transport-diffusion structure. Let us mention some recent applications of the wave operator method in solving fluid problems: In \cite{LiWeiZhang2020}, the authors use the wave operator method to solve Gallay's conjecture on pseudospectral and spectral bounds of the Oseen vortices operator. In \cite{WeiZhangZhao2020}, the wave operator method was used to solve Beck and Wayne's conjecture. In \cite{MasmoudiZhao2020}, the authors use the wave operator method to prove the nonlinear inviscid damping for stable monotone shear flows. In \cite{Zhao2023inviscid}, the author constructs the wave operator and proves the asymptotic stability for stable monotone shear flows with non-constant background density for the two-dimensional ideal inhomogeneous fluids.
 \end{remark}

\begin{remark}
    We believe the wave operator constructed in this paper can be useful in other problems, especially in the study of the asymptotic stability of monotone shear flows. The authors are optimistic about extending the existing asymptotic stability results for Couette flow in many other fluid equations (such as the stratified fluids, inhomogeneous fluids, and plasma fluids) to general monotone shear flows under some other assumptions. 
\end{remark}

\begin{remark}
Uniqueness for the heat equation fails on unbounded domains. 
The solution $b(t,y)$ is unique under the growth assumption that for all $t>0$, $|b(t,y)|\leq Ae^{ay^2}$, where $A,a$ are arbitrary constants. Once the uniqueness holds, then by the fact that $c_{m}\leq \frac{d}{dy}b_{in}\leq c_{m}^{-1}$, we have $c_{m}\leq\pa_yb(t,y)\leq c_{m}^{-1}$ for all $t>0$ and $y\in\mathbb{R}$, thus, $b(t,y)$ has only linear growth as $y\to \pm\infty$. 
\end{remark}

\begin{remark}
  Different from the background shear flow studied in \cite{Jiahao2022}, we do not impose a compact support condition on  $\frac{d^2}{dy^2}b_{in}$. Instead, we assume that $(1+y^2)^{\f14}\frac{d^2}{dy^2}b_{in}(y)\in H^1$ as stated in Assumption \ref{assum}. It is worth noting that this decay condition can be relaxed to $\left(1+|y|^2\right)^{c}$ for any $c>0$, and similar results can still be obtained.
\end{remark}
\begin{remark}
  Background shear flows satisfying Assumption \ref{assum} do exist. For example, if $\frac{d^2}{dy^2}b_{in}(y)>0$ for $y\in \mathbb{R}$, then the Rayleigh operator $\mathcal{R}_{{b_t}}=b(t,y)\mathrm{Id}-\pa_{yy}b(t,y)(\Delta)^{-1}$ has no eigenvalues for all $t\geq 0$. 
\end{remark}
\begin{remark}
  As $b(t,y)$ satisfies a linear heat equation, for $t$ big enough $\pa_{yy}b(t,y)$ will be sufficiently small, and $\mathcal{R}_{b_t}$ will have no eigenvalues. It means that we only need to check the stability part of Assumption \ref{assum} for finite time regions. In our proof, the stability assumption for $t>0$ seems necessary. Whether the absence of eigenvalues at the initial moment ensures the absence of eigenvalues at any subsequent moment is an interesting question, and we will proceed to investigate this phenomenon.
\end{remark}

\subsection{Main challenges and ideas}
To study the general monotone shear flows, the main difficulty arises from the nonlocal linear term $\pa_{yy}b\pa_x\Delta^{-1}\om$ which breaks the transport diffusion structure and can cause instability. The stability phenomenon of the Couette flow is a result of this structure. To recover this structure for monotone shear flows, we introduce the wave operator $\mathbb{D}_t$ to absorb the nonlocal term, given by,
\begin{equation}
    \mathbb{D}_t\big[b(t,y)\partial_x\om-\partial_{yy}b(t,y)\partial_x\Delta^{-1}\om\big]=b(t,y)\partial_x\mathbb{D}_t[\om].
\end{equation}

\noindent{\bf Difficulties in constructing the wave operator:} To construct this operator, we need to carefully study the linearized Euler equation  $(\pa_t+b\pa_x-\pa_{yy}b\pa_x\Delta^{-1})\om=0$, and derive the representation formula of the solution of this equation. In the case of a finite channel, the representation formula is deduced by using the Dunford integral (the contour integral of the resolvent of the Rayleigh operator), see \cite{MasmoudiZhao2020, WeiZhangZhao2018, Zhao2023inviscid}. However, for an unbounded domain, the spectrum of the Rayleigh operator is also unbounded, and the validity of the Dunford integral may not hold. Instead, we employ the inverse Fourier-Laplace transform to obtain the representation formula which together with the new limiting absorption lemma leads to the wave operator. In the construction, we also carefully study the solution to the homogeneous problem and obtain some uniform upper and lower bounds. Furthermore, there are additional difficulties in constructing and estimating the wave operator due to the time-varying background flow, see Remark \ref{rmk-time-back}. For a more detailed discussion about the construction of the wave operator, please refer to Section \ref{sec-wave}.

\noindent{\bf Difficulties in applying the wave operator:} After applying the wave operator and changing of coordinate $(t,x,y)\to (t,z,v)$ with $v=b(t,y)$ and $z=x-tv$, we obtain a new linearized equation in the form:
\begin{align*}
\pa_tF-\nu \left(\pa_{zz}F+ \left(\pa_yb(t, b^{-1}(t,v))\right)^2(\pa_v-t\pa_z)^2F\right)+\mathrm{Com}(t,z,v)=0,
\end{align*}
where $F(t,z,v)=(\mathbb{D}_t[\omega])(t,x,y)$. 
The presence of the non-constant coefficient $\left(\pa_yb(t, b^{-1}(t,v))\right)^2$ in the diffusion term prevents us from obtaining the solution formula \eqref{eq: Lin-sol} as in the case of Couette flow even at the linear level. To overcome this difficulty, we introduce a time-dependent Fourier multiplier of the ghost type denoted by $\mathring{\mathrm{A}}$ which was previously used in the proof of enhanced dissipation and inviscid damping for the nonlinear system. To give the estimate of the commutator 
$$\big[\mathring{\mathrm{A}},\left(\pa_yb(t, b^{-1}(t,v))\right)^2(\pa_v-t\pa_z)^2\big]F,$$ 
we directly provide the kernel the multiplier $\mathring{\mathrm{A}}$ on the physical side (see Lemma \ref{lem-A-nonlocal}), and obtain the energy dissipation estimate. This is addressed in Lemma \ref{lem-A-nonlocal}. Moreover, by using the Fourier multiplier $\mathring{\mathrm{A}}$, we also obtain the inviscid damping type estimate for the new known $F$, namely, 
\begin{align*}
    \|\pa_z(-\pa_{zz}-(\pa_v-t\pa_z)^2)^{-\f12}\mathring{\mathrm{A}}F\|_{L^2_tL^2_{z,v}}\lesssim \|F_{in}\|_{L^2_{z,v}}. 
\end{align*}

The last difficulty is to translate the enhanced dissipation and inviscid damping obtained for the new unknown $F$ to the original vorticity $\om$ and velocity $V$. In \cite{MasmoudiZhao2020,Zhao2023inviscid}, the authors chose to study the Fourier kernel of the wave operator to get the commutator estimates for the wave operator and the time-dependent Fourier multiplier. Here we take advantage of the fact that $\mathring{\mathrm{A}}$ is bounded from above and below and introduce an alternative approach.  We use the duality argument to recover the inviscid damping, which involves the $H^{-1}$ norm of $F$, namely, 
\begin{align*}
    \|\pa_xV\|_{L^2}^2&=\langle \pa_x\om,\pa_x\psi\rangle
    =\langle \pa_x\mathbb{D}_t[\om],\pa_x\mathbb{D}^1_t[\psi]\rangle\\
    &=\langle \pa_x(-\Delta)^{-\frac{1}{2}}\mathbb{D}_t[\om], \pa_x(-\Delta)^{\frac{1}{2}}\mathbb{D}^1_t[\psi]\rangle \\
    &\leq \|\pa_z(-\Delta_{L})^{-\frac{1}{2}}F\|_{L^2}\|\pa_xV\|_{L^2}.
\end{align*}
Let us now outline the general structure of the paper. In section \ref{sec-wave}, we construct the wave operator by studying the corresponding linearized Euler equation. In section \ref{sec-lin}, we introduce the ghost-type Fourier multiplier and complete the energy estimate for the linearized system. In section \ref{sec-nonlin}, we use a bootstrap argument and prove the nonlinear result. 

\no{\bf Notations}: Through this paper, we will use the following notations. 

We use $C$ to denote a positive big enough constant that may be different from line to line.  In situations where $C$ depends on a specific variable $M$, we use the notation $C(M)$ to emphasize this dependence. However, if $C$ only relies on properties of $b_{in}$ (such as $c_m$ and $\left\|b_{in}''\right\|_{H^3}$), we usually do not explicitly mention the dependence on $b_{in}$.

We use $f\lesssim g$ ($f\approx g$) to denote
\begin{align*}
  f\le C g\quad (C^{-1}g\le f\le C g).
\end{align*}

We use $t$ and $s$ to represent time variables, and $x$, $y$, $z$, and $v$ to represent spatial variables. Given a function $f(t,y)$, we denote its spatial derivation with respect to $y$ as
\begin{align*}
  f'(t,y)=\pa_yf(t,y),\qquad f''(t,y)=\pa_{yy}f(t,y),
\end{align*}
and its temporal derivation with respect to $t$ as
$
  \dot f(t,y)=\pa_tf(t,y).
$
In cases where there is no ambiguity, we use the following abbreviated notation
\begin{align*}
  f_t=f_t(y)=f(t,y),\quad \dot f_t=\dot f_t(y)=\dot f(t,y),\quad f_t'=f_t'(y)=f'(t,y),\quad f_t''=f_t''(y)=f''(t,y).
\end{align*}

We denote the Fourier transform in $x$ of $f(x,y)$ by
\begin{align*}
  \mathcal F_x f(x,y)=\tilde f_k(y)=\tilde f(k,y)=\int_{\mathbb T} f(x,y)e^{- ik x} dx,
\end{align*}
and its Fourier transform in $(x,y)$ by
\begin{align*}
  \mathcal F_{x,y} f(x,y)=\hat f(k,\xi)=\int_{\mathbb T}\int_{\mathbb R} f(x,y)e^{-ikx}e^{- i\xi y} dy dx,
\end{align*}
and the corresponding inverse Fourier transform by
\begin{align*}
  \left(\mathcal F^{-1}_k\tilde f(k,y)\right)(x,y)&=\frac{1}{2\pi}\sum_{k\in\mathbb Z}\tilde f(k,y)e^{ikx},\\
  \left(\mathcal F^{-1}_{k,\xi}\hat f(k,\xi)\right)(x,y)&=\frac{1}{4\pi^2}\sum_{k\in\mathbb Z} \int_{\mathbb R}\hat f(k,\xi) e^{i\xi y} d \xi e^{ikx}.
\end{align*}

We denote the projection to the $k$th mode of $f(x,y)$ by
\begin{align*}
  P_kf(x,y)=f_k(x,y)=\frac{1}{2\pi}\left(\int_{\mathbb{T}}f(x',y)e^{-ikx'}dx'\right)e^{ikx},
\end{align*}
and denote the projection to the non-zero mode by
\begin{align*}
  P_{\neq}f(x,y)=f_{\neq}(x,y)=\sum_{k\in\mathbb Z\setminus \{0\}}f_k(x,y).
\end{align*}
We also use $\hat f_k(\xi)$ to denote $\hat f(k,\xi)$ to emphasize it is the Fourier transform of the $k$ mode.

We use $\left\|\cdot\right\|_{H^{log}_x}$ to denote the following Sobolev norm $\left\|f\right\|_{H^{log}_x}=\left\|\ln(e+|D_x|)f\right\|_{L^{2}_x}$.
\section{Wave operator}\label{sec-wave}
In this section, we construct the wave operator and study its properties. The section will be useful for future studies. Here, we utilize techniques from \cite{LiMasmoudiZhao2022critical, MasmoudiZhao2020, WeiZhangZhao2018, Zhao2023inviscid}, adopting the notation from reference \cite{LiMasmoudiZhao2022critical} for consistency. For the sake of completeness, we provide proof for most of the conclusions.

We first fix $s$, regard $\left(b_s(y),0\right)$ as a steady background shear flow, and introduce the following linearized Euler equation around $\left(b_s(y),0\right)$:
\begin{equation}\label{eq:LinEuler}
  \left\{
    \begin{array}{l}
      \pa_t\om+b(s,y)\pa_x\om-b''(s,y)u^{(2)}=0,\\
u=\na^{\bot}\psi=(-\pa_y\psi,\pa_x\psi),\quad \Delta\psi=\om.
    \end{array}
  \right.
\end{equation}
 Recalling that $b_s$ satisfies the heat equation \eqref{eq:linearHeat}, we have $\dot{b}_s=\nu b''_s$ for all $s>0$ and $y\in \mathbb{R}$. 

In Proposition \ref{pro-wave}, we define the wave operator $\mathbb{D}_{s,k}$ for each wave number $k$. For any function $\om(s,x,y)\in C([0,T);L^2_{x,y})$, we define 
\begin{align}\label{eq: waveoperatorinxy}
  \mathbb{D}_s[\om](s,x,y)=\frac{1}{2\pi}\tilde{\om}_0(s,y)+\frac{1}{2\pi}\sum_{k}\mathbb{D}_{s,k}[\tilde{\om}_k](s,y) e^{ikx}
\end{align}

\subsection{Rayleigh equation and the representation formula}
By taking the Fourier transform in $x$ of \eqref{eq:LinEuler}, we have 
\begin{align}\label{eq:LinEulom}
\Big(\pa_t+ik \mathcal{R}_{s,k}\Big)\tilde{\om}(t,k,y)=0,\quad \tilde{\om}(0,k,y)=\tilde{\om}_{in,k}(y),
\end{align}
where $\mathcal{R}_{s,k}=b_s(y) \mathrm{Id}-b''_s(y)\Delta_k^{-1}$ is the Rayleigh operator and $\Delta_k=\pa_{yy}-k^2$. Let $\mathcal{L}_{s,k}=\Delta_k^{-1}(b_s(y) \Delta_k-b''_s(y)\mathrm{Id})$ we have 
\begin{align}\label{eq:LinEul2}
  (\pa_t+ik\mathcal{L}_{s,k})\tilde{\psi}(t,k,y)=0,\quad \tilde{\psi}(0,k,y)=\tilde{\psi}_{in,k}(y)=\Delta_k^{-1}\tilde{\om}_{in,k}(y).
\end{align}
Under our assumption, the Rayleigh operator $\mathcal{R}_{s,k}$ has no embedded eigenvalue or eigenvalue for all $s\ge0$ and $k\in\mathbb{Z}\setminus\{0\}$, so is $\mathcal{L}_{s,k}$. By a compactness argument, one can easily check that the continuous spectrum of $\mathcal{L}_{s,k}$ cannot be distributed on $\mathbb{C}\setminus\mathbb{R}$, see Remark \ref{rmk-spec}. Thus, by the stability assumption in Assumption \eqref{assum}, we have $\s(\mathcal{L}_{s,k})=\mathbb{R}=\mathrm{Ran}\, b_s$. For $\fc=\fc_r+i\fc_i$, $\fc_i\neq0$, let
\begin{align}\label{eq-tPhi}
  \mathring\Phi_s(k,y,\fc)=(\fc-\mathcal{L}_{s,k})^{-1}(\tilde{\psi}_{in,k}(y)),\qquad\mathring\Phi^{\pm}_s(k,y,\fc_r)=\lim_{\fc_i\to 0\pm}\mathring\Phi_s(k,y,\fc_r+i\fc_i).
\end{align}
We have the following representation formula:
\begin{equation}\label{eq:Rep}
    \tilde{\psi}(t,k,y)=\lim_{T\to \infty}\frac{1}{2\pi i}\int_{-T}^{T}e^{-i \fc_r k t}\big[\mathring\Phi^{-}_s(k,y,\fc_r)-\mathring\Phi^{+}_s(k,y,\fc_r)\big]d\fc_r.
\end{equation}
The identity \eqref{eq:Rep} can be formally regarded as the extension of the Dunford integral for unbounded spectral operators. We give the rigorous proof of \eqref{eq:Rep} in the proof of Proposition \ref{prop-repr}. Also note that in the construction of the wave operator, we only need \eqref{eq:Rep} at $t=0$. 

Let $\Phi^{\pm}_s(k,y,y',\ep)=\mathring\Phi_s(k,y,b_s(y')\pm i \ep)=(b_s(y')\pm i\ep-\mathcal{L}_{s,k})^{-1}(\tilde{\psi}_{in,k}(y))$ with $\ep>0$, then $\Phi^{\pm}_s$ solves the inhomogeneous Rayleigh equation
\begin{align}\label{eq:inhomRay}
  (b_s(y)-b_s(y')\mp i\ep)(\pa_{yy}-k^2)\Phi^{\pm}_s-b_s''(y)\Phi^{\pm}_s=-\Delta_k\tilde{\psi}_{in,k}(y)=-\tilde{\om}_{in,k}(y).
\end{align}
We also introduce the homogeneous Rayleigh equation for $\ep\ge0$
\begin{equation}\label{eq:homRay}
  \begin{aligned}    
   \left\{\begin{aligned}
&(b_s(y)-b_s(y')\mp i\ep)(\pa_{yy}-k^2)\phi^{\pm,\ep}_s(y)-b_s''(y)\phi^{\pm,\ep}_s(y)=0,\\
&\frac{\phi^{\pm,\ep}_s(y)}{b_s(y)-b_s(y')\mp i\ep}\Big|_{y=y'}=1,\quad \frac{d}{dy}\Big(\frac{\phi^{\pm,\ep}_s(y)}{b_s(y)-b_s(y')\mp i\ep}\Big)\Big|_{y=y'}=0.
\end{aligned}\right. 
  \end{aligned}
\end{equation}

\subsection{Solving the homogeneous Rayleigh equation}
In this section, we solve \eqref{eq:homRay} and prove the following proposition:
\begin{proposition}[Existence]\label{prop-phi}
For any $k\neq 0$, there exit $0<\varepsilon_{0,k}\le 1$, $0<C\le1$, such that for $y'\in\mathbb R$ and $0\le\ep\le \varepsilon_{0,k}$, \eqref{eq:homRay} has a regular solution
  \begin{align*}
   \phi^{\pm}_{s}(k,y,y',\ep)=\big(b_s(y)-b_s(y')\mp i\ep\big)\phi_{s,1}(k,y,y')\phi^{\pm}_{s,2}(k,y,y',\ep),
  \end{align*}
  which satisfies $\phi^{\pm}_{s}(k,y,y',\ep)|_{y=y'}=\mp i\ep$ and $\pa_y\phi^{\pm}_{s}(k,y,y',\ep)|_{y=y'}=b_s'(y')$. Here $\phi_{s,1}(k,y,y')$ is a real function that solves
  \begin{equation}\label{eq-phi1}
    \left\{
      \begin{array}{ll}
        \pa_y\left((b_s(y)-b_s(y'))^2\pa_y\phi_{s,1}(k,y,y')\right)-k^2(b_s(y)-b_s(y'))^2\phi_{s,1}(k,y,y')=0,\\
       \phi_{s,1}(k,y,y')|_{y=y'}=1,\quad\pa_y \phi_{s,1}(k,y,y')|_{y=y'}=0,
      \end{array}
    \right.
  \end{equation}
and $\phi^{\pm}_{s,2}(k,y,y',\ep)$ solves
\begin{equation}\label{eq-phi2}
  \left\{
    \begin{array}{ll}
      &\pa_y\Big((b_s(y)-b_s(y')\mp i\ep)^2\phi_{s,1}^2(k,y,y')\pa_y\phi^{\pm}_{s,2}(k,y,y',\ep)\Big)\\
      &\quad \pm\frac{2i{\ep} b_s'(y)(b_s(y)-b_s(y')\mp i\ep)}{(b_s(y)-b_s(y'))}\phi_{s,1}(k,y,y')\pa_y\phi_{s,1}(k,y,y')\phi^{\pm}_{s,2}(k,y,y',\ep)=0,\\
      &\phi^{\pm}_{s,2}(k,y,y',\ep)|_{y=y'}=1,\quad\pa_y\phi^{\pm}_{s,2}(k,y,y',\ep)|_{y=y'}=0.
    \end{array}
  \right.
\end{equation}
It holds that
\begin{align}
      &\phi_{s,1}(k,y,y')\ge1,\quad (y-y')\pa_y\phi_{s,1}(k,y,y')\geq 0,\label{eq-phi1-est-1}\\
   &C^{-1}e^{C^{-1}|k|(|y-y'|)}\le\phi_{s,1}(k,y,y')\le Ce^{C|k||y-y'|},\label{eq-phi1-est-2}\\
    &|\pa_y\phi_{s,1}(k,y,y')|\leq C|k|\min\{|k|(y-y'),1\}\phi_{s,1}(k,y,y'),\label{eq-phi1-est-3}\\
    &|\pa_{yy}\phi_{s,1}(k,y,y')|\leq Ck^2\phi_{s,1}(k,y,y'),\\
    & 0\leq \phi_{s,1}(k,y,y')-1\le C\min \{1, |k|^2|y-y'|^2\}\phi_{s,1}(k,y,y'),\label{eq-phi1-est-4}\\
    &\frac{\phi_{s,1}(k,y,y')}{\phi_{s,1}(k,y_1,y')}\le Ce^{-C|k|(y-y_1)}\text{ for }y_1\le y\le y',\label{eq-phi1-est-5}\\
     &\frac{\phi_{s,1}(k,y,y')}{\phi_{s,1}(k,y_1,y')}\le Ce^{-C|k|(y_1-y)}\text{ for }y_1\ge y\ge y',  \label{eq-phi1-est-6}
\end{align}
  \begin{align}
    &|\phi^{\pm}_{s,2}(k,y,y',\ep)-1|\le C\min \{|k|\varepsilon,|k|^2\varepsilon|y-y'|, |k|^2|y-y'|^2\},\label{eq-phi2-est-1}\\
      &|\pa_y\phi^{\pm}_{s,2}(k,y,y',\ep)|\le C|k|^2\min \{\ep, |y-y'|\},\label{eq-phi2-est-2}\\
      & \|\pa_{yy}\phi^{\pm}_{s,2}(k,y,y',\ep)\|_{L^\infty_y}\le C|k|^2,\label{eq-phi2-est-3}
  \end{align}
  and
    \begin{align}\label{eq-phi-est}
    C^{-1}(|y-y'|+|\ep|)e^{C_4|k||y-y'|}\le|\phi^{\pm}_{s}(k,y,y',\ep)|\le C(|y-y'|+|\ep|)e^{|k||y-y'|},
  \end{align}
  where $C>0$ is a constant independent of $\ep$, $k$, $y'$ and $s$.
\end{proposition}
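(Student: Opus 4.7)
I will (i) verify that the factorization $\phi^{\pm}_s=(b_s-b_s(y')\mp i\ep)\phi_{s,1}\phi^{\pm}_{s,2}$ reduces \eqref{eq:homRay} to the pair \eqref{eq-phi1}--\eqref{eq-phi2}; (ii) solve \eqref{eq-phi1} by Picard iteration on an integral reformulation and establish \eqref{eq-phi1-est-1}--\eqref{eq-phi1-est-6} via a Liouville-type reduction and comparison with a constant-coefficient Schr\"odinger problem; (iii) solve \eqref{eq-phi2} perturbatively around the trivial solution $\phi\equiv 1$ of the $\ep=0$ case and deduce \eqref{eq-phi2-est-1}--\eqref{eq-phi2-est-3}; and (iv) multiply these estimates to obtain \eqref{eq-phi-est}. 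Step (i) is routine algebra: set $A=b_s-b_s(y')\mp i\ep$ and $B=b_s-b_s(y')$, substitute the factorization into \eqref{eq:homRay}, use \eqref{eq-phi1} to eliminate $\pa_{yy}\phi_{s,1}$, and simplify via $A-B=\mp i\ep$ and $A^2-B^2=\mp i\ep(A+B)$; the stated initial data for $\phi^{\pm}_s$ match trivially.

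For (ii), integrating \eqref{eq-phi1} twice yields
\[
\phi_{s,1}(y)=1+k^2\int_{y'}^{y}\frac{d\tau}{(b_s(\tau)-b_s(y'))^2}\int_{y'}^{\tau}(b_s(z)-b_s(y'))^2\phi_{s,1}(z)\,dz.
\]
By the monotonicity $c_m|y-y'|\le|b_s(y)-b_s(y')|\le c_m^{-1}|y-y'|$, the iterated kernel is locally bounded, so Picard iteration starting at $1$ converges on any compact interval. Positivity of the integrand (with the correct orientation of the nested integrals) gives $\phi_{s,1}\ge 1$ and $(y-y')\pa_y\phi_{s,1}\ge 0$, and reading off derivatives of the integral formula yields the short-range estimates \eqref{eq-phi1-est-3}--\eqref{eq-phi1-est-4}. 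For the exponential bounds \eqref{eq-phi1-est-2} and the monotone quotient inequalities \eqref{eq-phi1-est-5}--\eqref{eq-phi1-est-6}, I use the Liouville-type substitution $\varphi=(b_s-b_s(y'))\phi_{s,1}$, which converts \eqref{eq-phi1} into the Schr\"odinger equation
\[
\pa_{yy}\varphi=\Big(k^2+\frac{b_s''}{b_s-b_s(y')}\Big)\varphi,
\]
whose potential is controlled by $\|b_s''\|_\infty/(c_m|y-y'|)$. Comparing this to $\pa_{yy}w=k^2w$ via a Gronwall argument on $\pa_y\log|\varphi|$, matched to the near-field behavior $\varphi(y)\approx b_s'(y')(y-y')$, squeezes $\varphi$ between $C^{-1}|y-y'|e^{C^{-1}|k||y-y'|}$ and $C|y-y'|e^{|k||y-y'|}$, from which all remaining bounds for $\phi_{s,1}$ follow.

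For (iii), at $\ep=0$ the equation \eqref{eq-phi2} is solved by $\phi^{\pm}_{s,2}\equiv 1$. For $0<\ep\le\ep_{0,k}$, I integrate \eqref{eq-phi2} twice and solve the resulting integral equation by a contraction mapping in $L^\infty_y$, the factor of $\ep$ in the nonlinear term making the contraction constant small. The ratio $\pa_z\phi_{s,1}(z)/B(z)$ is regular near $y=y'$ by \eqref{eq-phi1-est-3}, and the weight $1/(A(\tau)^2\phi_{s,1}(\tau)^2)$ is compensated by $A(z)$ in the integrand, so the fixed-point argument closes and delivers \eqref{eq-phi2-est-1}--\eqref{eq-phi2-est-3}. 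Combining $|A|\approx|y-y'|+|\ep|$ with the exponential bounds on $\phi_{s,1}$ and $\phi^{\pm}_{s,2}\approx 1$ finally yields \eqref{eq-phi-est}.

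\textbf{Main obstacle.} The delicate step is obtaining the sharp two-sided exponential bound \eqref{eq-phi1-est-2} uniformly in $k$, $y'$, and $s$. The Liouville-reduced potential has a simple pole at $y=y'$, so the comparison argument must carefully match the Frobenius-type local behavior of $\varphi$ with its WKB-type exponential growth far from $y'$, and the lower bound in particular requires ruling out cancellations. Uniformity in $s$ is inherited from the uniform control on $\pa_yb_s$ and $b_s''$ granted by Assumption~\ref{assum} together with the heat equation \eqref{eq:linearHeat}.
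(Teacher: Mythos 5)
Your overall plan (factorization check, integral-equation existence for $\phi_{s,1}$, perturbative contraction for $\phi^{\pm}_{s,2}$, multiply out) matches the paper's, and steps (i), (iii), (iv) are essentially the paper's argument. Step (ii), however, takes a genuinely different route for the central estimate \eqref{eq-phi1-est-2}, and that route has a gap.

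You propose to pass to $\varphi=(b_s-b_s(y'))\phi_{s,1}$, rewrite \eqref{eq-phi1} as $\pa_{yy}\varphi=\bigl(k^2+\tfrac{b_s''}{b_s-b_s(y')}\bigr)\varphi$, and squeeze $\pa_y\log|\varphi|$ between $\pm|k|$ by a Gronwall comparison with $\pa_{yy}w=k^2w$. The obstruction you flag — matching the Frobenius behavior at $y=y'$ to the far-field WKB growth — is not merely delicate; the comparison as stated does not close. The perturbation $b_s''/(b_s-b_s(y'))$ behaves like $b_s''(y')/(b_s'(y')(y-y'))$ near $y=y'$, so it is \emph{not} locally integrable, and $\pa_y\log|\varphi|$ itself has a simple pole at $y=y'$; consequently $\exp\bigl(\int|V-k^2|\bigr)$ diverges and a naive Gronwall on $\pa_y\log|\varphi|$ produces no bound. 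Moreover the perturbation has \emph{indefinite sign}, so even away from the pole a one-sided comparison with the constant-coefficient problem fails in one direction or the other depending on the sign of $b_s''$. Ruling out "cancellations" for the lower bound is exactly the content you have not supplied.

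The paper sidesteps both problems by working not with $G=\pa_y\log\varphi$ but with $F=\pa_y\log\phi_{s,1}=G-\tfrac{b_s'}{b_s-b_s(y')}$, which is bounded with $F(y',y')=0$. The Riccati equation $F'+F^2+\tfrac{2b_s'F}{b_s-b_s(y')}-k^2=0$ has two crucial structural features your Schr\"odinger reformulation loses: the coefficient $\tfrac{2b_s'}{b_s-b_s(y')}$ multiplies $F$ itself (not a constant), and the sign information $(y-y')F\ge 0$ from the integral formula makes $\tfrac{2b_s'F}{b_s-b_s(y')}\ge 0$ everywhere. This gives $F'\le k^2-F^2$, whence $|F|\le|k|$ by inspecting extrema, and the lower bound $|F|\ge C|k|$ for $|y-y'|\ge 1/|k|$ follows directly from the explicit integral representation of $F$ together with $(y-y')\pa_y\phi_{s,1}\ge 0$. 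Estimates \eqref{eq-phi1-est-2}, \eqref{eq-phi1-est-5}, \eqref{eq-phi1-est-6} then follow by integrating $F$. To repair your proposal you would essentially have to subtract off the pole of $G$, i.e. pass back to $F$, at which point you have reproduced the paper's argument. One further minor point: your Picard iteration "on any compact interval" gives local existence only; the paper runs the contraction in the global weighted space $\|g\|_X=\sup|g/\cosh(A(y-y'))|$ with $A\sim|k|$, which gives existence and the upper exponential bound in one stroke, uniformly in $y,y',s$.
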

\begin{proof} 
A direct calculation gives that if $\phi_{s,1}(k,y,y')$ solves \eqref{eq-phi1} and $\phi^{\pm}_{s,2}(k,y,y',\ep)$ solves \eqref{eq-phi2}, then $\phi^{\pm}_{s}(k,y,y',\ep)$ solves \eqref{eq:homRay}. 

\no{\bf Existence of $\phi_{s,1}(k,y,y')$:}
From \eqref{eq-phi1}, it is easy to check that $\phi_{s,1}(k,y,y')$ solves the following equation:
\begin{align*}
\phi_{s,1}(k,y,y')=1+k^2T_1[\phi_{s,1}],
\end{align*}
where 
\begin{align*}
T_1[\phi_{s,1}](k,y,y')=\int_{y'}^y\frac{1}{(b_s(y_1)-b_s(y'))^2}\int_{y'}^{y_1}(b_s(y_2)-b_s(y'))^2\phi_{s,1}(k,y_2,y')dy_2dy_1.
\end{align*}

For a function $g(y,y')$ defined on $\mathbb R\times \mathbb R$, we define
\begin{align*}
  \|g\|_{X}\eqdef \sup_{(y,y')\in\mathbb R\times \mathbb R}\left|\frac{g(y,y')}{\cosh(A(y-y'))}\right|,\quad\|g\|_{Y}\eqdef \sum_{j=0}^2A^{-j} \|\pa_y^kg\|_{X}.
\end{align*}

Let
\begin{align*}
  T_0 g(y,y')=&\int^y_{y'} g(y_1,y') dy_1\\
  T_{l,j}g(y,y')=&\frac{1}{\big(b_s(y)-b_s(y')\big)^j}\int^{y}_{y'}\big(b_s(y_2)-b_s(y')\big)^lg(y_2,y')dy_2.
\end{align*}
Then we have $T_1g(y,y')=T_0\circ T_{2,2}g(y,y')$.

It is clear that
\begin{align*}
  \|T_0 g\|_{X}=&\sup_{(y,y')\in\mathbb R\times \mathbb R}\left|\frac{1}{\cosh(A(y-y'))}\int^y_{y'}\frac{g(y_1,y')}{\cosh(A(y_1-y'))}\cosh(A(y_1-y'))  dy_1\right|\\
  \le&\sup_{(y,y')\in\mathbb R\times \mathbb R}\left|\frac{1}{\cosh(A(y-y'))}\int^y_{y'}\cosh(A(y_1-y'))  dy_1\right|\|g\|_{X}\le\frac{1}{A}\|g\|_{X},
\end{align*}
and
\begin{align*}
  \|T_{2,2} g\|_{X}=&\sup_{(y,y')\in\mathbb R\times \mathbb R}\left|\frac{\int^y_{y'}\frac{g(y_2,y')\big(b_s(y_2)-b_s(y')\big)^2}{\cosh(A(y_2-y'))}\cosh(A(y_2-y'))  dy_2}{\cosh(A(y-y'))\big(b_s(y)-b_s(y')\big)^2}\right|\\
  \le&\sup_{(y,y')\in\mathbb R\times \mathbb R}\left|\frac{1}{\cosh(A(y-y'))}\int^y_{y'}\cosh(A(y_2-y'))  dy_2\right|\|g\|_{X}\le\frac{1}{A}\|g\|_{X}.
\end{align*}
Here we use the fact that $b_s(y)$ is monotonic and that $\left|b_s(y_2)-b_s(y')\right|\le \left|b_s(y)-b_s(y')\right|$ for $\left|y_2-y'\right|\le \left|y-y'\right|$.

It follows that
\begin{align*}
\|T_1[\phi_{s,1}]\|_{X}\le\frac{1}{A^2}\|\phi_{s,1}\|_{X}.
\end{align*}
By taking $A=2k$, we have that $\mathrm{Id}-T_1$ is invertible in the space $X$, thus there exists 
\begin{align*}
  \phi_{s,1}(y,y')=\left(\mathrm{Id}-T_1\right)^{-1}1,
\end{align*}
with the bound
\begin{align*}
\|\phi_{s,1}\|_X\leq C,\qquad |\phi_{s,1}(k,y,y')|\leq Ce^{C|k||y-y'|}.
\end{align*}
 
By taking $A$ slightly larger that depends on $c_m$, we could also prove that $\phi_{s,1}$ exists in $Y$, thus $\phi_{s,1}$ is two order differential. We refer the readers to Proposition 5.3 of \cite{LiMasmoudiZhao2022critical}.

\no{\bf Properties of $\phi_{s,1}(k,y,y')$:} From the expression
\begin{align*}
\phi_{s,1}(k,y,y')=1+k^2\int_{y'}^y\frac{1}{(b_s(y_1)-b_s(y'))^2}\int_{y'}^{y_1}(b_s(y_2)-b_s(y'))^2\phi_{s,1}(k,y_2,y')dy_2dy_1,
\end{align*} 
we can see that 
\begin{align*}
\phi_{s,1}(k,y,y')\ge1,\quad (y-y')\pa_y\phi_{s,1}(k,y,y')\geq 0.
\end{align*}
Then we have
\begin{align*}
0&\leq \phi_{s,1}(k,y,y')-1=k^2\int_{y'}^y\frac{1}{(b_s(y_1)-b_s(y'))^2}\int_{y'}^{y_1}(b_s(y_2)-b_s(y'))^2\phi_{s,1}(k,y_2,y')dy_2dy_1\\
&\leq k^2\int_{y'}^y\frac{1}{(b_s(y_1)-b_s(y'))^2}\int_{y'}^{y_1}(b_s(y_2)-b_s(y'))^2dy_2dy_1\phi_{s,1}(k,y,y')\\
&\leq Ck^2|y-y'|^2\phi_{s,1}(k,y,y'),
\end{align*}
which gives $0\leq \phi_{s,1}(k,y,y')-1\le C\min \{1, |k|^2|y-y'|^2\}\phi_{s,1}(k,y,y')$. 

Let us introduce $F(y,y')=\frac{\pa_y\phi_{s,1}(k,y,y')}{\phi_{s,1}(k,y,y')}$. We deduce from \eqref{eq-phi1} that 
\begin{align*}
 F'(y,y')+F^2(y,y')+\frac{2b_s'(y)F(y,y')}{b_s(y)-b_s(y')}-k^2=0.
\end{align*}
Recalling that $\phi_{s,1}(k,y',y')=1$ and $\pa_y\phi_{s,1}(k,y',y')=0$, we have $F(y',y')=0$. Then we can see that
\begin{align*}
  \lim_{y\to y'}F'(y,y')=&k^2-\lim_{y\to y'}F^2(y,y')-\lim_{y\to y'}\frac{2b_s'(y)F(y,y')}{b_s(y)-b_s(y')}\\
  =&k^2-\lim_{y\to y'}2b_s'(y)\frac{y-y'}{b_s(y)-b_s(y')}\frac{F(y,y')-F(y',y')}{y-y'}\\
  =&k^2-2\lim_{y\to y'}F'(y,y').
\end{align*}
Therefore, $F'(y',y')=\frac{k^2}{3}>0$. It follows from
\begin{align*}
\pa_y\phi_{s,1}(k,y,y')=\frac{1}{\big(b_s(y)-b_s(y')\big)^2}\int^{y}_{y'}\phi_{s,1}(k,y_2,y')\big(b_s(y_2)-b_s(y')\big)^2dy_2,
\end{align*}
that $\pa_y\phi_{s,1}(k,y,y')>0$ for $y>y'$ and $\pa_y\phi_{s,1}(k,y,y')<0$ for $y<y'$. Therefore $\phi_{s,1}(k,y,y')\ge \phi_{s,1}(k,y_2,y')$ for $y_2\in[y,y']$ or $y_2\in[y',y]$. Then we have $\frac{F(y,y')}{b_s(y)-b_s(y')}\ge0$ for $\forall y\in\mathbb R$, and
\begin{align*}
  |F(y,y')|=&\left|\frac{\pa_y\phi_{s,1}(k,y,y')}{\phi_{s,1}(k,y,y')}\right|\\
  =&\left|\frac{k^2}{\big(b_s(y)-b_s(y')\big)^2}\int^{y}_{y'}\frac{\phi_{s,1}(k,y_2,y')}{\phi_{s,1}(k,y,y')}\big(b_s(y_2)-b_s(y')\big)^2dy_2\right|
  \le k^2|y-y'|.
\end{align*}
Next, we show that $|F(y,y')|\le |k|$. If $F$ attains its maximum (minimum) at $y_0^*$, we have $F'(y_0^*,y')=0$ and 
\begin{align*}
  F^2(y_0^*,y')=k^2-\frac{2b_s'(y_0^*)F(y_0^*,y')}{b_s(y_0^*)-b_s(y')}\le k^2.
\end{align*}
Therefore $|F(y,y')|\le |k|$. Otherwise, $F$ has no maximum or minimum point. If $F(y,y')>|k|$ at $y=y_1^*$, we know that $y_1^*>y'$ as $F(y_1^*,y')>0$. Then we have $F'(y_1^*,y')< 0$. Recall that $F'(y',y')=\frac{1}{3}>0$, so there exits $y_2^*\in(y',y_1^*)$ such that $F'(y_2^*,y')=0$ and $|k|\geq F(y_2^*,y')\ge F(y_1^*,y')>|k|$, which is impossible. If $F(y,y')<-|k|$ at $y_3^*$, we know that $y_3^*<y'$ and $F'(y_3^*,y')<0$, then $F(y,y')$ decreases strictly on  $[y_3^*,y']$, which contradicts $F(y',y')=0$.

As a conclusion, we have $|F(y,y')|\le |k|\min(1,|k||y-y'|)$. It follows that
\begin{align*}
  e^{-|k||y_2-y|}\le \frac{\phi_{s,1}(k,y_2,y')}{\phi_{s,1}(k,y,y')}\le e^{|k||y_2-y|}.
\end{align*}
Next, we show that for $y$ such that $|y-y'|\ge \frac{1}{|k|}$, $|F(y,y')|\ge C|k|>0$. Without loss of generality, we only consider the case $y>y'$. For the case $y\ge y'+\frac{1}{|k|}$, we have
\begin{align*}
  F(y,y')=&\frac{k^2}{\big(b_s(y)-b_s(y')\big)^2}\int^{y}_{y'}\frac{\phi_{s,1}(k,y_2,y')}{\phi_{s,1}(k,y,y')}\big(b_s(y_2)-b_s(y')\big)^2dy_2\\
  \ge&\frac{k^2}{\big(b_s(y)-b_s(y')\big)^2}\int^{y}_{y'}e^{-|k||y-y_2|}\big(b_s(y_2)-b_s(y')\big)^2dy_2\\
  \ge&k^2\frac{\big(b_s(y-\frac{1}{2|k|})-b_s(y')\big)^2}{\big(b_s(y)-b_s(y')\big)^2}\int^{y}_{y-\frac{1}{2|k|}}e^{-|k||y-y_2|}dy_2
  \ge C|k|,
\end{align*}
and for $y'<y<y'+\frac{1}{|k|}$, we have $1\leq \phi_{s,1}(k,y,y')\leq C$, and 
\begin{align*}
  F(y,y')=&\frac{k^2}{\big(b_s(y)-b_s(y')\big)^2}\int^{y}_{y'}\frac{\phi_{s,1}(k,y_2,y')}{\phi_{s,1}(k,y,y')}\big(b_s(y_2)-b_s(y')\big)^2dy_2\\
  \ge&\frac{k^2}{\big(b_s(y)-b_s(y')\big)^2}\int^{y}_{y'}C^{-1}\big(b_s(y_2)-b_s(y')\big)^2dy_2
  \ge k^2|y-y'|.
\end{align*}
Here $C$ depends only on $c_m$ and is independent of $s$. Similarly, we have $F(y,y')\le -C\min(|k|^2,|k||y-y'|)$, which gives \eqref{eq-phi1-est-5} and \eqref{eq-phi1-est-6}. The estimate $|\pa_{yy}\phi_{s,1}(k,y,y')|\leq Ck^2\phi_{s,1}(k,y,y')$ follows direction from the equation \eqref{eq-phi1} and the upper bound of $F$. 

\no{\bf Existence of $\phi^{\pm}_{s,2}(k,y,y',\ep)$:} By \eqref{eq-phi2}, we write
\begin{equation}\label{eq-phi2-exp}
\begin{aligned}
&\phi^{\pm}_{s,2}(k,y,y',\ep)=1+T_2[\phi^{\pm}_{s,2}]\\
\eqdef&1\mp2i{\ep}\int^y_{y'} \frac{1}{\big(b_s(y_1)-b_s(y')\mp i\ep\big)^2\phi_{s,1}^2(k,y_1,y')}\\
&\quad \times \int^{y_1}_{y'}\frac{b_s'(y_2)(b_s(y_2)-b_s(y')\mp i\ep)}{b_s(y_2)-b_s(y')}\phi_{s,1}^2(k,y_2,y')F(y_2,y')\phi^{\pm}_{s,2}(k,y_2,y',\ep)dy_2dy_1.
\end{aligned}
\end{equation}
It holds that
\begin{align*}
  &\left|\int^{y_1}_{y'}\frac{b_s'(y_2)(b_s(y_2)-b_s(y')\mp i\ep)}{b_s(y_2)-b_s(y')}\phi_{s,1}(k,y_2,y')\phi_{s,1}'(k,y_2,y')\phi^{\pm}_{s,2}(k,y_2,y',\ep)dy_2\right|\\
  \le&\left|\int^{y_1}_{y'}b_s'(y_2)\phi_{s,1}(k,y_2,y')\phi_{s,1}'(k,y_2,y')\phi^{\pm}_{s,2}(k,y_2,y',\ep)dy_2\right|\\
  &+{\ep}\left|\int^{y_1}_{y'}\frac{b_s'(y_2)F(y_2,y')}{b_s(y_2)-b_s(y')}\phi_{s,1}^2(k,y_2,y')\phi^{\pm}_{s,2}(k,y_2,y',\ep)dy_2\right|.
\end{align*}
Recalling that $(y-y')\phi_{s,1}'(k,y,y')\ge0$ and $\frac{F(y,y')}{b_s(y)-b_s(y')}\ge0$, we have
\begin{align*}
  &\left|\int^{y_1}_{y'}b_s'(y_2)\phi_{s,1}(k,y_2,y')\phi_{s,1}'(k,y_2,y')\phi^{\pm}_{s,2}(k,y_2,y',\ep)dy_2\right|\\
  \le& C \|\phi^{\pm}_{s,2}\|_{L^\infty}\left|\int^{y_1}_{y'}\phi_{s,1}(k,y_2,y')\phi_{s,1}'(k,y_2,y')dy_2\right|=C\|\phi^{\pm}_{s,2}\|_{L^\infty} \big(\phi_{s,1}^2(k,y_1,y')-1\big),
\end{align*}
and
\begin{align*}
  &\left|\int^{y_1}_{y'}\frac{b_s'(y_2)F(y_2,y')}{b_s(y_2)-b_s(y')}\phi_{s,1}^2(k,y_2,y')\phi^{\pm}_{s,2}(k,y_2,y',\ep)dy_2\right|\\
  \le& C |k|^2\|\phi^{\pm}_{s,2}\|_{L^\infty}\int^{y_1}_{y'}\phi_{s,1}^2(k,y_2,y')dy_2\\
  \le& |k|^2\|\phi^{\pm}_{s,2}\|_{L^\infty}\int^{y'+1/|k|}_{y'}\phi_{s,1}^2(k,y_2,y')dy_2\\
  &+C|k|\|\phi^{\pm}_{s,2}\|_{L^\infty}\left|\int^{y_1}_{y'+1/|k|}\phi_{s,1}(k,y_2,y')\phi_{s,1}'(k,y_2,y')dy_2\right|\\
  \le& C|k|\|\phi^{\pm}_{s,2}\|_{L^\infty} \big(\phi_{s,1}^2(k,y_1,y')+C\big).
\end{align*}
Here we use the fact that $|F(y,y')|\le|k|\min(|k||y-y'|,1)$ and $|F(y,y')|\ge C|k|>0$ for $|y-y'|\ge 1/|k|$ and $\phi_{s,1}^2(k,y,y')\leq C$ for $|y-y'|\leq \frac{1}{|k|}$. 

It follows from the fact $\big|b_s(y)-b_s(y')\mp i\ep\big|^2\geq C^{-1}(\ep^2+|y-y'|^2)$ that
\begin{align*}
  &\int^y_{y'} \frac{C\|\phi^{\pm}_{s,2}\|_{L^\infty} \big(\phi_{s,1}^2(k,y_1,y')-1\big)}{\left|b_s(y_1)-b_s(y')\mp i\ep\right|^2\phi_{s,1}^2(k,y_1,y')}dy_1\\
  \le&\int^{y'+\frac{1}{|k|}}_{y'} \frac{C\|\phi^{\pm}_{s,2}\|_{L^\infty} \big(\phi_{s,1}^2(k,y_1,y')-1\big)}{\left|b_s(y_1)-b_s(y')\mp i\ep\right|^2\phi_{s,1}^2(k,y_1,y')}dy_1+\int^{y}_{y'+\frac{1}{|k|}} \frac{C\|\phi^{\pm}_{s,2}\|_{L^\infty} \big(\phi_{s,1}^2(k,y_1,y')-1\big)}{\left|b_s(y_1)-b_s(y')\mp i\ep\right|^2\phi_{s,1}^2(k,y_1,y')}dy_1\\
  \le&\int^{y'+\frac{1}{|k|}}_{y'}C|k|^2\|\phi^{\pm}_{s,2}\|_{L^\infty}dy_1+\int^{y}_{y'+\frac{1}{|k|}} \frac{C\|\phi^{\pm}_{s,2}\|_{L^\infty}}{\left|y_1-y'\right|^2}dy_1\\
  \le& C|k|\|\phi^{\pm}_{s,2}\|_{L^\infty},
\end{align*}
and
\begin{align*}
  &\int^y_{y'} \frac{C|k|\|\phi^{\pm}_{s,2}\|_{L^\infty} \big(\phi_{s,1}^2(k,y_1,y')+C\big)}{\left|b_s(y_1)-b_s(y')\mp i\ep\right|^2\phi_{s,1}^2(k,y_1,y')}dy_1\le \int^y_{y'} \frac{C|k|\|\phi^{\pm}_{s,2}\|_{L^\infty} }{\left|y_1\right|^2+\varepsilon^2}dy_1  \le C\frac{|k|}{\ep}\|\phi^{\pm}_{s,2}\|_{L^\infty}.
\end{align*}
Combining  the above estimates, we have 
\begin{align*}
\|T_2[\phi^{\pm}_{s,2}]\|_{L^{\infty}}\leq C\ep|k|\|\phi^{\pm}_{s,2}\|_{L^\infty}. 
\end{align*}
Here $C$ depends only on $c_m$ and is independent of $s$. 
Thus by choosing $\varepsilon_{0,k}\ll \frac{1}{|k|}$, for $0\le\ep\le\varepsilon_{0,k}$, we have that $\mathrm{Id}-T_2$ is invertible, and
\begin{align*}
\phi^{\pm}_{s,2}(k,y,y',\ep)=(\mathrm{Id}-T_2)^{-1}[1](k,y,y',\ep).
\end{align*}

It is clear that 
\begin{align*}
  \|\phi^{\pm}_{s,2}(k,y,y',\ep)-1\|_{L^\infty_y}\le C |k| |\varepsilon|.
\end{align*}
By taking derivative of \eqref{eq-phi2-exp}, it holds that
\begin{align*}
  \pa_y\phi^{\pm}_{s,2}(k,y,y',\ep)=
&\mp2i{\ep}\frac{1}{\big(b_s(y)-b_s(y')\mp i\ep\big)^2\phi_{s,1}^2(k,y,y')}\\
& \times \int^{y}_{y'}\frac{b_s'(y_2)(b_s(y_2)-b_s(y')\mp i\ep)}{b_s(y_2)-b_s(y')}\phi_{s,1}^2(k,y_2,y')F(y_2,y')\phi^{\pm}_{s,2}(k,y_2,y',\ep)dy_2.
\end{align*}
By using a similar argument to $\|\phi^{\pm}_{s,2}(k,y,y',\ep)-1\|_{L^\infty_y}$, one can deduce that
\begin{align*}
  \|\pa_y\phi^{\pm}_{s,2}(k,y,y',\ep)\|_{L^\infty_y}\le C|k|^2\varepsilon.
\end{align*}

One can deduce from \eqref{eq-phi2} that
\begin{align*}
  \pa_y^2\phi^{\pm}_{s,2}(k,y,y',\ep)=&-\frac{2b_s'(y)\pa_y\phi^{\pm}_{s,2}(k,y,y',\ep)}{(b_s(y)-b_s(y')\mp i\ep}-\frac{2\pa_y\phi_{s,1}(k,y,y')\pa_y\phi^{\pm}_{s,2}(k,y,y',\ep)}{\phi_{s,1}(k,y,y')}\\
  &\mp\frac{2i{\ep} b_s'(y)\pa_y\phi_{s,1}(k,y,y')\phi^{\pm}_{s,2}(k,y,y',\ep)}{\left((b_s(y)-b_s(y')\mp i\ep\right)\left((b_s(y)-b_s(y')\right)\phi_{s,1}(k,y,y')},
\end{align*}
and consequently, $\|\pa_y^2\phi^{\pm}_{s,2}(k,y,y',\ep)\|_{L^\infty_y}\le C|k|^2$.

This completes the proof of this proposition.
\end{proof}
\subsection{Higher regularity estimates for the homogenous solution}
In this subsection, we focus on the higher regularity estimates of $\phi_{s,1}(k,y,y')$ obtained in Proposition \ref{prop-phi}. We introduce a good derivative $\pa_G=\pa_{y}+\pa_{y'}$. In this paper, since we only focus on a finite number of derivatives, the constants in the estimates may depend on the number of derivatives. The estimates for the infinite number of derivatives cases (Gevrey regularity estimate), will be discussed in the forthcoming paper in this series. 
\begin{proposition}\label{prop-est-G}
Let $\phi_{s,1}(k,y,y')$ be the solution of \eqref{eq-phi1}. It holds that
\begin{align}
  \left|\frac{\pa_G\phi_{s,1}(k,y,y')}{\phi_{s,1}(k,y,y')}\right|\le C\min\{ |y-y'|, k^2|y-y'|^3\},\label{eq-est-G}\\
  \left|\pa_y\Big(\frac{\pa_G\phi_{s,1}(k,y,y')}{\phi_{s,1}(k,y,y')}\Big)\right|\leq C\min\{1,|k|^2|y-y'|^2\},\label{eq-est-Gy}
\end{align}
\begin{align}
  \left|\frac{\pa_G^2\phi_{s,1}(k,y,y')}{\phi_{s,1}(k,y,y')}\right|
\leq C\min\{\max \left\{|y-y'|,|y-y'|^2\right\},|k|^2|y-y'|^3\}. \label{eq-est-GG}
\end{align}
Here the constant $C$ depends on $c_m$ and $\left\|b''_{in}\right\|_{H^3}$, and is independent of $s,\nu, k$. 
\end{proposition}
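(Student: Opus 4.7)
I would work through the Riccati variable $F(y,y'):=\pa_y\phi_{s,1}/\phi_{s,1}$ introduced in the proof of Proposition \ref{prop-phi}, which satisfies
\[
F' + F^2 + c(y,y')F = k^2, \qquad c(y,y'):=\frac{2b_s'(y)}{b_s(y)-b_s(y')}, \qquad F(y',y')=0.
\]
Set $G:=\pa_G\phi_{s,1}/\phi_{s,1}$; then $\pa_y G=\pa_G F$, and differentiating the identities $\phi_{s,1}(y',y')=1$, $\pa_y\phi_{s,1}(y',y')=0$, $F(y',y')=0$ along the diagonal yields the vanishing initial data $G(y',y')=\pa_y G(y',y')=0$. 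Applying $\pa_G$ to the Riccati equation produces a linear first-order ODE in $y$ for $\pa_y G$:
\[
(\pa_y G)' + (2F+c)\pa_y G = -F\cdot S, \qquad S:=\pa_G c.
\]
Since $\pa_y\ln\phi_{s,1}^2=2F$ and $\pa_y\ln(b_s(y)-b_s(y'))^2=c$, the natural integrating factor is $\mu(y):=(b_s(y)-b_s(y'))^2\phi_{s,1}^2(k,y,y')$, and the vanishing initial data allow us to write
\[
\pa_y G(y,y') = -\frac{1}{\mu(y)}\int_{y'}^y \mu(\tau)F(\tau,y')S(\tau,y')d\tau.
\]

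Next I would show that $S$ is uniformly bounded. A direct computation gives
\[
S = \frac{2b_s''(y)(b_s(y)-b_s(y'))-2b_s'(y)(b_s'(y)-b_s'(y'))}{(b_s(y)-b_s(y'))^2},
\]
and rewriting the numerator as $2\int_{y'}^y [b_s''(y)(b_s'(\tau_1)-b_s'(y))-b_s'(y)(b_s''(\tau_1)-b_s''(y))]d\tau_1$ exhibits the cancellation needed to see that it is $O(|y-y'|^2)$; combined with $|b_s(y)-b_s(y')|\ge c_m|y-y'|$ this yields $|S|\lesssim 1$ with a constant depending on $\|b_s''\|_{W^{1,\infty}}$. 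Plugging in the bounds $|F(\tau)|\le C|k|\min(1,|k||\tau-y'|)$ from Proposition \ref{prop-phi}, $\mu(\tau)\approx(\tau-y')^2\phi_{s,1}^2(\tau,y')$, and the exponential bounds \eqref{eq-phi1-est-5}-\eqref{eq-phi1-est-6} that give $\phi_{s,1}^2(\tau,y')/\phi_{s,1}^2(y,y')\le Ce^{-c|k|(y-\tau)}$ on $[y',y]$, I would split into the cases $|k||y-y'|\le 1$ (where the exponential is $O(1)$, producing $|\pa_y G|\lesssim k^2(y-y')^2$) and $|k||y-y'|\ge 1$ (where the exponential localizes the integral to a window of size $1/|k|$ near $\tau=y$, producing $|\pa_y G|\lesssim 1$). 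This establishes \eqref{eq-est-Gy}, and then integrating $G(y,y')=\int_{y'}^y\pa_y G(\tau,y')d\tau$, again splitting at $\tau=y'+1/|k|$, gives \eqref{eq-est-G}.

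For \eqref{eq-est-GG} I would repeat the procedure one more time. Applying $\pa_G$ to the equation for $\pa_G F$ yields
\[
(\pa_G^2 F)' + (2F+c)\pa_G^2 F = -2(\pa_G F)^2 - 2\pa_G F\cdot S - F\cdot \pa_G S,
\]
which is again integrated via the same factor $\mu$. The quadratic source $(\pa_G F)^2$ and the mixed term $\pa_G F\cdot S$ are immediately controlled by the previous estimate on $\pa_y G=\pa_G F$, so the delicate point reduces to $\pa_G S$. I expect this to be the main obstacle: unlike $S$, the second variation $\pa_G^2 c$ no longer has the clean telescoping cancellation and, using only the assumption $b_s''\in W^{1,1}\cap H^3$, one gets a bound of the form $|\pa_G S|\lesssim 1+|y-y'|$ rather than a uniform bound; this is precisely what will produce the asymmetric factor $\max\{|y-y'|,|y-y'|^2\}$ appearing in \eqref{eq-est-GG}. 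Propagating the bound through the integrating factor as above, and again distinguishing between the regimes $|k||y-y'|\lesssim 1$ and $|k||y-y'|\gtrsim 1$, will yield the stated estimate.
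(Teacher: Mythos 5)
Your Riccati reformulation is sound and is, up to algebra, the same scheme as the paper: your integrating factor $\mu=(b_s(y)-b_s(y'))^2\phi_{s,1}^2(k,y,y')$ is exactly the divergence-form weight in \eqref{eq-phi1-goodderivative}, your source $S=\pa_G c$ coincides with the paper's $\pa_y\left(\frac{2(b_s'(y)-b_s'(y'))}{b_s(y)-b_s(y')}\right)$ (they are literally the same expression), and your telescoping argument for $\lvert S\rvert\lesssim 1$ matches the paper's. So \eqref{eq-est-G}--\eqref{eq-est-Gy} go through as you describe, and the paper's only extra bookkeeping is that it works directly with $\pa_G\phi_{s,1}/\phi_{s,1}$ rather than $F$, which differs from $\pa_G G$ only by the harmless term $G^2=O(|y-y'|^2)$.

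Where the proposal goes off track is in the discussion of \eqref{eq-est-GG}. Your expectation that $\pa_G S=\pa_G^2 c$ lacks the telescoping cancellation and is only $\lesssim 1+\lvert y-y'\rvert$ is not correct: writing $\pa_G^2 c = \frac{2b_s'''(y)(b_s(y)-b_s(y'))-2b_s'(y)\left(b_s''(y)-b_s''(y')\right)}{(b_s(y)-b_s(y'))^2}-\frac{2\left(b_s''(y)(b_s(y)-b_s(y'))-b_s'(y)(b_s'(y)-b_s'(y'))\right)\left(b_s'(y)-b_s'(y')\right)}{(b_s(y)-b_s(y'))^3}$, both numerators admit the same telescoping trick you used for $S$ (e.g.\ $2\int_{y'}^{y}\bigl[b_s'''(y)b_s'(\tau)-b_s'(y)b_s'''(\tau)\bigr]d\tau=O(|y-y'|^2)$), so $\lvert\pa_G S\rvert\lesssim 1$, the constant now involving $\|b_s''''\|_{L^\infty}$, which is supplied by $b_{in}''\in H^3$ through Lemma \ref{lem-back} and Sobolev embedding. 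The $\max\{\lvert y-y'\rvert,\lvert y-y'\rvert^2\}$ in \eqref{eq-est-GG} therefore does \emph{not} come from $\pa_G S$, but from the very terms you describe as "immediately controlled": the cross term $S\,\pa_G F$ (the paper's $I$) and the quadratic $(\pa_G F)^2$. Feeding $\lvert\pa_G F\rvert=\lvert\pa_y G\rvert\lesssim\min\{1,k^2\lvert\cdot-y'\rvert^2\}$ into the $\mu$-weighted double integral yields a bound $\min\{\lvert y-y'\rvert^2,k^2\lvert y-y'\rvert^4\}$, and for $\lvert y-y'\rvert\ge 1$ this is genuinely of order $\lvert y-y'\rvert^2$, not $\lvert y-y'\rvert$; there is no way to improve it to $\min\{\lvert y-y'\rvert,k^2\lvert y-y'\rvert^3\}$. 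Your weaker guess $\lvert\pa_G S\rvert\lesssim 1+\lvert y-y'\rvert$ is an overestimate that by luck still lands inside the stated bound, but the mechanism you identify as the "main obstacle" is not the one actually responsible for the asymmetric factor.
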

\begin{proof}
Recalling \eqref{eq-phi1} and considering that $\pa_{y'}\left(\phi_{s,1}(k,y',y')\right)= \left(\pa_G\phi_{s,1}\right)(k,y,y')|_{y=y'}$, we deduce that $\pa_G\phi_{s,1}(k,y,y')$ satisfies the following equation
\begin{equation}\label{eq-phi1-goodderivative}
  \begin{aligned}    
            &\pa_y\left((b_s(y)-b_s(y'))^2\phi_{s,1}^2(k,y,y')\pa_y\Big(\frac{\pa_G\phi_{s,1}(k,y,y')}{\phi_{s,1}(k,y,y')}\Big)\right)\\
        =&-\pa_y\left(\frac{2(b'_s(y)-b_s'(y'))}{b_s(y)-b_s(y')}\right)(b_s(y)-b_s(y'))^2\phi_{s,1}(k,y,y')\pa_y\phi_{s,1}(k,y,y'),\\
      &\qquad \pa_G\phi_{s,1}(k,y,y')|_{y=y'}=0,\quad\pa_y \pa_G\phi_{s,1}(k,y,y')|_{y=y'}=0.
  \end{aligned}
\end{equation}
It follows that
\begin{align*}
&\frac{\pa_G\phi_{s,1}(k,y,y')}{\phi_{s,1}(k,y,y')}\\
=&-2\int_{y'}^y\frac{1}{(b_s(y_1)-b_s(y'))^2\phi_{s,1}^2(k,y_1,y')}\\
&\quad \times\int_{y'}^{y_1}\pa_{y_2}\left(\frac{(b'_s(y_2)-b_s'(y'))}{b_s(y_2)-b_s(y')}\right)(b_s(y_2)-b_s(y'))^2\phi_{s,1}(k,y_2,y')\pa_y\phi_{s,1}(k,y_2,y')dy_2dy_1.
\end{align*}
 
By the monotonicity of $\pa_y(\phi_{s,1}(k,y_2,y')^2-1)$, we have 
\begin{align*}
&\left|\int_{y'}^{y_1}\pa_{y_2}\left(\frac{(b'_s(y_2)-b_s'(y'))}{b_s(y_2)-b_s(y')}\right)(b_s(y_2)-b_s(y'))^2\phi_{s,1}(k,y_2,y')\pa_y\phi_{s,1}(k,y_2,y')dy_2\right|\\
\le&C \left\|b''\right\|_{C^1}(b_s(y_1)-b_s(y'))^2\int_{y'}^{y_1}\pa_y(\phi_{s,1}(k,y_2,y')^2-1)dy_2\\
\le& C(b_s(y_1)-b_s(y'))^2(\phi_{s,1}(k,y_1,y')^2-1).
\end{align*}
By using \eqref{eq-phi1-est-4}, we get
\begin{align*}
  \left|\frac{\pa_G\phi_{s,1}(k,y,y')}{\phi_{s,1}(k,y,y')}\right|\le C\min\{ |y-y'|, k^2|y-y'|^3\},\\
  \left|\pa_y\Big(\frac{\pa_G\phi_{s,1}(k,y,y')}{\phi_{s,1}(k,y,y')}\Big)\right|\leq C\min\{1,|k|^2|y-y'|^2\}.
\end{align*}

Similarly, we have
\begin{equation}\label{eq-phi1-goodderivative-2}
  \begin{aligned}    
            &\pa_y\left((b_s(y)-b_s(y'))^2\phi_{s,1}^2(k,y,y')\pa_y\Big(\frac{\pa_G^2\phi_{s,1}(k,y,y')}{\phi_{s,1}(k,y,y')}\Big)\right)\\
        =&-\pa_y\left(\frac{4(b'_s(y)-b_s'(y'))}{b_s(y)-b_s(y')}\right)(b_s(y)-b_s(y'))^2\phi_{s,1}(k,y,y')\pa_y\pa_G\phi_{s,1}(k,y,y')\\
        &-\pa_y\pa_G\left(\frac{2(b'_s(y)-b_s'(y'))}{b_s(y)-b_s(y')}\right)(b_s(y)-b_s(y'))^2\phi_{s,1}(k,y,y')\pa_y\phi_{s,1}(k,y,y'),\\
      &\qquad \pa_G^2\phi_{s,1}(k,y',y')=0,\quad\pa_y \pa_G^2\phi_{s,1}(k,y',y')=0,
  \end{aligned}
\end{equation}
and 
\begin{align*}
&\frac{\pa_G^2\phi_{s,1}(k,y,y')}{\phi_{s,1}(k,y,y')}\\
=&-4\int_{y'}^y\frac{1}{(b_s(y_1)-b_s(y'))^2\phi_{s,1}^2(k,y_1,y')}\\
&\quad \times\int_{y'}^{y_1}\pa_y\left(\frac{(b'_s(y_2)-b_s'(y'))}{b_s(y_2)-b_s(y')}\right)(b_s(y_2)-b_s(y'))^2\phi_{s,1}^2(k,y_2,y')\pa_y \frac{\pa_G\phi_{s,1}(k,y_2,y')}{\phi_{s,1}(k,y_2,y')}dy_2dy_1\\
&-2\int_{y'}^y\frac{1}{(b_s(y_1)-b_s(y'))^2\phi_{s,1}^2(k,y_1,y')}\\
&\quad \times\int_{y'}^{y_1}\pa_y\left(\frac{(b'_s(y_2)-b_s'(y'))}{b_s(y_2)-b_s(y')}\right)(b_s(y_2)-b_s(y'))^2\pa_y \left(\phi_{s,1}^2(k,y_2,y')-1\right)\frac{\pa_G\phi_{s,1}(k,y_2,y')}{\phi_{s,1}(k,y_2,y')}dy_2dy_1\\
&-\int_{y'}^y\frac{1}{(b_s(y_1)-b_s(y'))^2\phi_{s,1}^2(k,y_1,y')}\\
&\quad \times\int_{y'}^{y_1}\pa_y\pa_G\left(\frac{(b'_s(y_2)-b_s'(y'))}{b_s(y_2)-b_s(y')}\right)(b_s(y_2)-b_s(y'))^2\pa_y \left(\phi_{s,1}^2(k,y_2,y')-1\right)dy_2dy_1\\
\eqdef&I+II+III.
\end{align*}
Subsequently, by employing \eqref{eq-phi1-est-4}, \eqref{eq-est-Gy} and \eqref{eq-est-G}, we obtain 
\begin{align*}
  \left|I\right|\le C\int_{y'}^y \int_{y'}^{y_1}\min\{1,|k|^2|y_2-y'|^2\}dy_2dy_1\le C\min\{|y-y'|^2,|k|^2|y-y'|^4\},
\end{align*}
and
\begin{align*}
  \left|II\right|\le& C\int_{y'}^y \frac{1}{\phi_{s,1}^2(k,y_1,y')} \int_{y'}^{y_1}\min\{ |y_2-y'|, k^2|y_2-y'|^3\}\pa_y \left(\phi_{s,1}^2(k,y_2,y')-1\right) dy_2dy_1\\
  \le& C\int_{y'}^y \frac{\phi_{s,1}^2(k,y_1,y')-1}{\phi_{s,1}^2(k,y_1,y')} \min\{ |y_1-y'|, k^2|y_1-y'|^3\} dy_1 \\
  &+C\int_{y'}^y \int_{y'}^{y_1} \frac{\phi_{s,1}^2(k,y_2,y')-1}{\phi_{s,1}^2(k,y_1,y')} \min\{ 1, k^2|y_2-y'|^2\} dy_2dy_1\\
  \le&C\int_{y'}^y  \min\{ |y_1-y'|, k^4|y_1-y'|^5\} dy_1+C\int_{y'}^y \int_{y'}^{y_1} \min\{ 1, k^2|y_2-y'|^2\} dy_2dy_1\\
  \le&C\min\{|y-y'|^2,|k|^2|y-y'|^4\}.
\end{align*}
Similar to the estimate of $\frac{\pa_G\phi_{s,1}(k,y,y')}{\phi_{s,1}(k,y,y')}$, it holds that
\begin{align*}
  \left|II\right|\le C \left\|b''\right\|_{C^2}\min\{ |y-y'|, k^2|y-y'|^3\}.
\end{align*}

Combing Lemma \ref{lem-back} the above estimates, we get
\begin{align*}
  \left|\frac{\pa_G^2\phi_{s,1}(k,y,y')}{\phi_{s,1}(k,y,y')}\right|
\leq C\min\{\max \left\{|y-y'|,|y-y'|^2\right\},|k|^2|y-y'|^3\},
\end{align*}
where $C$ is a constant depends only on $c_m$ and $\left\|b''_{in}\right\|_{H^3}$.
\end{proof}
\subsection{Time derivative estimate for the homogenous solution}
In this subsection, we give the estimate for $\pa_s\phi_{s,1}(k,y,y')\eqdef \dot{\phi}_{s,1}(k,y,y')$. 
\begin{proposition}\label{prop-est-s}
Let $\phi_{s,1}(k,y,y')$ be the solution of \eqref{eq-phi1}. It holds that 
\begin{align}
&\left|\frac{\dot{\phi}_{s,1}(k,y,y')}{\phi_{s,1}(k,y,y')}\right|
\leq C\nu\min\{|y-y'|,k^2|y-y'|^3\}. \label{eq-est-s}
\end{align}
Here the constant $C$ depends on $\left\|b''_{in}\right\|_{C^2}$ and is independent of $s,\nu, k$. 
\end{proposition}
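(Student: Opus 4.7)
The plan is to mimic the proof of Proposition \ref{prop-est-G} verbatim, by deriving a linear equation for $h=\dot{\phi}_{s,1}/\phi_{s,1}$ that has the same left-hand side structure as \eqref{eq-phi1-goodderivative} but whose forcing carries an explicit factor of $\nu$ coming from $\dot{b}_s=\nu b_s''$.

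First, I would differentiate the defining equation \eqref{eq-phi1} in $s$. Writing $\delta b(y)=b_s(y)-b_s(y')$ and $\delta\dot b(y)=\dot b_s(y)-\dot b_s(y')=\nu(b_s''(y)-b_s''(y'))$, the resulting equation for $\dot{\phi}_{s,1}$ reads
\begin{equation*}
\pa_y\bigl[\delta b^2\,\pa_y\dot{\phi}_{s,1}\bigr]-k^2\delta b^2\,\dot{\phi}_{s,1}
=-\pa_y\bigl[2\,\delta b\,\delta\dot b\,\pa_y\phi_{s,1}\bigr]+2k^2\,\delta b\,\delta\dot b\,\phi_{s,1},
\end{equation*}
and since $\phi_{s,1}(k,y',y')=1$ and $\pa_y\phi_{s,1}(k,y',y')=0$ hold for every $s$, we have $\dot{\phi}_{s,1}(k,y',y')=0$ and $\pa_y\dot{\phi}_{s,1}(k,y',y')=0$.

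Next, introduce $q(y)=2\,\delta\dot b(y)/\delta b(y)=2\nu\,(b_s''(y)-b_s''(y'))/(b_s(y)-b_s(y'))$, so that $2\,\delta b\,\delta\dot b=q\,\delta b^2$. A direct computation, using the identity $\pa_y[\delta b^2\phi_{s,1}^2\pa_yh]=\phi_{s,1}\pa_y[\delta b^2\pa_y\dot{\phi}_{s,1}]-\dot{\phi}_{s,1}\pa_y[\delta b^2\pa_y\phi_{s,1}]$ together with the $\phi_{s,1}$-equation and the $\dot{\phi}_{s,1}$-equation above, collapses to
\begin{equation*}
\pa_y\bigl[\delta b^2\,\phi_{s,1}^2\,\pa_y h\bigr]=-\pa_y q\cdot\delta b^2\,\phi_{s,1}\,\pa_y\phi_{s,1}.
\end{equation*}
This is precisely of the form \eqref{eq-phi1-goodderivative}, with $\frac{2(b_s'(y)-b_s'(y'))}{b_s(y)-b_s(y')}$ replaced by $q$. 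Integrating twice against the initial conditions $h(y')=\pa_yh(y')=0$ yields
\begin{equation*}
h(y)=-\int_{y'}^y\frac{1}{\delta b^2(y_1)\phi_{s,1}^2(y_1)}\int_{y'}^{y_1}\pa_{y_2}q(y_2)\,\delta b^2(y_2)\,\phi_{s,1}(y_2)\,\pa_y\phi_{s,1}(y_2)\,dy_2\,dy_1,
\end{equation*}
which is the analogue of the integral representation used to prove \eqref{eq-est-G}.

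The remaining work is a routine bound on $\pa_y q$. Since $b_s''\in W^{1,1}\cap H^3$ and $b_s'\geq c_m>0$, writing $q=2\nu\int_0^1 b_s'''(y'+t(y-y'))\,dt/b_s'(y'+\cdot)$-type representations shows that $|\pa_y q|\leq C\nu$ uniformly in $s,y,y'$, with $C$ depending only on $c_m$ and $\|b_{in}''\|_{H^3}$ (by Lemma~\ref{lem-back}). Plugging this into the integral representation and repeating verbatim the monotonicity-of-$\phi_{s,1}^2$ argument used in the first estimate of Proposition \ref{prop-est-G} (which produced the bound $C\min\{|y-y'|,k^2|y-y'|^3\}$) gives the claim with the extra factor of $\nu$. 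The only mildly delicate point, and the place I expect to spend the most care, is verifying that $\pa_y q$ really is bounded by $C\nu$ near the diagonal $y=y'$: one must write $\delta\dot b(y)/\delta b(y)$ as a ratio of smooth mean-value integrals and differentiate under the integral, relying on $b_s'\geq c_m$ and the $H^3$-control of $b_s''$ to absorb the apparent singularity.
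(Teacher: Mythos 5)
Your proof is correct and follows essentially the same route as the paper: you differentiate \eqref{eq-phi1} in $s$, form $h=\dot\phi_{s,1}/\phi_{s,1}$, derive the same equation \eqref{eq-phi1-goodderivative-t} with forcing $-\pa_y q\cdot\delta b^2\,\phi_{s,1}\pa_y\phi_{s,1}$ (where your $q=2\nu(b_s''-b_s''(y'))/(b_s-b_s(y'))$ is exactly the paper's coefficient), integrate twice, and run the monotonicity argument of Proposition \ref{prop-est-G} with the extra $\nu$ from $\dot b_s=\nu b_s''$. The only difference is cosmetic: the paper states the dependence of $C$ via $\|b''_{in}\|_{C^2}$ rather than $\|b''_{in}\|_{H^3}$, but the two are compatible by Sobolev embedding and Lemma \ref{lem-back}.
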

\begin{proof}
Similar to \eqref{eq-phi1-goodderivative}, we deduce that $\dot{\phi}_{s,1}(k,y',y')$ satisfies
  \begin{equation}\label{eq-phi1-goodderivative-t}
    \left\{
      \begin{array}{ll}
        &\pa_y\left((b_s(y)-b_s(y'))^2\phi_{s,1}(k,y,y')^2\pa_y\Big(\frac{\dot{\phi}_{s,1}(k,y,y')}{\phi_{s,1}(k,y,y')}\Big)\right)\\
        &=-\pa_y\left(\frac{2(\dot{b}_s(y)-\dot{b}_s(y'))}{b_s(y)-b_s(y')}\right)(b_s(y)-b_s(y'))^2\phi_{s,1}(k,y,y')\pa_y\phi_{s,1}(k,y,y'),\\
      & \dot{\phi}_{s,1}(k,y,y')|_{y=y'}=0,\quad\pa_y \dot{\phi}_{s,1}(k,y,y')|_{y=y'}=0,
      \end{array}
    \right.
  \end{equation}
with expression
\begin{align*}
&\frac{\dot\phi_{s,1}(k,y,y')}{\phi_{s,1}(k,y,y')}\\
=&-\nu\int_{y'}^y\frac{1}{(b_s(y_1)-b_s(y'))^2\phi_{s,1}(k,y_1,y')^2}\\
&\quad \times\int_{y'}^{y_1}\pa_y\left(\frac{(b''_s(y_2)-b_s''(y'))}{b_s(y_2)-b_s(y')}\right)(b_s(y_2)-b_s(y'))^2\pa_y(\phi_{s,1}(k,y_2,y')^2-1)dy_2dy_1.
\end{align*}
Then one can obtain \eqref{eq-est-s} using a similar approach as Proposition \ref{prop-est-G}. 
\end{proof}

\subsection{Solving the inhomogeneous Rayleigh equation}\label{sec-in-Ray}
In this subsection, we solve the inhomogeneous Rayleigh equation \eqref{eq:inhomRay}, study the limit of \eqref{eq:Rep}, and give a representation formula of $\tilde\psi(t,k,y)$. 

Based on the solution $\phi^{\pm}_{s}(k,y,y',\ep)$ obtained in Proposition \ref{prop-phi}, we introduce the Wronskian of the homogeneous Rayleigh equation \eqref{eq:homRay}:
\begin{align*}
  \mathcal D_s^{\pm}(k,y',\ep)=&\int^{+\infty}_{-\infty}\frac{1}{{\phi_s^{\pm}}^2(k,y_1,y',\ep)}dy_1.
  \end{align*}
The following lemmas hold:
\begin{lemma}\label{lem-eigen}
A number $b_s(y')\pm i\ep\in\mathbb C$ with $0<\ep\le \ep_{0,k}$ is an eigenvalue of $\mathcal R_{s,k}$ if and only if $\mathcal D_s^{\pm}(k,y',\ep)=0$. 
\end{lemma}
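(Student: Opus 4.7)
The plan is to reduce the eigenvalue problem for $\mathcal{R}_{s,k}$ to the question of linear dependence of two special solutions of the homogeneous Rayleigh equation \eqref{eq:homRay} that decay at $+\infty$ and $-\infty$ respectively, and then identify $\mathcal{D}_s^{\pm}(k,y',\ep)$ as the (constant in $y$) Wronskian of those two solutions. First I would observe that $\om\in L^2(\mathbb{R})$ is an eigenfunction of $\mathcal{R}_{s,k}$ with eigenvalue $c=b_s(y')\pm i\ep$ iff $\psi=\Delta_k^{-1}\om\in H^2(\mathbb{R})$ satisfies $(b_s(y)-c)\Delta_k\psi-b_s''(y)\psi=0$, i.e.\ exactly \eqref{eq:homRay} (with no prescribed normalization). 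Since $H^2(\mathbb{R})\hookrightarrow C^0$ with vanishing at infinity, such a $\psi$ must decay at both $\pm\infty$.

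Next I would use the solution $\phi:=\phi^{\pm}_s(k,\cdot,y',\ep)$ constructed in Proposition \ref{prop-phi}, which by \eqref{eq-phi-est} is nonvanishing on $\mathbb{R}$ when $\ep>0$. Because the Rayleigh equation is of the form $(b_s-c)\psi''-[b_s''+k^2(b_s-c)]\psi=0$ with no first-order term, the Wronskian of any two solutions is independent of $y$, and reduction of order yields the independent solution $\phi\cdot\int dy/\phi^2$. Choosing the integration constants appropriately, I define
\begin{equation*}
\psi_{+}(y)=\phi(y)\int_y^{+\infty}\frac{dy_1}{\phi(y_1)^2},\qquad \psi_{-}(y)=\phi(y)\int_{-\infty}^{y}\frac{dy_1}{\phi(y_1)^2}.
\end{equation*}
Using the upper and lower bounds $(|y-y'|+\ep)e^{C^{-1}|k||y-y'|}\lesssim|\phi|\lesssim (|y-y'|+\ep)e^{|k||y-y'|}$, the improper integrals converge absolutely and a direct estimate shows $\psi_{\pm}\in H^2$ near the respective infinities, with exponential decay of the form $e^{-C^{-1}|k||y|}$ obtained from the cancellation between the growing $\phi$ and the rapidly decaying integral.

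I would then compute the Wronskian. Writing $\psi_{+}=\phi I_+$ and $\psi_{-}=\phi I_-$ with $I_+'=-1/\phi^2$ and $I_-'=1/\phi^2$, a two-line computation gives $W(\psi_{+},\psi_{-})=\psi_{+}\psi_{-}'-\psi_{-}\psi_{+}'=I_++I_-=\int_{-\infty}^{+\infty}dy_1/\phi(y_1)^2=\mathcal{D}_s^{\pm}(k,y',\ep)$. Combining this with Step 1, an $H^2$ (equivalently $L^2$) eigenfunction $\psi$ exists iff some nontrivial solution decays at both $\pm\infty$; since $\phi$ itself grows at both ends, any solution decaying at $+\infty$ must be a nonzero multiple of $\psi_{+}$, and analogously at $-\infty$ for $\psi_{-}$. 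Hence such a $\psi$ exists iff $\psi_{+}$ and $\psi_{-}$ are linearly dependent, iff $W(\psi_{+},\psi_{-})=0$, iff $\mathcal{D}_s^{\pm}(k,y',\ep)=0$.

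The main technical point will be rigorously controlling the integrals defining $\psi_{\pm}$: one has to use the sharp two-sided exponential bounds on $\phi$ from Proposition \ref{prop-phi} to simultaneously ensure absolute convergence of $\int dy_1/\phi^2$ near $\pm\infty$, to verify the $H^2$ (and indeed decay) bounds for $\psi_{\pm}$, and to show that no surface terms arise in the Wronskian calculation. The restriction $0<\ep\le\ep_{0,k}$ is exactly what is needed for $\phi$ to be nonvanishing everywhere (via \eqref{eq-phi-est}) so that division by $\phi^2$ and reduction of order are licit; if $\ep=0$ the solution $\phi$ has a simple zero at $y=y'$ and the corresponding analysis requires a principal-value/limiting-absorption treatment that will be taken up elsewhere in the paper.
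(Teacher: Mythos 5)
Your approach---identifying $\mathcal D_s^{\pm}(k,y',\ep)$ as the Wronskian of the two reduction-of-order solutions $\psi_{\pm}$ decaying at $+\infty$ and $-\infty$ respectively, and characterizing eigenvalues as exactly the values for which $\psi_+$ and $\psi_-$ become linearly dependent---is the natural one and matches the paper's own framing, which explicitly calls $\mathcal D_s^{\pm}$ ``the Wronskian of the homogeneous Rayleigh equation.'' The paper itself does not present a proof of this lemma (it refers to Appendix B of \cite{LiMasmoudiZhao2022critical}), so a line-by-line comparison is not possible, but your scheme is structurally sound: the reduction to the Rayleigh equation for $\psi=\Delta_k^{-1}\om\in H^2$ is correct; dividing through by $b_s-c$ is legitimate precisely because $\ep>0$ so the equation has no singular point; the Wronskian computation $W(\psi_+,\psi_-)=I_+ + I_- = \int_{\mathbb R}\phi^{-2}\,dy_1=\mathcal D_s^{\pm}$ is right (and automatically $y$-independent, consistent with the absence of a first-order term); and the functions $\psi_{\pm}$ you write down are, up to sign, precisely $\Phi_{s,h,l}^{\pm}$ and $\Phi_{s,h,r}^{\pm}$ from Proposition~\ref{prop:inhom}.

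One step, however, is not justified as written. You claim the decay of $\psi_+$ at $+\infty$ (and of $\psi_-$ at $-\infty$) follows from the two-sided exponential bound \eqref{eq-phi-est} alone. It does not: if the rates in the lower and upper bounds differ, say $(|y-y'|+\ep)e^{c_1|k||y-y'|}\lesssim|\phi|\lesssim(|y-y'|+\ep)e^{c_2|k||y-y'|}$ with $c_1<c_2$, then for $y\gg y'$ the crude estimate yields
\[
|\psi_+(y)|\lesssim |y-y'|\,e^{c_2|k||y-y'|}\int_y^{+\infty}\frac{dy_1}{|y_1-y'|^2 e^{2c_1|k||y_1-y'|}}\lesssim\frac{1}{|y-y'|}\,e^{(c_2-2c_1)|k||y-y'|},
\]
which fails to decay once $c_2>2c_1$. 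The correct tools are the ratio estimates \eqref{eq-phi1-est-5}--\eqref{eq-phi1-est-6} (equivalently, the bound $|F|=|\phi_{s,1}'/\phi_{s,1}|\le|k|$ combined with $|F|\gtrsim|k|$ for $|y-y'|\ge 1/|k|$), which give $\phi_{s,1}(k,y,y')/\phi_{s,1}(k,y_1,y')\lesssim e^{-C|k|(y_1-y)}$ for $y_1\ge y\ge y'$. That comparison, applied pointwise inside the integral, produces genuine exponential decay of $\psi_+$ (and of $\psi_+',\psi_+''$ via the equation), and hence $\psi_+\in H^2$ when $W=0$. All of this is available from Proposition~\ref{prop-phi}, so the gap is one of citation rather than substance---but as written, the decay claim does not follow from the estimate you invoke.
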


\begin{lemma}\label{lem-lim-eigen}
  It holds that
  \begin{align*}
    \lim_{\ep\to0+}\mathcal D_s^{\pm}(k,y',\ep)=\mathcal J_{s,1}(k,y')\mp i\mathcal J_{s,2}(y'),
  \end{align*}
  where
  \begin{align*}
    \mathcal J_{s,1}(k,y')&=\frac{1}{b_s'(y')}\Pi_{s,1}(y')+\Pi_{s,2}(k,y')\\
    &=-\mathcal H\Big((b_{s}^{-1})''\Big)(b_s(y'))+\Pi_{s,2}(k,y')\\
    \mathcal J_{s,2}(y')&=\pi\frac{b_s''(y')}{{b_s'}(y')^3}=-\pi(b_{s}^{-1})''(b_s(y')).
  \end{align*}
Here ${\mathcal H}$ is the Hilbert transform, and
  \begin{align*}
  \Pi_{s,1}(y')= \text{P.V.}\int  \frac{b_s'(y')-b_s'(y)}{(b_s(y)-b_s(y'))^2} dy,\quad \Pi_{s,2}(k,y')= \int \frac{1}{(b_s(y)-b_s(y'))^2}\left(\frac{1}{\phi_{s,1}^2(k,y,y')}-1\right) dy.
\end{align*}
\end{lemma}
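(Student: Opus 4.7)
The plan is to analyze $\mathcal D_s^{\pm}(k,y',\ep)$ by exploiting the factorization from Proposition \ref{prop-phi}, namely $\phi_s^{\pm}(k,y,y',\ep) = (b_s(y)-b_s(y')\mp i\ep)\phi_{s,1}(k,y,y')\phi_{s,2}^{\pm}(k,y,y',\ep)$, to isolate the nonintegrable pole from a smooth correction. I split
\begin{align*}
  \mathcal D_s^{\pm}(k,y',\ep) &= \int_{\mathbb R}\frac{dy}{(b_s(y)-b_s(y')\mp i\ep)^2} \\
  &\quad + \int_{\mathbb R}\frac{1}{(b_s(y)-b_s(y')\mp i\ep)^2}\left(\frac{1}{\phi_{s,1}^2(k,y,y')(\phi_{s,2}^{\pm})^2(k,y,y',\ep)}-1\right)dy \eqdef I_1^{\pm}(\ep)+I_2^{\pm}(\ep),
\end{align*}
and evaluate the limits separately. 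The normalizations $\phi_{s,1}(k,y',y')=1$ and $\phi_{s,2}^{\pm}(k,y',y',\ep)=1$ ensure that the correction factor vanishes at $y=y'$ and kills the double pole.

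For $I_1^{\pm}(\ep)$, I change variables $u=b_s(y)$ using the monotonicity of $b_s$ and the linear-growth assumption, obtaining $\int_{\mathbb R}\frac{(b_s^{-1})'(u)}{(u-b_s(y')\mp i\ep)^2}du$. Since $\ep>0$ regularizes, I integrate by parts: the boundary terms vanish because $(b_s^{-1})'(u)$ is bounded (with limits $1/C_{\pm}$) while the Cauchy factor decays at infinity, producing $\int_{\mathbb R}\frac{(b_s^{-1})''(u)}{u-b_s(y')\mp i\ep}du$. Note that $(b_s^{-1})''(u)=-b_s''(y)/(b_s'(y))^2\,du$ lies in $L^1$ and has enough regularity by the Assumption \ref{assum} hypothesis on $b_s''$. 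Applying the Sokhotski–Plemelj formula yields
\begin{equation*}
\lim_{\ep\to 0+}I_1^{\pm}(\ep)= \text{P.V.}\int_{\mathbb R}\frac{(b_s^{-1})''(u)}{u-b_s(y')}du\pm i\pi(b_s^{-1})''(b_s(y')),
\end{equation*}
whose imaginary part is exactly $\mp i\mathcal J_{s,2}(y')$ (matching $\mathcal J_{s,2}(y')=\pi b_s''(y')/(b_s'(y'))^3=-\pi(b_s^{-1})''(b_s(y'))$), and whose real part is $-\mathcal H((b_s^{-1})'')(b_s(y'))$ in the paper's convention.

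For $I_2^{\pm}(\ep)$, the bounds \eqref{eq-phi1-est-4} and \eqref{eq-phi2-est-1} give $|\phi_{s,1}^2(\phi_{s,2}^{\pm})^2-1|\lesssim k^2(y-y')^2+k\ep$ near $y=y'$, which cancels the quadratic singularity from the denominator, while at infinity the exponential lower bound on $\phi_{s,1}$ together with the quadratic growth of $(b_s(y)-b_s(y'))^2$ makes the integrand decay. These yield a uniform-in-$\ep$ integrable majorant, so dominated convergence sends $\phi_{s,2}^{\pm}\to 1$ and $\ep\to 0$ under the integral to produce $\lim_{\ep\to 0+}I_2^{\pm}(\ep)=\Pi_{s,2}(k,y')$. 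Finally, to obtain the alternate expression $\mathcal J_{s,1}(k,y')=\frac{1}{b_s'(y')}\Pi_{s,1}(y')+\Pi_{s,2}(k,y')$, I apply the same $u=b_s(y)$ substitution to $\Pi_{s,1}(y')$, reducing it to $\text{P.V.}\int \frac{(b_s^{-1})'(u)-(b_s^{-1})'(b_s(y'))}{(u-b_s(y'))^2}du$; an integration by parts (with the quadratic vanishing of the numerator at $u=b_s(y')$ canceling the symmetric $\delta$-type boundary contribution, and the bounded numerator at infinity killing the far-field terms) identifies this with $\text{P.V.}\int\frac{(b_s^{-1})''(u)}{u-b_s(y')}du=-\mathcal H((b_s^{-1})'')(b_s(y'))$. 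The main technical obstacle is justifying the Sokhotski–Plemelj limit and the P.V. integration by parts rigorously; I expect the regularity and decay of $b_s''$ built into Assumption \ref{assum} (in particular $(1+y^2)^{1/4}b_s''\in H^1$) to be comfortably sufficient.
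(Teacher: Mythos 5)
Your proof is correct and uses essentially the same machinery the paper deploys elsewhere: the paper defers this lemma to Appendix B of \cite{LiMasmoudiZhao2022critical}, but your decomposition $\mathcal D_s^\pm=I_1^\pm+I_2^\pm$ --- peeling off the pure Cauchy double pole, integrating by parts after the substitution $u=b_s(y)$, applying Sokhotski--Plemelj, and then handling the correction term by dominated convergence with the bounds \eqref{eq-phi1-est-4}, \eqref{eq-phi1-est-2}, \eqref{eq-phi2-est-1} --- mirrors exactly how the paper treats the analogous limit $\widetilde{III}_s^\pm$ in the proof of Proposition \ref{prop-limit}. The signs check out: with $\phi_s^\pm$ carrying the factor $(b_s(y)-b_s(y')\mp i\ep)$, Plemelj gives $\pm i\pi(b_s^{-1})''(b_s(y'))=\mp i\mathcal J_{s,2}(y')$, matching the stated $\mathcal J_{s,1}\mp i\mathcal J_{s,2}$.

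One small imprecision in the final step: the substitution $u=b_s(y)$ sends $\Pi_{s,1}(y')$ to $b_s'(y')\,\text{P.V.}\!\int\frac{(b_s^{-1})'(u)-(b_s^{-1})'(b_s(y'))}{(u-b_s(y'))^2}\,du$, so it is $\frac{1}{b_s'(y')}\Pi_{s,1}(y')$ --- not $\Pi_{s,1}(y')$ itself --- that you reduce to the displayed P.V.\ integral. Since you then identify that P.V.\ expression with $-\mathcal H\!\left((b_s^{-1})''\right)(b_s(y'))$, and the needed identity is precisely $\frac{1}{b_s'(y')}\Pi_{s,1}=-\mathcal H\!\left((b_s^{-1})''\right)(b_s(y'))$, the conclusion is right; just fix the bookkeeping of the $b_s'(y')$ prefactor. (Also note the paper's $\mathcal H$ omits the usual $1/\pi$ normalization, which you have implicitly adopted.)
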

\begin{lemma}\label{lem-eigen-0}
A number $b_s(y') \in\mathbb R$ is an eigenvalue of $\mathcal R_{s,k}$ if and only if 
 \begin{align*}
  \mathcal J_{s,1}^2(y')+\mathcal J_{s,2}^2(y')=0.
\end{align*}
\end{lemma}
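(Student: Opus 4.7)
The plan is to derive the lemma by combining Lemma~\ref{lem-eigen} and Lemma~\ref{lem-lim-eigen} via a limiting absorption argument at $\epsilon=0^+$. Concretely, I will first establish that $b_s(y')\in\mathbb{R}$ is an embedded eigenvalue of $\mathcal{R}_{s,k}$ if and only if
\[
\lim_{\epsilon\to 0^+}\mathcal{D}_s^{+}(k,y',\epsilon)=0\quad\text{and}\quad\lim_{\epsilon\to 0^+}\mathcal{D}_s^{-}(k,y',\epsilon)=0
\]
hold simultaneously. Once this equivalence is in place, Lemma~\ref{lem-lim-eigen} translates the two conditions into $\mathcal{J}_{s,1}(k,y')\mp i\mathcal{J}_{s,2}(y')=0$, and since $\mathcal{J}_{s,1}$ and $\mathcal{J}_{s,2}$ are both real-valued, this is exactly $\mathcal{J}_{s,1}^2(k,y')+\mathcal{J}_{s,2}^2(y')=0$.

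For the sufficiency direction of the equivalence I would construct an $L^2$ eigenfunction explicitly. Setting $\epsilon=0$ in \eqref{eq-phi2} degenerates the equation and forces $\phi_{s,2}^{\pm}\equiv 1$, so the regular Rayleigh solution at the critical layer is $\phi_s^{0}(k,y,y')=(b_s(y)-b_s(y'))\phi_{s,1}(k,y,y')$. A second linearly independent solution is produced by reduction of order with density $1/(\phi_s^{\pm})^2$. The vanishing of $\mathcal{J}_{s,1}$ is exactly the matching condition that a particular linear combination of the two solutions decays at both $y\to\pm\infty$, while the vanishing of $\mathcal{J}_{s,2}$, i.e., $b_s''(y')=0$, removes the $(y-y')\log(y-y')$ term from the Frobenius expansion at $y=y'$ so that the combination is genuinely in $L^2$. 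For the necessity direction, a nontrivial $L^2$ eigenfunction $\psi$ may be taken real; expanding $\psi$ in the Frobenius basis around the singular point $y=y'$, the logarithmic Frobenius companion, continued across the singularity from the half-plane $\mathrm{Im}\,\fc=\pm\epsilon$, carries a $\pm i\pi$ jump proportional to $b_s''(y')/b_s'(y')^3$. Reality of $\psi$ forces this jump to vanish, and matching to the decaying solutions at $\pm\infty$ forces the principal-value part to vanish as well, so both limits of $\mathcal{D}_s^{\pm}$ must be zero.

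The main obstacle will be making the reduction-of-order construction fully rigorous in the sufficiency direction, since one has to combine the mild logarithmic behavior at the critical layer with the exponential growth $e^{C|k||y-y'|}$ of $\phi_{s,1}$ at $\pm\infty$. The quantitative upper and lower bounds on $\phi_{s,1}$ and $\phi_s^{\pm}$ from Proposition~\ref{prop-phi}, together with \eqref{eq-phi-est}, should be exactly sufficient to show that the vanishing of $\mathcal{D}_s^{\pm}(k,y',0^+)$ cancels the growing mode and produces an honest $L^2$ eigenfunction, after which the algebraic identification via Lemma~\ref{lem-lim-eigen} is immediate.
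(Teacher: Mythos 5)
The paper itself does not prove Lemma~\ref{lem-eigen-0}: after stating Lemmas~\ref{lem-eigen}, \ref{lem-lim-eigen}, and \ref{lem-eigen-0} it simply writes ``For the proof of the above three lemmas, we refer the readers to Appendix B of \cite{LiMasmoudiZhao2022critical}.'' So there is no in-paper argument to compare against. Your overall strategy — reduce to the vanishing of the one-sided Wronskian limits $\lim_{\epsilon\to0^+}\mathcal D_s^{\pm}$, invoke Lemma~\ref{lem-lim-eigen}, and use the realness of $\mathcal J_{s,1}, \mathcal J_{s,2}$ to rewrite $\mathcal J_{s,1}\mp i\mathcal J_{s,2}=0$ as $\mathcal J_{s,1}^2+\mathcal J_{s,2}^2=0$ — is the standard approach for such critical-layer eigenvalue problems, and the observation that $\phi_{s,2}^{\pm}\equiv 1$ at $\epsilon=0$ so that $\phi_s^{0}=(b_s-b_s(y'))\phi_{s,1}$ is the regular Tollmien solution is correct.

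Two points deserve more care. First, the statement that removing the $(y-y')\log|y-y'|$ term makes ``the combination genuinely in $L^2$'' is imprecise: $(y-y')\log|y-y'|$ \emph{is} locally square-integrable, so its presence does not obstruct $\psi\in L^2$. What actually fails is $\omega=\Delta_k\psi\in L^2$, since $\Delta_k$ applied to the log companion produces a $(y-y')^{-1}$ singularity. Because $\mathcal R_{s,k}$ acts on the vorticity, the relevant constraint is on $\omega$ (equivalently $\psi\in H^2$), and the proof should track this distinction explicitly; as written the argument conflates eigenfunctions of $\mathcal R_{s,k}$ with streamfunctions. Second, the necessity direction via ``reality of $\psi$ forces the $\pm i\pi$ jump to vanish'' is indirect and, as phrased, not obviously correct: a real eigenfunction of the real Rayleigh equation at a real $\fc$ can be expanded in a real Frobenius basis with $\log|y-y'|$, and the $\pm i\pi$ ambiguity is a feature of the limiting-absorption contour, not of the eigenfunction itself. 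The cleaner necessity argument is direct: from $\omega=b_s''\psi/(b_s-b_s(y'))\in L^2$ near $y'$ one is forced into $b_s''(y')\psi(y')=0$; if $\psi(y')=0$ then $\psi$ is a multiple of $\phi_s^0$ near $y'$, hence globally on each side by ODE uniqueness, and therefore grows at infinity, a contradiction; so $b_s''(y')=0$, i.e., $\mathcal J_{s,2}=0$. Then the singularity of the Rayleigh equation at $y'$ cancels, the equation is regular there, and a standard Wronskian/matching computation for the decaying branches yields $\mathcal J_{s,1}=0$. With these two points repaired, your route should go through.
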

For the proof of the above three lemmas, we refer the readers to Appendix B of \cite{LiMasmoudiZhao2022critical}.

Next, we show some properties of $\mathcal J_{s,1}$ and $\mathcal J_{s,2}$.
\begin{lemma}\label{lem-bdPi2}
It holds that 
\begin{align}
-C_{**}|k|\leq \Pi_{s,2}(k,y')&\leq -C_*|k|,\label{eq-est-Pi2-1}\\
|\pa_{y'}\Pi_{s,2}(k,y')|&\leq C_*|k|,\label{eq-est-Pi2-2}\\
|\pa_{y'y'}\Pi_{s,2}(k,y')|&\leq C_*|k|,\label{eq-est-Pi2-3}
\end{align}
where $C_*$ and $C_{**}$ are  positive constants that depend on $c_m$ and $\left\|b''_{in}\right\|_{H^3}$, and are independent of $y',k,s,\nu$. 
\end{lemma}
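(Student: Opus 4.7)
All three estimates are obtained by splitting the integral at $|y-y'|=1/|k|$ and combining the pointwise estimates on $\phi_{s,1}$ from Proposition \ref{prop-phi} with the good-derivative estimates from Proposition \ref{prop-est-G}. For the two-sided bound on $\Pi_{s,2}$ itself, the sign $\Pi_{s,2}\leq 0$ is immediate from $\phi_{s,1}\geq 1$. For the upper bound $|\Pi_{s,2}|\leq C_{**}|k|$, I use \eqref{eq-phi1-est-4} to get $1-\phi_{s,1}^{-2}\leq C\min\{1,k^2|y-y'|^2\}$; in the near zone this bounds the integrand by $Ck^2$, whose integral over an interval of length $2/|k|$ contributes $O(|k|)$, while in the far zone monotonicity of $b_s$ gives an integrand bounded by $C|y-y'|^{-2}$, which also yields $O(|k|)$ after integration from $1/|k|$ to $\infty$. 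For the lower bound $-\Pi_{s,2}\geq C_*|k|$, the exponential growth \eqref{eq-phi1-est-2} implies $1-\phi_{s,1}^{-2}\geq 1/2$ on the region $\{|y-y'|\geq M/|k|\}$ for a sufficiently large universal $M$, so
\[
-\Pi_{s,2}(k,y')\geq \int_{|y-y'|\geq M/|k|}\frac{1/2}{(b_s(y)-b_s(y'))^2}\,dy\geq C_*|k|.
\]

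\textbf{First derivative via the good derivative.} Writing $g(y,y')=(b_s(y)-b_s(y'))^{-2}$ and $f(y,y')=\phi_{s,1}^{-2}-1$, and using $\pa_{y'}=\pa_G-\pa_y$ together with the identity $\int_{\mathbb R}\pa_y(gf)\,dy=0$ (boundary values at $\pm\infty$ vanish because $b_s$ has linear growth, hence $g\to 0$), one obtains
\[
\pa_{y'}\Pi_{s,2}(k,y')=\int_{\mathbb R}(\pa_G g)\,f\,dy+\int_{\mathbb R} g\,(\pa_G f)\,dy.
\]
The crucial point is that $\pa_G$ is one order less singular than $\pa_{y'}$: one has $|\pa_G g|\leq C|y-y'|^{-2}$ uniformly, since $|b_s'(y)-b_s'(y')|\leq \|b_s''\|_\infty |y-y'|$ and $|b_s(y)-b_s(y')|\geq c_m|y-y'|$; and $|\pa_G f|=2|\pa_G\phi_{s,1}|/\phi_{s,1}^3\leq C\phi_{s,1}^{-2}\min\{|y-y'|,k^2|y-y'|^3\}$ by \eqref{eq-est-G}. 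The same near/far splitting at $|y-y'|=1/|k|$ then bounds each integral by $C|k|$.

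\textbf{Second derivative and the main obstacle.} Iterating, $\pa_{y'}^2\Pi_{s,2}=\int \pa_G^2(gf)\,dy$, which expands into the three pieces $(\pa_G^2 g)f$, $2(\pa_G g)(\pa_G f)$, and $g\,\pa_G^2 f$. A direct computation gives
\[
\pa_G^2 g=-\frac{2(b_s''(y)-b_s''(y'))}{(b_s(y)-b_s(y'))^3}+\frac{6(b_s'(y)-b_s'(y'))^2}{(b_s(y)-b_s(y'))^4},
\]
and two applications of the mean value theorem with $b_s'',b_s'''\in L^\infty$ again yield $|\pa_G^2 g|\leq C|y-y'|^{-2}$. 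For $\pa_G^2 f=-2\pa_G^2\phi_{s,1}/\phi_{s,1}^3+6(\pa_G\phi_{s,1})^2/\phi_{s,1}^4$, the sharper estimate \eqref{eq-est-GG} of Proposition \ref{prop-est-G} is exactly what is required to control the $\pa_G^2\phi_{s,1}/\phi_{s,1}$ factor and get near-diagonal vanishing at rate $\min\{\max\{|y-y'|,|y-y'|^2\},k^2|y-y'|^3\}$. The core difficulty throughout is the non-integrable diagonal singularity of $g$, which a naive $\pa_{y'}$ would worsen to $|y-y'|^{-3}$ and $|y-y'|^{-4}$ even after exploiting the vanishing of $\phi_{s,1}^{-2}-1$ at $y=y'$; the good-derivative decomposition, together with the diagonal vanishing of $\pa_G^n\phi_{s,1}$ supplied by Proposition \ref{prop-est-G}, is precisely what converts these apparently divergent pieces into integrable quantities of the correct size $O(|k|)$.
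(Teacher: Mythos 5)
Your proof is correct and follows essentially the same route as the paper: a near/far split at $|y-y'|=1/|k|$ combined with the bounds of Proposition \ref{prop-phi} for the two-sided estimate, and the good-derivative identity $\pa_{y'}\int g f\,dy=\int\pa_G(gf)\,dy$ (justified by integration by parts) together with Proposition \ref{prop-est-G} for the derivative estimates; the paper's proof is terser for the second derivative (``in the same way, by using \eqref{eq-est-GG}''), but your explicit expansion of $\pa_G^2 g$ and $\pa_G^2 f$ and the accompanying bounds are exactly what that phrase elides.
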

\begin{proof}
We first decompose $\Pi_{s,2}$ into two parts,
\begin{align*}
  \Pi_{s,2}(k,y')=& \int_{|y-y'|\le \frac{1}{|k|}} \frac{1}{(b_s(y)-b_s(y'))^2} \frac{(1-\phi_{s,1} (k,y,y'))(1+\phi_{s,1} (k,y,y'))}{\phi_{s,1}^2(k,y,y')}  dy\\
  &+\int_{|y-y'|> \frac{1}{|k|}} \frac{1}{(b_s(y)-b_s(y'))^2}\frac{(1-\phi_{s,1} (k,y,y'))(1+\phi_{s,1} (k,y,y'))}{\phi_{s,1}^2(k,y,y')}  dy\\
  \eqdef&I+II.
\end{align*}

From \eqref{eq-phi1-est-2} and \eqref{eq-phi1-est-4}, we have $\frac{1}{\phi_{s,1}^2(k,y,y')}-1\le 0$, 
\begin{align*}
  0\ge I\ge -C \int_{|y-y'|\le \frac{1}{|k|}} \frac{|k|^2|y-y'|^2}{|y-y'|^2}  dy\ge-C|k|,
\end{align*}
and
\begin{align*}
  II\approx  -\int_{|y-y'|> \frac{1}{|k|}} \frac{1}{|y-y'|^2}  dy\approx -|k|,
\end{align*}
which gives \eqref{eq-est-Pi2-1}.

Taking derivative of $\Pi_{s,2}$, and using \eqref{eq-phi1-est-2}, \eqref{eq-phi1-est-4}, and \eqref{eq-est-G}, we deduce
\begin{align*}
  \left|\pa_{y'}\Pi_{s,2}(k,y')\right|\le& \int \left|\frac{1}{(b_s(y)-b_s(y'))^2}(\pa_{y'}+\pa_{y})\left(\frac{1}{\phi_{s,1}^2(k,y,y')}-1\right)\right|   dy\\
  &+\int  \left|\left(\frac{1}{\phi_{s,1}^2(k,y,y')}-1\right) (\pa_{y'}+\pa_{y})\frac{1}{(b_s(y)-b_s(y'))^2}\right|dy\\
  \le&C\int \left|\frac{1}{(b_s(y)-b_s(y'))^2} \frac{\pa_G\phi_{s,1} (k,y,y')}{\phi_{s,1}^3(k,y,y')} \right|dy\\
  &+C\int  \left|\left(\frac{1}{\phi_{s,1}^2(k,y,y')}-1\right) \frac{b_s'(y)-b_s'(y')}{(b_s(y)-b_s(y'))^3}\right|dy\\
  \le&C|k|.
\end{align*}

In the same way, by using \eqref{eq-est-GG}, we also have
\begin{align*}
  \left|\pa_{y'y'}\Pi_{s,2}(k,y')\right|\le C |k|.
\end{align*}
Here $C$ is a constant depends only on $c_m$ and $\left\|b''_{in}\right\|_{H^3}$.
\end{proof}

\begin{lemma}\label{lem-J-low}
  It holds that
  \begin{align}\label{est-J-low}
    \left|\mathcal J_{s,1}(y')\right|^2+\left|\mathcal J_{s,2}(y')\right|^2\ge \delta|k|^2.
  \end{align}
  Here $\delta>0$ is a constant that independent of $(y',s,\nu)$.
\end{lemma}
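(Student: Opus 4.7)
The plan is to split according to the size of $|k|$, exploiting the two-term structure $\mathcal{J}_{s,1} = -\mathcal{H}\big((b_s^{-1})''\big)\big(b_s(y')\big) + \Pi_{s,2}(k,y')$ together with Lemma \ref{lem-bdPi2}, and to handle the remaining bounded-$k$ range by a contradiction-plus-compactness argument that uses the stability hypothesis.

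\textbf{Large $|k|$.} First I would observe that the nonlocal piece $\Pi_{s,2}(k,y')$ dominates. By Lemma \ref{lem-bdPi2}, $\Pi_{s,2}(k,y') \leq -C_*|k|$ uniformly in $(s,y')$, while $-\mathcal{H}\big((b_s^{-1})''\big)\big(b_s(y')\big)$ is bounded pointwise by some constant $M$ depending only on $c_m$ and the norms of $b_{in}''$ (using the regularity $b_{in}'' \in W^{1,1}\cap H^3$ and the weighted bound $(1+y^2)^{1/4}b_{in}''\in H^1$, which are all preserved under the heat flow). Hence for $|k| \geq K_0 := \lceil 2M/C_*\rceil$ the triangle inequality gives $|\mathcal{J}_{s,1}(k,y')| \geq (C_*/2)|k|$, so \eqref{est-J-low} holds with $\delta = C_*^2/4$ for these $k$.

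\textbf{Bounded $|k|$.} For the finitely many remaining values $1 \leq |k| \leq K_0 - 1$, I would argue by contradiction. Suppose $\inf_{(s,y')}\big(|\mathcal{J}_{s,1}|^2+|\mathcal{J}_{s,2}|^2\big)=0$; pick a sequence $(s_n,y'_n)$ realising this. Three cases must be excluded. \emph{Case (a):} If $|y'_n|\to\infty$ along a subsequence, the weighted decay of $b_s''$ (hence of $(b_s^{-1})''$) forces $\mathcal{H}\big((b_s^{-1})''\big)\big(b_s(y'_n)\big)\to 0$, so $\Pi_{s_n,2}(k,y'_n)$ alone drives $|\mathcal{J}_{s_n,1}|\geq (C_*/2)|k|$, a contradiction. \emph{Case (b):} If $s_n\to\infty$, the heat equation gives $\|b_{s_n}''\|_{L^\infty}\to 0$, so both $\mathcal{J}_{s_n,2}$ and the Hilbert transform piece in $\mathcal{J}_{s_n,1}$ vanish, and again $\Pi_{s,2}$ dominates. \emph{Case (c):} Otherwise $(s_n,y'_n)$ has a convergent subsequence in a compact subset of $[0,\infty)\times\mathbb R$; by continuity of $(s,y')\mapsto(\mathcal{J}_{s,1},\mathcal{J}_{s,2})$ (which follows from Propositions \ref{prop-phi}, \ref{prop-est-G}, \ref{prop-est-s}) the limit satisfies $\mathcal{J}_{s,1}^2+\mathcal{J}_{s,2}^2=0$, which by Lemma \ref{lem-eigen-0} would produce an eigenvalue of $\mathcal R_{s,k}$, contradicting Assumption \ref{assum}.

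Combining the large-$|k|$ estimate and the uniform lower bound obtained from the three-case dichotomy for bounded $|k|$ (and using $|k|\geq 1$ to absorb the $|k|^2$ factor there) yields the required $\delta|k|^2$ bound.

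The main obstacle is controlling $\mathcal{H}\big((b_s^{-1})''\big)\big(b_s(y')\big)$ uniformly in $(s,y')$, both its pointwise boundedness (for the large-$k$ case) and its decay as $|y'|\to\infty$ or $s\to\infty$ (for cases (a) and (b) above). This is where the weighted regularity $(1+y^2)^{1/4}b_{in}''\in H^1$ and the $L^\infty$-smoothing of the heat semigroup are essential; outside this point, the remainder of the argument is routine continuity-and-compactness.
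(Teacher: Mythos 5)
Your large-$|k|$ step and the three-case dichotomy for bounded $|k|$ mirror the structure of the paper's argument, but there is a genuine gap: as written, your proof yields a constant $\delta$ that may depend on $\nu$, whereas the lemma requires $\delta$ independent of $\nu$.

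The problem is in case (b). You send $s_n\to\infty$ at fixed $\nu$ and invoke the heat-kernel smoothing $\|b_{s_n}''\|_{L^\infty}\to 0$; the rate of that decay is governed by $\nu s_n$, not by $s_n$ alone. Running the contradiction argument over the infimum in $(s,y')$ for each fixed $\nu$ therefore produces a threshold $\delta(\nu)$, and nothing in the argument prevents $\delta(\nu)\to 0$ as $\nu\to 0$. If instead you tried to put $\nu$ into the infimum and take a sequence $(\nu_n,s_n,y_n')$, the compactness step fails: with $\nu_n\to 0$ and $s_n$ bounded (or $\nu_n s_n\to 0$) there is no smoothing and no convergent limit problem to pass to. The paper's resolution is the substitution $\tau=\nu s$: the function $B_\tau(y)=b_{\nu^{-1}\tau}(y)$ solves the $\nu$-free equation $(\partial_\tau-\partial_{yy})B_\tau=0$, $B_0=b_{in}$, and the rescaled quantities $\widetilde{\mathcal J}_{\tau,1},\widetilde{\mathcal J}_{\tau,2}$ depend only on $(\tau,y',k)$. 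Your three cases then go through verbatim with $\tau$ in place of $s$ — heat decay for $\tau>\tau_0$, weighted decay for $|y'|>Y$, and compactness of $\{|y'|\le Y,\ 0\le\tau\le\tau_0\}\times\{|k|\le K\}$ together with Lemma \ref{lem-eigen-0} — and the resulting $\delta$ is automatically $\nu$-independent because $\nu$ has been scaled out. Aside from that rescaling, your organization (contradiction with three exhaustive cases) is a valid alternative to the paper's direct quantitative bounds, and the use of the weighted assumption $(1+y^2)^{1/4}b_{in}''\in H^1$ together with the $A_2$-weight boundedness of the Hilbert transform is exactly what the paper does for the $|y'|\to\infty$ regime.
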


\begin{proof}
We introduce 
  \begin{align*}
    \widetilde \Pi_{\tau,1}(y')=\Pi_{\nu^{-1}\tau,1}(y'),\ \widetilde \Pi_{\tau,2}(y')=\Pi_{\nu^{-1}\tau,2}(y'),\\
    \widetilde{\mathcal J}_{\tau,1}(y')=\mathcal J_{\nu^{-1}\tau,1}(k,y'),\ \widetilde{\mathcal J}_{\tau,2}(k,y')=\mathcal J_{\nu^{-1}\tau,2}(k,y').
  \end{align*}
One can regard the above time-scaled quantities defined on $B_\tau(y)=b_{\nu^{-1}s}(y)$, which is the solution of a $\nu$ independent equation
\begin{align*}
  (\pa_\tau-\pa_{yy})B_{\tau}(y)=0,\quad B_0(y)=b_{in}(y).
\end{align*} 

To prove \eqref{est-J-low} it suffices to prove
\begin{align*}
  \left|\widetilde{\mathcal J}_{\tau,1}(y')\right|^2+\left|\widetilde{\mathcal J}_{\tau,2}(k,y')\right|^2\ge \delta|k|^2.
\end{align*}

We begin by giving the existence of $\tau_0$, which is independent of $y',\nu,k$, such that for $\tau>\tau_0$, we have $\left|\widetilde\Pi_{\tau,1}(y')\right|\le \frac{1}{2}C_*$, where $C_*$ is given in Lemma \ref{lem-bdPi2}.

Recalling
\begin{align*}
B_\tau''(y)=\frac{1}{\sqrt{4\pi \tau}}\int_{\mathbb{R}}e^{-\frac{(y-y')^2}{4\tau}}b''_{in}(y')dy',\quad B_\tau'''(y)=\frac{1}{\sqrt{4\pi \tau}}\int_{\mathbb{R}}e^{-\frac{(y-y')^2}{4\tau}}b'''_{in}(y')dy',
\end{align*} 
we have
\begin{align*}
  \left\|B_\tau''(y)\right\|_{L^2}\lesssim \frac{1}{\tau^{\frac{1}{4}}}\left\|b_{in}''(y)\right\|_{L^1},\quad\left\|B_\tau'''(y)\right\|_{L^2}\lesssim \frac{1}{\tau^{\frac{1}{4}}}\left\|b_{in}'''(y)\right\|_{L^1}.
\end{align*}
By choosing $\tau_0=CC^{-4}_*$ to be sufficiently large, we can ensure that for $\tau>\tau_0$ 
\begin{align*}
  \left|\frac{1}{B_\tau'(y')}\widetilde \Pi_{\tau,1}(y')\right|=&\left|\mathcal H\Big((B_{\tau}^{-1})''\Big)(B_\tau(y'))\right|\le C \left\|B_\tau''(y)\right\|_{H^1}\\
  \le&C\frac{1}{\tau^{\frac{1}{4}}}\left\|b_{in}''(y)\right\|_{W^{1,1}}\le \frac{1}{2}C_*,
\end{align*}
and then
\begin{align*}
  \left|\widetilde{\mathcal J}_{\tau,1}(k,y')\right|\ge \left|\widetilde\Pi_{\tau,2}(k,y')\right|-\left|\frac{1}{B_\tau'(y')}\widetilde\Pi_{\tau,1}(y')\right|\ge \frac{1}{2}C_*|k|,\text{ for } \tau>\tau_0.
\end{align*}

Next, we show that there exists a constant $Y$, independent of $\tau, \nu, k$, such that for $\tau \leq \tau_0$ and $|y'| > Y$, it holds that $\left|\widetilde\Pi_{\tau,1}(y')\right|\le \frac{1}{2}C_*$.
  
As $\sqrt{1+|v'|}$ is a $A_2$ weight, it holds that
\begin{align*}
  &\left\|\sqrt{1+|v'|}\mathcal H[\pa_v^2(B_\tau^{-1})](v')\right\|_{L^2_{v'}}+\left\|\sqrt{1+|v'|}\pa_{v'}\mathcal H[\pa_v^2(B_\tau^{-1})](v')\right\|_{L^2_{v'}}\\
  \lesssim& \left\|\sqrt{1+|B_\tau(y)|}B_\tau''(y)\right\|_{L^2_y}+\left\|\sqrt{1+|B_\tau(y)|}B_\tau'''(y)\right\|_{L^2_y}.
\end{align*}
Recalling that $b_{in}(0)=0$ and $ \left|B''_\tau(0)\right|\le C\left\|b''_{in}(y)\right\|_{H^1}$, we have
\begin{align*}
   \left|B_{\tau}(0)\right|\le \int^{\tau}_0 \left|B''_{\tau'}(0)\right| d \tau'\le C\tau.
\end{align*}
It follows from \eqref{eq-est-b1} that
\begin{align*}
  c_m |y'|-C\tau\le\left|B_{\tau}(y')\right|\le c_m^{-1}|y'|+C\tau.
\end{align*}
Then by \eqref{eq-est-b3} and \eqref{eq-est-b4}, we deduce that
\begin{align*}
  &\sqrt{1+|B_{\tau}(y')|}\left|\mathcal H[\pa_v^2(B_\tau^{-1})](B_{\tau}(y'))\right|\\
  \le& C c_m^{-\frac{1}{2}}(1+\tau)\left(\left\|\sqrt{1+|y|}b_{in}''(y)\right\|_{L^2}+\left\|\sqrt{1+|y|}b_{in}'''(y)\right\|_{L^2}+ \left\|b_{in}''(y)\right\|_{W^{1,1}}\right).
\end{align*}

Therefore, by taking $Y=C\tau_0^2c_m^{-3}C_*^{-2}$ sufficiently large, we have for $\tau\le \tau_0$ and $|y'|\ge Y$ that
\begin{align*}
  \left|\widetilde{\mathcal J}_{\tau,1}(k,y')\right|\ge \left|\widetilde\Pi_{\tau,2}(k,y')\right|-\left|\frac{1}{B_\tau'(y')}\widetilde\Pi_{\tau,1}(y')\right|\ge \frac{1}{2}C_*|k|.
\end{align*}

It is clear that $\widetilde\Pi_{\tau,1}(y')$ has a upper bound uniform in $k$ and $\tau$. As $\widetilde\Pi_{\tau,2}(k,y')\leq -C_*|k|$, there exists $K$ independent on $\tau,y'$ that $\left|\widetilde{\mathcal J}_{\tau,1}(k,y')\right|\ge \frac{1}{2}C_*|k|$ for $|k|\ge K$.

Based on our assumption, the Rayleigh operator $\mathcal R_{s,k}$ has no eigenvalue or embedded eigenvalue. So by Lemma \ref{lem-eigen-0}, it holds that $\left|\widetilde{\mathcal J}_{\tau,1}(k,y')\right|^2+\left|\widetilde{\mathcal J}_{\tau,2}(k,y')\right|^2\neq0$ for all $\tau$, $k$, and $y'$. Note that the set $D=\left\{(y',\tau)\big||y'|\le Y,0\le \tau\le \tau_0\right\}$ is a compact set. There exists $\delta>0$ such that
\begin{align*}
  \inf_{|k|\le K}\inf_{(y',\tau)\in D}\left|\widetilde{\mathcal J}_{\tau,1}(k,y')\right|^2+\left|\widetilde{\mathcal J}_{\tau,2}(k,y')\right|^2\ge\delta.
\end{align*}
As the definition of $B_\tau(y)$ is independent of $\nu$, here $\delta$ is also independent of $\nu$.

Combining all the estimates, we arrive at \eqref{est-J-low}.
\end{proof}

\begin{proposition}\label{prop:inhom}
 For any $k\neq 0$, there exist $0<\varepsilon_{0,k}\le 1$, $0<C\le1$, such that for $y'\in\mathbb R$ and $0<|\ep|\le \varepsilon_{0,k}$, 
\beq\label{eq:inhom-sol}
\begin{aligned}
\Phi_s^{\pm}(k,y,y',\ep)=&\Phi_{s,i,l}^{\pm}(k,y,y',\ep)+\mu_s^{\pm}[\tilde{\om}_{in,k}](y',\ep)\Phi_{s,h,l}^{\pm}(k,y,y',\ep)\\
=&\Phi_{s,i,r}^{\pm}(k,y,y',\ep)+\mu_s^{\pm}[\tilde{\om}_{in,k}](y',\ep)\Phi_{s,h,r}^{\pm}(k,y,y',\ep)
\end{aligned}
\eeq
is a unique solution to \eqref{eq:inhomRay} which decays at infinity. Here
\begin{align*}
\Phi_{s,i,l}^{\pm}(k,y,y',\ep)&=-\phi^{\pm}_{s}(k,y,y',\ep)\int_{-\infty}^y\frac{\int_{y'}^{y_1}\tilde{\om}_{in,k}(y_2)\phi_{s,1}(k,y_2,y')\phi^{\pm}_{s,2}(k,y_2,y',\ep)dy_2}{{\phi^{\pm}_{s}}^2(k,y_1,y',\ep)}dy_1,\\
\Phi_{s,h,l}^{\pm}(k,y,y',\ep)&=\phi^{\pm}_{s}(k,y,y',\ep)\int_{-\infty}^y\frac{1}{{\phi^{\pm}_{s}}^2(k,y_1,y',\ep)}dy_1,\\
\Phi_{s,i,r}^{\pm}(k,y,y',\ep)&=-\phi^{\pm}_{s}(k,y,y',\ep)\int_{+\infty}^y\frac{\int_{y'}^{y_1}\tilde{\om}_{in,k}(y_2)\phi_{s,1}(k,y_2,y')\phi^{\pm}_{s,2}(k,y_2,y',\ep)dy_2}{{\phi^{\pm}_{s}}^2(k,y_1,y',\ep)}dy_1,\\
\Phi_{s,h,r}^{\pm}(k,y,y',\ep)&=\phi^{\pm}_{s}(k,y,y',\ep)\int_{+\infty}^y\frac{1}{{\phi^{\pm}_{s}}^2(k,y_1,y',\ep)}dy_1,\\
\mu_s^\pm[\tilde{\om}_{in,k}](y',\ep)&=\frac{\int_{-\infty}^{+\infty}\frac{\int_{y'}^{y_1}\tilde{\om}_{in,k}(y_2)\phi_{s,1}(k,y_2,y')\phi^{\pm}_{s,2}(k,y_2,y',\ep)dy_2}{{\phi^{\pm}_{s}}^2(k,y_1,y',\ep)}dy_1}{\mathcal D_s^{\pm}(k,y',\ep)}.
\end{align*}
\end{proposition}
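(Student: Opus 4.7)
The plan is to build $\Phi_s^\pm$ by variation of parameters, using $\phi_s^{\pm}(k,\cdot,y',\ep)$ from Proposition \ref{prop-phi} as one homogeneous solution. After dividing \eqref{eq:inhomRay} through by $(b_s(y)-b_s(y')\mp i\ep)$, the equation is in self-adjoint Sturm form (no $\pa_y$ term), so any two homogeneous solutions have constant Wronskian; taking the companion $\phi_s^\pm(y)\int(\phi_s^\pm)^{-2}\,dy_1$ gives Wronskian $1$. The standard variation-of-parameters formula then produces the particular solution
\begin{equation*}
\Phi_p(y)=\phi_s^\pm(y)\int^{y}\frac{1}{(\phi_s^\pm)^2(y_1)}\int^{y_1}\phi_s^\pm(y_2)\,\frac{-\tilde{\om}_{in,k}(y_2)}{b_s(y_2)-b_s(y')\mp i\ep}\,dy_2\,dy_1.
\end{equation*}
The crucial cancellation is the factorization $\phi_s^\pm=(b_s-b_s(y')\mp i\ep)\phi_{s,1}\phi_{s,2}^\pm$ from Proposition \ref{prop-phi}, which kills the apparent singularity of the source at $y_2=y'$ and turns the inner integrand into the regular expression $-\tilde{\om}_{in,k}(y_2)\phi_{s,1}(k,y_2,y')\phi_{s,2}^\pm(k,y_2,y',\ep)$. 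Pinning the inner base-point at $y'$ and choosing the outer base-point to be $-\infty$ or $+\infty$ then produces exactly $\Phi_{s,i,l}^\pm$ or $\Phi_{s,i,r}^\pm$.

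The second step is to identify $\Phi_{s,h,l}^\pm$ (resp.\ $\Phi_{s,h,r}^\pm$) as the homogeneous solution that decays at $-\infty$ (resp.\ $+\infty$), and then to pin down $\mu_s^\pm$ by demanding that the combination decays at both ends. Using $|\phi_s^\pm|\approx(|y-y'|+|\ep|)\,e^{|k||y-y'|}$ from \eqref{eq-phi-est}, every defining integral is absolutely convergent, and $\Phi_{s,h,l}^\pm(y)$ behaves like $e^{-|k||y-y'|}$ as $y\to-\infty$ and like $\mathcal D_s^\pm(k,y',\ep)\,\phi_s^\pm(y)$ as $y\to+\infty$. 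A similar analysis shows that $\Phi_{s,i,l}^\pm$ automatically decays at $-\infty$, and as $y\to+\infty$ it equals $-\phi_s^\pm(y)\int_{-\infty}^{+\infty}\frac{\int_{y'}^{y_1}\tilde{\om}_{in,k}\phi_{s,1}\phi_{s,2}^\pm\,dy_2}{(\phi_s^\pm)^2(y_1)}\,dy_1$ to leading order. Cancelling this growing $\phi_s^\pm$-mode against $\mu_s^\pm\Phi_{s,h,l}^\pm\sim\mu_s^\pm\mathcal D_s^\pm\phi_s^\pm$ forces $\mu_s^\pm$ to be precisely the quotient defined in the statement. The identity of the two representations in \eqref{eq:inhom-sol} then follows from the one-line algebraic check $(\Phi_{s,i,l}^\pm-\Phi_{s,i,r}^\pm)+\mu_s^\pm(\Phi_{s,h,l}^\pm-\Phi_{s,h,r}^\pm)=0$, where the two differences reduce, after combining the integral tails, to $-\phi_s^\pm(y)\cdot[\text{numerator of }\mu_s^\pm]$ and $\phi_s^\pm(y)\,\mathcal D_s^\pm(k,y',\ep)$ respectively.

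Uniqueness of the decaying solution comes from the stability hypothesis together with Lemma \ref{lem-eigen}: for $\ep\neq 0$ the point $b_s(y')\pm i\ep$ lies off the real axis hence in the resolvent set of $\mathcal R_{s,k}$, so $\mathcal D_s^\pm(k,y',\ep)\neq 0$ and no nontrivial homogeneous solution can decay at both ends simultaneously. The main obstacle throughout is the careful bookkeeping of integrability: the factorization of Proposition \ref{prop-phi} is what handles the (apparent) singularity of the source at $y_2=y'$, while the bound $1/(\phi_s^\pm)^2\lesssim e^{-2|k||y-y'|}/(|y-y'|+|\ep|)^2$ delivers the exponential decay needed to close every nested integral and legitimise the asymptotic extractions that pin down $\mu_s^\pm$. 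All quantitative inputs required to make this rigorous are already supplied by Proposition \ref{prop-phi}.
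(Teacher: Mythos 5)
Your proposal is correct and follows essentially the same route as the paper: the variation-of-parameters construction after dividing out $(b_s(y)-b_s(y')\mp i\ep)$ is exactly the paper's reduction-of-order identity $\pa_y\bigl(\phi^{\pm\,2}_s\pa_y(\Phi_s^\pm/\phi_s^\pm)\bigr)=-\tilde\om_{in,k}\phi_{s,1}\phi_{s,2}^\pm$, and both arguments appeal to the factorization of $\phi_s^\pm$ to cancel the apparent singularity at $y_2=y'$, to the bounds from Proposition \ref{prop-phi} for integrability and decay, and to $\mathcal D_s^\pm\neq0$ (Lemma \ref{lem-eigen} plus the spectral assumption) to pin down $\mu_s^\pm$ and obtain uniqueness. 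The paper states the transformed equation and verifies the formulas, whereas you derive them constructively and work out the $\pm\infty$ asymptotics to determine $\mu_s^\pm$; this is a more detailed presentation of the same idea, not a different method.
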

\begin{proof}
  By Proposition \ref{prop-phi}, it is easy to see that the integrations in the definition of $\Phi_{s,i,l}^{\pm}(k,y,y',\ep)$, $\Phi_{s,h,l}^{\pm}(k,y,y',\ep)$, $\Phi_{s,i,r}^{\pm}(k,y,y',\ep)$, and $\Phi_{s,h,r}^{\pm}(k,y,y',\ep)$ are well-defined for $0<\ep<\ep_{0,k}$. Recall that $\phi^{\pm}_{s}(k,y,y',\ep)$ is a solution to the homogeneous equation obtained in Proposition \ref{prop-phi}. The solution $\Phi_{s}^{\pm}(k,y,y',\ep)$ to the inhomogeneous equation \eqref{eq:inhomRay} satisfies
\begin{align}\label{eq:inhomRay2}
  \pa_y\left(\phi^{\pm}_{s}(k,y,y',\ep)^2\pa_y\left(\frac{\Phi_{s}^{\pm}(k,y,y',\ep)}{\phi^{\pm}_{s}(k,y,y',\ep)}\right)\right)=-\tilde{\om}_{in,k}(y_2)\phi_{s,1}(k,y,y')\phi^{\pm}_{s,2}(k,y,y',\ep). 
\end{align}
It is easy to check that $\Phi_{s}^{\pm}(k,y,y',\ep)$ given in \eqref{eq:inhom-sol} satisfies \eqref{eq:inhomRay2} and decays at infinity. The uniqueness of the solution follows from the assumption that the Rayleigh operator $\mathcal{R}_{s,k}$ has no eigenvalue.
\end{proof}

\begin{proposition}\label{prop-limit}
Let $\tilde{\om}_{in,k}\in C^{\infty}_c(\mathbb{R})$. For each $s$ and $k$, the following point-wise limit holds:
\begin{equation}\label{eq-Phi-lim}
  \begin{aligned}    
&\lim_{\ep\to 0+}\Phi_s^{\pm}(k,y,y',\ep)=\Phi_s^{\pm}(k,y,y')\\
&\eqdef\left\{\begin{aligned}
&-\phi_s(k,y,y')\int_{-\infty}^y\frac{\int_{y'}^{y_1}\tilde{\om}_{in,k}(y_2)\phi_{s,1}(k,y_2,y')dy_2}{\phi_s^2(k,y_1,y')}dy_1\\
&\quad \quad \quad  +\mu_s^{\pm}[\tilde{\om}_{in,k}](y')\phi_s(k,y,y')\int_{-\infty}^y\frac{1}{\phi_s^2(k,y_1,y')}dy_1, \quad  \text{for}\quad  y<y',\\
&-\phi_s(k,y,y')\int_{+\infty}^y\frac{\int_{y'}^{y_1}\tilde{\om}_{in,k}(y_2)\phi_{s,1}(k,y_2,y')dy_2}{\phi_s^2(k,y_1,y')}dy_1\\
&\quad \quad \quad +\mu_s^{\pm}[\tilde{\om}_{in,k}](y')\phi_s(k,y,y')\int_{+\infty}^y\frac{1}{\phi_s^2(k,y_1,y')}dy_1, \quad  \text{for}\quad  y>y'.
\end{aligned}\right.    
  \end{aligned}
\end{equation}
Here 
\begin{align*}
\phi_s(k,y,y')=(b_s(y)-b_s(y'))\phi_{s,1}(k,y,y'),\\
\mu_s^{\pm}[\tilde{\om}_{in,k}](y')=\frac{\mathcal J_{s,3}(k,y')\pm i\mathcal J_{s,4}(y')}{\mathcal J_{s,1}(k,y')\mp i\mathcal J_{s,2}(y')},
\end{align*}
and
\begin{align*}
  \mathcal J_{s,3}(k,y')= \text{P.V.}\int\frac{\int_{y'}^{y_1}\tilde{\om}_{in,k}(y_2)\phi_{s,1}(k,y_2,y')dy_2}{(b_s(y)-b_s(y'))^2\phi_{s,1}(k,y_1,y')^2}dy_1,\ \mathcal J_{s,4}(y')=\pi \frac{\tilde{\om}_{in,k}(y')}{b_s'(y')^2}.
\end{align*}
It also holds that
\begin{align}\label{eq-lim-phi-con}
  \lim_{y\to y'-}\Phi_s^{\pm}(k,y,y')=\lim_{y\to y'+}\Phi_s^{\pm}(k,y,y'),
\end{align}
which means $\Phi_s^{\pm}$ is continuous with respect to $y$.
\end{proposition}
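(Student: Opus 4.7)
The strategy is to separate the passage $\ep\to0+$ into a \emph{regular} part, where the integration range stays away from $y_1=y'$, and a \emph{singular} part (hidden in $\mu_s^\pm$), where one must cross through $y_1=y'$ and so must use a Plemelj--Sokhotski/limiting-absorption argument. First, fix $y<y'$ (the case $y>y'$ being identical after replacing the left endpoint $-\infty$ by $+\infty$). In the formulas \eqref{eq:inhom-sol} for $\Phi_{s,i,l}^\pm$ and $\Phi_{s,h,l}^\pm$, the outer integrand is evaluated only on $(-\infty,y]$, which is bounded away from $y'$, so the lower bound \eqref{eq-phi-est} gives $|\phi_s^\pm(k,y_1,y',\ep)|\gtrsim |y_1-y'|e^{C|k||y_1-y'|}$ uniformly in $\ep\in(0,\ep_{0,k}]$. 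Combined with $\phi_{s,2}^\pm\to 1$ and $\phi_{s}^\pm\to (b_s(y)-b_s(y'))\phi_{s,1}=\phi_s$ pointwise (from Proposition~\ref{prop-phi}) and $\tilde\om_{in,k}\in C_c^\infty$, dominated convergence immediately yields the desired pointwise limits of $\Phi_{s,i,l}^\pm$ and $\Phi_{s,h,l}^\pm$.

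Second, the nontrivial task is the limit of $\mu_s^\pm(y',\ep)=N_s^\pm(y',\ep)/\mathcal D_s^\pm(k,y',\ep)$. The denominator converges to $\mathcal J_{s,1}\mp i\mathcal J_{s,2}$ by Lemma~\ref{lem-lim-eigen}, and is nonzero thanks to Lemma~\ref{lem-J-low}, so it suffices to show $\lim_{\ep\to0+}N_s^\pm(y',\ep)=\mathcal J_{s,3}\pm i\mathcal J_{s,4}$. I rewrite
\begin{equation*}
\frac{1}{(b_s(y_1)-b_s(y')\mp i\ep)^2}=-\pa_{y_1}\!\left[\frac{1}{b_s'(y_1)(b_s(y_1)-b_s(y')\mp i\ep)}\right]-\frac{b_s''(y_1)}{b_s'(y_1)^2(b_s(y_1)-b_s(y')\mp i\ep)^2}\cdot(b_s(y_1)-b_s(y')\mp i\ep)
\end{equation*}
and integrate by parts in $N_s^\pm$. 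Since $F_s^\pm(y_1)=\int_{y'}^{y_1}\tilde\om_{in,k}\phi_{s,1}\phi_{s,2}^\pm\,dy_2$ vanishes linearly at $y_1=y'$ (with slope $\tilde\om_{in,k}(y')$) and since $\phi_{s,1},\phi_{s,2}^\pm\to 1$ at $y_1=y'$, the boundary terms at $\pm\infty$ vanish (decay of $1/\phi_{s,1}^2$) and there is no boundary contribution at $y_1=y'$; hence
\begin{equation*}
N_s^\pm(y',\ep)=\int_{\mathbb R}\frac{g_\ep'(y_1)}{b_s(y_1)-b_s(y')\mp i\ep}\,dy_1,\qquad g_\ep(y_1)=\frac{F_s^\pm(y_1)}{b_s'(y_1)\phi_{s,1}^2(k,y_1,y')(\phi_{s,2}^\pm)^2(k,y_1,y',\ep)}.
\end{equation*}
Using the uniform estimates \eqref{eq-phi2-est-1}--\eqref{eq-phi2-est-3}, one checks $g_\ep'\to g'$ in a dominated way (with $g(y_1)=F_s(y_1)/[b_s'(y_1)\phi_{s,1}^2]$ and $F_s(y_1)=\int_{y'}^{y_1}\tilde\om_{in,k}\phi_{s,1}dy_2$), and applies the standard Plemelj--Sokhotski identity to conclude
\begin{equation*}
\lim_{\ep\to0+}N_s^\pm(y',\ep)=\text{P.V.}\!\int_{\mathbb R}\frac{g'(y_1)}{b_s(y_1)-b_s(y')}\,dy_1\pm i\pi\,\frac{g'(y')}{b_s'(y')}.
\end{equation*}
Since $g(y')=0$ and $g'(y')=\tilde\om_{in,k}(y')/b_s'(y')$, the imaginary part equals $\pm i\mathcal J_{s,4}$. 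A reverse integration by parts (justified because $g/(b_s-b_s(y'))$ is continuous at $y'$, so the two symmetric P.V.\ contributions cancel) turns the principal value into $\text{P.V.}\int F_s(y_1)/[(b_s(y_1)-b_s(y'))^2\phi_{s,1}^2]\,dy_1=\mathcal J_{s,3}$, finishing Step 2.

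Third, for the continuity statement \eqref{eq-lim-phi-con} I compute the one-sided limits explicitly. Since $\phi_s(k,y,y')\sim b_s'(y')(y-y')$ and the integral $\int^y F_s/\phi_s^2$ has only a logarithmic divergence (because $F_s$ vanishes linearly at $y'$), one obtains $\lim_{y\to y'\pm}\Phi_{s,i,\cdot}^\pm=0$. On the other hand, $\int^y 1/\phi_s^2\,dy_1$ has a first-order pole whose residue is computable from $\phi_s\sim b_s'(y')(y-y')$, giving
\begin{equation*}
\lim_{y\to y'\pm}\phi_s(k,y,y')\int^y\frac{1}{\phi_s^2(k,y_1,y')}\,dy_1=-\frac{1}{b_s'(y')}
\end{equation*}
from both sides (the pole structure is symmetric). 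Consequently $\lim_{y\to y'-}\Phi_s^\pm=\lim_{y\to y'+}\Phi_s^\pm=-\mu_s^\pm/b_s'(y')$, establishing continuity.

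The main obstacle is the Plemelj--Sokhotski step and the identification with $\mathcal J_{s,3}$: one has to integrate by parts \emph{twice}, keeping careful track of the $\ep$-uniform bounds on $\phi_{s,2}^\pm$ and its derivative so that dominated convergence applies to $g_\ep'$, and then verify that the boundary contributions introduced by the principal value exactly cancel so that the reverse IBP is legitimate.
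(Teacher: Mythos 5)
Your proof is correct and follows the same overall skeleton as the paper — dominated convergence for $\Phi_{s,i,\cdot}^\pm$, $\Phi_{s,h,\cdot}^\pm$ away from $y_1=y'$ (using \eqref{eq-phi-est}), a Plemelj/limiting-absorption step for $\mu_s^\pm$, and l'Hospital-type one-sided limits for continuity — but your handling of the numerator $N_s^\pm(y',\ep)$ of $\mu_s^\pm$ is organized differently from the paper's. The paper splits $N_s^\pm=\widetilde I_s^\pm+\widetilde{II}_s^\pm+\widetilde{III}_s^\pm$ by peeling off $\phi_{s,1}\phi_{s,2}^\pm-1$ from the inner integrand and $(\phi_{s,1}\phi_{s,2}^\pm)^{-2}-1$ from the outer one; the first two pieces carry an $\ep$-uniform integrable dominating function (so ordinary dominated convergence applies), and only the leading piece $\widetilde{III}_s^\pm$, with denominator $(b_s(y_1)-b_s(y')\mp i\ep)^2$ alone, is treated by changing variables $v=b_s(y_1)$, integrating by parts once to lower the pole order, and then taking real/imaginary parts (a manual Plemelj). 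You instead perform a single global integration by parts on $N_s^\pm$, using $1/(b_s(y_1)-b_s(y')\mp i\ep)^2=-b_s'(y_1)^{-1}\pa_{y_1}(b_s(y_1)-b_s(y')\mp i\ep)^{-1}$ and absorbing all the $\phi$-factors into $g_\ep$, to reach one Cauchy-type integral $\int g_\ep'/(b_s(y_1)-b_s(y')\mp i\ep)\,dy_1$, and only then invoke Plemelj. This is more compact, and your reverse IBP to identify $\mathcal J_{s,3}$ (legitimate because $g(y')=0$, so the two symmetric P.V.\ boundary terms cancel) is cleaner than the paper's recombination of $\widetilde I_s^\pm+\widetilde{II}_s^\pm+\widetilde{III}_{s,r}^\pm$ into $\mathcal J_{s,3}$. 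Two small remarks. First, ``$g_\ep'\to g'$ in a dominated way'' understates what is needed: the denominator also degenerates as $\ep\to0$, so one actually needs a quantitative bound of the type $|g_\ep'(y_1)-g'(y_1)|\lesssim\min\{\ep,|y_1-y'|^2\}$ near $y'$ (readable off \eqref{eq-phi2-est-1}--\eqref{eq-phi2-est-3}) to conclude $\int (g_\ep'-g')/(b_s(y_1)-b_s(y')\mp i\ep)\,dy_1\to 0$; this is stronger than dominated convergence, though it does hold. Second, the ``boundary contribution at $y_1=y'$'' you mention is a non-issue: for $\ep>0$ the integrand is smooth across $y'$, so the IBP over $\mathbb R$ produces only vanishing boundary terms at $\pm\infty$.
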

\begin{proof}
First, we show $\lim\limits_{\ep\to 0+}\mu_s^\pm[\tilde{\om}_{in,k}](y',\ep)=\mu_s^{\pm}[\tilde{\om}_{in,k}](y')$. The limit of $\mathcal D_s^{\pm}(k,y',\ep)$ is given in Lemma \ref{lem-lim-eigen}, we only need to give the limit of the numerator of $\mu_s^\pm[\tilde{\om}_{in,k}]$.  We write
\begin{align*}
  &\int_{-\infty}^{\infty}\frac{\int_{y'}^{y_1}\tilde{\om}_{in,k}(y_2)\phi_{s,1}(k,y_2,y')\phi^{\pm}_{s,2}(k,y_2,y',\ep)dy_2}{{\phi^{\pm}_{s}}^2(k,y_1,y',\ep)}dy_1\\
  =&\int_{-\infty}^{\infty}\frac{\int_{y'}^{y_1}\tilde{\om}_{in,k}(y_2) \left(\phi_{s,1}(k,y_2,y')\phi^{\pm}_{s,2}(k,y_2,y',\ep)-1\right)dy_2}{{\phi^{\pm}_{s}}^2(k,y_1,y',\ep)}dy_1\\ 
  &+\int_{-\infty}^{\infty}\frac{\int_{y'}^{y_1}\tilde{\om}_{in,k}(y_2) dy_2}{\big(b_s(y_1)-b_s(y')\mp i\ep\big)^2} \left(\frac{1}{\phi_{s,1}^2(k,y_1,y'){\phi^{\pm}_{s,2}}^2(k,y_1,y',\ep)}-1\right)dy_1\\
  &+\int_{-\infty}^{\infty}\frac{\int_{y'}^{y_1}\tilde{\om}_{in,k}(y_2) dy_2}{\big(b_s(y_1)-b_s(y')\mp i\ep\big)^2} dy_1\\
  \eqdef&\widetilde I_s^{\pm}(y',\varepsilon)+\widetilde {II}_s^{\pm}(y',\varepsilon)+\widetilde {III}_s^{\pm}(y',\varepsilon). 
\end{align*}

From Proposition \ref{prop-phi}, we have $\left|\phi_{s,1}(k,y_2,y')\phi^{\pm}_{s,2}(k,y_2,y',\ep)-1\right|\le C(k)\min \{1, |y_2-y'|^2\}\phi_{s,1}(k,y_2,y')$, and then
\begin{align*}
  \left|\frac{\int_{y'}^{y_1}\tilde{\om}_{in,k}(y_2) \left(\phi_{s,1}(k,y_2,y')\phi^{\pm}_{s,2}(k,y_2,y',\ep)-1\right)dy_2}{{\phi^{\pm}_{s}}^2(k,y_1,y',\ep)}\right|\le C(k) \frac{\left|y_1-y'\right|^{\frac{1}{2}}\left\|\tilde{\om}_{in,k}\right\|_{L^2}}{1+(y_1-y')^2},
\end{align*}
where $C(k)$ is independent of $y'$ and $\varepsilon$. By the Lebesgue dominated convergence theorem, we get
\begin{align*}
  \lim_{\varepsilon\to0+}\widetilde I_s^{\pm}(y',\varepsilon)=\int_{-\infty}^{\infty}\frac{\int_{y'}^{y_1}\tilde{\om}_{in,k}(y_2) \left(\phi_{s,1}(k,y_2,y') -1\right)dy_2}{{\phi^{\pm}_{s}}^2(k,y_1,y',0)}dy_1,
\end{align*}
and
\begin{align*}
  \lim_{\varepsilon\to0+}\widetilde {II}_s^{\pm}(y',\varepsilon)=\int_{-\infty}^{\infty}\frac{\int_{y'}^{y_1}\tilde{\om}_{in,k}(y_2) dy_2}{\big(b_s(y_1)-b_s(y')\big)^2} \left(\frac{1}{\phi_{s,1}^2(k,y_1,y')}-1\right)dy_1.
\end{align*}

Letting $v=b_s(y_1)$ and $v'=b_s(y')$, we decompose $\widetilde {III}_s^{\pm}$ into its real and imaginary parts
\begin{align*}
  \widetilde {III}_s^{\pm}(b_s^{-1}(v'),\varepsilon)=&\int_{-\infty}^{\infty}\frac{\int_{b_s^{-1}(v')}^{b_s^{-1}(v)}\tilde{\om}_{in,k}(y_2) dy_2}{\big(v-v'\mp i\ep\big)^2} \frac{1}{b'_s(b_s^{-1}(v))}d v\\
  =&\int_{-\infty}^{\infty} \frac{1}{v-v'\mp i\ep} \pa_v \frac{\int_{b_s^{-1}(v')}^{b_s^{-1}(v)}\tilde{\om}_{in,k}(y_2) dy_2}{b'_s(b_s^{-1}(v))} d v\\
  =&\int_{-\infty}^{\infty} \frac{v-v'}{(v-v')^2+\ep^2} \pa_v \frac{\int_{b_s^{-1}(v')}^{b_s^{-1}(v)}\tilde{\om}_{in,k}(y_2) dy_2}{b'_s(b_s^{-1}(v))} d v\\
  &+i\int_{-\infty}^{\infty} \frac{\pm \varepsilon}{(v-v')^2+\ep^2} \pa_v \frac{\int_{b_s^{-1}(v')}^{b_s^{-1}(v)}\tilde{\om}_{in,k}(y_2) dy_2}{b'_s(b_s^{-1}(v))} d v\\
  \eqdef&\widetilde {III}_{s,r}^{\pm}(b_s^{-1}(v'),\varepsilon)+i\widetilde {III}_{s,i}^{\pm}(b_s^{-1}(v'),\varepsilon).
\end{align*}
It is clear that
\begin{align*}
  \lim_{\varepsilon\to0+}\widetilde {III}_{s,r}^{\pm}(y',\varepsilon)=& \text{P.V.}\int_{-\infty}^{\infty} \frac{1}{v-b_s(y')} \pa_v \frac{\int_{y'}^{b_s^{-1}(v)}\tilde{\om}_{in,k}(y_2) dy_2}{b'_s(b_s^{-1}(v))} d v,\\
   \lim_{\varepsilon\to0+}\widetilde {III}_{s,i}^{\pm}(y',\varepsilon)=&\pm \pi\frac{ \tilde{\om}_{in,k}(y')}{ \left(b'_s(y')\right)^2}.
\end{align*}
Accordingly,
\begin{equation}\label{eq-lim-mu-1}
  \begin{aligned}    
  &\lim_{\varepsilon\to0+}\int_{-\infty}^{\infty}\frac{\int_{y'}^{y_1}\tilde{\om}_{in,k}(y_2)\phi_{s,1}(k,y_2,y')\phi^{\pm}_{s,2}(k,y_2,y',\ep)dy_2}{{\phi^{\pm}_{s}}^2(k,y_1,y',\ep)}dy_1\\
  =& \text{P.V.}\int\frac{\int_{y'}^{y_1}\tilde{\om}_{in,k}(y_2)\phi_{s,1}(k,y_2,y')dy_2}{(b_s(y)-b_s(y'))^2\phi_{s,1}(k,y_1,y')^2}dy_1\pm i\pi  \frac{ \tilde{\om}_{in,k}(y')}{ \left(b'_s(y')\right)^2}, 
  \end{aligned}
\end{equation}
and $\lim\limits_{\ep\to 0+}\mu_s^\pm[\tilde{\om}_{in,k}](y',\ep)=\mu_s^{\pm}[\tilde{\om}_{in,k}](y')$.

Next, we show the point-wise limit of $\Phi_{s,i,l}^{\pm}(k,y,y',\ep)$ and $\Phi_{s,h,l}^{\pm}(k,y,y',\ep)$ for $y<y'$, as well as the limits of $\Phi_{s,i,r}^{\pm}(k,y,y',\ep)$ and $\Phi_{s,h,r}^{\pm}(k,y,y',\ep)$ for $y>y'$. Without loss of generality, we only give the proof for $y<y'$. For $y_1\le y<y'$ and $y_1\le y_2\le y'$, using Proposition \ref{prop-phi}, we have
\begin{equation}\label{eq-decay-part1}
  \begin{aligned}    
    &\left|\phi^{\pm}_{s}(k,y,y',\ep)\frac{\int_{y'}^{y_1}\tilde{\om}_{in,k}(y_2)\phi_{s,1}(k,y_2,y')\phi^{\pm}_{s,2}(k,y_2,y',\ep)dy_2}{{\phi^{\pm}_{s}}^2(k,y_1,y',\ep)}dy_1\right|\\
   \le& C(k) \frac{\left\|\tilde{\om}_{in,k}\right\|_{L^\infty}\left(e^{-|k||y'-y_1|}-1\right)e^{-|k||y-y_1|}}{|y_1-y'|},   
  \end{aligned}
\end{equation}
\begin{align}\label{eq-decay-part2}
  \left| \frac{\phi^{\pm}_{s}(k,y,y',\ep)}{{\phi^{\pm}_{s}}^2(k,y_1,y',\ep)}dy_1\right|\le C(k)\frac{ e^{-|k||y'-y_1|} e^{-|k||y-y_1|}}{|y_1-y'|},
\end{align}
where $C(k)$ is independent of $y'$ and $\varepsilon$. Then by the Lebesgue dominated convergence theorem, we get
\begin{equation}\label{eq-lim-Phi-i}
  \begin{aligned}    
      &\lim_{\varepsilon\to0+}\phi^{\pm}_{s}(k,y,y',\ep)\int_{-\infty}^y\frac{\int_{y'}^{y_1}\tilde{\om}_{in,k}(y_2)\phi_{s,1}(k,y_2,y')\phi^{\pm}_{s,2}(k,y_2,y',\ep)dy_2}{{\phi^{\pm}_{s}}^2(k,y_1,y',\ep)}dy_1\\
  =&(b_s(y)-b_s(y'))\phi_{s,1}(k,y,y')\int_{-\infty}^y\frac{\int_{y'}^{y_1}\tilde{\om}_{in,k}(y_2)\phi_{s,1}(k,y_2,y')dy_2}{(b_s(y_1)-b_s(y'))^2\phi_{s,1}(k,y_1,y')^2}dy_1,
  \end{aligned}
\end{equation}
and
\begin{align}\label{eq-lim-Phi-h}
    \lim_{\varepsilon\to0+}\phi^{\pm}_{s}(k,y,y',\ep)\int_{-\infty}^y\frac{1}{{\phi^{\pm}_{s}}^2(k,y_1,y',\ep)}dy_1=\int_{-\infty}^y\frac{(b_s(y)-b_s(y'))\phi_{s,1}(k,y,y')}{(b_s(y_1)-b_s(y'))^2\phi_{s,1}(k,y_1,y')^2}dy_1.
\end{align}

By using the l'Hospital's rule, one can easily check that
\begin{align*}
  \lim_{y\to y'-}\Phi_{s,i,l}^{\pm}(k,y,y',\ep)=\lim_{y\to y'+}\Phi_{s,i,r}^{\pm}(k,y,y',\ep),\ \lim_{y\to y'-}\Phi_{s,h,l}^{\pm}(k,y,y',\ep)=\lim_{y\to y'+}\Phi_{s,h,r}^{\pm}(k,y,y',\ep),
\end{align*}
which gives \eqref{eq-lim-phi-con}.

This finishes the proof of this proposition.
\end{proof}

\begin{proposition}[Representation formula]\label{prop-repr}
Suppose $\tilde{\om}_{in,k}\in C^{\infty}_c(\mathbb{R})$, then it holds for $t\geq 0$ that 
\begin{equation}\label{eq-rep-psi}
  \begin{aligned}    
     \tilde{\psi}(t,k,y) &=-\frac{1}{\pi}\int_{-\infty}^{y} e^{-ikb_s(y') t} \frac{\mathcal J_{s,1} \mathcal J_{s,4} +\mathcal J_{s,2} \mathcal J_{s,3} }{\mathcal J_{s,1}^2+\mathcal J_{s,2}^2}\int_{+\infty}^y\frac{\phi_s(k,y,y')}{\phi^2_s(k,y_1,y')}dy_1b_s'(y')dy'\\
  &\quad-\frac{1}{\pi}\int_{y}^{+\infty} e^{-ikb_s(y') t}\frac{\mathcal J_{s,1} \mathcal J_{s,4} +\mathcal J_{s,2} \mathcal J_{s,3} }{\mathcal J_{s,1}^2+\mathcal J_{s,2}^2}\int_{-\infty}^y\frac{\phi_s(k,y,y')}{\phi^2_s(k,y_1,y')}dy_1b_s'(y')dy'.
  \end{aligned}
\end{equation}
where $\mathcal J_{s,1}$ and $\mathcal J_{s,2}$ are defined in Lemma \ref{lem-lim-eigen}, and $\mathcal J_{s,1}$ and $\mathcal J_{s,2}$ are defined in Proposition \ref{prop:inhom}. 
\end{proposition}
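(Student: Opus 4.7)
The plan is to view $\tilde\psi(t,k,y)=e^{-ikt\mathcal{L}_{s,k}}\tilde\psi_{in,k}$ as produced by the functional calculus of $\mathcal{L}_{s,k}$, and to realize this semigroup by a contour integral that wraps its spectrum. Under Assumption \ref{assum} we have $\sigma(\mathcal{L}_{s,k})=\mathrm{Ran}\,b_s=\mathbb{R}$, so a compact Dunford contour is unavailable: instead I use a rectangular contour $\Gamma_{T,\delta}=\{|\Re\fc|\leq T,\ |\Im\fc|=\delta\}\cup\{\Re\fc=\pm T,\ |\Im\fc|\leq\delta\}$, oriented counterclockwise, and pass $\delta\to 0+$ followed by $T\to\infty$. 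On the horizontal sides one substitutes $\mathring\Phi_s(k,y,\fc)=(\fc-\mathcal L_{s,k})^{-1}\tilde\psi_{in,k}$; the limit $\delta\to 0+$ is supplied by Proposition \ref{prop-limit}, which also identifies the boundary values $\mathring\Phi^{\pm}_s(k,y,\fc_r)$ with $\Phi_s^{\pm}(k,y,y')$ at $\fc_r=b_s(y')$. This yields \eqref{eq:Rep}
\[
\tilde\psi(t,k,y)=\lim_{T\to\infty}\frac{1}{2\pi i}\int_{-T}^T e^{-i\fc_r kt}\bigl[\mathring\Phi^{-}_s(k,y,\fc_r)-\mathring\Phi^{+}_s(k,y,\fc_r)\bigr]\,d\fc_r,
\]
with the signs dictated by the counterclockwise orientation about the real axis.

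Next, change variables $\fc_r=b_s(y')$; since Assumption \ref{assum} gives $c_m\le b_s'\le c_m^{-1}$, this is a diffeomorphism of $\mathbb{R}$ with $d\fc_r=b_s'(y')\,dy'$. By Proposition \ref{prop-limit}, the difference $\mathring\Phi^{-}_s-\mathring\Phi^{+}_s$ reduces to $(\mu_s^--\mu_s^+)\,\phi_s(k,y,y')\int_{\pm\infty}^y \phi_s^{-2}(k,y_1,y')\,dy_1$, because the inhomogeneous piece $-\phi_s\int(\cdot)/\phi_s^2\,dy_1$ has a $\pm$-independent $\varepsilon\to 0+$ limit and therefore cancels. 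A direct algebraic computation from $\mu_s^{\pm}=(\mathcal J_{s,3}\pm i\mathcal J_{s,4})/(\mathcal J_{s,1}\mp i\mathcal J_{s,2})$ gives
\[
\mu_s^--\mu_s^+=-\frac{2i\,(\mathcal J_{s,1}\mathcal J_{s,4}+\mathcal J_{s,2}\mathcal J_{s,3})}{\mathcal J_{s,1}^2+\mathcal J_{s,2}^2},
\]
and the prefactor $(2\pi i)^{-1}\cdot(-2i)=-1/\pi$ reproduces the constant in \eqref{eq-rep-psi}. Splitting $\int_{-\infty}^{\infty}dy'$ at $y'=y$ assigns the $y>y'$ branch (with $\int_{+\infty}^y$) to $y'<y$ and the $y<y'$ branch (with $\int_{-\infty}^y$) to $y'>y$, exactly reproducing \eqref{eq-rep-psi}.

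The main obstacle is the first step: rigorously justifying \eqref{eq:Rep} when $\sigma(\mathcal L_{s,k})$ is the entire real line. One has to show that (i) the vertical segments $\Re\fc=\pm T$ contribute $o(1)$ as $T\to\infty$, and (ii) the passage $\delta\to 0+$ may be effected inside the horizontal integrals by dominated convergence. Both hinge on a uniform-in-$\varepsilon$, integrable-in-$\fc_r$ majorant for $\Phi_s^{\pm}(k,y,b_s^{-1}(\fc_r),\varepsilon)$. Such a majorant is assembled from the two-sided exponential estimate for $\phi_s^{\pm}$ in Proposition \ref{prop-phi}, the uniform upper bound and the quantitative lower bound for the Wronskian $\mathcal D^{\pm}_s(k,y',\varepsilon)$ obtained via Lemma \ref{lem-bdPi2} and Lemma \ref{lem-J-low}, and the compact support of $\tilde\om_{in,k}\in C^{\infty}_c$, which controls the inner $y_2$-integrals inside $\Phi_s^{\pm}$. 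Once these uniform estimates are in place, the rest of the argument is a book-keeping of limits together with the routine change of variables $\fc_r=b_s(y')$.
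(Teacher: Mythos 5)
The gap is in your very first step: you take for granted that the group $e^{-ikt\mathcal L_{s,k}}$ is reproduced by
$\lim_{T\to\infty}\lim_{\delta\to 0^+}\frac{1}{2\pi i}\oint_{\Gamma_{T,\delta}}e^{-ikt\fc}(\fc-\mathcal L_{s,k})^{-1}\,d\fc$. For bounded operators this is Dunford's calculus, but $\mathcal L_{s,k}$ is unbounded, non-self-adjoint, and $\sigma(\mathcal L_{s,k})=\mathbb R$. Neither the analytic-semigroup contour formula (sectoriality fails: the spectrum is not confined to a sector) nor Stone's formula (self-adjointness fails) applies; and for fixed $T,\delta$ the Cauchy integral over your rectangle produces at most a Riesz-projection–type operator, with no ready-made completeness statement guaranteeing $P_{\Gamma_{T,\delta}}\to\mathrm{Id}$ strongly as $T\to\infty$. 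The paper itself calls \eqref{eq:Rep} only a ``formal'' extension of the Dunford integral and identifies its justification as the content of the proof. You have effectively promoted the conclusion to a hypothesis.

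The paper's route is different in exactly the place you skip. It first proves the a priori growth bound $|\tilde\psi(t,k,y)|\le Ce^{c^*kt}$ (and likewise for $\pa_t\tilde\psi$) by an energy estimate for \eqref{eq:LinEul2}; this is what makes the Fourier--Laplace transform $\tilde\psi^*(\tilde\fc,k,y)$ exist for $\Re\tilde\fc>c^*$ and equal $(\tilde\fc+i\mathcal L_{s,k})^{-1}\tilde\psi_{in,k}/k$. Bromwich inversion then returns $H(t)\tilde\psi(t,k,y)$ — which degenerates at $t=0$, the very time the wave-operator construction needs — so the paper pairs it with the time-reversed solution $\underline{\tilde\psi}$, combines, and only then shifts the two resulting lines $\Re\tilde\fc=\pm c^*$ to the real axis. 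That contour shift uses analyticity of the resolvent on $\mathbb C\setminus\mathbb R$ together with the uniform decay $\|\mathring\Phi_s(\cdot,\fc)\|_{H^1_y}\lesssim (1+|\fc_r|)^{-1}\|\tilde\om_{in,k}\|_{L^2}$, proved by an integration by parts across the critical layer and Lemma \ref{lem-back}; the bound is applied on the \emph{finite} connecting segments at $\Re\fc=\pm T$. Note it gives only $O(1/|\fc_r|)$, which is not integrable on $\mathbb R$, so your plan to pass $\delta\to 0^+$ via dominated convergence against an ``integrable-in-$\fc_r$ majorant'' is not available; the limit $\lim_{T\to\infty}\int_{-T}^{T}$ in \eqref{eq:Rep} is a symmetric (principal-value) limit, not an absolutely convergent integral. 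The second half of your argument — the substitution $\fc_r=b_s(y')$, the cancellation of the $\pm$-independent inhomogeneous pieces of $\Phi_s^{\pm}$, and the identity $\mu_s^--\mu_s^+=-2i(\mathcal J_{s,1}\mathcal J_{s,4}+\mathcal J_{s,2}\mathcal J_{s,3})/(\mathcal J_{s,1}^2+\mathcal J_{s,2}^2)$ giving the $-1/\pi$ prefactor — is correct and coincides with the paper's computation.
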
 
\begin{proof}
First, we give the rigorous deduction of the identity \eqref{eq:Rep}. Without loss of generality, here we only focus on the case that $k>0$. Recall \eqref{eq:LinEulom} and \eqref{eq:LinEul2}. By the standard energy method, one can show that there exists $c^*>0$ such that
\begin{equation}\label{eq-grow-est1}
   \begin{aligned}    
  \tilde{\om}(t,k,y)\le C e^{c^*kt}, \quad\pa_t\tilde{\om}(t,k,y)\le C e^{c^*kt},\text{ for }t\ge0,y\in\mathbb R,\\
   \tilde{\psi}(t,k,y)\le C e^{c^*kt}, \quad\pa_t \tilde{\psi}(t,k,y)\le C e^{c^*kt},\text{ for }t\ge0,y\in\mathbb R.     
   \end{aligned}
 \end{equation} 
Applying the Fourier-Laplace transform on \eqref{eq:LinEul2} we have
\begin{align*}
  \tilde\fc \tilde{\psi}^*(\tilde\fc,k,y)+i\mathcal L_{s,k}\tilde{\psi}^*(\tilde\fc,k,y)=\frac{1}{k}\tilde{\psi}_{in,k}(y),\text{ for }\tilde\fc_r> c^*,
\end{align*}
where $\tilde\fc=\tilde\fc_r+i\tilde\fc_i$ and
\begin{align*}
  \tilde{\psi}^*(\tilde\fc,k,y)=\int^{+\infty}_0e^{-\tilde\fc kt}\tilde{\psi}(t,k,y)dt.
\end{align*}
It follows that
\begin{align}\label{eq-resolvent1}
  \tilde{\psi}^*(\tilde\fc,k,y)=\left(\tilde\fc+i\mathcal L_{s,k} \right)^{-1}\frac{1}{k}\tilde{\psi}_{in,k}(y),\text{ for }\tilde\fc_r> c^*.
\end{align}

By the inverse Fourier-Laplace transform (which is known by various names, the Bromwich integral, and Mellin's inverse formula), we have (see, \cite{Terras1985, HP1957})
\begin{equation}\label{eq-inver-1}
  \frac{k}{2\pi i}\lim_{T\to+\infty} \int^{c^*+iT}_{c^*-iT} e^{\tilde\fc kt}\tilde{\psi}^*(\tilde\fc,k,y) d\tilde\fc=H(t)\tilde{\psi}(t,k,y),
\end{equation}
where $H(t)=\frac{1}{2}(1+\text{sgn}(t))$ is the Heaviside step function. So, at $t=0$, the inverse Fourier-Laplace transform fails to recover the original function. To address this issue, we introduce $\underline{\tilde{\psi}}(t',k,y)$ which satisfies
\begin{align}\label{eq: LinearEuler-Psi3}
  \pa_{t'}\underline{\tilde{\psi}}(t',k,y)-i k\mathcal L_{s,k}\underline{\tilde{\psi}}(t',k,y)=0,\quad \underline{\tilde{\psi}}(0,k,y)= \tilde{\psi}_{in,k}(y).
\end{align}
One can regard $\underline{\tilde{\psi}}(t',k,y)={\tilde{\psi}}(-t',k,y)$ for $t'\ge0$, and \eqref{eq: LinearEuler-Psi3} as the time-backward extension of \eqref{eq:LinEul2}. It is clear that \eqref{eq-grow-est1} also holds for $\underline{\tilde{\psi}}(t',k,y)$. Similar to $\tilde{\psi}$, we also have 
\begin{align}\label{eq-resolvent2}
  \underline{\tilde{\psi}}^*(\tilde\fc',k,y)=\left(\tilde\fc'-i\mathcal L_{s,k} \right)^{-1}\frac{1}{k}\tilde{\psi}_{in,k}(y),\text{ for }\tilde\fc_r'> c^*.
\end{align}
and
\begin{equation}\label{eq-inver-2}
\frac{k}{2\pi i}\lim_{T\to+\infty} \int^{c^*+iT}_{c^*-iT} e^{\tilde\fc' kt}\underline{\tilde{\psi}}^*(\tilde\fc',k,y) d\tilde\fc'=H(t')\underline{\tilde{\psi}}(t',k,y).
\end{equation}
Let $t=-t'$ and $\tilde \fc=-\tilde\fc'$, we write \eqref{eq-inver-2} as
\begin{equation}\label{eq-inver-3}
  \frac{k}{2\pi i}\lim_{T\to+\infty} \int^{-c^*+iT}_{-c^*-iT} e^{\tilde\fc t}\underline{\tilde{\psi}}^*(-\tilde\fc,k,y) d\tilde\fc=H(-t)\tilde{\psi}(t,k,y).
\end{equation}
Combing \eqref{eq-resolvent1}, \eqref{eq-inver-1}, \eqref{eq-resolvent2}, and \eqref{eq-inver-3}, we have for $t\in\mathbb R$ that
\begin{equation}
  \begin{aligned}    
   \tilde{\psi}(t,k,y)=&\frac{1}{2\pi i}\lim_{T\to+\infty} \int^{c^*+iT}_{c^*-iT} e^{\tilde\fc t}\left(\tilde\fc+i\mathcal L_{s,k} \right)^{-1}\tilde{\psi}_{in,k}(y) d\tilde\fc\\
  &-\frac{1}{2\pi i}\lim_{T\to+\infty} \int^{-c^*+iT}_{-c^*-iT} e^{\tilde\fc t}\left(\tilde\fc+i\mathcal L_{s,k} \right)^{-1}\tilde{\psi}_{in,k}(y) d\tilde\fc   
  \end{aligned}
\end{equation}
Let $\fc=i\tilde \fc$, we have
\begin{equation}\label{eq:Rep2}
  \begin{aligned}    
   \tilde{\psi}(t,k,y)=&\frac{1}{2\pi i}\lim_{T\to+\infty} \int^{T}_{-T} e^{-i \left(\fc_r-ic^*\right) t}\left(\fc_r-ic^*-\mathcal L_{s,k} \right)^{-1}\tilde{\psi}_{in,k}(y) d\fc_r\\
  &-\frac{1}{2\pi i}\lim_{T\to+\infty} \int^{T}_{-T} e^{-i \left(\fc_r+ic^*\right) t}\left(\fc_r+ic^*-\mathcal L_{s,k} \right)^{-1}\tilde{\psi}_{in,k}(y) d\fc_r,
  \end{aligned}
\end{equation}
which is consistent with the form of \eqref{eq:Rep}.

Recall \eqref{eq-tPhi} that $\mathring\Phi_s(k,y,\fc)=(\fc-\mathcal{L}_{s,k})^{-1}(\tilde{\psi}_{in,k}(y))\in H_y^2(\mathbb{R})$ which satisfies for $\fc_i\neq0$ that
\begin{align}\label{eq-tPhi2}
  (\pa_{yy}-k^2)\mathring\Phi_s-\frac{b_s''(y)}{b_s(y)-\fc}\mathring\Phi_s=-\frac{\tilde{\om}_{in,k}(y)}{b_s(y)-\fc}.
\end{align}
Next we will show that for $|\fc_r|$ big enough, we have $\left\|\mathring\Phi_s(k,y,\fc)\right\|_{L^2}\le C \frac{1}{|\fc_r|} \left\|\tilde{\om}_{in,k}(y)\right\|_{L^2}$, where $C$ is independent of $\fc_i\neq0$.

Taking the inner product of \eqref{eq-tPhi2} with $\mathring\Phi_s$, we have
\begin{align}\label{eq-en-tPhi}
  \left\|\mathring\Phi_s'\right\|_{L^2}^2+k^2\left\|\mathring\Phi_s\right\|_{L^2}^2=-\int_{\mathbb R}\frac{b_s''(y)}{b_s(y)-\fc}\left|\mathring\Phi_s(k,y,\fc)\right|^2  dy+ \int_{\mathbb R}\frac{\tilde{\om}_{in,k}(y)}{b_s(y)-\fc} \overline{\mathring\Phi_s}(k,y,\fc)dy.
\end{align}
We write 
\begin{align*}
  \int_{\mathbb R}\frac{b_s''(y)}{b_s(y)-\fc}\left|\mathring\Phi_s\right|^2 d y=\int_{\mathbb R\setminus D}\frac{b_s''(y)}{b_s(y)-\fc}\left|\mathring\Phi_s\right|^2 d y+\int_{D}\frac{b_s''(y)}{b_s(y)-\fc}\left|\mathring\Phi_s\right|^2 d y\eqdef I+II,
\end{align*}
where $D=[b_s^{-1}(\fc_r)-|\fc_r|^{-\frac{1}{4}},b_s^{-1}(\fc_r)+|\fc_r|^{-\frac{1}{4}}]$. Similar to the proof of Lemma \ref{lem-J-low}, by using Lemma \ref{lem-back}, one can easily check that there exists $Y_1$ such that
\begin{align*}
  \sup_{y\in \mathbb R\setminus D} \left|\frac{b_s''(y)}{b_s(y)-\fc}\right|\le C |\fc_r|^{-\frac{1}{4}}\le \frac{1}{4} \text{ for } |\fc_r|\ge Y_1.
\end{align*}
It follows that $|I|\le \frac{1}{4}\|\mathring\Phi_s\|_{L^2_y}^2$ for $|\fc_r|\ge Y_1$. 

For $II$, by integration by parts, we have
\begin{align*}
  II=&\int_{D}\frac{b_s''(y)}{b_s(y)-\fc}|\mathring\Phi_s|^2 d y=\int_{D}\frac{b_s''}{b_s'}|\mathring\Phi_s|^2\pa_y\ln(b_s-\fc)  d y\\
  =&-\int_{D}\frac{b_s'''b_s'-(b_s'')^2}{(b_s')^2}|\mathring\Phi_s|^2\ln(b_s-\fc)+ 2\frac{b_s''}{b_s'}\Re(\mathring\Phi_s\overline{\mathring\Phi_s'})\ln(b_s-\fc) d y+\frac{b_s''}{b_s'}|\psi|^2\ln(b_s-\fc)\Big|^{b_s^{-1}(\fc_r)-|\fc_r|^{-\frac{1}{4}}}_{b_s^{-1}(\fc_r)-|\fc_r|^{-\frac{1}{4}}}.
\end{align*}
Then by Lemma \ref{lem-back}, we have for $|\fc_r|\ge Y_1$ that
\begin{align*}
  |II|\le& C \|\mathring\Phi_s\|_{L^\infty_y}^2 \|\ln(b_s-\fc)\|_{L^1_y(D)}+C \|\mathring\Phi_s\|_{L^\infty_y}\|\mathring\Phi_s'\|_{L^2_y}\|\ln(b_s-\fc)\|_{L^2_y(D)}\\
  &+C|\fc_r|^{-\frac{1}{2}}\|\mathring\Phi_s\|_{L^\infty_y}^2 \left(\left|\ln \left(b_s \left(b_s^{-1}(\fc_r)-|\fc_r|^{-\frac{1}{4}}\right)-\fc\right)\right|+\left|\ln \left(b_s \left(b_s^{-1}(\fc_r)-|\fc_r|^{-\frac{1}{4}}\right)-\fc\right)\right|\right).
\end{align*}
Recall the definition of $D$.  We have for $p=1,2$ that
\begin{align*}
  &\int_{D}|\ln(b_s-\fc)|^p d y\le C\int_{|z|\le c_m^{-1}|\fc_r|^{-\frac{1}{4}}} |\ln(z)|^pdz\le C\int_{-\infty}^{\ln\big(c_m^{-1}|\fc_r|^{-\frac{1}{4}}\big)}|y|^pe^{y}dy\le C |\fc_r|^{-\frac{1}{4}}|\ln(|\fc_r|)|^p,
\end{align*}
and
\begin{align*}
  \left(\left|\ln \left(b_s \left(b_s^{-1}(\fc_r)-|\fc_r|^{-\frac{1}{4}}\right)-\fc\right)\right|+\left|\ln \left(b_s \left(b_s^{-1}(\fc_r)-|\fc_r|^{-\frac{1}{4}}\right)-\fc\right)\right|\right)\le C |\ln(|\fc_r|)|.
\end{align*}
By taking $Y_1$ big enough, we have $|II|\le \frac{1}{4}\|\mathring\Phi_s\|_{H^1_y}^2$.

As $\tilde{\om}_{in,k}$ has compact support, there exists a constant $Y_2$ such that 
\begin{align*}
  \left|\frac{\tilde{\om}_{in,k}(y)}{b_s(y)-\fc}\right|\le \frac{|\tilde{\om}_{in,k}(y)|}{1+|\fc_r|},\text{ for }|\fc_r|\ge Y_2.
\end{align*}
Combing the above estimate, we deduce from \eqref{eq-en-tPhi} that
\begin{align}\label{eq-est-tPhi}
  \|\mathring\Phi_s\|_{L^{\infty}_y}\leq C\|\mathring\Phi_s\|_{H^1_y}\le C \frac{\left\|\tilde{\om}_{in,k}\right\|_{L^2_y}}{1+|\fc_r|}, \text{ for }|\fc_r|\ge\max(Y_1,Y_2),
\end{align}
where $C$ is independent of $\fc_i\neq0$.

By Remark \ref{rmk-spec} and our assumption that $\mathcal R$ has no eigenvalues, $\mathbb C\setminus\mathbb R$ is in the resolvent set of $\mathcal L_{s,k}$, namely $\mathbb C\setminus\mathbb R\subset \mathbb C\setminus \s(\mathcal L_{s,k})$. Thus, $\left(\fc-\mathcal L \right)^{-1}$ is an analytic operator-value function of $\fc$ on $\mathbb C\setminus\mathbb R$. And by Proposition \ref{prop-limit} $\mathring\Phi^{\pm}_s(k,y,\fc_r)=\lim_{\fc_i\to 0\pm}\mathring\Phi_s(k,y,\fc_r+i\fc_i)=\Phi_s^{\pm}(k,y,b_s^{-1}(\fc_r))$ is continuous in $\fc_r$, where $\Phi_s^{\pm}$ is given in \eqref{eq-Phi-lim}. Then we deduce from \eqref{eq:Rep2}, \eqref{eq-est-tPhi}, and \eqref{eq-Phi-lim} that
\begin{align*}
     \tilde{\psi}(t,k,y)=&\frac{1}{2\pi i}\lim_{T\to+\infty} \int^{T}_{-T} e^{-i \left(\fc_r-ic^*\right) t}\mathring\Phi_s(k,y,\fc_r-ic^*)-e^{-i \left(\fc_r+ic^*\right) t}\mathring\Phi_s(k,y,\fc_r+ic^*) d\fc_r\\
  =&\frac{1}{2\pi i}\lim_{T\to+\infty} \int^{T}_{-T} e^{-i\fc_r  t} \left(\mathring\Phi_s^-(k,y,\fc_r)-\mathring\Phi_s^+(k,y,\fc_r)\right)d\fc_r\\
  &+\frac{1}{2\pi}\lim_{T\to+\infty} \int^{c^*}_{0} e^{-i \left(-T-i\fc_i\right) t}\mathring\Phi_s(k,y,-T-i\fc_i)-e^{-i \left(T-i\fc_i\right) t}\mathring\Phi_s(k,y,T-i\fc_i) d\fc_i\\
  &+\frac{1}{2\pi}\lim_{T\to+\infty} \int^{c^*}_{0}e^{-i \left(-T+i\fc_i\right) t}\mathring\Phi_s(k,y,-T+i\fc_i)- e^{-i \left(T+i\fc_i\right) t}\mathring\Phi_s(k,y,T+i\fc_i)d\fc_i\\
  =&\frac{1}{2\pi i}\lim_{T\to+\infty} \int^{T}_{-T} e^{-i\fc_r  t} \left(\mathring\Phi_s^-(k,y,\fc_r)-\mathring\Phi_s^+(k,y,\fc_r)\right)d\fc_r\\
  =&-\frac{1}{\pi}\int_{-\infty}^{y} e^{-ikb_s(y') t} \frac{\mathcal J_{s,1} \mathcal J_{s,4} +\mathcal J_{s,2} \mathcal J_{s,3} }{\mathcal J_{s,1}^2+\mathcal J_{s,2}^2}\int_{+\infty}^y\frac{\phi_s(k,y,y')}{\phi^2_s(k,y_1,y')}dy_1b_s'(y')dy'\\
  &\quad-\frac{1}{\pi}\int_{y}^{+\infty} e^{-ikb_s(y') t}\frac{\mathcal J_{s,1} \mathcal J_{s,4} +\mathcal J_{s,2} \mathcal J_{s,3} }{\mathcal J_{s,1}^2+\mathcal J_{s,2}^2}\int_{-\infty}^y\frac{\phi_s(k,y,y')}{\phi^2_s(k,y_1,y')}dy_1b_s'(y')dy'.
\end{align*}
Then we arrive at the result of this proposition. We also refer the readers to \cite{LiMasmoudiZhao2022critical} for a different method of proof, where more accurate estimates can be found.
\end{proof}

\subsection{Wave operator related to the Rayleigh operator}
Now, we are in the position to give the wave operator $\mathbb{D}_{s,k}$ related to $\mathcal{R}_{s,k}$.
\begin{proposition}[Wave operator]\label{pro-wave}For $k\neq0$, $w(y),g(y)\in L^2(\mathbb R)$, let
\begin{align*}
\mathbb{D}_{s,k}[w](y')=\frac{1}{\pi} b_s'(y')\frac{\mathcal J_{s,1}(k,y')\mathcal J_{s,4}[w](y')+\mathcal J_{s,2}(y')\mathcal J_{s,3}[w](k,y')}{\sqrt{\mathcal J_{s,1}^2(k,y')+\mathcal J_{s,2}^2(y')}},
\end{align*}
and
\begin{align*}
  \mathbb{D}_{s,k}^1[g]
  =&\frac{b_s'(y')g(y')\mathcal J_1(y')+ \mathcal J_{s,3}[b_s''g](k,y')}{\sqrt{\mathcal J_{s,1}^2(k,y')+\mathcal J_{s,2}^2(y')}}.
\end{align*}
It holds that
\begin{align}\label{eq-wave}
  \mathbb{D}_{s,k}\big[\mathcal{R}_{s,k}w\big](y')=b_s(y')\mathbb{D}_{s,k}[w](y'),
\end{align}
and
\begin{align}\label{eq-wave-dual}
  \int_{\mathbb R} w(y)g(y)d y= \int_{\mathbb R} \mathbb{D}_{s,k}[w](y')\mathbb{D}_{s,k}^1[g](y')d y'.
\end{align}
Moreover, there exists $C\ge1$ independent of $k,s,\nu$ such that
\begin{align}\label{eq-est-wave}
  C^{-1}\|w\|_{L^2}\leq \|\mathbb{D}_{s,k}[w]\|_{L^2}\leq C\|w\|_{L^2},\quad C^{-1}\|w\|_{L^2}\leq \|\mathbb{D}_{s,k}^1[w]\|_{L^2}\leq C\|w\|_{L^2}.
\end{align}
\end{proposition}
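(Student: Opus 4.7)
The plan is to read the wave operator off the representation formula in Proposition~\ref{prop-repr}. After rearranging factors, that formula exhibits the stream function in the form
\[
\tilde\psi(t,k,y) \;=\; \int_{\mathbb R} e^{-ikb_s(y')t}\,\mathcal K_\psi(k,y,y')\,\mathbb{D}_{s,k}[\tilde\omega_{in,k}](y')\,dy',
\]
with an explicit kernel $\mathcal K_\psi$ built from $\phi_s(k,\cdot,y')$, $\int 1/\phi_s^2\,dy_1$, and the normalizing factor $\sqrt{\mathcal J_{s,1}^2+\mathcal J_{s,2}^2}$. Applying $\Delta_k$ yields a parallel representation for $\tilde\omega(t,k,y)$ with some kernel $\mathcal K_\omega$, and in this ``generalized Fourier'' picture the Rayleigh evolution acts as multiplication by $e^{-ikb_s(y')t}$ in the spectral variable $y'$.

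\medskip

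\noindent\textbf{Intertwining and duality.} For the intertwining identity I would apply the spectral expansion to the linearized solution with initial data $w$ and, separately, with initial data $\mathcal R_{s,k}w$. Differentiating the first expansion in $t$ at $t=0$ produces $-ik\mathcal R_{s,k}w$ on the left and brings down a factor $-ikb_s(y')$ under the integral on the right; comparing spectral densities---justified by injectivity of the transform, which follows from the representation formula---yields $\mathbb{D}_{s,k}[\mathcal R_{s,k}w](y')=b_s(y')\mathbb{D}_{s,k}[w](y')$. For the duality identity I would plug the spectral representation of $w=\Delta_k\tilde\psi(0,\cdot)$ into $\int wg\,dy$, integrate by parts twice (boundary terms vanish by the decay of $\phi_s$ from Proposition~\ref{prop-phi}), and use the homogeneous Rayleigh equation satisfied by $\phi_{s,1}$ to process the $\Delta_k$-derivatives landing on the kernel: the multiplicative piece $b_s'(y')g(y')\mathcal J_{s,1}(y')$ arises from the pointwise part of the Green's-function kernel, while $\mathcal J_{s,3}[b_s''g](k,y')$ arises from the nonlocal part when $\Delta_k$ hits $\phi_s$ and reproduces the factor $-b_s''$ through the Rayleigh equation.

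\medskip

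\noindent\textbf{$L^2$ bounds.} For the upper bound $\|\mathbb{D}_{s,k}[w]\|_{L^2}\lesssim \|w\|_{L^2}$, I combine the pointwise estimates $|\mathcal J_{s,1}|+|\mathcal J_{s,2}|\lesssim |k|$ from Lemma~\ref{lem-bdPi2} with the nondegeneracy $\sqrt{\mathcal J_{s,1}^2+\mathcal J_{s,2}^2}\gtrsim |k|$ from Lemma~\ref{lem-J-low}; the $\mathcal J_{s,4}$ piece is trivial since it is a bounded pointwise multiplier, while $\mathcal J_{s,3}$ is a Hilbert-transform-type singular integral whose kernel is controlled using the sharp exponential decay bounds \eqref{eq-phi1-est-5}--\eqref{eq-phi1-est-6} for $\phi_{s,1}$. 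The symmetric bound for $\mathbb{D}_{s,k}^1$ is proved identically. The lower bounds are then recovered by duality: using the identity just established,
\[
\|w\|_{L^2}=\sup_{\|g\|_{L^2}=1}\Bigl|\int wg\,dy\Bigr|=\sup_{\|g\|_{L^2}=1}\Bigl|\int \mathbb{D}_{s,k}[w]\,\mathbb{D}_{s,k}^1[g]\,dy'\Bigr|\lesssim \|\mathbb{D}_{s,k}[w]\|_{L^2},
\]
and symmetrically for $\mathbb{D}_{s,k}^1$.

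\medskip

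\noindent\textbf{Main obstacle.} The hardest step is the uniform-in-$(k,s,\nu)$ $L^2$ estimate for the singular integral $\mathcal J_{s,3}$. After the change of variable $v=b_s(y)$ its principal part becomes a Hilbert transform acting against a weight of the form $\phi_{s,1}(k,\cdot,y')^{\pm 2}$; to close the bound with constants independent of $k$ I expect to have to use simultaneously the upper bound from \eqref{eq-phi1-est-2} and the matching lower bound from \eqref{eq-phi1-est-5}--\eqref{eq-phi1-est-6}, so that the weight lies in a uniform $A_2$ class. Everything else---the identity \eqref{eq-wave}, the duality \eqref{eq-wave-dual}, and the reduction of the lower bounds to the upper bounds---is then essentially algebraic given the representation formula.
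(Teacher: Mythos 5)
Your high-level plan---duality via the $t=0$ representation formula, upper $L^2$ bounds by splitting $\mathcal J_{s,3}$ and using the nondegeneracy $\sqrt{\mathcal J_{s,1}^2+\mathcal J_{s,2}^2}\gtrsim|k|$ from Lemma~\ref{lem-J-low}, lower bounds from duality---reproduces the paper's structure, and the duality and lower-bound steps are essentially the paper's argument. Two points need repair.

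The $A_2$-weight idea for $\mathcal J_{s,3}$ does not apply as stated: $\phi_{s,1}(k,y_1,y')$ depends on the output variable $y'$ as well as the integration variable $y_1$, so after $v=b_s(y_1)$ you have a genuine two-variable kernel $K(v_1,v')/(v_1-v')$, not an operator of the form $w(v')^{-1}\mathcal H(w\,f)$ with a single weight $w$. The paper instead writes $\mathcal J_{s,3}[w](y')$ as the $1/|k|$-truncated Hilbert transform $\frac{1}{(b_s'(y'))^2}\text{P.V.}\int_{|y_1-y'|<1/|k|}\frac{w(y_1)}{y_1-y'}\,dy_1$ plus four remainder operators whose kernels are pointwise dominated, via Proposition~\ref{prop-phi}, either by $|k|^2\chi_{\{|y_1-y'|<1/|k|\}}$, by $\chi_{\{|y_1-y'|>1/|k|\}}e^{-C|k||y_1-y'|}/|y_1-y'|$, or by an integrable exponential profile times the Hardy--Littlewood maximal function $M[w](y_1)$; uniform $L^2$-boundedness of the truncated Hilbert transform, boundedness of $M$, and Young's inequality close the estimate uniformly in $k,s,\nu$. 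The two-sided exponential bounds on $\phi_{s,1}$ are indeed the key input, but they enter through this elementary splitting rather than Muckenhoupt machinery.

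Your intertwining argument is a genuinely different and conceptually attractive route, but it is incomplete as written. For general $w\in L^2$, $\mathcal R_{s,k}w=b_sw-b_s''\Delta_k^{-1}w$ is not in $L^2$ since $b_s$ grows linearly, so the identity has to be established on a dense class and then interpreted. Differentiating the spectral expansion under the integral at $t=0$ brings down the linearly growing factor $b_s(y')$ and needs a dominated-convergence justification that you have not supplied. And the injectivity step you invoke requires the lower $L^2$ bound, which in your own scheme follows from duality; so the logical order must be: upper bound, then duality, then lower bound, and only then intertwining. The paper avoids these issues with a direct algebraic computation: using $b_s''\phi_{s,1}=\Delta_k\phi_s$ and two integrations by parts---the same manipulations as in the duality proof---one obtains $\mathcal J_{s,3}[b_s''\psi]=\mathcal J_{s,3}[b_sw]-b_s(y')\mathcal J_{s,3}[w]-\frac{\mathcal J_{s,1}}{\mathcal J_{s,2}}\mathcal J_{s,4}[b_s''\psi]$, and then \eqref{eq-wave} follows by substituting into the definition of $\mathbb{D}_{s,k}$ and cancelling term by term.
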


\begin{proof}
We first prove the upper bounds in \eqref{eq-est-wave}. Recall the definition of $\mathcal J_{s,1}$, $\mathcal J_{s,2}$, $\mathcal J_{s,3}$, and $\mathcal J_{s,4}$. From Lemma \ref{lem-J-low}, we can see that $\sqrt{\mathcal J_{s,1}^2(k,y')+\mathcal J_{s,2}^2(y')}$ has a uniform lower bound. Therefore, in order to prove the upper bounds in \eqref{eq-est-wave}, it is sufficient to estimate $\left\|\mathcal J_{s,3}[\cdot](k,y')\right\|_{L^2\to L^2}$.

We write
\begin{align*}
  \mathcal J_{s,3}[w](k,y')=&\text{P.V.}\int \frac{\int_{y'}^{y_1}  w(y_2)\phi_1(y_2,y')dy_2}{\phi^2_s(y_1,y')}dy_1
  =\text{P.V.}\int \frac{\int_{y'}^{y_1}  w(y_2)\phi_1(y_2,y')dy_2}{(y_1-y')^2}\frac{(y_1-y')^2}{\phi^2_s(y_1,y')}dy_1\\
  =&-\text{P.V.}\int \int_{y'}^{y_1}  w(y_2)\phi_1(y_2,y')dy_2\frac{(y_1-y')^2}{\phi^2_s(y_1,y')}\pa_{y_1}\frac{1}{y_1-y'}dy_1\\
  =&\text{P.V.}\int \frac{w(y_1)}{y_1-y'}\frac{(y_1-y')^2}{\left(b_s(y_1)-b_s(y')\right)^2\phi_{s,1}(y_1,y')}dy_1\\
  &+\int \frac{\int_{y'}^{y_1}  w(y_2)\phi_1(y_2,y')dy_2}{y_1-y'}\pa_{y_1} \frac{(y_1-y')^2}{\phi^2_s(y_1,y')}dy_1\\
  =&\frac{1}{(b'_s(y'))^2}\text{P.V.}\int^{y'+\frac{1}{|k|}}_{y'-\frac{1}{|k|}} \frac{w(y_1)}{y_1-y'}dy_1\\
  &+\int^{y'+\frac{1}{|k|}}_{y'-\frac{1}{|k|}} \frac{w(y_1)}{y_1-y'}\left(\frac{(y_1-y')^2}{\left(b_s(y_1)-b_s(y')\right)^2\phi_{s,1}(y_1,y')}-\frac{1}{(b'_s(y'))^2}\right)dy_1\\
  &+ \int_{\mathbb R\setminus[y'-\frac{1}{|k|},y'+\frac{1}{|k|}]} \frac{w(y_1)}{y_1-y'}\frac{(y_1-y')^2}{\left(b_s(y_1)-b_s(y')\right)^2\phi_{s,1}(y_1,y')}dy_1\\
  &+ 2\int \frac{\int_{y'}^{y_1}  w(y_2)\phi_1(y_2,y')dy_2}{y_1-y'} \frac{(y_1-y') \int^{y_1}_{y'}\left(b_s'(y_1)-b_s'(y_3)\right)d_{y_3}}{\left(b_s(y_1)-b_s(y')\right)^3\phi_{s,1}^2(y_1,y')}dy_1\\
  &- 2\int \frac{\int_{y'}^{y_1}  w(y_2)\phi_1(y_2,y')dy_2}{y_1-y'} \frac{(y_1-y')^2\pa_{y_1}\phi_{s,1}(y_1,y')}{\left(b_s(y_1)-b_s(y')\right)^2\phi_1^3(y_1,y')}dy_1\\
  \eqdef&(\mathcal H-\mathcal H_{\frac{1}{|k|}}) [w](y')\\
  &+\mathcal J_{s,3,1}[w](k,y')+\mathcal J_{s,3,2}[w](k,y')+\mathcal J_{s,3,3}[w](k,y')+\mathcal J_{s,3,4}[w](k,y').
\end{align*}

It is clear that
\begin{align*}
  \left\|(\mathcal H-\mathcal H_{\frac{1}{|k|}}) [w](y')\right\|_{L^2}\lesssim \left\|w\right\|_{L^2}.
\end{align*}
For the integrand in $\mathcal J_{s,3,1}$, $\mathcal J_{s,3,2}$, $\mathcal J_{s,3,3}$, and $\mathcal J_{s,3,4}$, we use Proposition \ref{prop-phi} to get
\begin{align*}
  &\left|\frac{w(y_1)\chi_{<\frac{1}{|k|}}(y_1-y')}{y_1-y'}\left(\frac{(y_1-y')^2}{\left(b_s(y_1)-b_s(y')\right)^2\phi_{s,1}(y_1,y')}-\frac{1}{(b'_s(y'))^2}\right)\right|\\
  \lesssim& \left|w(y_1)\right| \chi_{<\frac{1}{|k|}}(y_1-y')\left(1+k^2\left|y_1-y'\right|\right),
\end{align*}
\begin{align*}
  \left|\frac{w(y_1)\chi_{>\frac{1}{|k|}}(y_1-y')}{y_1-y'}\frac{(y_1-y')^2}{\left(b_s(y_1)-b_s(y')\right)^2\phi_{s,1}(y_1,y')}\right|\lesssim\frac{\left|w(y_1)\right| \chi_{>\frac{1}{|k|}}(y_1-y')}{e^{C|k||y_1-y'|}\left|y_1-y'\right|},
\end{align*}
\begin{align*}
  &\left|\frac{\int_{y'}^{y_1}  w(y_2)\phi_{s,1}(y_2,y')dy_2}{y_1-y'} \frac{(y_1-y') \int^{y_1}_{y'}\left(b_s'(y_1)-b_s'(y_2)\right)d_{y_2}}{\left(b_s(y_1)-b_s(y')\right)^3\phi_{s,1}(y_1,y')}\right|\\
  \lesssim&\frac{\int_{y'}^{y_1}  |w(y_2)|dy_2}{y_1-y'}\left|\frac{(y_1-y') \int^{y_1}_{y'}\left(b_s'(y_1)-b_s'(y_2)\right)d_{y_2}}{\left(b_s(y_1)-b_s(y')\right)^3\phi_{s,1}^2(y_1,y')}\right| \\
  \lesssim&M[w](y_1)\frac{1}{e^{C|k||y_1-y'|}},\\
    &\left|\frac{\int_{y'}^{y_1}  w(y_2)\phi_1(y_2,y')dy_2}{y_1-y'} \frac{(y_1-y')^2\pa_{y_1}\phi_{s,1}(y_1,y')}{\left(b_s(y_1)-b_s(y')\right)^2\phi_1^3(y_1,y')}\right|\\
  \lesssim&M[w](y_1)\frac{|k|\min(1,|k||y_1-y'|)}{e^{C|k||y_1-y'|}},
\end{align*}
where $M[\cdot]$ is the Hardy-Littlewood maximal function
\begin{align*}
  M[w](v_1)\eqdef\sup_{r>0}\frac{1}{r}\int^r_{-r}|w(y_1-y)|dy.
\end{align*}

It follows from Young's convolution inequality that
\begin{align*}
  \left\|\mathcal J_{s,3,1}[w] \right\|_{L^2_{y'}}+\left\|\mathcal J_{s,3,2}[w] \right\|_{L^2_{y'}}+\left\|\mathcal J_{s,3,3}[w] \right\|_{L^2_{y'}}+\left\|\mathcal J_{s,3,4}[w] \right\|_{L^2_{y'}}\le C \left\|w\right\|_{L^2},
\end{align*}
where $C$ is a constant that depends only on $c_m$ and $\left\|b_{in}''\right\|_{H^2}$.

Consequently, we have
\begin{align*}
  \left\|\mathcal J_{s,3}[w](k,y')\right\|_{L^2}\le C \left\|w\right\|_{L^2},
\end{align*}
which gives 
\begin{align}\label{eq: upperboundDD_1}
 \|\mathbb{D}_{s,k}[w]\|_{L^2}\leq C\|w\|_{L^2},\quad \|\mathbb{D}_{s,k}^1[w]\|_{L^2}\leq C\|w\|_{L^2}.
\end{align}

Next, we turn to \eqref{eq-wave-dual}. By taking $t=0$ in \eqref{eq-rep-psi}, we have the following representation formula:
\begin{align*}
  \tilde{\psi}_{in,k}(y)=-\int_{\mathbb R}\mathbb{D}_{s,k}\big[\Delta_{k}\tilde{\psi}_{in,k}\big](y') \frac{\chi_{y'<y}\int_{+\infty}^y\frac{\phi_s(k,y,y')}{\phi^2_s(k,y_1,y')}dy_1+\chi_{y'>y}\int_{-\infty}^y\frac{\phi_s(k,y,y')}{\phi^2_s(k,y_1,y')}dy_1}{\sqrt{\mathcal J_{s,1}^2(k,y')+\mathcal J_{s,2}^2(y')}} dy'.
\end{align*}

Let $\psi=\Delta_k^{-1}w$. We have for $g\in C_c^\infty(\mathbb R)$ that
  \begin{align*}
    &\int_{\mathbb R} w(y)g(y)d y=\int_{\mathbb R} \psi(y)\Delta_kg(y)d y\\
    =& \int_{\mathbb R}\frac{\mathbb{D}_{s,k}[w](y')}{\sqrt{\mathcal J_{s,1}^2(k,y')+\mathcal J_{s,2}^2(y')}}\int_{\mathbb R}\frac{\int^{y_1}_{y'}\phi_s(y_2,y')\Delta_kg(y_2)dy_2}{\phi^2_s(y_1,y')}dy_1 dy'.
  \end{align*}
Next, we focus on $\int_{\mathbb R}\frac{\int^{y_1}_{y'}\phi_s(y_2,y')\Delta_kg(y_2)dy_2}{\phi^2_s(y_1,y')}dy_1$. Recall that $\phi_s$ solves \eqref{eq:homRay} with $\ep=0$.  Using integration by part twice, we get
\begin{align*}
  &\int_{\mathbb R}\frac{\int^{y_1}_{y'}\phi_s(y_2,y')\Delta_kg(y_2)dy_2}{\phi^2_s(y_1,y')}dy_1\\
  =&\text{P.V.}\int_{\mathbb R}\frac{1}{\phi^2_s(y_1,y')}\Big( \int^{y_1}_{y'}\pa_y^2\phi_s(y_2,y')g(y_2)-k^2\phi_s(y_2,y')g(y_2)dy_2\\
  &\qquad\qquad\qquad\qquad\qquad\qquad+\phi_s(y_1,y')g'(y_1)-\pa_y\phi_s(y_1,y')g(y_1)+\pa_y\phi_s(y',y')g(y')\Big)dy_1\\
  =&\text{P.V.}\int_{\mathbb R}\frac{1}{\phi^2_s(y_1,y')}\Big(\int^{y_1}_{y'}b''_2(y_2)\phi_1(y_2,y')g(y_2)dy_2\\
  &\qquad\qquad\qquad\qquad\qquad\qquad+\phi_s(y_1,y')g'(y_1)-\pa_y\phi_s(y_1,y')g(y_1)+\pa_y\phi_s(y',y')g(y')\Big)dy_1.
\end{align*}
We write
\begin{align*}
  &\text{P.V.}\int_{\mathbb R}\frac{\phi_s(y_1,y')g'(y_1)-\pa_y\phi_s(y_1,y')g(y_1)+\pa_y\phi_s(y',y')g(y')}{\phi^2_s(y_1,y')}dy_1\\
  =&\text{P.V.}\int_{\mathbb R}\frac{g'(y_1)}{\phi_s(y_1,y')}dy_1-\text{P.V.}\int_{\mathbb R}\frac{g(y_1)b'_s(y_1)\phi_{s,1}(y_1,y')-\pa_y\phi_s(y',y')g(y')}{\phi^2_s(y_1,y')}dy_1\\
  &-\text{P.V.}\int_{\mathbb R}\frac{g(y_1)\left(b_s(y_1)-b_s(y')\right)\phi_{s,1}'(y_1,y')}{\phi^2_s(y_1,y')}dy_1\eqdef\mathcal K_1+\mathcal K_2+\mathcal K_3.
\end{align*}
For each term, we derive that
\begin{align*}
  \mathcal K_1=&\text{P.V.}\int \frac{\pa_{y_1}\big( g(y_1)- g(y')\big)}{\phi_s(y_1,y')}dy_1=\text{P.V.}\int \frac{\big( g(y_1)- g(y')\big)\pa_{y_1}\phi_s(y_1,y')}{(b_s(y_1)-b_s(y'))^2\phi_{s,1}^2(y_1,y')}dy_1\\
  =&\text{P.V.}\int \frac{\big( g(y_1)- g(y')\big)b_s'(y_1)\phi_{s,1}(y_1,y')}{(b_s(y_1)-b_s(y'))^2\phi_{s,1}^2(y_1,y')}dy_1\\
  &+\text{P.V.}\int \frac{\big( g(y_1)- g(y')\big)\big(b_s(y_1)-b_s(y')\big)\phi_{s,1}'(y_1,y')}{(b_s(y_1)-b_s(y'))^2\phi_{s,1}^2(y_1,y')}dy_1\\
  \eqdef& \mathcal K_{1,1}+\mathcal K_{1,2}.
\end{align*}
Recalling that $\pa_y\phi_s(y',y')=b_s'(y')$, we have
\begin{align*}
  &\mathcal K_{1,1}+\mathcal K_2=- g(y')\text{P.V.}\int \frac{b_s'(y_1)\phi_{s,1}(y_1,y')-b_s'(y')}{(b_s(y_1)-b_s(y'))^2\phi_{s,1}^2(y_1,y')}dy_1\\
  =&- g(y')\text{P.V.}\int \frac{b_s'(y_1)-b_s'(y')}{\big(b_s(y_1)-b_s(y')\big)^2}dy_1- g(y')\int^{+\infty}_{-\infty}\frac{b_s'(y_1)}{\big(b_s(y_1)-b_s(y')\big)^2}\left(\frac{1}{\phi_{s,1}(y_1,y')}-1\right)dy_1\\
  &+b_s'(y')g(y')\int^{+\infty}_{-\infty}\frac{1}{\big(b_s(y_1)-b_s(y')\big)^2}\left(\frac{1}{\phi_{s,1}^2(y_1,y')}-1\right)dy_1,
\end{align*}
and
\begin{align*}
  \mathcal K_{1,2}+\mathcal K_3=&-g(y')\int^{+\infty}_{-\infty}\frac{\big(b_s(y_1)-b_s(y')\big)\pa_y\phi_{s,1}(y_1,y')}{(b_s(y_1)-b_s(y'))^2\phi_{s,1}^2(y_1,y')}dy_1\\
  =& g(y')\int^{+\infty}_{-\infty}\frac{1}{b_s(y_1)-b_s(y')}\pa_y\left(\frac{1}{\phi_{s,1}(y_1,y')}-1\right)dy_1\\
  =& g(y')\int^{+\infty}_{-\infty}\frac{b_s'(y_1)}{\big(b_s(y_1)-b_s(y')\big)^2}\left(\frac{1}{\phi_{s,1}(y_1,y')}-1\right)dy_1.
\end{align*}
Thus, we conclude that
\begin{align*}
  &\mathcal K_1+\mathcal K_2+\mathcal K_3
  =\mathcal K_{1,1}+\mathcal K_2+\mathcal K_{1,2}+\mathcal K_3\\
  =& g(y')\text{P.V.}\int \frac{b_s'(y')-b_s'(y_1)}{\big(b_s(y_1)-b_s(y')\big)^2}dy_1\\
  &+ b_s'(y')g(y')\int^{+\infty}_{-\infty}\frac{1}{\big(b_s(y_1)-b_s(y')\big)^2}\left(\frac{1}{\phi_{s,1}^2(y_1,y')}-1\right)dy'
  = b_s'(y')g(y')\mathcal J_1(y').
\end{align*}
Therefore, we have
  \begin{align*}
    &\int_{\mathbb R} w(y)g(y)d y\\
    =& \int_{\mathbb R}\frac{\mathbb{D}_{s,k}[w](y')}{\sqrt{\mathcal J_{s,1}^2(k,y')+\mathcal J_{s,2}^2(y')}}\text{P.V.}\int_{\mathbb R}\frac{\int^{y_1}_{y'}b''_s(y_2)\phi_1(k,y_2,y')g(y_2)dy_2}{\phi^2_s(k,y_1,y')}dy_1 dy'\\
    &+ \int_{\mathbb R}\frac{\mathbb{D}_{s,k}[w](y')}{\sqrt{\mathcal J_{s,1}^2(k,y')+\mathcal J_{s,2}^2(y')}} b_s'(y') g(y')\mathcal J_1(y')dy'\\
    =& \int_{\mathbb R} \mathbb{D}_{s,k}[w](y')\mathbb{D}_{s,k}^1[g](y')d y'.
  \end{align*}
The equality \eqref{eq-wave-dual} follows from the fact that $C_c^\infty(\mathbb R)$ is dense in $L^2$.

Last, we prove \eqref{eq-wave}. We focus on the following nonlocal term in $\mathbb{D}_{s,k}\big[\mathcal{R}_{s,k}w\big]$
\begin{align*}
  \mathcal J_{s,3}[b_s''\psi] (k,y')=\text{P.V.}\int_{\mathbb R}\frac{\int^{y}_{y'}b''_s(y_2)\psi(y_2)\phi_{s,1}(k,y_2,y')dy_2}{\phi^2_s(k,y,y')}dy.
\end{align*}
Recall that $\psi=\Delta_k^{-1}w$,  $\phi_s$ solves \eqref{eq:homRay} with $\ep=0$, then $b''_s(y_2)\phi_{s,1}(k,y_2,y')=\Delta_k\phi_s(k,y,y')$, $\phi_s(k,y',y')=0$, and $\pa_y\phi_s(k,y',y')=b_s'(y')$. By the same technique used in the proof of \eqref{eq-wave-dual}, we deduce that
\begin{align*}
  &\text{P.V.}\int_{\mathbb R}\frac{\int^{y}_{y'}b''_s(y_2)\psi(y_2)\phi_{s,1}(k,y_2,y')dy_2}{\phi^2_s(k,y,y')}dy=\text{P.V.}\int_{\mathbb R}\frac{\int^{y}_{y'}\psi(y_2)\Delta_k\phi_{s}(k,y_2,y')dy_2}{\phi^2_s(k,y,y')}dy\\
  =&\text{P.V.}\int_{\mathbb R}\frac{\int^{y}_{y'}\Delta_k\psi(y_2)\phi_{s}(k,y_2,y')dy_2}{\phi^2_s(k,y,y')}dy\\
  &+\text{P.V.}\int_{\mathbb R}\frac{\psi(y)\phi_{s}'(k,y,y')-\psi(y')\phi_{s}'(k,y',y')-\psi'(y)\phi_{s}(k,y,y')}{\phi^2_s(k,y,y')}dy\\
  =&\text{P.V.}\int_{\mathbb R}\frac{\int^{y}_{y'}b_s(y_2)w(y_2)\phi_{s,1}(k,y_2,y')dy_2}{\phi^2_s(k,y,y')}dy-b_s(y')\text{P.V.}\int_{\mathbb R}\frac{\int^{y}_{y'}w(y_2)\phi_{s,1}(k,y_2,y')dy_2}{\phi^2_s(k,y,y')}dy\\
  &-b_s'(y')\psi(y')\mathcal J_1(y')\\
  =&\mathcal J_{s,3}[b_sw] (k,y')-b_s(y')\mathcal J_{s,3}[w] (k,y')-\frac{\mathcal J_{s,1}(k,y')}{\mathcal J_{s,2}(y')}\mathcal J_{s,4}[b_s''\psi] (y')
\end{align*}
Then, one can easily check that
\begin{align*}
  &\mathbb{D}_{s,k}\big[\mathcal{R}_{s,k}w\big](y')-b_s(y')\mathbb{D}_{s,k}[w](y')\\
  =&\frac{b_s'(y')}{\pi}\frac{\mathcal J_{s,1}(k,y')\mathcal J_{s,4}[b_sw-b_s''\psi] (y')+\mathcal J_{s,2}(y')\mathcal J_{s,3}[b_sw-b_s''\psi] (k,y')}{\sqrt{\mathcal J_{s,1}^2(k,y')+\mathcal J_{s,2}^2(y')}}\\
  &-\frac{b_s'(y')}{\pi}\frac{b_s(y')\mathcal J_{s,1}(k,y')\mathcal J_{s,4}[w] (y')+b_s(y')\mathcal J_{s,2}(y')\mathcal J_{s,3}[w] (k,y')}{\sqrt{\mathcal J_{s,1}^2(k,y')+\mathcal J_{s,2}^2(y')}}\\
  =&\frac{b_s'(y')}{\pi}\frac{\left(\mathcal J_{s,1}(k,y')\mathcal J_{s,4}[b_sw] (y')-b_s(y')\mathcal J_{s,1}(k,y')\mathcal J_{s,4}[w] (y')\right)}{\sqrt{\mathcal J_{s,1}^2(k,y')+\mathcal J_{s,2}^2(y')}}
  =0,
\end{align*}
which is \eqref{eq-wave}.

The lower bound in \eqref{eq-est-wave} follows directly from a duality argument and the upper bounds \eqref{eq: upperboundDD_1}. 
This completes the proof of this Proposition.
\end{proof}

Next, we present some commutator estimates for the wave operator $\mathbb{D}_{s,k}$, which will be utilized in establishing the linear enhanced dissipation. We denote the commutator of the wave operator and the derivative by
\begin{align*}
  [\pa_s,\mathbb{D}_{s,k}][w](s,y')=\pa_s \left(\mathbb{D}_{s,k}[w]\right)(s,y')-\left(\mathbb{D}_{s,k}[\pa_sw]\right)(s,y'),\\
  [\pa_{y},\mathbb{D}_{s,k}][w](s,y')=\pa_{y'} \left(\mathbb{D}_{s,k}[w]\right)(s,y')-\left(\mathbb{D}_{s,k}[\pa_yw]\right)(s,y'),
\end{align*}
and have the following estimates.
\begin{lemma}\label{lem-wave-com}
For $k\neq 0$ and $w(s,y)\in C^1([0,\infty);H^1(\mathbb{R}))$, it holds that 
\begin{align}
&\|[\pa_s,\mathbb{D}_{s,k}][w]\|_{L^2}\leq C\nu\|w\|_{L^2},\label{eq-wave-com-s}\\
&\|[\pa_y,\mathbb{D}_{s,k}][w]\|_{L^2}+\|[\pa_y,\mathbb{D}_{s,k}^1][w]\|_{L^2}\leq C\|w\|_{L^2},\label{eq-wave-com-y}\\
&\|[\pa_{yy},\mathbb{D}_{s,k}][w]\|_{L^2}+\|[\pa_{yy},\mathbb{D}_{s,k}^1][w]\|_{L^2}\leq C\Big(\|\pa_yw\|_{L^2}+\|w\|_{L^2}\Big),\label{eq-wave-com-yy}
\end{align}
where $C>1$ is a constant depends on $c_m$ and $\left\|b_{in}''\right\|_{H^3}$ and is independent of $k,s,\nu$.
\end{lemma}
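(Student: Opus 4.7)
My plan is to prove all three commutator estimates by directly differentiating the explicit formula for $\mathbb{D}_{s,k}$ from Proposition \ref{pro-wave} and then matching the result against $\mathbb{D}_{s,k}$ applied to the derivative of $w$. The wave operator depends on the differentiation variable ($s$ or $y'$) through three kinds of ingredients: (i) the scalar prefactors $b_s'(y')$, $\mathcal J_{s,1}(k,y')$, $\mathcal J_{s,2}(y')$; (ii) the local part $\mathcal J_{s,4}[w](y')=\pi w(y')/b_s'(y')^2$; and (iii) the principal value integral $\mathcal J_{s,3}[w](k,y')$ whose kernel is built from $\phi_{s,1}$. In each case, once the commutator is written out, every residual term factors as an integrable kernel acting on $w$ (or on derivatives of $w$), and we bound it in $L^2\to L^2$ by the same combination of Hilbert-transform estimates, Young's convolution inequality, and Hardy--Littlewood maximal estimates already used in the proof of Proposition \ref{pro-wave}.

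For $[\pa_s,\mathbb{D}_{s,k}]$, differentiating the explicit formula in $s$ produces terms involving $\dot b_s=\nu b_s''$, $\dot b_s'=\nu b_s'''$, $\pa_s\mathcal J_{s,1}$, $\pa_s\mathcal J_{s,2}$, and $\dot\phi_{s,1}$. The first two already carry an explicit factor $\nu$, and $\pa_s\mathcal J_{s,j}$ ($j=1,2$) does too because $\mathcal J_{s,j}$ is built from $b_s''$ and $\phi_{s,1}$. For the contribution of $\dot\phi_{s,1}$ inside $\mathcal J_{s,3}$ I invoke Proposition \ref{prop-est-s}, which gives $|\dot\phi_{s,1}/\phi_{s,1}|\le C\nu\min\{|y-y'|,k^2|y-y'|^3\}$; this bound is qualitatively the same as $(\phi_{s,1}^2-1)/\phi_{s,1}^2$ up to the $\nu$ prefactor, so the kernel estimates done in the proof of Proposition \ref{pro-wave} apply verbatim and deliver the claimed $C\nu\|w\|_{L^2}$ bound.

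For $[\pa_y,\mathbb{D}_{s,k}]$, the key algebraic identity is obtained by integration by parts in $\mathcal J_{s,3}[\pa_y w]$ to move the derivative off $w$ and onto $\phi_{s,1}(k,y_2,y')$; the difference with $\pa_{y'}\mathcal J_{s,3}[w]$ then collapses into integrals whose kernel is the \emph{good derivative} $\pa_G\phi_{s,1}=(\pa_y+\pa_{y'})\phi_{s,1}$, plus boundary contributions from $y_2=y'$. By Proposition \ref{prop-est-G} we have $|\pa_G\phi_{s,1}/\phi_{s,1}|\le C\min\{|y-y'|,k^2|y-y'|^3\}$, which yields kernels with exactly the same integrability profile as those controlled in Proposition \ref{pro-wave}. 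The boundary terms at $y_2=y'$ combine with the $\pa_{y'}$--derivatives of the prefactors $b_s'(y')$, $\mathcal J_{s,1}$, $\mathcal J_{s,2}$ (bounded via Lemma \ref{lem-bdPi2}) and with $\pa_{y'}\mathcal J_{s,4}[w]$; the latter produces the Hilbert-transform type piece that is the only genuinely singular operator, but it is bounded on $L^2$. The estimate for $[\pa_y,\mathbb{D}_{s,k}^1]$ is structurally identical since $\mathbb{D}_{s,k}^1$ shares the same building blocks.

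For $[\pa_{yy},\mathbb{D}_{s,k}]$ I use $[\pa_{yy},\mathbb{D}_{s,k}]=\pa_y[\pa_y,\mathbb{D}_{s,k}]+[\pa_y,\mathbb{D}_{s,k}]\pa_y$; the second summand is already controlled by the previous step applied to $\pa_y w$, giving $C\|\pa_y w\|_{L^2}$. For the first summand I repeat the Step-3 computation with one extra $\pa_{y'}$ landing on the kernel, which requires $\pa_{y'}(\pa_G\phi_{s,1}/\phi_{s,1})$ and $\pa_G^2\phi_{s,1}/\phi_{s,1}$; both are supplied by \eqref{eq-est-Gy} and \eqref{eq-est-GG} of Proposition \ref{prop-est-G}, and the resulting kernels once again fall inside the Young/maximal-function framework of Proposition \ref{pro-wave}. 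The step where I expect the most care to be needed is matching the boundary terms in Step 3: because $\mathcal J_{s,3}$ is a principal value and $\phi_{s,1}$ grows like $e^{C|k||y-y'|}$, one must split the integral at $|y_1-y'|\sim 1/|k|$ exactly as in the proof of the upper bound \eqref{eq: upperboundDD_1}, track the $y'$--derivatives through that splitting, and verify that the $1/|k|$ scales cancel to yield constants independent of $k$. This is the main obstacle, but once the splitting is in place the estimates are algorithmic.
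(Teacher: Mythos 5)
Your plan tracks the paper's own argument quite closely: you differentiate the explicit formula for $\mathbb{D}_{s,k}$, split off the scalar prefactors (bounded below by Lemma~\ref{lem-J-low} and with derivatives controlled by Lemma~\ref{lem-bdPi2}), and push the derivative through $\mathcal J_{s,3}$ using the good-derivative bounds of Proposition~\ref{prop-est-G} (resp.\ the time-derivative bound of Proposition~\ref{prop-est-s}), then close with the same maximal-function/Young and $|y_1-y'|\sim 1/|k|$ splitting arguments as in Proposition~\ref{pro-wave}. The main difference is organizational. For $[\pa_y,\mathcal J_{s,3}]$ you integrate by parts in $\mathcal J_{s,3}[\pa_y w]$ and then match boundary terms against $\pa_{y'}\mathcal J_{s,3}[w]$; the paper instead exploits the identity $(\pa_{y_1}+\pa_{y'})\int_{y'}^{y_1}f(y_2,y')\,dy_2=\int_{y'}^{y_1}(\pa_{y_2}+\pa_{y'})f(y_2,y')\,dy_2$ together with $\text{P.V.}\int\pa_{y_1}(\cdot)\,dy_1=0$, which produces directly the good-derivative combination with \emph{no} boundary terms and \emph{no} individually over-singular pieces. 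Your route works but requires you to verify that the double-pole term $\text{P.V.}\int w(y_1)\,[(b_s(y_1)-b_s(y'))^2\phi_{s,1}(y_1,y')]^{-1}\,dy_1$ coming from the boundary of the inner integral cancels against the lower-limit contribution of $\pa_{y'}\mathcal J_{s,3}[w]$ --- a cancellation that happens automatically in the paper's formulation. Likewise your factorization $[\pa_{yy},\mathbb{D}_{s,k}]=\pa_y[\pa_y,\mathbb{D}_{s,k}]+[\pa_y,\mathbb{D}_{s,k}]\pa_y$ is algebraically correct, whereas the paper applies the translation identity twice to produce $(\pa_{y_2}+\pa_{y_1}+\pa_{y'})^2$ at once; both routes end up invoking \eqref{eq-est-Gy} and \eqref{eq-est-GG}. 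One small misattribution: the Hilbert-transform piece you are anticipating comes from the P.V.\ structure of $\mathcal J_{s,3}$ (its leading part is $\mathcal H-\mathcal H_{1/|k|}$, as in the proof of Proposition~\ref{pro-wave}), not from $\pa_{y'}\mathcal J_{s,4}[w]$, which is purely local (a multiplication operator by $-2\pi b_s''/b_s'^3$ after subtracting $\mathcal J_{s,4}[\pa_y w]$).
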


\begin{remark}\label{rmk-time-back}
    It is important for us to get $(s,\nu)$ independent estimates, since the shear flow is changing in time variable $s$ and is $\nu$ related. If the shear flow is fixed namely,  a special extra force term is added in the original system \eqref{eq:NS}, then Proposition \ref{prop-est-s}, \eqref{eq-wave-com-s} and Lemma \ref{lem-back} are not necessary and the proofs of Lemma \ref{lem-bdPi2} and Lemma \ref{lem-J-low} can be easier.  
\end{remark}

\begin{proof}
It follows from the definition of $\mathbb{D}_{s,k}$ that
\begin{align*}
   [\pa_s,\mathbb{D}_{s,k}][w](s,y')
   =&\mathcal J_{s,4}[w](s,y') \pa_s \left(\frac{1}{\pi} b_s'(y')\frac{\mathcal J_{s,1}(y')}{\sqrt{\mathcal J_{s,1}^2(k,y')+\mathcal J_{s,2}^2(y')}}\right)\\
   &+ \mathcal J_{s,3}[w](s,k,y')\pa_s \left(\frac{1}{\pi} b_s'(y')\frac{\mathcal J_{s,2}(y')}{\sqrt{\mathcal J_{s,1}^2(k,y')+\mathcal J_{s,2}^2(y')}}\right)\\
   &+\frac{1}{\pi} b_s'(y')\frac{\mathcal J_{s,1}(y')[\pa_s,\mathcal J_{s,4}][w](s,y')+\mathcal J_{s,2}(y')[\pa_s,\mathcal J_{s,3}][w](s,k,y')}{\sqrt{\mathcal J_{s,1}^2(k,y')+\mathcal J_{s,2}^2(y')}}.
 \end{align*}

Recall that $b_s$ solves \eqref{eq:linearHeat}. Taking the derivative of $\mathcal J_{s,1}$ and $\mathcal J_{s,2}$ with respect to s, we obtain 
\begin{align*}
  \pa_s\mathcal J_{s,2}(y')&=\pi\pa_s\frac{b_s''(y')}{{b_s'}(y')^3}=\nu\pi\frac{b_s''''(y'){b_s'}(y')-3b_s''(y')b_s'''(y')}{{b_s'}(y')^4},
\end{align*}
and
\begin{align*}
  \pa_s\mathcal J_{s,1}=&-\pa_s\mathcal H\Big((b_{s}^{-1})''\Big)(b_s(y'))+\pa_s\Pi_{s,2}(k,y')\\
  =&-\nu b''_s(y')\mathcal H\Big((b_{s}^{-1})'''\Big)(b_s(y'))-\mathcal H\Big(\pa_s(b_{s}^{-1})''\Big)(b_s(y'))+\pa_s\Pi_{s,2}(k,y').
\end{align*}
Here
\begin{align*}
  (b_{s}^{-1})'''(v)=\pa_v^3(b_s^{-1})(v)=-\frac{b_s'''(b_s^{-1}(v))b_s'(b_s^{-1}(v))-3 \left(b_s''(b_s^{-1}(v))\right)^2}{\left(b_s'(b_s^{-1}(v))\right)^5},
\end{align*}
and
\begin{align*}
  &\pa_s (b_s^{-1})''(v)=\pa_s\pa_v^2(b_s^{-1})(v)\\
  =&-\nu\frac{b_s''''(b_s^{-1}(v))\left(b_s'(b_s^{-1}(v))\right)-4b_s'''(b_s^{-1}(v))b_s''(b_s^{-1}(v))b_s'(b_s^{-1}(v))+3\left(b_s''(b_s^{-1}(v))\right)^3}{\left(b_s'(b_s^{-1}(v))\right)^5}.
\end{align*}

We decompose $\pa_s\Pi_{s,2}(k,y')$ into two parts,
\begin{align*}
  \pa_s\Pi_{s,2}(k,y')=&-2 \nu\int \frac{b_s''(y)-b_s''(y')}{(b_s(y)-b_s(y'))^3}\left(\frac{1}{\phi_{s,1}^2(k,y,y')}-1\right) dy\\
  &-2 \int \frac{1}{(b_s(y)-b_s(y'))^2} \frac{\pa_s\phi_{s,1}}{\phi_{s,1}^3(k,y,y')}  dy.
\end{align*}
For the first term, it follows from Lemma \ref{lem-back} and Proposition \ref{prop-phi} that
\begin{align*}
  \left|\nu\int \frac{b_s''(y)-b_s''(y')}{(b_s(y)-b_s(y'))^3}\left(\frac{1}{\phi_{s,1}^2(k,y,y')}-1\right) dy\right|\le C\nu|k|.
\end{align*}
For the second term, by using \eqref{eq-est-s}, we have 
\begin{align*}
  \left|\int \frac{1}{(b_s(y)-b_s(y'))^2} \frac{\pa_s\phi_{s,1}}{\phi_{s,1}^3(k,y,y')}  dy\right|\lesssim \nu.
\end{align*}

Combing the above estimates with Lemma \ref{lem-J-low}, we obtain
\begin{align*}
  \left|\pa_s\mathcal J_{s,1}(k,y')\right|+\left|\pa_s\mathcal J_{s,2}(y')\right|\lesssim \nu \left(\left| \mathcal J_{s,1}(k,y')\right|+\left|\mathcal J_{s,2}(y')\right|\right).
\end{align*}

It follows that
\begin{equation}\label{eq-est-J1J2s}
  \begin{aligned}    
  &\left|\pa_s\frac{\mathcal J_{s,1}(y')}{\sqrt{\mathcal J_{s,1}^2(k,y')+\mathcal J_{s,2}^2(y')}}\right|+\left|\pa_s\frac{\mathcal J_{s,2}(y')}{\sqrt{\mathcal J_{s,1}^2(k,y')+\mathcal J_{s,2}^2(y')}}\right|\\
  \le& \left|\frac{\mathcal J_{s,2}(y')^2\pa_s\mathcal J_{s,1}(y')-\mathcal J_{s,1}(k,y')\mathcal J_{s,2}(y')\pa_s\mathcal J_{s,2}(y')}{\left(\sqrt{\mathcal J_{s,1}^2(k,y')+\mathcal J_{s,2}^2(y')}\right)^3}\right|\\
  &+\left|\frac{\mathcal J_{s,1}(k,y')^2\pa_s\mathcal J_{s,2}(y')-\mathcal J_{s,1}(k,y')\mathcal J_{s,2}(y')\pa_s\mathcal J_{s,1}(y')}{\left(\sqrt{\mathcal J_{s,1}^2(k,y')+\mathcal J_{s,2}^2(y')}\right)^3}\right|
  \le C \frac{\nu}{\delta^{\frac{1}{2}}}.    
  \end{aligned}
\end{equation}
where $\delta$ is given in Lemma \ref{lem-J-low}.

Next, we focus on $[\pa_s,\mathcal J_{s,3}][w]$. It holds that
\begin{align*}
  &[\pa_s,\mathcal J_{s,3}][w](s,k,y')\\
  =&\pa_s\text{P.V.}\int \frac{\int_{y'}^{y_1}w(s,y_2)\phi_{s,1}(y_2,y')dy_2}{\phi^2_s(y_1,y')}dy_1-\text{P.V.}\int \frac{\int_{y'}^{y_1}\pa_sw(s,y_2)\phi_{s,1}(y_2,y')dy_2}{\phi^2_s(y_1,y')}dy_1\\
  =&\int \frac{\int_{y'}^{y_1}w(s,y_2)\pa_s\phi_{s,1}(y_2,y')dy_2}{\phi^2_s(y_1,y')}dy_1\\
  &-2\int \frac{\int_{y'}^{y_1}w(s,y_2)\phi_{s,1}(y_2,y')dy_2 \left(b_s(y_1)-b_s(y')\right)\pa_s\phi_{s,1}(y_1,y')}{\phi^3_s(y_1,y')}dy_1\\
  &-2\text{P.V.}\int \frac{\int_{y'}^{y_1}w(s,y_2)\phi_{s,1}(y_2,y')dy_2 \left(\pa_sb_s(y_1)-\pa_sb_s(y')\right)\phi_{s,1}(y_1,y')}{\phi^3_s(y_1,y')}dy_1\\
  \eqdef&\mathcal J_{s,3}^{(s),1}[w](k,y')+\mathcal J_{s,3}^{(s),2}[w](k,y')+\mathcal J_{s,3}^{(s),3}[w](k,y').
\end{align*}
For $\mathcal J_{s,3}^{(s),1}[w](k,y')$ and $\mathcal J_{s,3}^{(s),2}[w](k,y')$, we can see from Proposition \ref{prop-est-s} that
\begin{align*}
  \left|\frac{\int_{y'}^{y_1}w(s,y_2)\pa_s\phi_{s,1}(y_2,y')dy_2}{\phi^2_s(y_1,y')}\right|\le M[w](y_1)\frac{\nu\min(|k|^2|y_1-y'|,\frac{1}{|y_1-y'|})}{e^{C|k||y_1-y'|}},
\end{align*}
and
\begin{align*}
  \left|\frac{\int_{y'}^{y_1}w(s,y_2)\phi_{s,1}(y_2,y')dy_2 \left(b_s(y_1)-b_s(y')\right)\pa_s\phi_{s,1}(y_1,y')}{\phi^3_s(y_1,y')}\right|\le M[w](y_1)\frac{\nu\min(|k|^2|y_1-y'|,\frac{1}{|y_1-y'|})}{e^{C|k||y_1-y'|}}.
\end{align*}
Then it follows from Young’s convolution inequality that 
\begin{align*}
  \left\|\mathcal J_{s,3}^{(s),1}[w](k,y')\right\|_{L^2}+\left\|\mathcal J_{s,3}^{(s),2}[w](k,y')\right\|_{L^2}\lesssim\nu \left\|w\right\|_{L^2}.
\end{align*}

The last term $\mathcal J_{s,3}^{(s),3}[w](k,y')$ is the most troublesome term. We write
\begin{align*}
  &\mathcal J_{s,3}^{(s),3}[w](k,y')\\
  =&2\nu\text{P.V.}\int \frac{\int_{y'}^{y_1}w(s,y_2)\phi_{s,1}(y_2,y')dy_2 \left(b_s''(y_1)-b_s''(y')\right)(y_1-y')^2}{\left(b_s(y_1)-b_s(y')\right)^3\phi_{s,1}^2(y_1,y')}\pa_{y_1}\frac{1}{y_1-y'}dy_1\\
  =&-2\nu\text{P.V.}\int \frac{w(s,y_1)\left(b_s''(y_1)-b_s''(y')\right)(y_1-y')^2}{(y_1-y')\left(b_s(y_1)-b_s(y')\right)^3\phi_{s,1}(y_1,y')}dy_1\\
  &-2\nu\text{P.V.}\int \frac{\int_{y'}^{y_1}w(s,y_2)\phi_{s,1}(y_2,y')dy_2 }{y_1-y'}\pa_{y_1}\frac{\left(b_s''(y_1)-b_s''(y')\right)(y_1-y')^2}{\left(b_s(y_1)-b_s(y')\right)^3\phi_{s,1}^2(y_1,y')}dy_1\\
  =&-2\nu\frac{b'''_s(y')}{(b'_s(y'))^3}\text{P.V.}\int^{y'+\frac{1}{|k|}}_{y'-\frac{1}{|k|}} \frac{w(s,y_1)}{y_1-y'}dy_1\\
  &-2\nu\int^{y'+\frac{1}{|k|}}_{y'-\frac{1}{|k|}} \frac{w(s,y_1)}{y_1-y'}\left(\frac{\left(b_s''(y_1)-b_s''(y')\right)(y_1-y')^2}{\left(b_s(y_1)-b_s(y')\right)^3\phi_{s,1}(y_1,y')}-\frac{b'''_s(y')}{(b'_s(y'))^3}\right)dy_1\\
  &-2\nu \int_{\mathbb R\setminus[y'-\frac{1}{|k|},y'+\frac{1}{|k|}]} \frac{w(s,y_1)}{y_1-y'}\frac{\left(b_s''(y_1)-b_s''(y')\right)(y_1-y')^2}{\left(b_s(y_1)-b_s(y')\right)^3\phi_{s,1}(y_1,y')}dy_1\\
  &-2\nu\int \frac{\int_{y'}^{y_1}  w(s,y_2)\phi_1(y_2,y')dy_2}{y_1-y'}\frac{1}{\phi_{s,1}^2(y_1,y')} \pa_{y_1}\frac{\left(b_s''(y_1)-b_s''(y')\right)(y_1-y')^2}{\left(b_s(y_1)-b_s(y')\right)^3}dy_1\\
  &+ 4\nu\int \frac{\int_{y'}^{y_1}  w(s,y_2)\phi_1(y_2,y')dy_2}{y_1-y'} \frac{\left(b_s''(y_1)-b_s''(y')\right)(y_1-y')^2\pa_{y_1}\phi_{s,1}(y_1,y')}{\left(b_s(y_1)-b_s(y')\right)^3\phi_1^3(y_1,y')}dy_1.
\end{align*}
One can utilize the approach employed to treat $\left\|\mathcal J_{s,3}[\cdot](k,y')\right\|_{L^2\to L^2}$ in Proposition \ref{pro-wave} to estimate all these terms and obtain
\begin{align*}
  \left\|\mathcal J_{s,3}^{(s),3}[w](k,y')\right\|_{L^2}\lesssim\nu \left\|w\right\|_{L^2},
\end{align*}
which gives
\begin{align*}
  \left\|[\pa_s,\mathcal J_{s,3}][w]\right\|_{L^2}\le C\nu \left\|w\right\|_{L^2}
\end{align*}

One can easily check that
\begin{align*}
  \left\|[\pa_s,\mathcal J_{s,4}][w]\right\|_{L^2}\lesssim\nu \left\|w\right\|_{L^2}.
\end{align*}
As a conclusion, we have
\begin{align*}
  \|[\pa_s,\mathbb{D}_{s,k}][w]\|_{L^2}\leq C\nu \|w\|_{L^2},
\end{align*}
where $C$ is a constant depends only on $c_m$ and $\left\|b_{in}''\right\|_{H^3}$.

Next, we turn to \eqref{eq-wave-com-y}. We have
\begin{align*}
  \pa_{y'}\mathcal J_{s,1}=&-\pa_{y'}\mathcal H\Big((b_{s}^{-1})''\Big)(b_s(y'))+\pa_{y'}\Pi_{s,2}(k,y').
\end{align*}
It holds that
\begin{align*}
   &-\pa_{y'}\mathcal H\Big((b_{s}^{-1})''\Big)(b_s(y'))=- b'_s(y')\mathcal H\Big((b_{s}^{-1})'''\Big)(b_s(y'))-\mathcal H\Big((b_{s}^{-1})'''\Big)(b_s(y')),
\end{align*} 
and
\begin{align*}
  \pa_{y'}\Pi_{s,2}(k,y')=&\int \pa_G\left(\frac{1}{(b_s(y)-b_s(y'))^2}\left(\frac{1}{\phi_{s,1}^2(k,y,y')}-1\right)\right) dy\\
  =&-2 \int \frac{b_s'(y)-b_s'(y')}{(b_s(y)-b_s(y'))^3}\left(\frac{1}{\phi_{s,1}^2(k,y,y')}-1\right) dy-2 \int \frac{1}{(b_s(y)-b_s(y'))^2} \frac{\pa_G\phi_{s,1}}{\phi_{s,1}^3(k,y,y')}  dy.
\end{align*}
Then by employing Proposition \ref{prop-est-G}, we obtain
\begin{align*}
  \left|\pa_{y'}\Pi_{s,2}(k,y')\right|\le C|k|.
\end{align*} 

It is easy to check that
\begin{align*}
  \pa_{y'}\mathcal J_{s,2}(y')&=\pi\pa_{y'}\frac{b_s''(y')}{{b_s'}(y')^3}=\pi\frac{b_s'''(y'){b_s'}(y')-3 \left(b_s''(y')\right)^2}{{b_s'}(y')^4}.
\end{align*}
Then similar to \eqref{eq-est-J1J2s}, we have
\begin{align*}
  \left|\pa_{y'}\frac{\mathcal J_{s,1}(y')}{\sqrt{\mathcal J_{s,1}^2(k,y')+\mathcal J_{s,2}^2(y')}}\right|+\left|\pa_{y'}\frac{\mathcal J_{s,2}(y')}{\sqrt{\mathcal J_{s,1}^2(k,y')+\mathcal J_{s,2}^2(y')}}\right| \le &C \frac{1}{\delta^{\frac{1}{2}}}.
\end{align*}

Similar to the proof of \eqref{eq-wave-com-s}, the estimate of $[\pa_s,\mathcal J_{s,4}][w]$ is straightforward, and the crucial step is to estimate $[\pa_{y},\mathcal J_{s,3}][w]$. It holds that
\begin{align*} 
  \pa_{y'}\mathcal J_{s,3}[w](k,y')=&\pa_{y'}\text{P.V.}\int \frac{\int_{y'}^{y_1}w(y_2)\phi_{s,1}(y_2,y')dy_2}{\phi^2_s(y_1,y')}dy_1\\
  =&\text{P.V.}\int (\pa_{y_1}+\pa_{y'})\frac{\int_{y'}^{y_1}w(y_2)\phi_{s,1}(y_2,y')dy_2}{\phi^2_s(y_1,y')}dy_1\\
  =&\text{P.V.}\int \int_{y'}^{y_1}(\pa_{y_2}+\pa_{y_1}+\pa_{y'})\frac{w(y_2)\phi_{s,1}(y_2,y')dy_2}{\phi^2_s(y_1,y')}dy_1\\
  =&\text{P.V.}\int \frac{\int_{y'}^{y_1}\pa_{y_2}w(y_2)\phi_{s,1}(y_2,y')dy_2}{\phi^2_s(y_1,y')}dy_1\\
  &+ \int \frac{\int_{y'}^{y_1}w(y_2)(\pa_{y_2}+\pa_{y'})\phi_{s,1}(y_2,y')dy_2}{\phi^2_s(y_1,y')}dy_1\\
  &+\text{P.V.}\int \int_{y'}^{y_1}w(y_2)\phi_{s,1}(y_2,y')dy_2(\pa_{y_1}+\pa_{y'})\frac{1}{\phi^2_s(y_1,y')}dy_1.
\end{align*}
Then we have
\begin{align*}
  &\pa_{y'}\mathcal J_{s,3}[w](k,y')-\mathcal J_{s,3}[\pa_yw](k,y')\\
  =& \int \frac{\int_{y'}^{y_1}w(y_2)\pa_G\phi_{s,1}(y_2,y')dy_2}{\phi^2_s(y_1,y')}dy_1\\
  &-2 \int \frac{\int_{y'}^{y_1}w(y_2)\phi_{s,1}(y_2,y')dy_2}{(b_s(y')-b_s(y_1))^2}\frac{\pa_G\phi_{s,1}(y_1,y')}{\phi^3_{s,1}(y_1,y')}dy_1\\
  &-2\text{P.V.}\int \frac{\int_{y'}^{y_1}w(y_2)\phi_{s,1}(y_2,y')dy_2}{(b_s(y')-b_s(y_1))^3}\frac{b_s'(y')-b_s'(y_1)}{\phi^2_{s,1}(y_1,y')}dy_1.
\end{align*}
By using Proposition \ref{prop-est-G} and following the proof of \eqref{eq-wave-com-s}, we obtain
\begin{align*}
  \left\|[\pa_{y},\mathcal J_{s,3}][w]\right\|_{L^2}\leq C \|w\|_{L^2},
\end{align*}
and
\begin{align*}
  \|[\pa_{y},\mathbb{D}_{s,k}](w)\|_{L^2}\leq C \|w\|_{L^2}.
\end{align*}

At last, we give the proof of \eqref{eq-wave-com-yy}. Similar to \eqref{eq-wave-com-s}, one can easily check that 
\begin{align*}
  \left|\pa_{y'y'}\frac{\mathcal J_{s,1}(y')}{\sqrt{\mathcal J_{s,1}^2(k,y')+\mathcal J_{s,2}^2(y')}}\right|+\left|\pa_{y'y'}\frac{\mathcal J_{s,2}(y')}{\sqrt{\mathcal J_{s,1}^2(k,y')+\mathcal J_{s,2}^2(y')}}\right|\lesssim \frac{1}{\delta},
\end{align*}
and
\begin{align*}
  \left\|[\pa_{yy},\mathcal J_{s,4}][w]\right\|_{L^2}\lesssim\nu \left\|w\right\|_{H^1}.
\end{align*}

For the crucial term $[\pa_{yy},\mathcal J_{s,3}][w]$, we have
\begin{align*} 
  \pa_{y'y'}\mathcal J_{s,3}[w](k,y')
  =&\text{P.V.}\int (\pa_{y_1}+\pa_{y'})^2\frac{\int_{y'}^{y_1}w(y_2)\phi_{s,1}(y_2,y')}{\phi^2_s(y_1,y')}dy_2dy_1\\
  =&\text{P.V.}\int \int_{y'}^{y_1}(\pa_{y_2}+\pa_{y_1}+\pa_{y'})^2\frac{w(y_2)\phi_{s,1}(y_2,y')dy_2}{\phi^2_s(y_1,y')}dy_1\\
  =&\text{P.V.}\int \frac{\int_{y'}^{y_1}\pa_{y_2y_2}w(y_2)\phi_{s,1}(y_2,y')dy_2}{\phi^2_s(y_1,y')}dy_1\\
  &+ \int \frac{\int_{y'}^{y_1}w(y_2)(\pa_{y_2}+\pa_{y'})^2\phi_{s,1}(y_2,y')dy_2}{\phi^2_s(y_1,y')}dy_1\\
  &+ \text{P.V.}\int \int_{y'}^{y_1}w(y_2)\phi_{s,1}(y_2,y')dy_2(\pa_{y_1}+\pa_{y'})^2\frac{1}{\phi^2_s(y_1,y')}dy_1\\
  &+2 \int \frac{\int_{y'}^{y_1}\pa_{y_2}w(y_2)(\pa_{y_2}+\pa_{y'})\phi_{s,1}(y_2,y')dy_2}{\phi^2_s(y_1,y')}dy_1\\
  &+2 \text{P.V.}\int \int_{y'}^{y_1}\pa_{y_2}w(y_2)\phi_{s,1}(y_2,y')dy_2(\pa_{y_1}+\pa_{y'})\frac{1}{\phi^2_s(y_1,y')}dy_1\\
  &+2 \int \int_{y'}^{y_1} w(y_2)(\pa_{y_2}+\pa_{y'})\phi_{s,1}(y_2,y')dy_2(\pa_{y_1}+\pa_{y'})\frac{1}{\phi^2_s(y_1,y')}dy_1,
\end{align*}
and then
\begin{align*}
  &\pa_{y'y'}\mathcal J_{s,3}[w](k,y')-\mathcal J_{s,3}[\pa_{yy}f](k,y')\\
  =& \int \frac{\int_{y'}^{y_1}w(y_2)(\pa_{y_2}+\pa_{y'})^2\phi_{s,1}(y_2,y')dy_2}{\phi^2_s(y_1,y')}dy_1\\
  &+ \text{P.V.}\int \int_{y'}^{y_1}w(y_2)\phi_{s,1}(y_2,y')dy_2(\pa_{y_1}+\pa_{y'})^2\frac{1}{\phi^2_s(y_1,y')}dy_1\\
  &+2 \int \frac{\int_{y'}^{y_1}\pa_{y_2}w(y_2)(\pa_{y_2}+\pa_{y'})\phi_{s,1}(y_2,y')dy_2}{\phi^2_s(y_1,y')}dy_1\\
  &+2 \text{P.V.}\int \int_{y'}^{y_1}\pa_{y_2}w(y_2)\phi_{s,1}(y_2,y')dy_2(\pa_{y_1}+\pa_{y'})\frac{1}{\phi^2_s(y_1,y')}dy_1\\
  &+2 \int \int_{y'}^{y_1} w(y_2)(\pa_{y_2}+\pa_{y'})\phi_{s,1}(y_2,y')dy_2(\pa_{y_1}+\pa_{y'})\frac{1}{\phi^2_s(y_1,y')}dy_1.
\end{align*}
By following the proof of \eqref{eq-wave-com-s}, and applying Proposition \ref{prop-est-G}, we get
\begin{align*}
  \left\|[\pa_{yy},\mathcal J_{s,3}][w]\right\|_{L^2}\leq C \|w\|_{H^1},
\end{align*}
and
\begin{align*}
  \|[\pa_{yy},\mathbb{D}_{s,k}](w)\|_{L^2}\leq C \|w\|_{H^1},
\end{align*}
where $C$ is a constant depends only on $c_m$ and $\left\|b_{in}''\right\|_{H^3}$.
\end{proof}

\section{Linear enhanced dissipation}\label{sec-lin}
In this section, we study the linearized equation of \eqref{eq:NS2}, and establish the linear enhanced dissipation using the wave operator $\mathbb{D}_{s,k}$ which is introduced in the last section.

For $t>s$, let $S(t,s)g$ solves the following linearized Navier-Stokes equation:
\begin{equation}\label{eq: linearNS}
  \left\{
    \begin{array}{l}
      \pa_t\om+b(t,y)\pa_x\om-b''(t,y)\pa_x\Delta^{-1}\om-\nu\Delta\om=0,\\
      \om(s,x,y)=g(x,y),
    \end{array}
  \right.
\end{equation}
with $\int_{\mathbb T} g(x,y) d x=0$. We have the following estimates.
\begin{proposition}\label{prop-semi}
Given $g(x,y)\in H^{log}_xL^2_y(\mathbb T\times\mathbb R)$ with $\int_{\mathbb T} g(x,y) d x=0$. There exists constants $c_0,C_0>0$ such that for any $t\ge s\ge0$, it holds that
\begin{align}
\left\|e^{c_0\nu^{\frac{1}{3}}t}S(t,s)g(x,y)\right\|_{H^{log}_xL^2_y}&\leq C_0 \left\|e^{c_0\nu^{\frac{1}{3}}s}g(x,y)\right\|_{H^{log}_xL^2_y}\label{btsp:om-point}\\
\left(\int_{s}^t\|e^{c_0\nu^{\frac{1}{3}}s'} \nabla S(s',s)g(x,y)\|_{H^{log}_xL^2_y}^2ds'\right)^{\frac{1}{2}}&\leq C_0\nu^{-\frac{1}{2}}\left\|e^{c_0\nu^{\frac{1}{3}}s}g(x,y)\right\|_{H^{log}_xL^2_y}\label{btsp:pa_xyomL2L2L2}\\
\int_{s}^t\|e^{c_0\nu^{\frac{1}{3}}s'} \pa_xS(s',s)g(x,y)\|_{H^{log}_xL^2_y}ds'&\leq C_0\nu^{-\frac{1}{2}}\left\|e^{c_0\nu^{\frac{1}{3}}s}g(x,y)\right\|_{H^{log}_xL^2_y}\label{btsp:pa_xomL1L2L2}\\
\left(\int_{s}^t\|e^{c_0\nu^{\frac{1}{3}}s'} S(s',s)g(x,y)\|_{L^{\infty}_{x,y}}^2ds'\right)^{\frac{1}{2}}&\leq C_0\nu^{-\frac{1}{2}}\left\|e^{c_0\nu^{\frac{1}{3}}s}g(x,y)\right\|_{H^{log}_xL^2_y}\label{btsp:omL2L1Linfty}
\end{align}
\begin{align}
\left(\int_{s}^t\|e^{c_0\nu^{\frac{1}{3}}s'}\pa_x\Delta^{-1}S(s',s)g(x,y)\|_{L^{\infty}_{x,y}}^2ds'\right)^{\frac{1}{2}}
&\leq C_0\left\|e^{c_0\nu^{\frac{1}{3}}s}g(x,y)\right\|_{H^{log}_xL^2_y},\label{btsp:V_2L2LinftyLinfty}\\
\left(\int_{s}^t\|e^{c_0\nu^{\frac{1}{3}}s'}|D_x|^{\frac{1}{2}} \pa_x\Delta^{-1}S(s',s)g(x,y)\|_{H^{log}_xL^\infty_y}^2ds'\right)^{\frac{1}{2}}
&\leq C_0\left\|e^{c_0\nu^{\frac{1}{3}}s}g(x,y)\right\|_{H^{log}_xL^2_y},\label{btsp:V_2L2L2Linfty}\\
\left(\int_{s}^t\|e^{c_0\nu^{\frac{1}{3}}s'}\pa_{xy}\Delta^{-1}S(s',s)g(x,y)\|_{H^{log}_xL^2_y}^2ds'\right)^{\frac{1}{2}}
&\leq C_0\left\|e^{c_0\nu^{\frac{1}{3}}s}g(x,y)\right\|_{H^{log}_xL^2_y};\label{btsp:V_1L2L2L2}
\end{align}
\begin{align}
  \sup_{s'\in[s,t]}\|e^{c_0\nu^{\frac{1}{3}}s'}\pa_{y}\Delta^{-1}S(s',s)g(x,y)\|_{L^{\infty}_{x,y}}
\leq C_0\left\|e^{c_0\nu^{\frac{1}{3}}s}g(x,y)\right\|_{H^{log}_xL^2_y}.\label{btsp:V_1LinftyL1Linfty}
\end{align}
\end{proposition}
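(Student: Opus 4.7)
The plan is to reduce the linearized problem \eqref{eq: linearNS} to a scalar transport-diffusion equation for the new unknown $F(t,z,v)=\mathbb{D}_t[\om](t,x,y)$ via the wave operator of Section \ref{sec-wave}, then run a ghost-multiplier energy estimate for $F$ that delivers both the $\nu^{1/3}$ enhanced dissipation and the inviscid-damping-type space-time bounds, and finally transfer everything back to $\om$ and $V$ using the two-sided bound \eqref{eq-est-wave} and the duality identity \eqref{eq-wave-dual}. The section after this statement is devoted precisely to steps two through four, so I will only sketch how the several conclusions \eqref{btsp:om-point}--\eqref{btsp:V_1LinftyL1Linfty} are packaged out of it.

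\textbf{Step 1 (wave operator).} Writing $\om=S(t,s)g$ and $F=\mathbb{D}_t[\om]$, I apply $\mathbb{D}_{t,k}$ to \eqref{eq: linearNS} mode by mode. The intertwining identity \eqref{eq-wave} in Proposition \ref{pro-wave} turns the Rayleigh part $ik\mathcal{R}_{t,k}\widetilde\om_k$ into $ikb(t,y')\widetilde F_k$, so the full nonlocal operator collapses to a pure multiplication by $b(t,y')$. The cost is the commutator $[\pa_t,\mathbb{D}_t]\om$ and $[\mathbb{D}_t,\nu\Delta]\om$, which by Lemma \ref{lem-wave-com} are bounded by $C\nu\|\om\|_{L^2}$ and $C\nu(\|\pa_y\om\|_{L^2}+\|\om\|_{L^2})$ respectively and therefore are absorbable by the dissipation.

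\textbf{Step 2 (change of coordinates and ghost multiplier).} After the change of variables $v=b(t,y)$, $z=x-tv$, the transport term $b(t,y)\pa_xF$ disappears and the diffusion becomes the variable-coefficient Laplacian
\begin{equation*}
\nu\bigl(\pa_{zz}+\bigl(\pa_yb(t,b^{-1}(t,v))\bigr)^2(\pa_v-t\pa_z)^2\bigr)F,
\end{equation*}
with the coefficient $(\pa_yb)^2$ bounded between $c_m^2$ and $c_m^{-2}$ uniformly in $t$ by the monotonicity assumption. I then introduce the ghost-type Fourier multiplier $\mathring{\mathrm{A}}$ whose physical-side kernel is engineered so that the commutator with $(\pa_yb)^2(\pa_v-t\pa_z)^2$ produces a \emph{nonnegative} extra dissipation term rather than a sign-indefinite error; this is the single genuinely technical point and, as indicated in the introduction, is the content of Lemma \ref{lem-A-nonlocal} of Section \ref{sec-lin}.

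\textbf{Step 3 (energy inequality and $\nu^{1/3}$ rate).} The resulting $\mathring{\mathrm{A}}$-weighted energy identity reads, schematically,
\begin{equation*}
\tfrac{d}{dt}\|\ln(e+|D_x|)\mathring{\mathrm{A}}F\|_{L^2}^2+c\nu\|\ln(e+|D_x|)\na_L\mathring{\mathrm{A}}F\|_{L^2}^2+c\|\ln(e+|D_x|)\pa_z(-\Delta_L)^{-1/2}\mathring{\mathrm{A}}F\|_{L^2}^2\le 0,
\end{equation*}
where $\na_L=(\pa_z,\pa_v-t\pa_z)$ and where the right-hand side has already absorbed the commutator errors of Step 1. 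The standard interpolation between $\nu\|\na_LF\|^2$ and the time-growing contribution of $(\pa_v-t\pa_z)^2$ yields the exponential factor $e^{-c_0\nu^{1/3}t}$, which is \eqref{btsp:om-point} for $F$; integrating the dissipation yields \eqref{btsp:pa_xyomL2L2L2} for $F$, and \eqref{btsp:pa_xomL1L2L2} follows from \eqref{btsp:pa_xyomL2L2L2} by Cauchy--Schwarz against the exponential weight.

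\textbf{Step 4 (passage back to $\om$ and $V$).} By \eqref{eq-est-wave} the operators $\mathbb{D}_t,\mathbb{D}_t^1$ are bounded above and below on $L^2$ uniformly in $(k,t,\nu)$, and the $H^{\log}_x$ weight commutes with them mode by mode, so \eqref{btsp:om-point}, \eqref{btsp:pa_xyomL2L2L2} and \eqref{btsp:pa_xomL1L2L2} for $F$ transfer verbatim to $\om$. The Sobolev embedding $H^{1/2+}_y\hookrightarrow L^\infty_y$ combined with \eqref{btsp:pa_xyomL2L2L2} gives the $L^\infty_{x,y}$-bound \eqref{btsp:omL2L1Linfty}. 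For the velocity estimates \eqref{btsp:V_2L2LinftyLinfty}--\eqref{btsp:V_1LinftyL1Linfty}, the point is the duality identity \eqref{eq-wave-dual}: for any test $h$,
\begin{equation*}
\langle \pa_x V,h\rangle=\langle \pa_x\om,\pa_x\Delta^{-1}h\rangle=\langle \mathbb{D}_t[\pa_x\om],\mathbb{D}_t^1[\pa_x\Delta^{-1}h]\rangle=\langle \pa_zF,\pa_z(-\Delta_L)^{-1}\mathbb{D}_t^1[h]\rangle,
\end{equation*}
so velocity norms are controlled by $\|\pa_z(-\Delta_L)^{-1/2}F\|_{L^2}$, which is exactly the dissipation quantity in the energy identity. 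The half derivative $|D_x|^{1/2}$ in \eqref{btsp:V_2L2L2Linfty} is obtained by interpolating between the $\pa_z$ estimate and the $\pa_z(-\Delta_L)^{-1/2}$ estimate on $F$.

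\textbf{Main obstacle.} The hard part is not the packaging above but the construction of $\mathring{\mathrm{A}}$ together with its physical-side kernel bound: without pointwise control of the kernel the commutator with the variable coefficient $(\pa_yb(t,b^{-1}(t,v)))^2$ cannot be turned into a nonnegative correction, and the $\nu^{1/3}$ rate would be lost. Once Lemma \ref{lem-A-nonlocal} is available, the remaining commutators from Step 1 are $O(\nu)$ by Lemma \ref{lem-wave-com} and pose no difficulty.
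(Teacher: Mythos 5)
Your proposal follows essentially the same route as the paper's own proof: apply the wave operator $\mathbb{D}_t$ to collapse the nonlocal Rayleigh term, change to $(z,v)$ coordinates, run the $\mathring{\mathrm{A}}$-weighted energy estimate of Lemma~\ref{lem-A-nonlocal} to get the dissipation inequality and the enhanced-dissipation/inviscid-damping integrals for $F$, then transfer back via \eqref{eq-est-wave} and the duality identity \eqref{eq-wave-dual}, with Gagliardo--Nirenberg for the $L^\infty_y$ bounds. The one place your sketch is slightly loose is \eqref{btsp:pa_xomL1L2L2}: it does not follow from \eqref{btsp:pa_xyomL2L2L2} by Cauchy--Schwarz alone (that would cost an unbounded factor of $\sqrt{t-s}$); the paper splits $\int_s^t=\int_s^{s+1}+\int_{s+1}^t$, using the $L^2_t$ bound on the short interval and the pointwise decay \eqref{est-F-1} with the factor $\nu|k|^2(s'-s)^3e^{-\nu^{1/3}|k|^{2/3}(s'-s)}\lesssim 1$ on the tail to recover $\nu^{-1/2}$ uniformly in $t$.
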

\subsection{New equations}
Recalling \eqref{eq: waveoperatorinxy} and \eqref{eq-wave}, we apply the wave operator $\mathbb{D}_t$ on \eqref{eq: linearNS} and get 
\begin{align}
  \pa_t(\mathbb{D}_t[\om])&+b(t,y)\pa_x(\mathbb{D}_t[\om])
-\nu\Delta(\mathbb{D}_t[\om])+\mathrm{com}_t[\om]=0,
\end{align}
where $\mathrm{com}_t[\om]=\sum\limits_{k}\Big([\pa_t,\mathbb{D}_{t,k}][\tilde{\om}_k](t,y)+\nu [\pa_{yy},\mathbb{D}_{t,k}][\tilde{\om}_{k}](t,y)\Big)e^{ik x}$. 
We now introduce a change of coordinate. Let $v=b(t,y)$, $z=x-tv$ and 
\begin{align*}
F(t,z,v)=(\mathbb{D}_t[\om])(t,x,y), \quad \mathrm{Com}(t,z,v)=\mathrm{com}_t[\om](t,x,y)
\end{align*}
Thus 
\begin{align*}
&\pa_t(\mathbb{D}_t[\om])=\pa_tF-b(t,y)\pa_zF-t\pa_tb(t,y)\pa_zF+\pa_tb(t,y)\pa_vF\\
&\Delta(\mathbb{D}_t[\om])=\pa_{zz}F+(\pa_yb(t,y))^2(\pa_v-t\pa_z)^2F+\pa_{yy}b(t,y)(\pa_v-t\pa_z)F
\end{align*}
which gives that
\begin{align}
\pa_tF-\nu \left(\pa_{zz}F+ \left(\pa_yb(t, b^{-1}(t,v))\right)^2(\pa_v-t\pa_z)^2F\right)+\mathrm{Com}(t,z,v)=0. 
\end{align}
Taking the Fourier transform in $z$, we get
\begin{align}\label{eq-F-k}
  \pa_t\tilde F_k-\nu \left(\pa_{zz}\tilde F_k+ \left(b'_t\circ b_t^{-1}\right)^2(\pa_v-ikt)^2\tilde F_k\right)+\widetilde{\mathrm{Com}}_k(t,v)=0. 
\end{align}

\subsection{Time dependent Fourier multipliers}
To obtain the enhanced dissipation and (weak) inviscid damping, we introduce the following ghost type multiplier \cite{Alinhac2001} which is widely used in studying the asymptotic stability of Couette flow, see \cite{BGM2015,BVW2018}. 
For any $k\neq 0$, let 
\begin{align*}
  \mathring{\mathrm{A}}_k(t,\eta)=\left( \arctan \left(\frac{\eta-kt}{k}\right)+ \arctan\Big(\nu^{\frac13}|k|^{\frac23}\frac{\eta-kt}{k}\Big)+\pi +1\right),
\end{align*}
and
\begin{align*}
  \mathrm{A}_k(t,\eta)=e^{c_0\nu^{\f13}t}\mathring{\mathrm{A}}_k(t,\eta)\ln(e+|k|). 
\end{align*}
It is easy to check that 
\begin{align*}
  e^{c_0\nu^{\f13}t}\ln(e+|k|)\leq \mathrm{A}_k(t,\eta)\leq (2\pi +1)e^{c_0\nu^{\f13}t}\ln(e+|k|).
\end{align*}

Let $\mathrm{A}_k(t,\pa_v)\tilde F_k(v)=\mathcal{F}^{-1}_\eta(\mathrm{A}_k(t,\eta)\hat{F}_k(t,\eta))$, and $\mathrm{A}(t,\pa_z,\pa_v)F=\mathcal{F}^{-1}_{k,\eta}(\mathrm{A}_k(t,\eta)\hat{F}_k(t,\eta))$. In order to estimate the dissipation term $\mathrm{A}(t,\pa_z,\pa_v) \Big(\left(b'_t\circ b_t^{-1}\right)^2(\pa_v-t\pa_z)^2F\Big)$. We study the kernel of the Fourier multipliers. 
\begin{lemma}\label{lem-ker-arct}
  It holds that
  \begin{equation}\label{eq-inver-F1}
    \begin{aligned}    
      &\mathcal{F}^{-1}_{\eta}\Big(\arctan \left( \frac{\eta-kt}{k}\right)\hat{F}_k(t,\eta)\Big)(t,k,v)\\
      =&\frac{i\mathrm{sgn}(k)}{2}P.V.\int \frac{e^{-|k(v-v')|}e^{ikt(v-v')}}{v-v'}\tilde F_k(t,v')dv',
    \end{aligned}
  \end{equation}
  and 
  \begin{equation}\label{eq-inver-F2}
    \begin{aligned}    
      &\mathcal{F}^{-1}_{\eta}\Big(\arctan \left( \nu^{\frac13}|k|^{\frac23}\frac{\eta-kt}{k}\right)\hat{F}_k(t,\eta)\Big)(t,k,v)\\
      =&\frac{i\mathrm{sgn}(k)}{2}P.V.\int \frac{e^{-\nu^{-\frac13}|k|^{-\frac23}|k(v-v')|}e^{ikt(v-v')}}{v-v'}\tilde F_k(t,v')dv'.
    \end{aligned}
  \end{equation}
\end{lemma}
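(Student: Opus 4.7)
The strategy is direct: by the convolution theorem, $\mathcal F^{-1}_\eta(a(\eta)\hat F_k(\eta))(v)=\mathcal F^{-1}_\eta(a)(v)\ast_v\tilde F_k(v)$, so it suffices to identify the inverse Fourier transforms (as tempered distributions) of the two symbols. Both claims then reduce to verifying
\[
\mathcal F^{-1}_\eta\!\Big(\arctan\tfrac{\eta-kt}{k}\Big)(v)\;=\;\frac{i\,\mathrm{sgn}(k)}{2}\cdot\mathrm{P.V.}\,\frac{e^{-|kv|}\,e^{iktv}}{v},
\]
and its rescaled counterpart, after which convolution with $\tilde F_k$ gives exactly the right-hand sides in \eqref{eq-inver-F1}--\eqref{eq-inver-F2}.

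I would verify the displayed identity by computing the Fourier transform of the candidate kernel and checking that it equals the symbol. Plugging in and separating real and imaginary parts:
\[
\int_{\mathbb R}\mathrm{P.V.}\,\frac{e^{-|kv|}\,e^{-i(\eta-kt)v}}{v}\,dv\;=\;-\,i\int_{\mathbb R}\mathrm{P.V.}\,\frac{e^{-|kv|}\sin((\eta-kt)v)}{v}\,dv,
\]
since the cosine contribution is odd in $v$. The remaining integral is even in $v$, hence equals $-2i\int_0^\infty e^{-|k|v}\sin((\eta-kt)v)\,v^{-1}\,dv$, and the classical Laplace-sine identity $\int_0^\infty e^{-av}\sin(bv)\,v^{-1}\,dv=\arctan(b/a)$ for $a>0$ evaluates this to $-2i\arctan((\eta-kt)/|k|)$. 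Multiplying by $i\,\mathrm{sgn}(k)/2$ and using that $\arctan$ is odd gives exactly $\arctan((\eta-kt)/k)$, settling \eqref{eq-inver-F1}.

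For \eqref{eq-inver-F2} the same computation applies verbatim after replacing the exponential decay rate $|k|$ by $\nu^{-1/3}|k|^{1/3}$: indeed
\[
\arctan\!\Big(\nu^{1/3}|k|^{2/3}\tfrac{\eta-kt}{k}\Big)\;=\;\mathrm{sgn}(k)\,\arctan\!\Big(\tfrac{\eta-kt}{\nu^{-1/3}|k|^{1/3}}\Big),
\]
and the exponent in the kernel, $\nu^{-1/3}|k|^{-2/3}|k(v-v')|=\nu^{-1/3}|k|^{1/3}|v-v'|$, matches the rescaled decay rate. So a single computation covers both cases.

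The only point requiring a bit of care, and the mildest obstacle, is the distributional justification: $\arctan$ is bounded but not integrable, so the inverse Fourier transform must be interpreted in $\mathcal S'$ and the kernel is a genuine principal-value distribution. I would handle this by first establishing the identity for $\tilde F_k$ in the Schwartz class (where all integrals converge absolutely after symmetrizing around $v=v'$) and then extending by density, using that the multiplier is uniformly bounded (by $\pi/2$) and hence defines a bounded operator on $L^2_v$. No other serious difficulty is present; the whole argument is an application of one elementary integral formula.
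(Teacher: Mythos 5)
Your proof is correct, and it takes a genuinely different route from the paper's. The paper proceeds \emph{forward}: it first introduces an auxiliary function $f(t,x,v)=F(t,z,v)$ with $x=z+tv$ to factor out the $e^{iktv}$ modulation, then writes the multiplier as a convolution of two frequency-side pieces, namely $\arctan(\xi/k)=\tfrac{1}{2}\bigl(\tfrac{k}{k^2+(\cdot)^2}*\mathrm{sgn}(\cdot)\bigr)(\xi)$, and then converts each factor into a known physical-side distribution ($\tfrac{k}{k^2+\xi^2}=\tfrac{\mathrm{sgn}(k)}{2}\mathcal F_v(e^{-|kv|})$ and $\mathrm{sgn}(\xi)=\tfrac{i}{\pi}\mathcal F_v(\mathrm{P.V.}\,1/v)$), so the kernel emerges as the product of a decaying exponential with a P.V.\ singularity. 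You instead work \emph{backward}: you posit the kernel, compute its Fourier transform by parity and the scalar identity $\int_0^\infty e^{-av}\sin(bv)\,v^{-1}\,dv=\arctan(b/a)$, and check that the symbol comes out right. Both arguments are correct and of comparable difficulty; yours is slightly more self-contained since it hinges on a single elementary integral rather than on a convolution factorization of $\arctan$ that itself requires justification, while the paper's derivation has the advantage of making it structurally transparent \emph{why} the kernel is a product of a Poisson-type exponential and a Hilbert-type P.V.\ factor. Your remark about Schwartz density and $L^2$-boundedness of the (bounded) multiplier is the right way to deal with the distributional issues, which the paper handles similarly by fixing $h\in\mathcal S$.
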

\begin{proof}
We introduce an auxiliary function $f(t,x,v)$ such that $f(t,x,v)=F(t,z,v)$ with $x=z+tv$. It holds that $\tilde{F}_k(t,v)=\tilde{f}_k(t,v)e^{iktv}$, $\hat{F}_k(t,\eta)=\hat{f}_k(t,\eta-kt)$ and 
\begin{align*}
\arctan \left(\frac{\eta-kt}{k}\right) \hat{F}_k(t,\eta)= \arctan \left(\frac{\xi}{k}\right) \hat{f}_k(t,\xi)\bigg|_{\xi=\eta-kt}.
\end{align*}
To prove \eqref{eq-inver-F1}, it suffice to give the inverse Fourier transform of $\arctan\big(\frac{\xi}{k}\big)\hat{f}_k(t,\xi)$.

Note that $\arctan \big(\frac{\xi}{k}\big)$, $\text{sgn}(\xi)\in \mathcal S'$.  We have for any  $h\in \mathcal S$ that
\begin{align*}
  \mathcal{F}^{-1}_{\xi}\Big(\arctan \big(\frac{\xi}{k}\big)\hat{h}(\xi)\Big)(v)  =&\mathcal{F}^{-1}_{\xi} \left(\arctan \big(\frac{\xi}{k}\big)\right)*\mathcal{F}^{-1}_{\xi}\left(\hat{h}(\xi)\right)(v)\\
  =&\frac{1}{2}\mathcal{F}^{-1}_{\xi} \left[ \Big(\frac{k}{k^2+(\cdot)^2}*\text{sgn}(\cdot)\Big)(\xi)\right]*\mathcal{F}^{-1}_{\xi}\left(\hat{h}(\xi)\right)(v)\\
  =&\pi\left(\mathcal{F}^{-1}_{\xi} \left( \frac{k}{k^2+\xi^2}\right)\mathcal{F}^{-1}_{\xi} \left(\text{sgn}(\xi)\right) \right)*\mathcal{F}^{-1}_{\xi}\left(\hat{h}(\xi)\right)(v)\\
  =&\frac{i\mathrm{sgn}(k)}{2}P.V.\int \frac{e^{-|k(v-v')|}}{v-v'}h(v')dv'.
\end{align*}
Here we use the fact that
\begin{align*}
  \frac{k}{k^2+{\xi}^2}=\frac{\mathrm{sgn}(k)}{2}\mathcal{F}_v \left(e^{-|kv|}\right)(\xi),\quad \text{sgn}(\xi)=\frac{i}{\pi}\mathcal{F}_v \left(\text{P.V.}\frac{1}{v}\right)(\xi). 
\end{align*}

Accordingly, we deduce that
\begin{align*}
  &\mathcal{F}^{-1}_{\eta}\Big(\arctan \left( \frac{\eta-kt}{k}\right)\hat{F}_k(t,\eta)\Big)(t,k,v)=e^{iktv}\mathcal{F}^{-1}_{\xi}\Big(\arctan \big(\frac{\xi}{k}\big)\hat{f}_k(t,\xi)\Big)(t,k,v)\\
  =&e^{iktv}\frac{i\mathrm{sgn}(k)}{2}P.V.\int \frac{e^{-|k(v-v')|}}{v-v'}\tilde f_k(t,v')dv'= \frac{i\mathrm{sgn}(k)}{2}\text{P.V.}\int \frac{e^{-|k(v-v')|}e^{ikt(v-v')}}{v-v'}\tilde F_k(t,v')dv',
\end{align*}
which is \eqref{eq-inver-F1}.

Note that $\arctan \big(\nu^{\frac13}|k|^{\frac23}\frac{\xi}{k}\big)=\Big(\frac{\nu^{\frac13}|k|^{\frac23}k}{k^2+\nu^{\frac23}|k|^{\frac43}(\cdot)^2}*\text{sgn}(\cdot)\Big)(\xi)$. Using the same approach, we can also obtain \eqref{eq-inver-F2}.
\end{proof}

\begin{lemma}\label{lem-A-nonlocal}
For $k\neq0$, it holds that
\begin{align*}
& \int \mathrm{A}_k(t,\pa_v)\left(b'_t\circ b_t^{-1}(v)\right)^2(\pa_v-ikt)^2\tilde F_k(t,v)\overline{\mathrm{A}_k(t,\pa_v)\tilde F_k(t,v)}dv\\
&\quad\quad\quad\quad\quad\quad\quad\quad
+\big\|\left(b'_t\circ b_t^{-1}(v)\right)\mathrm{A}_k(t,\pa_v)(\pa_v-ikt)\tilde F_k(t,v)\big\|_{L^2}^2 \\
\leq& C\|e^{c_0\nu^{\frac{1}{3}}t}\ln (e+|k|)(\pa_v-ikt)\tilde F_k(t,v)\|_{L^2}\|e^{c_0\nu^{\frac{1}{3}}t}\ln (e+|k|)\tilde F_k(t,v)\|_{L^2}.
\end{align*}
\end{lemma}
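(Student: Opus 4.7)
The plan is to reduce the dissipation estimate to a commutator bound on $[\mathrm{A}_k, a]$, where $a(v) := \bigl(b'_t \circ b_t^{-1}(v)\bigr)^2$, by exploiting two key features of $\mathrm{A}_k$: it is self-adjoint on $L^2$ (as a real-valued Fourier multiplier in $v$), and it commutes with $\mathcal{D} := \pa_v - ikt$, since $\mathrm{A}_k(t,\eta)$ depends on $\eta$ only through the combination $\eta - kt$. Throughout, write $G := \mathrm{A}_k \tilde F_k$, $Y := \mathcal{D}\tilde F_k$, and $H := \mathrm{A}_k \mathcal{D} \tilde F_k = \mathcal{D} G$; note in particular that $\|(b'_t\circ b_t^{-1})\mathrm{A}_k\mathcal{D}\tilde F_k\|_{L^2}^2 = \|a^{1/2}H\|_{L^2}^2$.

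\textbf{Step 1 (Algebraic identity).} Using $a\mathcal{D}^2 \tilde F_k = \mathcal{D}(a\mathcal{D}\tilde F_k) - (\pa_v a)\mathcal{D}\tilde F_k$, self-adjointness of $\mathrm{A}_k$, one integration by parts in $v$ (using that $\mathcal{D}$ is anti-self-adjoint and commutes with $\mathrm{A}_k$), and the expansion $\mathrm{A}_k[aY] = aH + [\mathrm{A}_k, a]Y$, I expect to derive the exact identity
\begin{align*}
\int \mathrm{A}_k\bigl[a\mathcal{D}^2\tilde F_k\bigr]\overline{G}\, dv + \|a^{1/2} H\|_{L^2}^2 = -\int [\mathrm{A}_k, a]\mathcal{D}\tilde F_k \cdot \overline{H}\, dv - \int (\pa_v a)\mathcal{D}\tilde F_k \cdot \overline{\mathrm{A}_k^2 \tilde F_k}\, dv.
\end{align*}
The left-hand side is exactly the quantity to bound in the lemma, and the $\mathcal{D}^2$ has been reduced to at most one $\mathcal{D}$ on $\tilde F_k$ in each remainder.

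\textbf{Step 2 (Easy remainder).} The second integral is handled by Cauchy--Schwarz: $\|\pa_v a\|_{L^\infty}$ is finite (by the assumption $b''_{in}\in H^3$ and the monotonicity $b'_t \geq c_m$), and $\|\mathrm{A}_k\|_{L^2\to L^2} \leq C e^{c_0\nu^{1/3}t}\ln(e+|k|)$ follows from $|\mathring{\mathrm{A}}_k|\leq 2\pi+1$; applying the latter twice to $\mathrm{A}_k^2 \tilde F_k$ produces exactly the product $\|e^{c_0\nu^{1/3}t}\ln(e+|k|)\mathcal{D}\tilde F_k\|_{L^2}\cdot\|e^{c_0\nu^{1/3}t}\ln(e+|k|)\tilde F_k\|_{L^2}$ on the right-hand side of the lemma.

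\textbf{Step 3 (Commutator bound --- the main obstacle).} For the first remainder, Cauchy--Schwarz and $\|H\|_{L^2}\leq Ce^{c_0\nu^{1/3}t}\ln(e+|k|)\|\mathcal{D}\tilde F_k\|_{L^2}$ reduce matters to showing
\[
\|[\mathrm{A}_k, a]\mathcal{D}\tilde F_k\|_{L^2} \leq C e^{c_0\nu^{1/3}t}\ln(e+|k|) \|\tilde F_k\|_{L^2},
\]
with no derivative remaining on $\tilde F_k$. This is the main obstacle: a crude kernel estimate only yields the right-hand side with $\|\mathcal{D}\tilde F_k\|_{L^2}$, which would overshoot. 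The resolution is to combine the explicit kernel representation from Lemma \ref{lem-ker-arct},
\[
[\mathrm{A}_k, a] g(v) = C_k\,\mathrm{P.V.}\!\int K_k(v-v')\, e^{ikt(v-v')}\, \frac{a(v')-a(v)}{v-v'}\, g(v')\, dv',
\]
where $K_k(\xi) := e^{-|k\xi|} + e^{-\nu^{-1/3}|k|^{-2/3}|k\xi|}$ and $C_k := \tfrac{i\,\mathrm{sgn}(k)}{2}e^{c_0\nu^{1/3}t}\ln(e+|k|)$, with the twisted-derivative identity
\[
e^{-iktv'}\mathcal{D}\tilde F_k(v') = \pa_{v'}\bigl(e^{-iktv'}\tilde F_k(v')\bigr),
\]
to integrate by parts in $v'$ (splitting the integral at $v'=v$ to handle the weak singularity of $K_k$; the two boundary contributions cancel by continuity of $K_k$). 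The derivative $\pa_{v'}$ then lands on either $K_k$ or the divided difference $\frac{a(v')-a(v)}{v-v'} = \int_0^1 a'(v+s(v'-v))\,ds$, never on $\tilde F_k$. Each resulting kernel has uniformly bounded $L^1(dv')$-norm: $\pa_{v'}e^{-|k(v-v')|} = |k|\mathrm{sgn}(v-v')e^{-|k(v-v')|}$ has $L^1$-mass $2$; $\pa_{v'}e^{-\nu^{-1/3}|k|^{-2/3}|k(v-v')|}$ has $L^1$-mass $2$; $\pa_{v'}\frac{a(v')-a(v)}{v-v'} = \int_0^1 s\,a''(v+s(v'-v))\,ds$ is bounded by $\tfrac12\|a''\|_{L^\infty}$, against $K_k$ of $L^1$-mass $O(|k|^{-1} + \nu^{1/3}|k|^{-1/3}) \lesssim 1$ for $|k|\geq 1$, $\nu\leq 1$; the factor $\frac{a(v')-a(v)}{v-v'}$ itself is bounded by $\|a'\|_{L^\infty}$. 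Both $\|a'\|_{L^\infty}$ and $\|a''\|_{L^\infty}$ are finite under the assumptions on $b_{in}$. Young's convolution inequality then delivers the required $L^2$-bound, and the two steps together produce the claimed estimate.
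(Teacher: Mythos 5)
Your proposal is correct and follows essentially the same route as the paper's proof: the same integration by parts in $v$ to isolate $-\|a^{1/2}\mathrm{A}_k\mathcal{D}\tilde F_k\|_{L^2}^2$ plus a commutator and a $\partial_v a$ remainder, and the same key move of using the kernel representation from Lemma \ref{lem-ker-arct} together with the twisted-derivative identity $e^{ikt(v-v')}\mathcal D\tilde F_k(v')=e^{iktv}\pa_{v'}\bigl(e^{-iktv'}\tilde F_k(v')\bigr)$ to integrate by parts in $v'$ so the derivative never hits $\tilde F_k$, reducing the commutator to convolution against $L^1$ kernels of uniformly bounded mass. The only cosmetic difference is that you push $\mathrm{A}_k$ onto the test function via self-adjointness to form $\mathrm{A}_k^2\tilde F_k$, whereas the paper keeps one $\mathrm{A}_k$ on each side, but this is the same estimate.
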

\begin{proof} 
By direct calculation, we can obtain
\begin{align*}
  &\int \mathrm{A}_k(t,\pa_v)\left(\left(b'_t\circ b_t^{-1}(v)\right)^2(\pa_v-ikt)^2\tilde F_k(t,v)\right)\overline{\mathrm{A}_k(t,\pa_v)\tilde F_k(t,v)} dv\\
  =&\int \mathrm{A}_k(t,\pa_v)(\pa_v-ikt)\left(\left(b'_t\circ b_t^{-1}(v)\right)^2(\pa_v-ikt)\tilde F_k(t,v)\right)\overline{\mathrm{A}_k(t,\pa_v)\tilde F_k(t,v)} dv\\
  &-\int \mathrm{A}_k(t,\pa_v) \left(\pa_v\left(b'_t\circ b_t^{-1}(v)\right)^2\right)(\pa_v-ikt)\tilde F_k(t,v) \overline{\mathrm{A}_k(t,\pa_v)\tilde F_k(t,v)} dv\\
  =&-\int \left(b'_t\circ b_t^{-1}(v)\right)^2\mathrm{A}_k(t,\pa_v)\left((\pa_v-ikt)\tilde F_k(t,v)\right)\overline{\mathrm{A}_k(t,\pa_v)(\pa_v-ikt)\tilde F_k(t,v)} dv\\
  &+\int  \left[\left(b'_t\circ b_t^{-1}(v)\right)^2,\mathrm{A}_k(t,\pa_v)\right] \left((\pa_v-ikt)\tilde F_k(t,v)\right)\overline{\mathrm{A}_k(t,\pa_v)(\pa_v-ikt)\tilde F_k(t,v)} dv\\
  &-\int \mathrm{A}_k(t,\pa_v) \left(\pa_v\left(b'_t\circ b_t^{-1}(v)\right)^2\right)(\pa_v-ikt)\tilde F_k(t,v) \overline{\mathrm{A}_k(t,\pa_v)\tilde F_k(t,v)} dv\\
  \eqdef&I+II+III.
\end{align*}
It is clear that
\begin{align*}
  I=-\left\|b'_t\circ b_t^{-1}(v)\mathrm{A}_k(t,\pa_v)(\pa_v-ikt)\tilde F_k(t,v)\right\|_{L^2}^2.
\end{align*}
Recall that $\mathring{\mathrm{A}}_k$ is a ghost type multiplier. By using Lemma \ref{lem-back}, one can get
\begin{align*}
  \left|III\right|\lesssim\left\|e^{c_0\nu^{\f13}t}\ln (e+|k|)(\pa_v-ikt)\tilde F_k(t,v)\right\|_{L^2}\left\|e^{c_0\nu^{\f13}t}\ln (e+|k|)\tilde F_k(t,v)\right\|_{L^2}.
\end{align*}
The most troublesome term is $II$. To  treat this term, we need to provide the estimate for the following commutator
\begin{align*}
  \left[\left(b'_t\circ b_t^{-1}(v)\right)^2,\mathrm{A}_k(t,\pa_v)\right]=e^{c_0\nu^{\f13}t} \ln(e+|k|)\left[\left(b'_t\circ b_t^{-1}(v)\right)^2,\mathring{\mathrm{A}}_k(t,\pa_v)\right].
\end{align*}

With the help of Lemma \ref{lem-ker-arct}, we have
\begin{align*}
  &\left[\left(b'_t\circ b_t^{-1}(v)\right)^2, \mathring{\mathrm{A}}_k(t,\pa_v)\right] (\pa_v-ikt)\tilde F_k(t,v)\\
  =&\frac{i\mathrm{sgn}(k)}{2}\int \frac{e^{-|k(v-v')|}e^{itk(v-v')}\left(b'_t\circ b_t^{-1}(v))-b'_t\circ b_t^{-1}(v'))\right)}{v-v'}(\pa_{v'}-ikt)\tilde F_k(t,v')dv'\\
  &+\frac{i\mathrm{sgn}(k)}{2}\int \frac{e^{-\nu^{-\frac13}|k|^{-\frac23}|k(v-v')|}e^{itk(v-v')}\left(b'_t\circ b_t^{-1}(v))-b'_t\circ b_t^{-1}(v'))\right)}{v-v'}(\pa_{v'}-ikt)\tilde F_k(t,v')dv'\\
  =&-\frac{i\mathrm{sgn}(k)}{2}\int \frac{|k|\mathrm{sgn}(v-v')e^{-|k(v-v')|}e^{itk(v-v')}\left(b'_t\circ b_t^{-1}(v))-b'_t\circ b_t^{-1}(v'))\right)}{v-v'}\tilde F_k(t,v')dv'\\
  &-\frac{i\mathrm{sgn}(k)}{2}\int e^{-|k(v-v')|}e^{itk(v-v')} \left(\pa_{v'}\frac{ \left(b'_t\circ b_t^{-1}(v))-b'_t\circ b_t^{-1}(v'))\right)}{v-v'}\right)\tilde F_k(t,v')dv'\\
  &-\frac{i\mathrm{sgn}(k)}{2}\int \nu^{-\frac13}|k|^{-\frac23}|k|\mathrm{sgn}(v-v')e^{-\nu^{-\frac13}|k|^{-\frac23}|k(v-v')|}e^{itk(v-v')}\\
  &\qquad\qquad\qquad\qquad\qquad\qquad\qquad\qquad\qquad\times \frac{\left(b'_t\circ b_t^{-1}(v))-b'_t\circ b_t^{-1}(v'))\right)}{v-v'}\tilde F_k(t,v')dv'\\
  &-\frac{i\mathrm{sgn}(k)}{2}\int e^{-\nu^{-\frac13}|k|^{-\frac23}|k(v-v')|}e^{itk(v-v')} \left(\pa_{v'}\frac{ \left(b'_t\circ b_t^{-1}(v))-b'_t\circ b_t^{-1}(v'))\right)}{v-v'}\right)\tilde F_k(t,v')dv'.
\end{align*}
By Lemma \ref{lem-back} we can see that
\begin{align*}
\left|\frac{ \left(b'_t\circ b_t^{-1}(v))-b'_t\circ b_t^{-1}(v'))\right)}{v-v'}\right|\le C\left\|b_{in}''\right\|_{H^1},\\
  \left|\pa_{v'}\frac{ \left(b'_t\circ b_t^{-1}(v))-b'_t\circ b_t^{-1}(v'))\right)}{v-v'}\right|\le C\left\|b_{in}''\right\|_{H^2},
\end{align*}
and then
\begin{align*}
  \left|\frac{|k|\mathrm{sgn}(v-v')e^{-|k(v-v')|}e^{itk(v-v')}\left(b'_t\circ b_t^{-1}(v))-b'_t\circ b_t^{-1}(v'))\right)}{v-v'}\tilde F_k(t,v')\right|\lesssim |k| e^{-|k(v-v')|}\left|\tilde F_k(t,v')\right|.
\end{align*}
Therefore,
\begin{align*}
  &\left\|\int \frac{|k|\mathrm{sgn}(v-v')e^{-|k(v-v')|}e^{itk(v-v')}\left(b'_t\circ b_t^{-1}(v))-b'_t\circ b_t^{-1}(v'))\right)}{v-v'}\tilde F_k(t,v')dv'\right\|_{L^2}\\
  \lesssim&\left\||k| e^{-|k(\cdot)|}*\tilde F_k(t,\cdot)\right\|_{L^2}\lesssim \left\||k| e^{-|k(v)|}\right\|_{L^1}\left\|\tilde F_k(t,v)\right\|_{L^2}\lesssim\left\|\tilde F_k(t,v)\right\|_{L^2}.
\end{align*}
Repeating this technique,  we obtain
\begin{align*}
  \left\|\left[\left(b'_t\circ b_t^{-1}(v)\right)^2,\mathring{\mathrm{A}}_k(t,\pa_v)\right] (\pa_v-ikt)\tilde F_k(t,v) \right\|_{L^2}\lesssim \left\|\tilde F_k(t,v)\right\|_{L^2}.
\end{align*}

It follows that
\begin{align*}
  \left|II\right|\lesssim&\left\|\left[\left(b'_t\circ b_t^{-1}(v)\right)^2,\mathrm{A}_k(t,\pa_v)\right] (\pa_v-ikt)\tilde F_k(t,v) \right\|_{L^2}\left\|\mathrm{A}_k(t,\pa_v)(\pa_v-ikt)\tilde F_k(t,v)\right\|_{L^2}\\
  \lesssim&\left\|e^{c_0\nu^{\f13}t} \ln(e+|k|)\left[\left(b'_t\circ b_t^{-1}(v)\right)^2,\mathring{\mathrm{A}}_k(t,\pa_v)\right] (\pa_v-ikt)\tilde F_k(t,v) \right\|_{L^2} \\
  &\qquad\qquad\qquad\qquad\qquad\qquad\qquad\qquad\times\left\|e^{c_0\nu^{\f13}t}\ln (e+|k|)(\pa_v-ikt)\tilde F_k(t,v)\right\|_{L^2}\\
  \lesssim&\left\|e^{c_0\nu^{\f13}t}\ln (e+|k|)\tilde F_k(t,v)\right\|_{L^2}\left\|e^{c_0\nu^{\f13}t}\ln (e+|k|)(\pa_v-ikt)\tilde F_k(t,v)\right\|_{L^2}.
\end{align*}
This gives the result of this lemma.
\end{proof}
\subsection{Energy estimate for $\tilde F_k$}
In this subsection, we give the energy estimate for \eqref{eq-F-k}.
\begin{lemma}
For $k\neq0$, $t\ge s\ge0$, their exists $c_1>0$ that is independent of $k$, $t$, and $s$ such that
\begin{align}\label{est-F-1}
  \big\|\mathrm{A}_k(t,\pa_v)\tilde F_k(t,v)\big\|_{L^2}^2\le e^{-c_1 \left(\nu^{\frac{1}{3}}|k|^{\frac{2}{3}}+\nu |k|^2\right)(t-s)}\big\|\mathrm{A}_k(s,\pa_v)\tilde F_k(s,v)\big\|_{L^2}^2.
\end{align}
It also holds that
\begin{align}\label{est-F-2}
  \int^t_s\nu\left\|\mathrm{A}_k(s',\pa_v)(\pa_v-iks')\tilde F_k(s',v)\right\|_{L^2}^2ds'\le\big\|\mathrm{A}_k(s,\pa_v)\tilde F_k(s,v)\big\|_{L^2}^2
\end{align} 
\begin{align}\label{est-F-3}
  \int^t_s\nu\left\|\mathrm{A}_k(s',\pa_v)|k|\tilde F_k(s',v)\right\|_{L^2}^2ds'\le\big\|\mathrm{A}_k(s,\pa_v)\tilde F_k(s,v)\big\|_{L^2}^2
\end{align}
\begin{align}\label{est-F-4}
  \int^t_s \left\|\mathrm{A}_k(s',\pa_v)k(-\Delta_{L})^{-\frac{1}{2}}\tilde F_k(s',v)\right\|_{L^2}^2ds'\le\big\|\mathrm{A}_k(s,\pa_v)\tilde F_k(s,v)\big\|_{L^2}^2,
\end{align}
where $\Delta_{L}=-k^2+(\pa_v-ikt)^2$.
\end{lemma}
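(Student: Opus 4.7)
I would run a weighted energy estimate on \eqref{eq-F-k} against $\overline{\mathrm{A}_k^2(t,\pa_v)\tilde F_k}$. Taking the real part produces the identity
\begin{align*}
\tfrac12\tfrac{d}{dt}\|\mathrm{A}_k\tilde F_k\|_{L^2}^2
&= \mathrm{Re}\langle(\pa_t\mathrm{A}_k)\tilde F_k,\mathrm{A}_k\tilde F_k\rangle - \nu k^2\|\mathrm{A}_k\tilde F_k\|^2 \\
&\quad + \nu\,\mathrm{Re}\langle\mathrm{A}_k(b_t'\circ b_t^{-1})^2(\pa_v-ikt)^2\tilde F_k,\mathrm{A}_k\tilde F_k\rangle - \mathrm{Re}\langle\mathrm{A}_k\widetilde{\mathrm{Com}}_k,\mathrm{A}_k\tilde F_k\rangle.
\end{align*}
The strategy is to harvest three independent dissipations---the viscous $\nu k^2$ term, the non-local diffusion (via Lemma~\ref{lem-A-nonlocal}), and a ghost-type dissipation from $\pa_t\mathrm{A}_k$---and then absorb both the growing exponential factor $c_0\nu^{1/3}\|\mathrm{A}_k\tilde F_k\|^2$ and the commutator error into them.

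\textbf{Ghost dissipation and enhanced-dissipation symbol.} Direct differentiation gives
$$\pa_t\mathrm{A}_k = c_0\nu^{1/3}\mathrm{A}_k - \Big(\tfrac{k^2}{k^2+(\eta-kt)^2}+\tfrac{\nu^{1/3}|k|^{2/3}k^2}{k^2+\nu^{2/3}|k|^{4/3}(\eta-kt)^2}\Big)e^{c_0\nu^{1/3}t}\ln(e+|k|).$$
Since $\mathring{\mathrm{A}}_k\in[1,2\pi+1]$, Plancherel yields
$$-\mathrm{Re}\langle(\pa_t\mathrm{A}_k)\tilde F_k,\mathrm{A}_k\tilde F_k\rangle \ge -c_0\nu^{1/3}\|\mathrm{A}_k\tilde F_k\|^2 + c\|k(-\Delta_L)^{-1/2}\mathrm{A}_k\tilde F_k\|^2 + c\!\int\!\tfrac{\nu^{1/3}|k|^{2/3}k^2}{k^2+\nu^{2/3}|k|^{4/3}(\eta-kt)^2}|\mathrm{A}_k\hat F_k|^2 d\eta,$$
the middle term giving \eqref{est-F-4} directly. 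A case-split on whether $\nu^{2/3}|k|^{4/3}(\eta-kt)^2\le k^2$ or not shows the pointwise estimate $\tfrac12\nu(\eta-kt)^2+\tfrac{\nu^{1/3}|k|^{2/3}k^2}{k^2+\nu^{2/3}|k|^{4/3}(\eta-kt)^2}\ge\tfrac12\nu^{1/3}|k|^{2/3}$; combined with $\tfrac12\nu k^2$ this produces the rate $c_1(\nu^{1/3}|k|^{2/3}+\nu k^2)\|\mathrm{A}_k\tilde F_k\|^2$, leaving residual dissipations $\tfrac\nu2\|\mathrm{A}_k(\pa_v-ikt)\tilde F_k\|^2$ (for \eqref{est-F-2}) and $\tfrac\nu2 k^2\|\mathrm{A}_k\tilde F_k\|^2$ (for \eqref{est-F-3}, since $|k|$ commutes with $\mathrm{A}_k$). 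The non-local diffusion term is controlled by Lemma~\ref{lem-A-nonlocal} combined with $b_t'\circ b_t^{-1}\ge c_m$ and a Young absorption, yielding
$$\nu\,\mathrm{Re}\langle\mathrm{A}_k(b_t'\circ b_t^{-1})^2(\pa_v-ikt)^2\tilde F_k,\mathrm{A}_k\tilde F_k\rangle \le -\tfrac{c_m^2\nu}{2}\|\mathrm{A}_k(\pa_v-ikt)\tilde F_k\|^2+C\nu\|\mathrm{A}_k\tilde F_k\|^2,$$
which furnishes exactly the $\nu\|\mathrm{A}_k(\pa_v-ikt)\tilde F_k\|^2$ dissipation used above.

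\textbf{Commutator and conclusion.} The forcing $\widetilde{\mathrm{Com}}_k$ is the $(z,v)$-side image of $[\pa_t,\mathbb{D}_{t,k}]\tilde\om_k+\nu[\pa_{yy},\mathbb{D}_{t,k}]\tilde\om_k$, so Lemma~\ref{lem-wave-com} bounds its $L^2_y$ norm by $C\nu(\|\tilde\om_k\|_{L^2_y}+\|\pa_y\tilde\om_k\|_{L^2_y})$. Using \eqref{eq-est-wave} and the change-of-variables identity $(\pa_v-ikt)\tilde F_k(v)=(b_t'\circ b_t^{-1}(v))^{-1}e^{iktv}\,\pa_y(\mathbb{D}_{t,k}\tilde\om_k)(b_t^{-1}(v))$, together with $[\pa_y,\mathbb{D}_{t,k}]$ from Lemma~\ref{lem-wave-com} as a lower-order piece, one converts this bound into $\|\widetilde{\mathrm{Com}}_k\|_{L^2_v}\le C\nu(\|\tilde F_k\|+\|(\pa_v-ikt)\tilde F_k\|)$; since $\mathrm{A}_k$ is pointwise comparable to $e^{c_0\nu^{1/3}t}\ln(e+|k|)$, Young gives $|\mathrm{Re}\langle\mathrm{A}_k\widetilde{\mathrm{Com}}_k,\mathrm{A}_k\tilde F_k\rangle|\le\tfrac{c_m^2\nu}{8}\|\mathrm{A}_k(\pa_v-ikt)\tilde F_k\|^2+C\nu\|\mathrm{A}_k\tilde F_k\|^2$. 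Choosing $c_0,c_1$ small enough that $C\nu+c_0\nu^{1/3}\le\tfrac12 c_1(\nu^{1/3}|k|^{2/3}+\nu k^2)$ for all $k\ne0$ and $\nu\le\nu_0$, everything closes to
$$\tfrac{d}{dt}\|\mathrm{A}_k\tilde F_k\|^2+c_1(\nu^{1/3}|k|^{2/3}+\nu k^2)\|\mathrm{A}_k\tilde F_k\|^2+\tfrac{c_m^2\nu}{4}\|\mathrm{A}_k(\pa_v-ikt)\tilde F_k\|^2+c\|k(-\Delta_L)^{-1/2}\mathrm{A}_k\tilde F_k\|^2\le 0.$$
Grönwall yields \eqref{est-F-1}; integration on $[s,t]$, together with the identity $k^2\|\mathrm{A}_k\tilde F_k\|^2=\|\mathrm{A}_k(|k|\tilde F_k)\|^2$, yields \eqref{est-F-2}--\eqref{est-F-4}. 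I expect the subtlest step to be the commutator: one must carefully convert the $y$-derivative bound from Lemma~\ref{lem-wave-com} into the $v$-side quantity $(\pa_v-ikt)\tilde F_k$ so that the bare $\nu$ prefactor suffices for absorption into the enhanced dissipation rate rather than leaving a growing residual.
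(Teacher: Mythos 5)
Your proof is correct and follows essentially the same route as the paper: you apply the ghost multiplier $\mathrm{A}_k$, split the time derivative of $\mathring{\mathrm{A}}_k$ to extract the two arctan-type dissipations, invoke Lemma~\ref{lem-A-nonlocal} with the lower bound $b_t'\circ b_t^{-1}\ge c_m$ plus Young's inequality for the variable-coefficient diffusion, convert the commutator bound from Lemma~\ref{lem-wave-com} to the $(z,v)$ side using \eqref{eq-est-wave} and $[\pa_y,\mathbb{D}_{t,k}]$, run the case split $\nu^{2/3}|k|^{4/3}(\eta-kt)^2\lessgtr k^2$, and absorb the $c_0\nu^{1/3}+C\nu$ error using $|k|\ge 1$ and $\nu$ small. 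The only cosmetic difference is that your absorption inequality should read with the dissipation constant rather than $c_1$ on the right, with $c_1$ then taken to be the surviving fraction, but the argument closes exactly as in the paper.
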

\begin{proof}
We start with the estimate of the communicator term.  Recalling the relationship between the old and new coordinate systems, and using Lemma \ref{lem-back}, one can see that
  \begin{align*}
     \tilde F_k(t,v(t,y))e^{-itkv(t,y)}=\mathbb{D}_{t,k}[\tilde\om_k](t,y),
   \end{align*} 
and
\begin{align}\label{eq-rela-FDw}
  \left\|\tilde F_k(t,v)\right\|_{L^2_v}\approx \left\|\mathbb{D}_{t,k}[\tilde\om_k](t,y)\right\|_{L^2_y},\ \left\|\left(\pa_v-itk\right) \tilde F_k(t,v)\right\|_{L^2_v}\approx \left\|\pa_y\mathbb{D}_{t,k}[\tilde\om_k](t,y)\right\|_{L^2_y}.
\end{align}
Then we deduce from Lemma \ref{lem-wave-com} that
\begin{align*}
  \|\mathrm{A}_k(t,\pa_v)\widetilde{\mathrm{Com}}_k(t,v)\|_{L^2_v}\leq C\nu \left(\left\|\mathrm{A}_k(t,\pa_v)\tilde F_k(t,v)\right\|_{L^2_v}+\left\|\mathrm{A}_k(t,\pa_v)\left(\pa_v-itk\right) \tilde F_k(t,v)\right\|_{L^2_v} \right).
\end{align*}

Next, we use a standard energy method to prove the results of this lemma.

Recall the definition of $\mathrm{A}_k(t,\pa_v)$. By using \eqref{eq-F-k}, we have
\begin{align*}
  &\frac{1}{2}\frac{d}{dt}\int \mathrm{A}_k(t,\pa_v) \tilde F_k(t,v) \overline{\mathrm{A}_k(t,\pa_v)\tilde F_k(t,v)} dv\\
  =& \mathfrak R\int \mathrm{A}_k(t,\pa_v) \pa_t\tilde F_k(t,v) \overline{\mathrm{A}_k(t,\pa_v)\tilde F_k(t,v)} dv\\
  &+ \mathfrak R\int \left(\pa_t\mathrm{A}_k(t,\pa_v)\right)\tilde F_k(t,v) \overline{\mathrm{A}_k(t,\pa_v)\tilde F_k(t,v)} dv\\
  =&\nu\mathfrak R\int \mathrm{A}_k(t,\pa_v) \left( \left(-k^2+ \left(b'_t\circ b_t^{-1}(v)\right)^2(\pa_v-ikt)^2\right) \tilde F_k(t,v)\right) \overline{\mathrm{A}_k(t,\pa_v)\tilde F_k(t,v)} dv\\
  &-\mathfrak R\int \mathrm{A}_k(t,\pa_v) \widetilde{\mathrm{Com}}_k(t,v) \overline{\mathrm{A}_k(t,\pa_v)\tilde F_k(t,v)} dv\\
  &+c_0\nu^{\frac{1}{3}}\mathfrak R\int \mathrm{A}_k(t,\pa_v)   \tilde F_k(t,v) \overline{\mathrm{A}_k(t,\pa_v)\tilde F_k(t,v)} dv\\
  &+ \mathfrak R\int e^{c_0\nu^{\f13}t} \left(\pa_t\mathring{\mathrm{A}}_k(t,\pa_v)\right)\ln(e+|k|)   \tilde F_k(t,v) \overline{\mathrm{A}_k(t,\pa_v)\tilde F_k(t,v)} dv.
\end{align*}
For the last term, it holds that
\begin{align*}
  &\mathfrak R\int e^{c_0\nu^{\frac{1}{3}}t} \left(\pa_t\mathring{\mathrm{A}}_k(t,\pa_v)\right)\ln(e+|k|)   \tilde F_k(t,v) \overline{\mathrm{A}_k(t,\pa_v)\tilde F_k(t,v)} dv\\
  =&-\mathfrak R\int e^{c_0\nu^{\frac{1}{3}}t} \left(\frac{1}{1+\left(\frac{\eta}{k}-t\right)^2}+ \frac{\nu^{\frac{1}{3}}|k|^{\frac{2}{3}}}{1+\nu^{\frac{2}{3}}|k|^{\frac{4}{3}}\left(\frac{\eta}{k}-t\right)^2}\right)\ln(e+|k|)\hat{F}_k(t,\eta)\overline{\mathrm{A}_k(t,\eta)\hat{F}_k(t,\eta)} d\eta\\
  \le&-\left\| k(-\Delta_{L})^{-\frac{1}{2}}\mathrm{A}_k(t,\pa_v)\tilde F_k(t,v)\right\|_{L^2}^2-\left\|\frac{e^{c_0\nu^{\frac{1}{3}}t}\nu^{\frac{1}{6}}|k|^{\frac{1}{3}}\ln(e+|k|)}{\sqrt{1+(\nu k^2)^{\frac{2}{3}}\big(t-\frac{\eta}{k}\big)^2}}\hat{F}_k(t,\eta)\right\|_{L^2_\eta}^2.
\end{align*}

Then by using Lemma \ref{lem-A-nonlocal}, we have
\begin{align*}
  &\frac{1}{2}\frac{d}{dt}\big\|\mathrm{A}_k(t,\pa_v)\tilde F_k(t,v)\big\|_{L^2}^2\\
  &+\nu\left\|b'_t\circ b_t^{-1}(v)\mathrm{A}_k(t,\pa_v)(\pa_v-ikt)\tilde F_k(t,v)\right\|_{L^2}^2+\nu\left\|k\mathrm{A}_k(t,\pa_v)\tilde F_k(t,v)\right\|_{L^2}^2\\
  &+\left\| k(-\Delta_{L})^{-\frac{1}{2}}\mathrm{A}_k(t,\pa_v)\tilde F_k(t,v)\right\|_{L^2}^2+\left\|\frac{e^{c_0\nu^{\frac{1}{3}}t}\nu^{\frac{1}{6}}|k|^{\frac{1}{3}}\ln(e+|k|)}{\sqrt{1+(\nu k^2)^{\frac{2}{3}}\big(t-\frac{\eta}{k}\big)^2}}\hat{F}_k(t,\eta)\right\|_{L^2}^2\\
  \lesssim&c_0\nu^{\frac{1}{3}}\big\|\mathrm{A}_k(t,\pa_v)\tilde F_k(t,v)\big\|_{L^2}^2+\nu\|\mathrm{A}_k(t,\pa_v)(\pa_v-ikt)\tilde F_k(t,v)\|_{L^2}\|\mathrm{A}_k(t,\pa_v)\tilde F_k(t,v)\|_{L^2}\\
  &+\|\mathrm{A}_k(t,\pa_v) \mathrm{Com}(t,z,v)\|_{L^2}\|\mathrm{A}_k(t,\pa_v)\tilde F_k(t,v)\|_{L^2}\\
  \lesssim&c_0\nu^{\frac{1}{3}}\big\|\mathrm{A}_k(t,\pa_v)\tilde F_k(t,v)\big\|_{L^2}^2+\nu\|\mathrm{A}_k(t,\pa_v)(\pa_v-ikt)\tilde F_k(t,v)\|_{L^2}\|\mathrm{A}_k(t,\pa_v)\tilde F_k(t,v)\|_{L^2}\\
  &+\nu\|\mathrm{A}_k(t,\pa_v)\tilde F_k(t,v)\|_{L^2}^2.
\end{align*}

It follows from the fact
\begin{equation}
  \left\{
    \begin{array}{ll}
      \frac{ \nu^{\frac{1}{3}}|k|^{\frac{2}{3}} }{ 1+(\nu k^2)^{\frac{2}{3}}\big(t-\frac{\eta}{k}\big)^2}\ge \frac{1}{2}\nu^{\frac{1}{3}}|k|^{\frac{2}{3}}&\text{ for }(\nu k^2)^{\frac{2}{3}}\big(t-\frac{\eta}{k}\big)^2\le 1;\\
      \nu (\eta-kt)^2 \ge\nu^{\frac{1}{3}}|k|^{\frac{2}{3}}&\text{ for }(\nu k^2)^{\frac{2}{3}}\big(t-\frac{\eta}{k}\big)^2\ge 1,
    \end{array}
  \right.
\end{equation}
that 
\begin{align*}
  \nu (\eta-kt)^2+\frac{ \nu^{\frac{1}{3}}|k|^{\frac{2}{3}} }{ 1+(\nu k^2)^{\frac{2}{3}}\big(t-\frac{\eta}{k}\big)^2}\ge \frac{1}{2}\left(\nu|k|^2\right)^{\frac{1}{3}},
\end{align*}
therefore 
\begin{align*}
  &\nu\left\|b'_t\circ b_t^{-1}(v)\mathrm{A}_k(t,\pa_v)(\pa_v-ikt)\tilde F_k(t,v)\right\|_{L^2}^2+\left\|\frac{e^{c_0\nu^{\frac{1}{3}}t}\nu^{\frac{1}{6}}|k|^{\frac{1}{3}}\ln(e+|k|)}{\sqrt{1+(\nu k^2)^{\frac{2}{3}}\big(t-\frac{\eta}{k}\big)^2}}\widehat{F}(t,k,\eta)\right\|_{L^2}^2\\
  \ge& \frac{1}{C}\big\|\left(\nu|k|^2\right)^{\frac{1}{6}}\mathrm{A}_k(t,\pa_v)\tilde F_k(t,v)\big\|_{L^2}^2.
\end{align*}

This implies that 
\begin{align*}
  &\frac{3}{4}\frac{d}{dt}\big\|\mathrm{A}_k(t,\pa_v)\tilde F_k(t,v)\big\|_{L^2}^2+\frac{1}{C}\big\|\left(\nu|k|^2\right)^{\frac{1}{6}}\mathrm{A}_k(t,\pa_v)\tilde F_k(t,v)\big\|_{L^2}^2 \\
  &+\nu\left\|b'_t\circ b_t^{-1}(v)\mathrm{A}_k(t,\pa_v)(\pa_v-ikt)\tilde F_k(t,v)\right\|_{L^2}^2+\nu\left\|\mathrm{A}_k(t,\pa_v)|k|\tilde F_k(t,v)\right\|_{L^2}^2\\
  &+\left\| k(-\Delta_{L})^{-\frac{1}{2}}\mathrm{A}_k(t,\pa_v)\tilde F_k(t,v)\right\|_{L^2}^2+\left\|\frac{e^{c_0\nu^{\frac{1}{3}}t}\nu^{\frac{1}{6}}|k|^{\frac{1}{3}}\ln(e+|k|)}{\sqrt{1+(\nu k^2)^{\frac{2}{3}}\big(t-\frac{\eta}{k}\big)^2}}\widehat{F}(t,k,\eta)\right\|_{L^2}^2\\
  \lesssim&c_0\nu^{\frac{1}{3}}\big\|\mathrm{A}_k(t,\pa_v)F\big\|_{L^2}^2+\nu^{\frac{1}{3}} \left(\nu^{\frac{1}{2}}\|\mathrm{A}_k(t,\pa_v)(\pa_v-ikt)\tilde F_k(t,v)\|_{L^2}\right) \|\nu^{\frac{1}{6}}\mathrm{A}_k(t,\pa_v)\tilde F_k(t,v)\|_{L^2} \\
  &+\nu^{\frac{2}{3}}\|\nu^{\frac{1}{6}}\mathrm{A}_k(t,\pa_v)\tilde F_k(t,v)\|_{L^2}^2.
\end{align*}
Then by taking $\nu$ and $c_0$ small enough, we deduce that
\begin{align*}
  &\frac{d}{dt}\big\|\mathrm{A}_k(t,\pa_v)\tilde F_k(t,v)\big\|_{L^2}^2+\frac{1}{C}\big\|\left(\nu|k|^2\right)^{\frac{1}{6}}\mathrm{A}_k(t,\pa_v)\tilde F_k(t,v)\big\|_{L^2}^2 \\
  &+\nu\left\|b'_t\circ b_t^{-1}(v)\mathrm{A}_k(t,\pa_v)(\pa_v-ikt)\tilde F_k(t,v)\right\|_{L^2}^2+\nu\left\|\mathrm{A}_k(t,\pa_v)|k|\tilde F_k(t,v)\right\|_{L^2}^2\\
  &+\left\| k(-\Delta_{L})^{-\frac{1}{2}}\mathrm{A}_k(t,\pa_v)\tilde F_k(t,v)\right\|_{L^2}^2+\left\|\frac{e^{c_0\nu^{\frac{1}{3}}t}\nu^{\frac{1}{6}}|k|^{\frac{1}{3}}\ln(e+|k|)}{\sqrt{1+(\nu k^2)^{\frac{2}{3}}\big(t-\frac{\eta}{k}\big)^2}}\widehat{F}(t,k,\eta)\right\|_{L^2}^2 \le 0.
\end{align*}
Through the above energy dissipation inequality, we obtain \eqref{est-F-1}, \eqref{est-F-2}, \eqref{est-F-3}, and \eqref{est-F-4} immediately.
\end{proof}

\subsection{Estimates for $S(t,s)$}
In this subsection, we use the energy estimates $\tilde F_k(t,v)$ to derive estimates for $\omega$, the solution of \eqref{eq: linearNS}, and give the proof of Proposition \ref{prop-semi}. 
\begin{proof}[Proof of Proposition \ref{prop-semi}]
It follows from the assumption $\int_{\mathbb T} \om(s,x,y) d x=0$ that $\int_{\mathbb T} \om(t,x,y) d x=0$, and $\om(t,x,y)=\om_{\neq}(t,x,y)$.

Recalling \eqref{eq-rela-FDw}, we deduce from \eqref{eq-est-wave} and \eqref{est-F-1} that
\begin{align*}
  &\left\|e^{c_0\nu^{\frac{1}{3}}t}{\om}(t,x,y)\right\|_{H^{log}_xL^2_{y}}^2
  \lesssim \sum_{k\neq0}\left\|e^{c_0\nu^{\frac{1}{3}}t}\ln(e+|k|)\mathbb{D}_{t,k}[\tilde\om_k](t,y)\right\|_{L^2_{y}}^2\\
  \lesssim& \sum_{k\neq0}\left\|e^{c_0\nu^{\frac{1}{3}}t}\ln(e+|k|)\tilde F_k(t,v)\right\|_{L^2_{v}}^2
  \lesssim \sum_{k\neq0} \big\|e^{c_0\nu^{\frac{1}{3}}s}\ln(e+|k|)\tilde F_k(s,v)\big\|_{L^2_v}^2\\
  \lesssim&  \sum_{k\neq0}\big\|e^{c_0\nu^{\frac{1}{3}}s}\ln(e+|k|)\tilde\om_k(s,y)\big\|_{L^2_y}^2\lesssim  \left\|e^{c_0\nu^{\frac{1}{3}}s}{\om}(s,x,y)\right\|_{H^{log}_xL^2_{y}}^2,
\end{align*}
which is \eqref{btsp:om-point}.

Similarly, from \eqref{eq-wave-com-y}, \eqref{est-F-2}, and \eqref{est-F-3} we obtain
\begin{align*}
   &\int_{s}^t\|e^{c_0\nu^{\frac{1}{3}}s'}\pa_x{\om}(s',x,y)\|_{H^{log}_xL^2_{y}}^2ds' \lesssim \sum_{k\neq0}\int_{s}^t\|e^{c_0\nu^{\frac{1}{3}}s'}|k|\ln(e+|k|) \tilde F_k(s',v)\|_{L^2_{v}}^2ds'\\
   \lesssim&\nu^{-1}\sum_{k\neq0} \|e^{c_0\nu^{\frac{1}{3}}s} \ln(e+|k|) \tilde F_k(s,v)\|_{L^2_{v}}^2 \lesssim\nu^{-1}\left\|e^{c_0\nu^{\frac{1}{3}}s}{\om}(s,x,y)\right\|_{H^{log}_xL^2_{y}}^2,
\end{align*}
and
\begin{align*}
   &\int_{s}^t\|e^{c_0\nu^{\frac{1}{3}}s'}\pa_y{\om}(s')\|_{H^{log}_xL^2_{y}}^2ds' \lesssim \sum_{k\neq0}\int_{s}^t\|e^{c_0\nu^{\frac{1}{3}}s'}\ln(e+|k|)\pa_y\tilde{\om}_{k}(s')\|_{L^2_{y}}^2ds'\\
   \lesssim&\sum_{k\neq0}\int_{s}^t\|e^{c_0\nu^{\frac{1}{3}}s'}\ln(e+|k|)\pa_y\mathbb{D}_{s',k}[\tilde\om_k](s',y)\|_{L^2_{y}}^2+\|e^{c_0\nu^{\frac{1}{3}}s'}\ln(e+|k|)\mathbb{D}_{s',k}[\tilde\om_k](s',y)\|_{L^2_{y}}^2ds'\\
   \lesssim&\sum_{k\neq0}\int_{s}^t\|e^{c_0\nu^{\frac{1}{3}}s'}\ln(e+|k|)(\pa_v-iks') \tilde F_k(s',v)\|_{L^2_{v}}^2+\|e^{c_0\nu^{\frac{1}{3}}s'}\ln(e+|k|) \tilde F_k(s',v)\|_{L^2_{v}}^2ds'\\
   \lesssim&\nu^{-1}\sum_{k\neq0} \|e^{c_0\nu^{\frac{1}{3}}s}\ln(e+|k|) \tilde F_k(s,v)\|_{L^2_{v}}^2 \lesssim\nu^{-1}\left\|e^{c_0\nu^{\frac{1}{3}}s}{\om}(s,x,y)\right\|_{H^{log}_xL^2_{y}}^2,
\end{align*}
which is \eqref{btsp:pa_xyomL2L2L2}.

By using \eqref{est-F-3} and \eqref{est-F-1}, we have
\begin{align*}
  &\int_{s}^t\|e^{c_0\nu^{\frac{1}{3}}s'}\pa_x{\om}(s')\|_{H^{log}_xL^2_{y}}ds'\\
  \lesssim&\int_{s}^t \left(\sum_{k\neq0}\|e^{c_0\nu^{\frac{1}{3}}s'}|k|\ln(e+|k|)\mathbb{D}_{s',k}[\tilde\om_k](s',y)\|_{L^2_{y}}^2 \right)^{\frac{1}{2}}ds'\\
  \lesssim& \left(\int_{s}^{s+1} \sum_{k\neq0}\|e^{c_0\nu^{\frac{1}{3}}s'}|k|\ln(e+|k|)\tilde F_k(s',v)\|_{L^2_{y}}^2 ds'\right)^{\frac{1}{2}}\\
  &+\int_{s+1}^t \left(\sum_{k\neq0}\|e^{c_0\nu^{\frac{1}{3}}s'}|k|\ln(e+|k|)\tilde F_k(s',v)\|_{L^2_{y}}^2\right)^{\frac{1}{2}} ds'\\
  \lesssim& \left(\int_{s}^{s+1} \sum_{k\neq0}\|e^{c_0\nu^{\frac{1}{3}}s'}|k|\ln(e+|k|)\tilde F_k(s',v)\|_{L^2_{y}}^2 ds'\right)^{\frac{1}{2}}\\
  &+\int_{s+1}^t \left(\sum_{k\neq0}\frac{\nu |k|^2(s'-s)^3e^{-\nu^{\frac{1}{3}}|k|^{\frac{2}{3}}(s'-s)}}{\nu |k|^2(s'-s)^3} |k|^2\|e^{c_0\nu^{\frac{1}{3}}s}\ln(e+|k|)\tilde F_k(s,v)\|_{L^2_{y}}^2\right)^{\frac{1}{2}} ds'\\
  \lesssim&\nu^{-\frac{1}{2}} \left(\sum_{k\neq0} \|e^{c_0\nu^{\frac{1}{3}}s} \ln(e+|k|) \tilde F_k(s,v)\|_{L^2_{v}}^2\right)^{\frac{1}{2}} \lesssim\nu^{-\frac{1}{2}}\left\|e^{c_0\nu^{\frac{1}{3}}s}{\om}(s,x,y)\right\|_{H^{log}_xL^2_{y}},
\end{align*}
which is \eqref{btsp:pa_xomL1L2L2}.

By using the Gagliardo–Nirenberg interpolation inequality, we deduce from \eqref{btsp:pa_xyomL2L2L2} that 
\begin{align*}
   &\int_{s}^t\| e^{c_0\nu^{\frac{1}{3}}s'}{\om}(s')\|_{L^{\infty}_{x,y}}^2ds'\lesssim\int_{s}^t\sum_{k\neq0}\|e^{c_0\nu^{\frac{1}{3}}s'}|k|^{\frac{1}{2}}\ln(e+|k|) \tilde{\om}_k(s')\|_{L^\infty_y}^2ds'\\
   \lesssim&\int_{s}^t\sum_{k\neq0}\|e^{c_0\nu^{\frac{1}{3}}s'}|k| \ln(e+|k|) \tilde{\om}_k(s')\|_{L^2_y}\|e^{c_0\nu^{\frac{1}{3}}s'}\ln(e+|k|) \pa_y\tilde{\om}_k(s')\|_{L^2_y}ds'\\
   \lesssim&\left(\int_{s}^t\sum_{k\neq0}\|e^{c_0\nu^{\frac{1}{3}}s'}|k| \ln(e+|k|) \tilde{\om}_k(s')\|_{L^2_y}^2\right)^{\frac{1}{2}}\left(\int_{s}^t\sum_{k\neq0} \|e^{c_0\nu^{\frac{1}{3}}s'}\ln(e+|k|) \pa_y\tilde{\om}_k(s')\|_{L^2_y}^2ds'\right)^{\frac{1}{2}}\\
   \lesssim&\nu^{-1}\left\|e^{c_0\nu^{\frac{1}{3}}s}{\om}(s,x,y)\right\|_{H^{log}_xL^2_{y}}^2,
\end{align*}
which is \eqref{btsp:omL2L1Linfty}.

Recall that $\psi=\Delta^{-1}\om$, $\left(u^{(1)},u^{(2)}\right)=(-\pa_y\psi,\pa_x\psi)$, and
\begin{align*}
  \|(\pa_{xy},\pa_{xx})\Delta^{-1}\om(t)\|_{H^{log}_xL^2_{y}}^2=\|(\pa_{x}u^{(1)},\pa_{x}u^{(2)})(t)\|_{H^{log}_xL^2_{y}}^2.
\end{align*}
By the duality argument, we deduce from \eqref{eq-wave-dual} that
\begin{align*}
  &\|(\pa_{x}u^{(1)},\pa_{x}u^{(2)})(t)\|_{H^{log}_xL^2_{y}}^2\\
  =&\left|\left\langle \ln(e+|D_x|)\pa_{x}\om(t),\ln(e+|D_x|)\pa_{x}\psi(t) \right\rangle_{x,y}\right|\\
  =&\left|\left\langle \ln(e+|D_x|)\pa_{x}\mathbb{D}_t[\om](t),\ln(e+|D_x|)\pa_{x}\mathbb{D}_t^1[\psi](t) \right\rangle_{x,y}\right|\\
  =&\left|\left\langle \ln(e+|D_x|)\pa_{x}\mathbb{D}_t[\om](t),\left(\pa_vb^{-1}\right)\ln(e+|D_x|)\pa_{x}\mathbb{D}_t^1[\psi](t) \right\rangle_{x,v}\right|\\
  =&\left|\left\langle \ln(e+|D_x|)\pa_{x}(-\Delta_{x,v})^{-\frac{1}{2}}\mathbb{D}_t[\om](t),(-\Delta_{x,v})^{\frac{1}{2}}\left(\left(\pa_vb^{-1}\right)\ln(e+|D_x|)\pa_{x}\mathbb{D}_t^1[\psi](t)\right) \right\rangle_{x,v}\right|\\
  \lesssim&\left\|\pa_{x}(-\Delta_{x,v})^{-\frac{1}{2}}\mathbb{D}_t[\om](t)\right\|_{H^{log}_xL^2_{v}}\left\|\left(\pa_x,\pa_v\right) \left(\left(\pa_vb^{-1}\right)\ln(e+|D_x|)\pa_{x}\mathbb{D}_t^1[\psi](t)\right)\right\|_{L^2_{x,v}}.
\end{align*}
Note that $\int_{\mathbb T} \psi(x,y) d x=0$. It follows from \eqref{eq-est-wave}, \eqref{eq-wave-com-y}, and Lemma \ref{lem-back} that
\begin{align*}
  &\left\|\left(\pa_x,\pa_v\right) \left(\left(\pa_vb^{-1}\right)\ln(e+|D_x|)\pa_{x}\mathbb{D}_t^1[\psi](t)\right)\right\|_{L^2_{x,v}}\\
  \lesssim&\left\|\left(\pa_x,\pa_y\right) \ln(e+|D_x|)\pa_{x}\mathbb{D}_t^1[\psi](t) \right\|_{L^2_{x,y}}+\left\| \ln(e+|D_x|)\pa_{x}\mathbb{D}_t^1[\psi](t) \right\|_{L^2_{x,y}}\\
  \lesssim&\left\| \ln(e+|D_x|)\pa_{x}\mathbb{D}_t^1[\pa_x\psi](t) \right\|_{L^2_{x,y}}+\left\| \ln(e+|D_x|)\pa_{x}\mathbb{D}_t^1[\pa_y\psi](t) \right\|_{L^2_{x,y}}\\
  &+\left\| \ln(e+|D_x|)\pa_{x}\mathbb{D}_t^1[\psi](t) \right\|_{L^2_{x,y}}\\
  \lesssim&\left\| \pa_{xx}\psi(t) \right\|_{H^{log}_xL^2_{y}}+\left\| \pa_{xy}\psi(t) \right\|_{H^{log}_xL^2_{y}}\lesssim\| (\pa_{x}u^{(1)},\pa_{x}u^{(2)})(t)\|_{H^{log}_xL^2_{y}}.
\end{align*}
Therefore, 
\begin{align*}
  \|(\pa_{x}u^{(1)},\pa_{x}u^{(2)})(t)\|_{H^{log}_xL^2_{y}}\lesssim\left\|\pa_{x}(-\Delta_{x,v})^{-\frac{1}{2}}\mathbb{D}_t[\om](t)\right\|_{H^{log}_xL^2_{v}}.
\end{align*}
Then by using \eqref{est-F-4}, we have
\begin{equation}\label{eq-est-L2L2L2-strong}
  \begin{aligned}    
    &\int_{s}^t\|e^{c_0\nu^{\frac{1}{3}}s'}\left(\pa_{xy},\pa_{xx}\right)\Delta^{-1}\om(s')\|_{H^{log}_xL^2_{y}}^2ds'= \int_{s}^t\|e^{c_0\nu^{\frac{1}{3}}s'}(\pa_{x}u^{(1)},\pa_{x}u^{(2)})(s')\|_{H^{log}_xL^2_{y}}^2ds'\\
  \lesssim&\int_{s}^t\left\|e^{c_0\nu^{\frac{1}{3}}s'}\pa_{x}(-\Delta_{x,v})^{-\frac{1}{2}}\mathbb{D}_t[\om](s')\right\|_{H^{log}_xL^2_{v}}^2ds'\\
  \lesssim&\sum_{k\neq0}\int_{s}^t\left\|e^{c_0\nu^{\frac{1}{3}}s'}\ln(e+|k|)|k|\Delta^{-\frac{1}{2}}_{L}\tilde F_k(s',v)\right\|_{L^2_{v}}^2ds'\lesssim\left\|e^{c_0\nu^{\frac{1}{3}}s}{\om}(s,x,y)\right\|_{H^{log}_xL^2_{y}}^2,
  \end{aligned}
\end{equation}
this gives \eqref{btsp:V_1L2L2L2}.

By using the Gagliardo–Nirenberg interpolation inequality, we deduce from \eqref{eq-est-L2L2L2-strong} that 
\begin{align*}
  &\int_{s}^{t}\|e^{c_0\nu^{\frac{1}{3}}s'}|D_x|^{\frac{1}{2}}\pa_x\Delta^{-1}\om(s')\|_{H^{log}_xL^\infty_y}^2ds'\\
  \lesssim&\int_{s}^{t}\|e^{c_0\nu^{\frac{1}{3}}s'}\pa_{xx}\Delta^{-1}\om(s')\|_{H^{log}_xL^2_y}\|e^{c_0\nu^{\frac{1}{3}}s'}\pa_{xy}\Delta^{-1}\om(s')\|_{H^{log}_xL^2_y}ds'\\
  \lesssim& \left(\int_{s}^{t}\|e^{c_0\nu^{\frac{1}{3}}s'}\pa_{xx}\Delta^{-1}\om(s')\|_{H^{log}_xL^2_y}^2 ds'\right)^{\frac{1}{2}}\left(\int_{s}^{t} \|e^{c_0\nu^{\frac{1}{3}}s'}\pa_{xy}\Delta^{-1}\om(s')\|_{H^{log}_xL^2_y}^2ds'\right)^{\frac{1}{2}}\\
  \lesssim&\left\|e^{c_0\nu^{\frac{1}{3}}s}{\om}(s,x,y)\right\|_{H^{log}_xL^2_{y}}^2,
\end{align*}
this is \eqref{btsp:V_2L2L2Linfty}. 

Using the Sobolev inequality, we deduce from \eqref{btsp:V_2L2L2Linfty} 
\begin{align*}
  &\int_{s}^t\|e^{c_0\nu^{\frac{1}{3}}s'}\pa_x\Delta^{-1}\om (s')\|_{L^{\infty}_{x,y}}^2ds'\\
  \lesssim&\int_{0}^{t}\|e^{c_0\nu^{\frac{1}{3}}s'}|D_x|^{\frac{1}{2}}\pa_x\Delta^{-1}\om(s')\|_{H^{log}_xL^\infty_y}^2ds'\lesssim\left\|e^{c_0\nu^{\frac{1}{3}}s}{\om}(s,x,y)\right\|_{H^{log}_xL^2_{y}}^2,
\end{align*}
which is \eqref{btsp:V_2L2LinftyLinfty}.

By using the Gagliardo–Nirenberg interpolation inequality, we deduce from \eqref{est-F-1} that 
\begin{align*}
  &\sup_{s'\in[s,t]}\|e^{c_0\nu^{\frac{1}{3}}s'}\pa_{y}\Delta^{-1}\om (s')\|_{L^{\infty}_{x,y}}^2\lesssim\sup_{s'\in[s,t]}\|e^{c_0\nu^{\frac{1}{3}}s'}|D_x|^{\frac{1}{2}}\pa_y\Delta^{-1}\om (s')\|_{H^{log}_xL^\infty_y}^2\\
  \lesssim&\sup_{s'\in[s,t]}\|e^{c_0\nu^{\frac{1}{3}}s'}|D_x|\pa_y\Delta^{-1}\om (s')\|_{H^{log}_xL^2_y}\|e^{c_0\nu^{\frac{1}{3}}s'}\pa_{yy}\Delta^{-1}\om (s')\|_{H^{log}_xL^2_y}\\
  \lesssim&\sup_{s'\in[s,t]} \|e^{c_0\nu^{\frac{1}{3}}s'} \om (s')\|_{H^{log}_xL_{y}^2}^2\lesssim \|e^{c_0\nu^{\frac{1}{3}}s}{\om}(s)\|_{H^{log}_xL^2_{y}},
\end{align*}
which is \eqref{btsp:V_1LinftyL1Linfty}. 

This finishes the proof of Proposition \ref{prop-semi}.  
\end{proof}

\section{Nonlinear enhanced dissipation and inviscid damping}\label{sec-nonlin}
In this section, we study the nonlinear system and give the proof of the main result.

We now consider the nonlinear equation,
\begin{equation}
  \left\{
    \begin{array}{l}
      \pa_t\om_{\neq}+b(t,y)\pa_x\om_{\neq}-b''(t,y)\pa_x\Delta^{-1}\om_{\neq}-\nu\Delta\om_{\neq}=-\mathcal N_1-\mathcal N_3-\mathcal N_3,\\
      \omega_{\neq}|_{t=0}=P_{\neq}\omega_{in}(x,y),
    \end{array}
  \right.
\end{equation}
with
\begin{align*}
  \mathcal{N}^{1}=\left(u_{\neq}^{(1)} \partial_x \omega_{\neq}\right)_{\neq}+\left(u_{\neq}^{(2)} \partial_y \omega_{\neq}\right)_{\neq},\ \mathcal{N}^{2}=u_0^{(1)} \partial_x \omega_{\neq},\  \mathcal{N}^{3}=u_{\neq}^{(2)} \partial_y \omega_0,
\end{align*}
where $\omega_0(t,y)$ is the $0$ mode of vorticity which satisfies
\begin{equation*}
  \left\{
    \begin{array}{l}
      \pa_t\omega_0-\nu\pa_y^2\omega_0=-\big(u^{(1)}_{\neq}\pa_x\omega_{\neq}\big)_0-\big(u^{(2)}_{\neq}\pa_y\omega_{\neq}\big)_0,\\
      \omega_0|_{t=0}=P_{0}\omega_{in}(x,y),
    \end{array}
  \right.
\end{equation*}
and $u^{(1)}_0$ is the $0$ mode of horizontal velocity which satisfies
\begin{equation*}
  \left\{
    \begin{array}{l}
      \pa_t u^{(1)}_0-\nu\pa_y^2u^{(1)}_0=-\big(u^{(1)}_{\neq}\pa_xu^{(1)}_{\neq}\big)_0-\big(u^{(2)}_{\neq}\pa_yu^{(1)}_{\neq}\big)_0,\\
      u^{(1)}_0|_{t=0}=P_{0}u^{(1)}_{in}(y).
    \end{array}
  \right.
\end{equation*}
Then we have
\begin{align}\label{eq:DH-1}
  \omega_{\neq}(t,x,y)=S(t,0)\omega_{in,\neq}(x,y)-\int^t_0 S(t,s)(\mathcal N^{1}+\mathcal N^{2}+\mathcal N^{3})(s,x,y)ds,
\end{align}
\begin{align}\label{eq:DH-2}
  \omega_0(t,y)=e^{t\nu\pa_y^2}\omega_{in,0}(x,y)-\int^t_0 e^{(t-s)\nu\pa_y^2}\Big(\big(u^{(1)}_{\neq}\pa_x\omega_{\neq}\big)_0+\big(u^{(2)}_{\neq}\pa_y\omega_{\neq}\big)_0\Big)(s,y) ds,
\end{align}
and
\begin{align}\label{eq:DH-3}
  u^{(1)}_0(t,y)=e^{t\nu\pa_y^2}u^{(1)}_{in,0}(x,y)-\int^t_0 e^{(t-s)\nu\pa_y^2}\Big(\big(u^{(1)}_{\neq}\pa_xu^{(1)}_{\neq}\big)_0+\big(u^{(2)}_{\neq}\pa_yu^{(1)}_{\neq}\big)_0\Big)(s,y) ds.
\end{align}
Here $S(t,s)$ is the solution operator of the linear equation \eqref{eq: linearNS}.

Suppose $\left\|\omega_{i n}\right\|_{H^{log}_xL_{y}^2}+\left\|u_{i n}\right\|_{L_{x, y}^2} \leq \varepsilon_0 \nu^\beta$ and for some $T>0$, the following inequalities hold:

1. Uniform bound of $u^{(1)}_0$ and $\omega_0$:
\begin{align}\label{eq-assu-1}
  \sup_{t\in[0,T]}\left\|u^{(1)}_0(t)\right\|_{L^2_y}\le 8C_1\varepsilon_0 \nu^\beta;
\end{align}

\begin{align}\label{eq-assu-2}
  \sup_{t\in[0,T]}\left\|u^{(1)}_0(t)\right\|_{L^\infty_y}\le 8C_1\varepsilon_0 \nu^\beta;
\end{align}

\begin{align}\label{eq-assu-3}
  \left(\int^T_0\left\|\pa_y\omega_0(t)\right\|_{L^2_y}^2dt\right)^{\frac{1}{2}}\le 8C_1\nu^{-\frac{1}{2}}\left\| \omega_{i n}\right\|_{H^{log}_xL_{y}^2};
\end{align}
2. Enhanced dissipation:
\begin{align}\label{eq-assu-4}
    \sup_{t\in[0,T]}\left\|e^{c_0\nu^{\frac{1}{3}}t} {\om}_{\neq}(t,x,y)\right\|_{H^{log}_xL^2_{y}}\le 8C_1\left\| \omega_{i n}\right\|_{H^{log}_xL_{y}^2};
\end{align}
\begin{align}\label{eq-assu-5}
     \left(\int_{0}^T\|e^{c_0\nu^{\frac{1}{3}}t} \nabla{\om}_{\neq}(t,x,y)\|_{H^{log}_xL^2_{y}}^2dt\right)^{\frac{1}{2}}\le 8C_1\nu^{-\frac{1}{2}}\left\| \omega_{i n}\right\|_{H^{log}_xL_{y}^2};
\end{align}
\begin{align}\label{eq-assu-6}
     \int_{0}^T\|e^{c_0\nu^{\frac{1}{3}}t}\pa_x{\om}_{\neq}(t,x,y)\|_{H^{log}_xL^2_{y}}dt \le 8C_1\nu^{-\frac{1}{2}}\left\| \omega_{i n}\right\|_{H^{log}_xL_{y}^2};
\end{align}
\begin{align}\label{eq-assu-7}
     \left(\int_{0}^T\|e^{c_0\nu^{\frac{1}{3}}t} {\om}_{\neq}(t,x,y)\|_{L^\infty_{x,y}}^2dt\right)^{\frac{1}{2}}\le 8C_1\nu^{-\frac{1}{2}}\left\| \omega_{i n}\right\|_{H^{log}_xL_{y}^2};
\end{align}

3. Inviscid damping:
\begin{align}\label{eq-assu-8}
     \left(\int_{0}^T\|e^{c_0\nu^{\frac{1}{3}}t}u^{(2)}_{\neq}(t,x,y)\|_{L^\infty_{x,y}}^2dt\right)^{\frac{1}{2}}\le 8C_1 \left\| \omega_{i n}\right\|_{H^{log}_xL_{y}^2};
\end{align}
\begin{align}\label{eq-assu-9}
     \left(\int_{0}^T\| e^{c_0\nu^{\frac{1}{3}}t}\left|D_x\right|^{\frac{1}{2}} u^{(2)}_{\neq}(t,x,y)\|_{H^{log}_xL^\infty_{y}}^2dt\right)^{\frac{1}{2}}\le 8C_1 \varepsilon_0 \nu^\beta;
\end{align}
\begin{align}\label{eq-assu-10}
     \left(\int_{0}^T\|e^{c_0\nu^{\frac{1}{3}}t} \pa_xu^{(1)}_{\neq}(t,x,y)\|_{H^{log}_xL^2_{y}}^2dt\right)^{\frac{1}{2}}\le 8C_1 \left\| \omega_{i n}\right\|_{H^{log}_xL_{y}^2};
\end{align}

4. Uniform bound of $u^{(1)}_{\neq}$:
\begin{align}\label{eq-assu-11}
     \sup_{t\in[0,T]}\|e^{c_0\nu^{\frac{1}{3}}t}u^{(1)}_{\neq}(t)\|_{L^{\infty}_{x,y}}\le 8C_1 \left\| \omega_{i n}\right\|_{H^{log}_xL_{y}^2}.
\end{align}

The constants $C_1$ and $\varepsilon_0$ will be determined later. 

\begin{proposition}\label{prop-boot}
  Let $\beta \geq 1 / 2$. Assume that $\left\|\omega_{i n}\right\|_{H_x^{\log } L_y^2}+\left\|u_{i n}\right\|_{L_{x, y}^2} \leq \varepsilon_0 \nu^\beta$ and that for some $T>0$, the estimate \eqref{eq-assu-1}-\eqref{eq-assu-11} hold. Then there exists $\nu_0$ so that for $\nu<\nu_0$ and $\varepsilon_0$ sufficiently small depending only on $C_1$ (in particular, independent of $T$ ), these same estimates hold with all the occurrences of $8$ on the right-hand side replaced by $4$.
\end{proposition}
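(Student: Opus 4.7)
The plan is to run a standard continuity/bootstrap argument, using Duhamel's formulas \eqref{eq:DH-1}--\eqref{eq:DH-3} together with the linear semigroup bounds of Proposition \ref{prop-semi} for the nonzero modes and the heat semigroup $e^{(t-s)\nu\pa_y^2}$ for the zero modes. First I would fix $C_1\geq 2C_0$, so that Proposition \ref{prop-semi} already supplies every one of the homogeneous contributions $S(t,0)\omega_{in,\neq}$ in \eqref{eq-assu-4}--\eqref{eq-assu-10} and each homogeneous part $e^{t\nu\pa_y^2}\omega_{in,0}$, $e^{t\nu\pa_y^2}u_{in,0}^{(1)}$ in \eqref{eq-assu-1}--\eqref{eq-assu-3} with a constant at most $\tfrac12\cdot 4C_1$. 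The remaining work is to show that the Duhamel nonlinear terms contribute at most $\tfrac12\cdot 4C_1$ times the relevant norm of the data, which will follow provided we extract one spare power of $\varepsilon_0$.

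For the nonzero-mode bounds \eqref{eq-assu-4}--\eqref{eq-assu-10}, I apply Proposition \ref{prop-semi} with ``initial datum'' $\mathcal N^1(s)+\mathcal N^2(s)+\mathcal N^3(s)$ at time $s$, and integrate in $s$. Pointwise-in-$t$ estimates (e.g.\ \eqref{eq-assu-4}) reduce to controlling $\|\mathcal N^i\|_{L^1_s H^{log}_xL^2_y}$; $L^2_t$-type estimates (e.g.\ \eqref{eq-assu-5}, \eqref{eq-assu-7}--\eqref{eq-assu-10}) reduce by Minkowski to controlling the same $L^1_s$-norm, since Proposition \ref{prop-semi} gives $L^2_s\!\to\!L^2_t$-type bounds with an exponential factor that pairs cleanly with $e^{c_0\nu^{1/3}t}$. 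Each nonlinear piece is then estimated by putting inviscid damping on one factor and enhanced dissipation on the other: $\mathcal N^1$ via \eqref{eq-assu-8}, \eqref{eq-assu-10} combined with \eqref{eq-assu-5}, \eqref{eq-assu-6}; $\mathcal N^2=u_0^{(1)}\pa_x\omega_{\neq}$ via $\|u_0^{(1)}\|_{L^\infty_tL^\infty_y}$ from \eqref{eq-assu-2} against $\|\pa_x\omega_{\neq}\|_{L^1_tH^{log}_xL^2_y}$ from \eqref{eq-assu-6}; $\mathcal N^3=u_{\neq}^{(2)}\pa_y\omega_0$ via \eqref{eq-assu-8} against \eqref{eq-assu-3}. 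Every product produces a factor $\lesssim C_1^2\varepsilon_0^2\nu^{2\beta-1/2}$, and because $\beta\geq\tfrac12$ and $\|\omega_{in}\|_{H^{log}_xL^2_y}\lesssim\varepsilon_0\nu^{1/2}$, one factor of $\varepsilon_0$ remains after balancing against the target norm, absorbing the bootstrap constant once $\varepsilon_0\ll 1/C_1^2$. The $\ln(e+|D_x|)$ weight on the vorticity factor is distributed by the paraproduct inequality $\ln(e+|k|)\leq\ln(e+|k-\ell|)+\ln(e+|\ell|)$; the second piece is harmless thanks to the $H^{log}_xL^\infty_y$ velocity bound \eqref{eq-assu-9}.

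For the zero-mode estimates \eqref{eq-assu-1}--\eqref{eq-assu-3}, I rewrite the forcings in divergence form using $\pa_xu_{\neq}^{(1)}+\pa_yu_{\neq}^{(2)}=0$, so $(u_{\neq}^{(1)}\pa_x\omega_{\neq}+u_{\neq}^{(2)}\pa_y\omega_{\neq})_0=\pa_y(u_{\neq}^{(2)}\omega_{\neq})_0$ and similarly for $u^{(1)}_0$; then the heat kernel $e^{(t-s)\nu\pa_y^2}\pa_y$ yields an extra $((t-s)\nu)^{-1/2}$, and products of inviscid-damped velocity with enhanced-dissipated vorticity close \eqref{eq-assu-1}, \eqref{eq-assu-3}. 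The $L^\infty_y$ estimate \eqref{eq-assu-2} follows by Gagliardo--Nirenberg from the $L^2_y$ and $\pa_y L^2_y$ bounds. Finally \eqref{eq-assu-11} is obtained by a Gagliardo--Nirenberg / Sobolev interpolation using Proposition \ref{prop-semi} (combining \eqref{btsp:V_1LinftyL1Linfty} and \eqref{btsp:V_2L2L2Linfty}) on the linear part and the same nonlinear estimates on the Duhamel part.

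The principal obstacle is the coupling pair $\mathcal N^2,\mathcal N^3$: each carries only one small nonzero-mode factor and must be closed by trading the time-integrable inviscid-damping norm on the nonzero factor against a bare $L^\infty$ or $L^2_t$-bound on the non-decaying zero-mode factor, so the scaling works only at the critical threshold $\beta=\tfrac12$. The second delicate point is making sure the $H^{log}_x$ weight follows the high-frequency factor in every paraproduct; here the uniform equivalence $\|\mathbb D_{t,k}[\cdot]\|_{L^2}\approx\|\cdot\|_{L^2}$ of Proposition \ref{pro-wave} and the two-sided bound on $\mathring{\mathrm A}_k$ used in Lemma \ref{lem-A-nonlocal} are essential, since they let us move between the original vorticity norms and the ghost-weighted norms on $F$ without losing the logarithmic weight.
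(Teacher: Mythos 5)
Your overall scheme — Duhamel via \eqref{eq:DH-1}--\eqref{eq:DH-3}, Proposition \ref{prop-semi} for the nonzero modes, the heat semigroup for the zero modes, and nonlinear product estimates with inviscid damping on one factor and enhanced dissipation on the other — is exactly the paper's approach, and the nonzero-mode estimates are handled the same way (the paper organizes them into a separate lemma bounding $\sum_i\|e^{c_0\nu^{1/3}s}\mathcal N^i\|_{L^1_sH^{log}_xL^2_y}$ by $C_2C_1^2\varepsilon_0\nu^{\beta-1/2}\|\omega_{in}\|_{H^{log}_xL^2_y}$, which is precisely the quantity you feed into Duhamel).

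However, there is a genuine gap in your treatment of the zero-mode $L^\infty_t L^2_y$ estimate \eqref{eq-assu-1} (and of $\omega_0$, which you need for \eqref{eq-assu-2}). Rewriting the forcing in divergence form $\pa_y(u^{(2)}_{\neq}u^{(1)}_{\neq})_0$ and then putting the derivative on the heat kernel costs a factor $((t-s)\nu)^{-1/2}$, and this does not close at the critical threshold. Concretely, the best pointwise bound available on the integrand is $\|(u^{(2)}_{\neq}u^{(1)}_{\neq})_0(s)\|_{L^2_y}\lesssim C_1^2\|\omega_{in}\|^2e^{-2c_0\nu^{1/3}s}$, and
\begin{equation*}
\sup_t\int_0^t ((t-s)\nu)^{-1/2} e^{-2c_0\nu^{1/3}s}\,ds \;\approx\; \nu^{-1/2}\cdot\nu^{-1/6}=\nu^{-2/3},
\end{equation*}
so the Duhamel contribution is $\lesssim C_1^2\varepsilon_0^2\nu^{2\beta-2/3}$, and closing against the target $4C_1\varepsilon_0\nu^\beta$ would require $C_1\varepsilon_0\nu^{\beta-2/3}\lesssim1$, which at $\beta=1/2$ forces $\varepsilon_0\lesssim\nu^{1/6}$ — not allowed since $\varepsilon_0$ must be independent of $\nu$. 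Attempting instead to pair the $(t-s)^{-1/2}$ singularity with the $L^2_s L^\infty$ inviscid-damping bound on $u^{(2)}_{\neq}$ via Cauchy--Schwarz leads to the divergent integral $\int_0^t(t-s)^{-1}ds$. The divergence-form rewrite is therefore counterproductive here: it trades a derivative you can afford (on $u^{(1)}_{\neq}$ or $\omega_{\neq}$, both of which carry usable $L^1_s$ or $L^2_s$ gradient bounds) for a time singularity you cannot. The paper simply keeps the forcing in the form $(u^{(1)}_{\neq}\pa_xu^{(1)}_{\neq})_0+(u^{(2)}_{\neq}\pa_yu^{(1)}_{\neq})_0$, bounds it in $L^1_sL^2_y$ by $\|u^{(1)}_{\neq}\|_{L^\infty_sL^\infty}\|\pa_xu^{(1)}_{\neq}\|_{L^1_sL^2}+\|u^{(2)}_{\neq}\|_{L^2_sL^\infty}\|\pa_yu^{(1)}_{\neq}\|_{L^2_sL^2}$, and uses only the $L^2_y$-boundedness of $e^{(t-s)\nu\pa_y^2}$, which incurs no extra power of $\nu$. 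For \eqref{eq-assu-3} the paper keeps the same $L^1_sL^2_y$ bound on the forcing and puts the single $\pa_y$ on the heat kernel via the property $\|\pa_y e^{t\nu\pa_y^2}f\|_{L^2_tL^2_y}\lesssim\nu^{-1/2}\|f\|_{L^2_y}$ (followed by Minkowski), so the $\nu^{-1/2}$ loss lands exactly where the target already allows it. You should abandon the divergence-form step and adopt the paper's direct $L^1_sL^2_y$ control of the zero-mode forcings. A final small imprecision: for $\mathcal N^3=u^{(2)}_{\neq}\pa_y\omega_0$ the $H^{log}_x$ weight on the output requires the $H^{log}_xL^\infty_y$ inviscid-damping bound \eqref{eq-assu-9} rather than the $L^\infty_{x,y}$ bound \eqref{eq-assu-8} that you cite.
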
 

This proposition implies Theorem \ref{thm-main} by the standard bootstrap argument. Before presenting the proof of Proposition \ref{prop-boot}, we first introduce the following lemma, which provides estimates for the nonlinear terms.
\begin{lemma}
  Under the bootstrap assumptions \eqref{eq-assu-1}-\eqref{eq-assu-11}, there is a constant $C_2$ independent of $C_1$ and $\varepsilon_0, \nu$ so that for any $0 \le t\le T$, it holds that
  \begin{align*}
  \sum_{i=1}^3\left\|e^{c_0\nu^{\frac{1}{3}}s} \mathcal{N}^i(s)\right\|_{L_{s}^1\left([0, t], H^{log}_xL_{y}^2\right)}   \leq C_2 C_1^2\varepsilon_0  \nu^{\beta-\frac{1}{2}}\left\| \omega_{i n}\right\|_{H^{log}_xL_{y}^2}.
  \end{align*}
  We also have
  \begin{align*}
    &\| \big(u^{(1)}_{\neq}\pa_xu^{(1)}_{\neq}\big)_0(s)\|_{L^1_s([0, t];L^2_{y})}+\| \big(u^{(2)}_{\neq}\pa_yu^{(1)}_{\neq}\big)_0(s)\|_{L^1_s([0, t];L^2_{y})}\\
    &+\| \big(u^{(1)}_{\neq}\pa_x\omega_{\neq}\big)_0(s)\|_{L^1_s([0, t];L^2_{y})}+\| \big(u^{(2)}_{\neq}\pa_y\omega_{\neq}\big)_0(s)\|_{L^1_s([0, t];L^2_{y})}\\
    \le&C_2 C_1^2\varepsilon_0  \nu^{\beta-\frac{1}{2}}\left\| \omega_{i n}\right\|_{H^{log}_xL_{y}^2}.
  \end{align*}
\end{lemma}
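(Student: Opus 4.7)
The plan is to estimate each of $\mathcal{N}^1,\mathcal{N}^2,\mathcal{N}^3$ and each of the four zero-mode products separately, using Hölder's inequality in time together with a Moser-type product bound in $H^{log}_xL^2_y$. Specifically, I will rely on the elementary estimate
\begin{align*}
\|fg\|_{H^{log}_xL^2_y}\lesssim \|f\|_{H^{log}_xL^\infty_y}\|g\|_{L^\infty_xL^2_y}+\|f\|_{L^\infty_{x,y}}\|g\|_{H^{log}_xL^2_y},
\end{align*}
and the fact that for functions supported on nonzero $x$-modes one has $\|\cdot\|_{H^{log}_xL^\infty_y}\lesssim \||D_x|^{1/2}\cdot\|_{H^{log}_xL^\infty_y}$. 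The weight $e^{c_0\nu^{1/3}t}$ is split evenly across the two factors in each Hölder pairing so as to fit the exact form of the bootstrap assumptions \eqref{eq-assu-1}--\eqref{eq-assu-11}.

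For $\mathcal{N}^2=u^{(1)}_0\partial_x\omega_{\neq}$ I would pull $\|u^{(1)}_0\|_{L^\infty_tL^\infty_y}$ out via \eqref{eq-assu-2} and pair it with $\|e^{c_0\nu^{1/3}s}\partial_x\omega_{\neq}\|_{L^1_tH^{log}_xL^2_y}$ from \eqref{eq-assu-6}. For $\mathcal{N}^3=u^{(2)}_{\neq}\partial_y\omega_0$, the factor $\partial_y\omega_0$ depends only on $y$ and commutes with $\ln(e+|D_x|)$, so Cauchy--Schwarz in time pairs the $H^{log}_xL^\infty_y$ bound \eqref{eq-assu-9} on $u^{(2)}_{\neq}$ (using $|D_x|^{1/2}\ge 1$ on nonzero modes) against $\|\partial_y\omega_0\|_{L^2_tL^2_y}$ from \eqref{eq-assu-3}. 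For $\mathcal{N}^1$ I would treat the two summands separately: for $(u^{(1)}_{\neq}\partial_x\omega_{\neq})_{\neq}$ I apply the Moser bound with $\|u^{(1)}_{\neq}\|_{L^\infty_{x,y}}$ from \eqref{eq-assu-11} and $\|\partial_xu^{(1)}_{\neq}\|_{L^2_tH^{log}_xL^2_y}$ from \eqref{eq-assu-10}, paired against $\|\omega_{\neq}\|_{L^\infty_tH^{log}_xL^2_y}$ and $\|\partial_x\omega_{\neq}\|_{L^1_tH^{log}_xL^2_y}$ from \eqref{eq-assu-4} and \eqref{eq-assu-6}; for $(u^{(2)}_{\neq}\partial_y\omega_{\neq})_{\neq}$ I pair the $L^2_t$ bounds \eqref{eq-assu-8}, \eqref{eq-assu-9} on $u^{(2)}_{\neq}$ against the $L^2_tH^{log}_xL^2_y$ gradient bound \eqref{eq-assu-5}. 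In every case one factor carries the small size $\varepsilon_0\nu^{\beta}$ or $\|\omega_{in}\|_{H^{log}_xL^2_y}$ and the other contributes a $\nu^{-1/2}$ from enhanced dissipation, so the resulting estimate is $C_2C_1^2\varepsilon_0\nu^{\beta-1/2}\|\omega_{in}\|_{H^{log}_xL^2_y}$.

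The four zero-mode products are easier, since $P_0$ brings an extra $x$-integration and one has $\|(fg)_0\|_{L^2_y}\le \|f\|_{L^2_xL^\infty_y}\|g\|_{L^2_xL^2_y}$. For example, $\|(u^{(2)}_{\neq}\partial_yu^{(1)}_{\neq})_0\|_{L^1_tL^2_y}$ is controlled by $\|u^{(2)}_{\neq}\|_{L^2_tL^\infty_{x,y}}\|\partial_yu^{(1)}_{\neq}\|_{L^2_tL^2_{x,y}}$, with the latter extracted from \eqref{eq-assu-5} after noting $\partial_yu^{(1)}=-\partial_{yy}\Delta^{-1}\omega$; the other three products are handled by analogous splittings together with \eqref{eq-assu-6}, \eqref{eq-assu-7}, \eqref{eq-assu-8}, \eqref{eq-assu-10}, \eqref{eq-assu-11}.

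The main obstacle is a clean justification of the Moser bound in $H^{log}_x$, which I would carry out by Littlewood--Paley decomposition in $x$: since $\ln(e+|k|)$ is a slowly varying Mikhlin symbol, the low--high and high--low paraproducts each satisfy the stated form, and the high--high paraproduct is controlled by $\|f\|_{L^\infty}\|g\|_{H^{log}}$ thanks to almost-orthogonality of the frequency blocks. A secondary subtlety is the precise bookkeeping of the weight $e^{c_0\nu^{1/3}s}$ across each bilinear split, but because every term on the right of the eleven bootstrap assumptions carries this weight explicitly, one only has to check that the sum of the weight exponents from the two factors is at least $c_0\nu^{1/3}s$ pointwise in $s$, which is automatic. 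The sharpness of $\beta=1/2$ in the critical norm $H^{log}_xL^2_y$ is then visible from the scaling: the loss $\nu^{-1/2}$ from enhanced dissipation is exactly compensated by one factor of $\varepsilon_0\nu^{1/2}$.
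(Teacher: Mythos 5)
Your pairings with the bootstrap estimates are mostly in the right places, but the one product estimate you propose to carry the whole argument,
\[
\|fg\|_{H^{log}_xL^2_y}\lesssim \|f\|_{H^{log}_xL^\infty_y}\|g\|_{L^\infty_xL^2_y}+\|f\|_{L^\infty_{x,y}}\|g\|_{H^{log}_xL^2_y},
\]
is not strong enough to close two of the four paraproduct pieces of $\mathcal N^1$, and this is exactly where the difficulty sits. For the low--high piece $T_{\partial_y\omega_{\neq}}u^{(2)}_{\neq}$, your bound forces you to put $\partial_y\omega_{\neq}$ in either $L^\infty_{x}L^2_y$ or $L^\infty_{x,y}$, and neither quantity is controlled by \eqref{eq-assu-1}--\eqref{eq-assu-11}; the bootstrap only gives $\partial_y\omega_{\neq}$ in $L^2_t L^2_{x,y}$ (via \eqref{eq-assu-5}). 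The paper gets around this with the tailored Bernstein inequality \eqref{eq-Ber-2}, $\|\ln(e+|D_x|)T_fg\|_{L^2_x}\lesssim\|f\|_{L^2_x}\||D_x|^{1/2}\ln(e+|D_x|)g\|_{L^2_x}$, which demands only $L^2_x$ of the low-frequency factor at the cost of a $|D_x|^{1/2}$ gain on the high-frequency factor, and that gain is precisely what \eqref{eq-assu-9} provides. Your Moser bound has no analogue of this $L^\infty_x\to L^2_x$ trade.

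The same problem appears for the low--high piece $T_{\partial_x\omega_{\neq}}u^{(1)}_{\neq}$. Here the paper invokes \eqref{eq-Ber-3}, $\|\ln(e+|D_x|)T_{\partial_xf}g\|_{L^2_x}\lesssim\|f\|_{L^\infty_x}\|\ln(e+|D_x|)\partial_xg\|_{L^2_x}$, i.e.\ it moves the $x$-derivative off the low-frequency factor $\omega_{\neq}$ and onto the high-frequency factor $u^{(1)}_{\neq}$, so that the resulting pairing is $\|\omega_{\neq}\|_{L^2_tL^\infty_{x,y}}\cdot\|\partial_xu^{(1)}_{\neq}\|_{L^2_tH^{log}_xL^2_y}$, using \eqref{eq-assu-7} and \eqref{eq-assu-10}. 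Your Moser bound keeps the derivative on $\omega$ and asks for $\|\partial_x\omega_{\neq}\|_{L^\infty_xL^2_y}$, which again the bootstrap does not control; your stated pairing with $\|\omega_{\neq}\|_{L^\infty_tH^{log}_xL^2_y}$ from \eqref{eq-assu-4} does not fit into any valid bilinear estimate (you cannot close a product of two $H^{log}_xL^2_y$ functions without some $L^\infty$ in one of the variables). So the gap is concrete: the correct argument uses a Bony decomposition with the paper's three Bernstein-type inequalities \eqref{eq-Ber-1}--\eqref{eq-Ber-3}, where \eqref{eq-Ber-2} and \eqref{eq-Ber-3} are essential and are \emph{not} consequences of the two-term Moser bound you state. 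Your treatments of $\mathcal N^2$, $\mathcal N^3$, and the four zero-mode products are fine and match the paper's.
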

\begin{proof}
According to Bony's decomposition in $x$, we write
\begin{align*}
  u_{\neq}^{(1)} \partial_x \omega_{\neq}=\text{T}_{\partial_x \omega_{\neq}}u_{\neq}^{(1)}+\text{T}_{u_{\neq}^{(1)}}^*\partial_x \omega_{\neq},\quad u_{\neq}^{(2)} \partial_y \omega_{\neq}=\text{T}_{\partial_y \omega_{\neq}}u_{\neq}^{(2)}+\text{T}_{u_{\neq}^{(2)}}^*\partial_y \omega_{\neq}.
\end{align*}

Then by using  \eqref{eq-assu-5}-\eqref{eq-assu-11} and \eqref{eq-Ber-1}-\eqref{eq-Ber-3} we have
\begin{align*}
  &\|e^{c_0\nu^{\frac{1}{3}}s} \mathcal N^{(1)}\|_{L_{s}^1\left([0, t], H^{log}_xL_{y}^2\right)}\\
  \le&\|e^{c_0\nu^{\frac{1}{3}}s} \text{T}_{\partial_x \omega_{\neq}}u_{\neq}^{(1)}\|_{L_{s}^1\left([0, t], H^{log}_xL_{y}^2\right)}+\|e^{c_0\nu^{\frac{1}{3}}s} \text{T}_{u_{\neq}^{(1)}}^*\partial_x \omega_{\neq}\|_{L_{s}^1\left([0, t], H^{log}_xL_{y}^2\right)}\\
  &+\|e^{c_0\nu^{\frac{1}{3}}s} \text{T}_{\partial_y \omega_{\neq}}u_{\neq}^{(2)}\|_{L_{s}^1\left([0, t], H^{log}_xL_{y}^2\right)}+\|e^{c_0\nu^{\frac{1}{3}}s} \text{T}_{u_{\neq}^{(2)}}^*\partial_y \omega_{\neq}\|_{L_{s}^1\left([0, t], H^{log}_xL_{y}^2\right)}\\
  \le&C\Big(\|e^{c_0\nu^{\frac{1}{3}}s} \pa_x u_{\neq}^{(1)}\|_{L_{s}^2\left([0, t], H^{log}_xL_{y}^2\right)}\|e^{c_0\nu^{\frac{1}{3}}s} \omega_{\neq}\|_{L_{s}^2\left([0, t], L_{x, y}^\infty\right)}\\
  &+\|e^{c_0\nu^{\frac{1}{3}}s} u_{\neq}^{(1)}\|_{L_{s}^\infty\left([0, t], L_{x, y}^\infty\right)}\|e^{c_0\nu^{\frac{1}{3}}s} \pa_x \omega_{\neq}\|_{L_{s}^1\left([0, t], H^{log}_xL_{y}^2\right)}\\
  &+\|e^{c_0\nu^{\frac{1}{3}}s}|D_x|^{\frac{1}{2}} u_{\neq}^{(2)}\|_{L_{s}^2\left([0, t], H^{log}_xL^{\infty}_y\right)}\|e^{c_0\nu^{\frac{1}{3}}s} \pa_y\omega_{\neq}\|_{L_{s}^2\left([0, t], L_{x, y}^2\right)}\\
  &+\|e^{c_0\nu^{\frac{1}{3}}s}u_{\neq}^{(2)}\|_{L_{s}^2\left([0, t], L^{\infty}_{x,y}\right)}\|e^{c_0\nu^{\frac{1}{3}}s} \pa_y\omega_{\neq}\|_{L_{s}^2\left([0, t], H^{log}_xL_{y}^2\right)}\Big)\\
  \le&CC_1^2\varepsilon_0  \nu^{\beta-\frac{1}{2}}\left\| \omega_{i n}\right\|_{H^{log}_xL_{y}^2}.
\end{align*}
Similarly, by using \eqref{eq-assu-2}, \eqref{eq-assu-2}, \eqref{eq-assu-6}, and \eqref{eq-assu-10}, we have 
  \begin{align*}
  &\|e^{c_0\nu^{\frac{1}{3}}s} \mathcal N^{(2)}\|_{L_{s}^1\left([0, t], H^{log}_xL_{y}^2\right)}+\|e^{c_0\nu^{\frac{1}{3}}s} \mathcal N^{(3)}\|_{L_{s}^1\left([0, t], H^{log}_xL_{y}^2\right)}\\
  \le&\|e^{c_0\nu^{\frac{1}{3}}s} u^{(1)}_{0}\pa_x\omega_{\neq}\|_{L_{s}^1\left([0, t], H^{log}_xL_{y}^2\right)}+\|e^{c_0\nu^{\frac{1}{3}}s} u^{(2)}_{\neq}\pa_y\omega_0\|_{L_{s}^1\left([0, t], H^{log}_xL_{y}^2\right)}\\
  \le&\|u^{(1)}_{0}\|_{L_{s}^\infty\left([0, t], L_{y}^\infty\right)}\|e^{c_0\nu^{\frac{1}{3}}s} \pa_x\omega_{\neq}\|_{L_{s}^1\left([0, t], H^{log}_xL_{y}^2\right)}+\|e^{c_0\nu^{\frac{1}{3}}s} u^{(2)}_{\neq}\|_{L_{s}^2\left([0, t], H^{log}_xL^\infty_y\right)}\|\pa_y\omega_0\|_{L_{s}^2\left([0, t], L_{y}^2\right)}\\
  \le&CC_1^2\varepsilon_0  \nu^{\beta-\frac{1}{2}}\left\| \omega_{i n}\right\|_{H^{log}_xL_{y}^2}.
\end{align*}

A similar argument shows that
\begin{align*}
    &\| \big(u^{(1)}_{\neq}\pa_xu^{(1)}_{\neq}\big)_0\|_{L^1_s([0, t];L^2_{y})}+\| \big(u^{(2)}_{\neq}\pa_yu^{(1)}_{\neq}\big)_0\|_{L^1_s([0, t];L^2_{y})}\\
    &+\| \big(u^{(1)}_{\neq}\pa_x\omega_{\neq}\big)_0\|_{L^1_s([0, t];L^2_{y})}+\| \big(u^{(2)}_{\neq}\pa_y\omega_{\neq}\big)_0\|_{L^1_s([0, t];L^2_{y})}\\
    \le&\|  u^{(1)}_{\neq}\|_{L^\infty_s([0, t];L^\infty_{x,y})}\| \pa_xu^{(1)}_{\neq} \|_{L^1_s([0, t];L^2_{x,y})}+\|  u^{(2)}_{\neq}\|_{L^2_s([0, t];L^\infty_{x,y})}\| \pa_yu^{(1)}_{\neq} \|_{L^2_s([0, t];L^2_{x,y})}\\
    &+\|  u^{(1)}_{\neq}\|_{L^\infty_s([0, t];L^\infty_{x,y})}\| \pa_x\omega_{\neq} \|_{L^1_s([0, t];L^2_{x,y})}+\|  u^{(2)}_{\neq}\|_{L^2_s([0, t];L^\infty_{x,y})}\| \pa_y\omega_{\neq} \|_{L^2_s([0, t];L^2_{x,y})}\\
    \le&CC_1^2\varepsilon_0  \nu^{\beta-\frac{1}{2}}\left\| \omega_{i n}\right\|_{H^{log}_xL_{y}^2}.
\end{align*}
Here we used the fact that
\begin{align*}
  &\| \pa_xu^{(1)}_{\neq} \|_{L^1_s([0, t];L^2_{x,y})}+\| \pa_yu^{(1)}_{\neq} \|_{L^2_s([0, t];L^2_{x,y})}\le\| \pa_x\omega_{\neq} \|_{L^1_s([0, t];L^2_{x,y})}+\| \pa_y\omega_{\neq} \|_{L^2_s([0, t];L^2_{x,y})}.
\end{align*}
\end{proof}

Now we are in a position to prove Proposition \ref{prop-boot}.
\begin{proof}[Proof of Proposition \ref{prop-boot}]
  By using the properties of heat kernel, we get for $0\le t\le T$ that
  \begin{align*}
    \left\|u_0^{(1)}(t)\right\|_{L_y^2}+\left\|\omega_0(t)\right\|_{L_y^2}
    \le&\left\|e^{t\nu\pa_y^2} u_{in,0}^{(1)} \right\|_{L_y^2}+\left\|e^{t\nu\pa_y^2}\omega_{in,0}\right\|_{L_y^2}\\
    &+\left\| \int^t_0 e^{(t-s)\nu\pa_y^2}\Big(\big(u^{(1)}_{\neq}\pa_xu^{(1)}_{\neq}\big)_0+\big(u^{(2)}_{\neq}\pa_yu^{(1)}_{\neq}\big)_0\Big)(s,y) ds\right\|_{L^2_y} \\
  &+\left\| \int^t_0 e^{(t-s)\nu\pa_y^2}\Big(\big(u^{(1)}_{\neq}\pa_x\omega_{\neq}\big)_0+\big(u^{(2)}_{\neq}\pa_y\omega_{\neq}\big)_0\Big)(s,y) ds\right\|_{L^2_y} \\
  \le&\left\|e^{t\nu\pa_y^2} u_{in,0}^{(1)} \right\|_{L_y^2}+\left\|e^{t\nu\pa_y^2}\omega_{in,0}\right\|_{L_y^2}\\
  &+\| \big(u^{(1)}_{\neq}\pa_xu^{(1)}_{\neq}\big)_0\|_{L^1_s([0, t];L^2_{y})}+\| \big(u^{(2)}_{\neq}\pa_yu^{(1)}_{\neq}\big)_0\|_{L^1_s([0, t];L^2_{y})}\\
    &+\| \big(u^{(1)}_{\neq}\pa_x\omega_{\neq}\big)_0\|_{L^1_s([0, t];L^2_{y})}+\| \big(u^{(2)}_{\neq}\pa_y\omega_{\neq}\big)_0\|_{L^1_s([0, t];L^2_{y})}\\
  \le&C(1+C_2 C_1^2\varepsilon_0  \nu^{\beta-\frac{1}{2}})\varepsilon_0 \nu^{\beta}.
  \end{align*}
  It follows that
  \begin{align*}
   \left\|u_0^{(1)}(t)\right\|_{L_y^\infty}\le C \left\|u_0^{(1)}(t)\right\|_{L_y^2}^{\frac{1}{2}}\left\|\omega_0(t)\right\|_{L_y^2}^{\frac{1}{2}}\le C(1+C_2 C_1^2\varepsilon_0  \nu^{\beta-\frac{1}{2}})\varepsilon_0 \nu^{\beta}.
  \end{align*}
Recalling the property of heat kernel that
\begin{align*}
  \|\pa_y e^{t\nu\pa_y^2}f\|_{L^2_t([0, T];L^2_y)}\le C\nu^{-\frac{1}{2}}\|f\|_{L^2_y},
\end{align*}
we have
\begin{align*}
  &\|\pa_y\omega_0\|_{ L^2_t\left([0,T]; L^2_y\right)}\\
  \le& \left\|\pa_ye^{t\nu\pa_y^2}\omega_{in,0}\right\|_{ L^2_t\left([0,T]; L^2_y\right)}\\
  &+\left\|\int^t_0 \pa_ye^{(t-s)\nu\pa_y^2}\Big(\big(u^{(1)}_{\neq}\pa_x\omega_{\neq}\big)_0+\big(u^{(2)}_{\neq}\pa_y\omega_{\neq}\big)_0\Big)(s,y) ds\right\|_{ L^2_t\left([0,T]; L^2_y\right)}\\
  \le& \left\|\pa_ye^{t\nu\pa_y^2}\omega_{in,0}\right\|_{ L^2_t\left([0,T]; L^2_y\right)}\\
  &+\int^{T}_0\left(\int^{T}_{s} \left\|\pa_ye^{(t-s)\nu\pa_y^2}\Big(\big(u^{(1)}_{\neq}\pa_x\omega_{\neq}\big)_0+\big(u^{(2)}_{\neq}\pa_y\omega_{\neq}\big)_0\Big)(s)\right\|_{L^2_y}^2dt\right)^{\frac{1}{2}}ds\\
  \le&C\nu^{-\frac{1}{2}}\left\| \omega_{in,0}\right\|_{L^2_y}+
  C\nu^{-\frac{1}{2}}\int^{T}_0\left\|\Big(\big(u^{(1)}_{\neq}\pa_x\omega_{\neq}\big)_0+\big(u^{(2)}_{\neq}\pa_y\omega_{\neq}\big)_0\Big)(s)\right\|_{L^2_y}ds\\
  \le&C\nu^{-\frac{1}{2}}(1+C_2 C_1^2\varepsilon_0  \nu^{\beta-\frac{1}{2}})\left\| \omega_{i n}\right\|_{H^{log}_xL_{y}^2}.
\end{align*}

From \eqref{eq:DH-1} and Proposition \ref{prop-semi}, we have
\begin{align*}
  \left\|e^{c_0\nu^{\frac{1}{3}}t} {\om}_{\neq}(t,x,y)\right\|_{H^{log}_xL^2_{y}}
  &\le\left\|e^{c_0\nu^{\frac{1}{3}}t} S(t,0){\om}_{in,\neq}(x,y)\right\|_{H^{log}_xL^2_{y}}\\
  &\quad+\left\|\int^t_0 e^{c_0\nu^{\frac{1}{3}}(t-s)} S(t,s) \left(e^{c_0\nu^{\frac{1}{3}}s}(\mathcal N^{1}+\mathcal N^{2}+\mathcal N^{3})(s)\right)ds \right\|_{H^{log}_xL^2_{y}}\\
  \le&C_0\left\|  {\om}_{in,\neq}(x,y)\right\|_{H^{log}_xL^2_{y}}+C_0\left\| e^{c_0\nu^{\frac{1}{3}}s} (\mathcal N^{1}+\mathcal N^{2}+\mathcal N^{3})(s) \right\|_{L^1_s([0, t];H^{log}_xL^2_{y})}\\
  \le&\left(C_0+C_0C_2 C_1^2\varepsilon_0  \nu^{\beta-\frac{1}{2}}\right)\left\| \omega_{i n}\right\|_{H^{log}_xL_{y}^2}.
\end{align*}
It follows that
\begin{align*}
  \|e^{c_0\nu^{\frac{1}{3}}s}u^{(1)}_{\neq}(s)\|_{L^{\infty}_{x,y}}\le& C\left\|e^{c_0\nu^{\frac{1}{3}}t} {\om}_{\neq}(t,x,y)\right\|_{H^{log}_xL^2_{y}}\le C\left(C_0+C_0C_2 C_1^2\varepsilon_0  \nu^{\beta-\frac{1}{2}}\right)\left\| \omega_{i n}\right\|_{H^{log}_xL_{y}^2}.
\end{align*}

By using similar arguments, we deduce that
\begin{align*}
  &\nu^{\frac{1}{2}}\left(\int_{0}^T\|e^{c_0\nu^{\frac{1}{3}}t} \nabla{\om}_{\neq}(t,x,y)\|_{H^{log}_xL^2_{y}}^2dt\right)^{\frac{1}{2}}+\nu^{\frac{1}{2}}\int_{0}^T\|e^{c_0\nu^{\frac{1}{3}}t} \pa_x{\om}_{\neq}(t,x,y)\|_{H^{log}_xL^2_{y}}dt\\
  &+\nu^{\frac{1}{2}}\left(\int_{0}^T\|e^{c_0\nu^{\frac{1}{3}}t} {\om}_{\neq}(t,x,y)\|_{L^\infty_{x,y}}^2dt\right)^{\frac{1}{2}}+\left(\int_{0}^T\|e^{c_0\nu^{\frac{1}{3}}t}u^{(2)}_{\neq}(t,x,y)\|_{L^\infty_{x,y}}^2dt\right)^{\frac{1}{2}}\\
  &+ \left(\int_{0}^T\| e^{c_0\nu^{\frac{1}{3}}t}\left|D_x\right|^{\frac{1}{2}} u^{(2)}_{\neq}(t,x,y)\|_{H^{log}_xL^\infty_{y}}^2dt\right)^{\frac{1}{2}}+\left(\int_{0}^T\|e^{c_0\nu^{\frac{1}{3}}t} \pa_xu^{(1)}_{\neq}(t,x,y)\|_{H^{log}_xL^2_{y}}^2dt\right)^{\frac{1}{2}}\\
  \le&C\left(C_0+C_0C_2 C_1^2\varepsilon_0  \nu^{\beta-\frac{1}{2}}\right)\left\| \omega_{i n}\right\|_{H^{log}_xL_{y}^2}.
\end{align*}

Then by taking $C_1\ge CC_0$ big enough and then taking $\varepsilon_0$ small enough, we complete the proof of Proposition \ref{prop-boot}.
\end{proof}
\begin{appendix}
  \section{Bony's decomposition}
For the completeness of the article, in this appendix, we review some basic conclusions of Bony's decomposition on $\mathbb T$, the same content can be found in the appendix of \cite{MasmoudiZhao2020cpde}. For a more thorough and detailed presentation of this theory, we refer the readers to \cite{danchin2005fourier}.  

Let $(\varphi,\chi)$ be a couple of smooth functions defined on $\mathbb R$ and valued in in $[0,1]$, such that $\mathrm{supp}\, \varphi\subset \left\{\xi:~\frac{3}{4}|\xi|\le \frac{8}{3}\right\}$, $\mathrm{supp}\,\chi\subset \left\{\xi:~|\xi|\le \frac{4}{3}\right\}$, and 
\begin{align*}
  \chi(\xi)+\sum_{j\in\mathbb N}\varphi(2^{-j}\xi)=1,\ \forall\xi\in\mathbb R,
\end{align*}
where $\mathbb N=\left\{0,1,2,\dots\right\}$.

Denoting
\begin{align*}
  h_j(x)= \frac{1}{2\pi}\sum_{k\in\mathbb Z}\varphi(2^{-j}\beta)e^{ik x},
\end{align*}
we define the periodic dyadic blocks as
\begin{align*}
  \Delta_{-1}^{per} u(x)&=\frac{1}{2\pi}\tilde u_0,\\
  \Delta_j^{per} u(x)&=\frac{1}{2\pi}\sum_{k\in\mathbb Z}\varphi(2^{-j}\beta)\tilde u_ke^{ik x}=\int_{\mathbb T} h_j(x-y)u(y) dy,\text{ for }j\in\mathbb N, 
\end{align*}
and the lower-frequency cut-off
\begin{align*}
  S_j^{per} u(x)=\sum_{-1\le i\le j-1} \Delta_j^{per} u(x)=\frac{1}{2\pi}\sum_{k\in\mathbb Z}\chi(2^{-j}\beta)\tilde u_ke^{ik x},\text{ for }j\in\mathbb N.
\end{align*}
Then, we have Bony's decomposition: $T_fg=\sum_{j\ge 1}S_{j-1}f\Delta_j^{per}g$ and $T^*_gf=fg-T_fg$.

The following Bernstein type inequalities will be used.
\begin{align}
  & \left\|\ln \left(e+\left|D_x\right|\right) T_f g\right\|_{L_x^2}+\left\|\ln \left(e+\left|D_x\right|\right) T_f^* g\right\|_{L_x^2} \leq C \left\| f\right\|_{L_x^{\infty}} \left\|\ln \left(e+\left|D_x\right|\right) g \right\|_{L_x^2},\label{eq-Ber-1} \\
& \left\|\ln \left(e+\left|D_x\right|\right) T_f g\right\|_{L_x^2} \leq C \left\|f\right\|_{L^2_x} \left\|\left|D_x\right|^{\frac{1}{2}} \ln \left(e+\left|D_x\right|\right) g\right\|_{L^2_x},\label{eq-Ber-2} \\
& \left\| \ln \left(e+\left|D_x\right|\right) T_{\partial_x f} g\right\|_{L_x^2}  \leq C \left\| f\right\|_{L_x^{\infty}}\left\|\ln \left(e+\left|D_x\right|\right) \pa_xg \right\|_{L_x^2}\label{eq-Ber-3}.
\end{align}
Here we give the proof of \eqref{eq-Ber-2}, 
\begin{align*}
  &\left\|\ln \left(e+\left|D_x\right|\right) T_f g\right\|_{L_x^2}=\left\|\ln \left(e+\left|D_x\right|\right) \sum_{j\ge 1}S_{j-1}f\Delta_j^{per}g\right\|_{L_x^2}\\
  \lesssim&\left(\sum_{k\ge-1}\left\|\langle k\rangle \Delta_k^{per} \left(\sum_{j\ge1} S_{j-1}f \Delta_j^{per}g\right)\right\|_{L^2_x}^2\right)^{\frac{1}{2}}\\
  \lesssim& \left(\sum_{k\ge-1}\langle k\rangle^2 \sum_{|j-k|\le2} \left\|S_{j-1}f\right\|_{L^\infty_x}^2\left\|\Delta_j^{per}g\right\|_{L^2_x}^2\right)^{\frac{1}{2}}\\
  \lesssim& \left(\sum_{k\ge-1}\langle k\rangle^2 \sum_{|j-k|\le2}2^j \left\|S_{j-1}f\right\|_{L^2_x}^2\left\|\Delta_j^{per}g\right\|_{L^2_x}^2\right)^{\frac{1}{2}}\\
  \lesssim& \left\|f\right\|_{L^2_x}\left(\sum_{k\ge-1}\langle k\rangle^22^k\left\|\Delta_k^{per}g\right\|_{L^2_x}^2\right)^{\frac{1}{2}}\le C \left\|f\right\|_{L^2_x} \left\|\left|D_x\right|^{\frac{1}{2}} \ln \left(e+\left|D_x\right|\right) g\right\|_{L^2_x}.
\end{align*}
The estimates \eqref{eq-Ber-1} and \eqref{eq-Ber-3} can be obtained by the same argument.
\section{Properties of the background shear flow}
The wave operator is defined on background shear flow $b_s(y)$. Here we give some properties of $b_s(y)$.
\begin{lemma}\label{lem-back}
  Under Assumption \ref{assum}, it holds that 
  \begin{align}
    0<c_m\leq b_s'(y)\leq c_m^{-1}, \ \forall s\ge0,\label{eq-est-b1}\\
    \lim_{y\to \pm\infty}b'_{in}(y)=C_{\pm},\ \forall s\ge0,\label{eq-est-b2}\\
    \left\|b_s''(y)\right\|_{W^{1,1}}+\left\|b_s''(y)\right\|_{H^3}\le C \left(\left\|b_{in}''(y)\right\|_{W^{1,1}}+\left\|b_{in}''(y)\right\|_{H^3}\right),\ \forall s\ge0,\label{eq-est-b2.5}\\
\left\|\sqrt{1+|y|}b_s''(y)\right\|_{L^2}\le C \left( \left\|\sqrt{1+|y|}b_{in}''(y)\right\|_{L^2}+ \left\|b_{in}''(y)\right\|_{L^1}\right)  ,\ \forall s\ge0,\label{eq-est-b3}\\
\left\|\sqrt{1+|y|}b_s'''(y)\right\|_{L^2}\le C \left(\left\|\sqrt{1+|y|}b_{in}'''(y)\right\|_{L^2}+ \left\|b_{in}'''(y)\right\|_{L^1}\right)\label{eq-est-b4},\ \forall s\ge0.
  \end{align}
Here $C$ is a positive constant that depends only on $c_m$.
\end{lemma}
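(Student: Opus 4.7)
Since $(\partial_s - \nu \partial_y^2) b_s = 0$ with $b_s|_{s=0} = b_{in}$ and differentiation commutes with the heat operator, every derivative of $b_s$ satisfies $b_s^{(k)} = G_{s\nu} \ast b_{in}^{(k)}$, where $G_\tau(y) = (4\pi\tau)^{-1/2} e^{-y^2/(4\tau)}$ is the heat kernel; all five assertions reduce to properties of this convolution. Estimates \eqref{eq-est-b1}--\eqref{eq-est-b2.5} are standard: \eqref{eq-est-b1} holds because $G_\tau \geq 0$ is a probability kernel, so $b_s'(y)$ is a convex combination of values of $b_{in}'$ lying in $[c_m, c_m^{-1}]$; \eqref{eq-est-b2} follows by dominated convergence applied to $b_s'(y) - C_\pm = \int G_{s\nu}(z)(b_{in}'(y-z) - C_\pm)\,dz$, with integrand dominated by $c_m^{-1} G_{s\nu}(z) \in L^1$ and tending pointwise to zero as $y \to \pm \infty$; and \eqref{eq-est-b2.5} follows from the contractivity $\|G_\tau \ast f\|_{L^1} \leq \|f\|_{L^1}$ (Young) and $\|G_\tau \ast f\|_{H^k} \leq \|f\|_{H^k}$ (Plancherel with $|e^{-\tau\xi^2}| \leq 1$).

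The weighted estimates \eqref{eq-est-b3} and \eqref{eq-est-b4} are the main work. My plan is to use the subadditive inequality $\sqrt{1+|y|} \leq \sqrt{1+|y-y'|} + \sqrt{|y'|}$ to obtain the pointwise decomposition
\begin{align*}
\sqrt{1+|y|}\,|b_s''(y)| \leq \bigl(\sqrt{1+|\cdot|}\,G_{s\nu}\bigr) \ast |b_{in}''|(y) + G_{s\nu} \ast \bigl(\sqrt{|\cdot|}\,|b_{in}''|\bigr)(y).
\end{align*}
The second term is bounded in $L^2$ by $\|\sqrt{|\cdot|}\,b_{in}''\|_{L^2} \leq \|\sqrt{1+|\cdot|}\,b_{in}''\|_{L^2}$ via Young with $\|G_{s\nu}\|_{L^1} = 1$. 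The first term requires more care because an explicit Gaussian computation shows that $\|\sqrt{1+|\cdot|}\,G_\tau\|_{L^1} \lesssim (1+\sqrt{\tau})^{1/2}$ is uniformly bounded only for bounded $\tau$, while $\|\sqrt{1+|\cdot|}\,G_\tau\|_{L^2}^2 \lesssim \tau^{-1/2} + 1$ is uniformly bounded only for $\tau$ bounded away from $0$.

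The hard part is therefore bounding this first convolution uniformly in $s\nu \in [0,\infty)$. I would handle it with a dichotomy exploiting the complementary behavior of the two weighted norms: for $s\nu \leq 1$ apply Young in the form $L^1 \ast L^2 \to L^2$, giving a bound $C\|b_{in}''\|_{L^2} \leq C\|\sqrt{1+|\cdot|}\,b_{in}''\|_{L^2}$; for $s\nu \geq 1$ apply $L^2 \ast L^1 \to L^2$, giving a bound $C\|b_{in}''\|_{L^1}$. The two regimes match exactly the two terms on the right-hand side of \eqref{eq-est-b3}, which explains why the $L^1$ norm of $b_{in}''$ must appear: it is what absorbs the large-time spreading of the heat kernel. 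Estimate \eqref{eq-est-b4} then follows by applying the identical argument with $b_{in}'''$ in place of $b_{in}''$.
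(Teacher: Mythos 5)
Your proof is correct, and for \eqref{eq-est-b1}--\eqref{eq-est-b2.5} it matches the paper's argument. For the weighted estimates \eqref{eq-est-b3}--\eqref{eq-est-b4} you take a genuinely different route. The paper adds and subtracts $\sqrt{1+|y_1|}$ inside the heat-kernel integral, so that the remainder carries the factor $\sqrt{1+|y|}-\sqrt{1+|y_1|}$, which is bounded by $|y-y_1|^{1/2}$; the resulting kernel $G_{s\nu}(y)\,|y|^{1/2}$ then has an $L^2$ norm that is \emph{independent} of $s\nu$ (the Gaussian integral $\frac{1}{4\pi\tau}\int|y|e^{-y^2/(2\tau)}\,dy$ is a constant), so a single application of Young's inequality $L^2\ast L^1\to L^2$ bounds the remainder by $C\|b_{in}''\|_{L^1}$ for every time, with no case distinction. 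Your decomposition $\sqrt{1+|y|}\le\sqrt{1+|y-y_1|}+\sqrt{|y_1|}$ instead produces the kernel $G_{s\nu}(y)\sqrt{1+|y|}$, which does not vanish at $y=0$; this is exactly why its $L^2$ norm blows up as $s\nu\to 0$ and you are forced into a dichotomy in $s\nu$. Both approaches are valid. The paper's is a touch leaner because the extra $|y-y_1|^{1/2}$ decay of its remainder kernel cancels the short-time Gaussian singularity, eliminating the case split; your dichotomy, on the other hand, makes transparent why \emph{both} $\|b_{in}''\|_{L^1}$ and $\|\sqrt{1+|\cdot|}\,b_{in}''\|_{L^2}$ must appear on the right-hand side, matching each time regime to one of the two terms.
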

\begin{proof}
  We only consider the solution that $b_s''(y)$ in energy spaces. Then with the help of the fundamental solution, we have
  \begin{align*}
     b_s'(y)=\frac{1}{\sqrt{4\pi \nu s}}\int_{\mathbb R} e^{-\frac{(y_1-y)^2}{4\nu s}}b_{in}'(y_1) d y_1.
   \end{align*} 
The estimates \eqref{eq-est-b1} follow immediately.

We write
\begin{align*}
  b_s'(y)-C_-=\frac{1}{\sqrt{4\pi \nu s}}\int_{\mathbb R} e^{-\frac{(y_1-y)^2}{4\nu s}} \left(b_{in}'(y_1)-C_-\right) d y_1.
\end{align*}
It follows from $\lim\limits_{y\to -\infty}b'_{in}(y)=C_{-}$ that
$
  \lim\limits_{y\to -\infty}b_s'(y)=C_{-},
$
and similarly
$\lim\limits_{y\to +\infty}b_s'(y)=C_{+}.$

It holds that
\begin{align*}
  b_s''(y)=\frac{1}{\sqrt{4\pi \nu s}}\int_{\mathbb R} e^{-\frac{(y_1-y)^2}{4\nu s}}b_{in}''(y_1) d y_1,\ b_s'''(y)=\frac{1}{\sqrt{4\pi \nu s}}\int_{\mathbb R} e^{-\frac{(y_1-y)^2}{4\nu s}}b_{in}'''(y_1) d y_1,
\end{align*}
therefore
\begin{align*}
  \int_{\mathbb R}b_s''(y) d y=&\int_{\mathbb R}\frac{1}{\sqrt{4\pi \nu s}}\int_{\mathbb R} e^{-\frac{(y_1-y)^2}{4\nu s}}b_{in}''(y_1) d y_1 d y\\
  =&\int_{\mathbb R}b_{in}''(y_1)\int_{\mathbb R} \frac{1}{\sqrt{4\pi \nu s}}e^{-\frac{(y_1-y)^2}{4\nu s}} d y d y_1=\int_{\mathbb R}b_{in}''(y_1) d y_1,
\end{align*}
and $\left\|b_s''(y)\right\|_{L^1_y}\le\left\|b''_{in}(y)\right\|_{L^1_y}$. We also have $\left\|b_s'''(y)\right\|_{L^1_y}\le\left\|b'''_{in}(y)\right\|_{L^1_y}$ and $\left\|b_s''(y)\right\|_{H^3_y}\le\left\|b''_{in}(y)\right\|_{H^3_y}$. 

Next, we give the decay estimates. We write 
\begin{align*}
  \sqrt{1+|y|}b_s''(y)=&\frac{1}{\sqrt{4\pi \nu s}}\int_{\mathbb{R}}e^{-\frac{(y-y_1)^2}{4\nu s}}\sqrt{1+|y_1|}b''_{in}(y_1)dy_1\\
  &+\frac{1}{\sqrt{4\pi \nu s}}\int_{\mathbb{R}}e^{-\frac{(y-y_1)^2}{4\nu s}}\frac{|y|-|y_1|}{\sqrt{1+|y_1|}\sqrt{1+|y|}}b''_{in}(y_1)dy_1\\
  \eqdef&I+II.
\end{align*}
It is clear that
\begin{align*}
  \left\|I\right\|_{L^2}\le\left\|\sqrt{1+|y|}b_{in}''(y)\right\|_{L^2}.
\end{align*}
As $\left|\frac{|y|-|y_1|}{\sqrt{1+|y_1|}\sqrt{1+|y|}}\right|\le |y-y_1|^{\frac{1}{2}}$, we have
\begin{align*}
  \left\|II\right\|_{L^2}\le \left\|b_{in}''(y)\right\|_{L^1}\frac{1}{\sqrt{4\pi \nu s}}\left\|e^{-\frac{y^2}{4\nu s}}|y|^{\frac{1}{2}}\right\|_{L^2}\lesssim\left\|b_{in}''(y)\right\|_{L^1}.
\end{align*}
As a result, we have
\begin{align*}
  \left\|\sqrt{1+|y|}b_s''(y)\right\|_{L^2}\lesssim \left\|\sqrt{1+|y|}b_{in}''(y)\right\|_{L^2}+ \left\|b_{in}''(y)\right\|_{L^1},
\end{align*}
which gives \eqref{eq-est-b3}. By the same argument, we also have \eqref{eq-est-b4}.
\end{proof}
  \section{Spectrum of the Rayleigh operator}
\begin{remark}\label{rmk-spec}
Assume that $b_s'\ge c_m$ and $b_s''\in H^2(\mathbb R)$.  The continuous spectrum of $\mathcal{L}_{s,k}=\Delta_k^{-1}(b_s(y) \Delta_k-b''_s(y)\mathrm{Id})$ cannot be distributed on $\mathbb{C}\setminus\mathbb{R}$.
\end{remark}
\begin{proof}
  Assume that $\fc\in\sigma(\mathcal{L}_{s,k})$ with $\fc_i\neq0$, then there exists sequence $\left\{\Phi_n(k,y),\psi_n(k,y)\right\}$ such that $\left\|\Phi_n(k,y)\right\|_{H^2}=1$, $\left\|\psi_n(k,y)\right\|_{H^2}\to0$ as $n\to+\infty$, and
  \begin{align}\label{eq-eigen-Phi}
    (\fc-\mathcal{L}_{s,k})\Phi_n(k,y)=\psi_n(k,y).
  \end{align}
  Let $\Om_n(k,y)=\Delta_k\Phi_n(k,y)$, $\om_n(k,y)=\Delta_k\phi_n(k,y)$. It is clear that $\left\|\Om_n(k,y)\right\|_{L^2}\approx \left\|\Phi_n(k,y)\right\|_{H^2}$, $\left\|\om_n(k,y)\right\|_{L^2}\approx \left\|\phi_n(k,y)\right\|_{H^2}$. Equation \eqref{eq-eigen-Phi} can be rewritten as
  \begin{align}\label{eq-eigen-Om}
    \Om_n(k,y)-\frac{b_s''(y)}{b_s(y)-\fc}\Delta_k^{-1}\Om_n(k,y)=\frac{\om_n(k,y)}{b_s(y)-\fc}.
  \end{align}
Since $\fc_i\neq0$ and $\left\|\om_n(k,y)\right\|_{L^2}\to0$ as $n\to+\infty$, it follows that
\begin{align}\label{eq-lim-comp}
  \left\|\Om_n(k,y)-\frac{b_s''(y)}{b_s(y)-\fc}\Delta_k^{-1}\Om_n(k,y)\right\|_{L^2}\to0 \text{ as }n\to+\infty.
\end{align}
Recalling that $\left\|\Om_n(k,y)\right\|_{L^2}\approx1$ and $b_s'\ge c_m$, we have
\begin{align*}
  \left\|\frac{b_s''(y)}{b_s(y)-\fc}\Delta_k^{-1}\Om_n(k,y)\right\|_{H^1}\le C,\quad \left\|y\frac{b_s''(y)}{b_s(y)-\fc}\Delta_k^{-1}\Om_n(k,y)\right\|_{L^2}\le C.
\end{align*}
Then by a compactness argument, there exits $\Om(k,y)$ such that $\left\{\frac{b_s''(y)}{b_s(y)-\fc}\Delta_k^{-1}\Om_n(k,y)\right\}$ strongly convergence to $\Om(k,y)$ in $L^2$ norm (by passing to a subsequence). It follows from \eqref{eq-lim-comp} that
\begin{align*}
  \left\|\Om_n(k,y)-\Om(k,y)\right\|_{L^2}\to0 \text{ as }n\to+\infty,
\end{align*}
and $\fc$ is an eigenvalue of the Rayleigh operator $\mathcal{R}_{s,k}$. Accordingly, $\fc$ is an eigenvalue of $\mathcal{L}_{s,k}$ with eigenfunction $\Delta_k^{-1}\Om(k,y)$.

In conclusion, if $\fc\in\sigma(\mathcal{L}_{s,k})$ with $\fc_i\neq0$, then $\fc$ can only be the eigenvalue of $\mathcal{L}_{s,k}$.
\end{proof}

\end{appendix}

\bibliographystyle{siam.bst} 
\bibliography{references.bib}

\end{document}